\newtheorem{thm}{Theorem}[section]
\newtheorem{prop}[thm]{Proposition}
\newtheorem{lem}[thm]{Lemma}
\newtheorem{cor}[thm]{Corollary}
\theoremstyle{definition}
\newtheorem{defn}[thm]{Definition}
\newtheorem{exa}[thm]{Example}
\newtheorem{rmk}[thm]{Remark}
\newtheorem{para}[thm]{}
\newcounter{myitem}[thm]
\renewcommand{\themyitem}{\thethm.\arabic{myitem}}
\newenvironment{mylist}{
\setcounter{myitem}{\value{equation}}
\begin{list}{}{%
 \setlength{\topsep}{5pt}%
 \setlength{\parsep}{5pt}%
 \setlength{\itemsep}{0pt}%
 \setlength{\leftmargin}{50pt}%
 \setlength{\labelwidth}{50pt}%
}}
{\end{list}}
\newcommand{\itemno}{
\refstepcounter{myitem}
\item[{\rm (\themyitem)}]
\setcounter{equation}{\value{myitem}}}
\numberwithin{equation}{thm}
\newcommand{\RomI}{\uppercase\expandafter{\romannumeral 1}}
\newcommand{\RomII}{\uppercase\expandafter{\romannumeral 2}}
\newcommand{\RomIII}{\uppercase\expandafter{\romannumeral 3}}
\newcommand{\bC}{\mathbb C}
\newcommand{\bQ}{\mathbb Q}
\newcommand{\bZ}{\mathbb Z}
\newcommand{\bnnZ}{\mathbb Z_{\ge 0}}
\newcommand{\bpZ}{\mathbb Z_{> 0}}
\newcommand{\bH}{\mathbb H}
\newcommand{\bR}{\mathbb R}
\newcommand{\bpR}{\mathbb R_{> 0}}
\newcommand{\bV}{\mathbb V}
\newcommand{\ki}{1, 2, \dots, k}
\newcommand{\li}{1, 2, \dots, l}
\newcommand{\ski}{\{\ki\}}
\newcommand{\cA}{\mathcal A}
\newcommand{\cF}{\mathcal F}
\newcommand{\cL}{\mathcal L}
\newcommand{\cM}{\mathcal M}
\newcommand{\cO}{\mathcal O}
\newcommand{\cV}{\mathcal V}
\newcommand{\ca}{complex analytic }
\newcommand{\coh}{{\rm H}}
\newcommand{\gp}{^{\rm gp}}
\newcommand{\snc}{simple normal crossing }
\newcommand{\tB}{\widetilde{B}}
\DeclareMathOperator{\cok}{Coker}
\DeclareMathOperator{\dlog}{dlog}
\DeclareMathOperator{\e}{\mathbf e}
\DeclareMathOperator{\gr}{Gr}
\DeclareMathOperator{\id}{id}
\DeclareMathOperator{\image}{Image}
\DeclareMathOperator{\kernel}{Ker}
\DeclareMathOperator{\kos}{Kos}
\DeclareMathOperator{\pd}{\Delta^{\!\ast}}
\DeclareMathOperator{\pr}{pr}
\DeclareMathOperator{\rec}{rec}
\DeclareMathOperator{\res}{Res}
\DeclareMathOperator{\supp}{Supp}
\begin{document}

\title{Limits of Hodge structures in several variables, \RomII\footnotemark[0]}
\author{Taro Fujisawa \\
 \\
Tokyo Denki University \\
e-mail: fujisawa@mail.dendai.ac.jp}
\footnotetext[0]
{\hspace{-18pt}2010 {\itshape Mathematics Subject Classification}.
Primary 14D07; Secondary 14C30, 32G20. \\
{\itshape Key words and phrases}.
 variation of Hodge structures, monodromy weight filtrations.}

\maketitle

\begin{abstract}
The aim of this article is to study
degeneration of the variations of Hodge structure
associated to a proper semistable morphism
from a K\"ahler manifold.
We prove that the weight filtrations
constructed in \cite{FujisawaLHSSV}
coincide with the monodromy weight filtrations
on the relative log de Rham cohomology groups
of the proper semistable morphism.
Moreover, we show that
the limiting mixed Hodge structures
form admissible variations of mixed Hodge structure.
\end{abstract}

\section{Introduction}

\begin{para}
\label{question in the general form}
Let $Y$ be a complex manifold
and $E$ a reduced \snc divisor on $Y$.
A morphism $f: X \longrightarrow Y$ from a complex manifold $X$ to $Y$
is said to be semistable (along $E$)
if $D=f^{\ast}E$ is a {\itshape reduced} \snc divisor on $X$
and if $f$ is log smooth in the sense of Kato
\cite{KazuyaKato}
(for the precise definition,
see Definition \ref{definition for semistable morphism}).
Under the assumption for $X$ being a K\"ahler manifold,
a proper semistable morphism $f: X \longrightarrow Y$ along $E$
induces variations of Hodge structure
$R^qf_{\ast}\bQ_X|_{Y \setminus E}$
for all $q$,
because $f$ is smooth over $Y \setminus E$.
In this article,
as well as \cite{FujisawaLHSSV}, \cite{FujisawaDWSS},
we study the degeneration of these variations of Hodge structure
from the algebro-geometric viewpoint.
Because the question is of local nature,
we treat the case of $Y=\Delta^k \times S$
and $E=\{t_1t_2 \cdots t_k=0\}$,
where $\Delta^k$ is the $k$-dimensional polydisc
with the coordinate functions $t_1, t_2, \dots, t_k$
and $S$ is a complex manifold.
\end{para}

\begin{para}
\label{review of [S], [CKS] and so on}
For the case of $Y=\Delta^k$,
the degeneration of an abstract polarized variation of Hodge structure
on $Y \setminus E=(\pd)^k$
is intensively studied
by Cattani, Kaplan, Kashiwara, Kawai, Schmid and others,
apart from the morphism $f$.
Here we briefly review some of their results
for the case of unipotent monodromy.

Let $(\bV, (\cV, F))$ be
a polarizable variation of $\bR$-Hodge structure
of weight $m$
on $Y \setminus E=(\pd)^k$.
The universal covering
$\pi: \bH^k \longrightarrow Y \setminus E$
is given by
\begin{equation}
\pi^{\ast}t_i=\exp(2\pi\sqrt{-1}s_i), \quad i=\ki
\end{equation}
where $\bH^k$ is a product of the upper half plane $\bH$
with the coordinate $(s_1, s_2, \dots, s_k)$.
Then
$V_{\bR}=\Gamma(\bH^k, \pi^{-1}\bV)$
is a finite dimensional $\bR$-vector spaces.
The covering transformation $s_i \mapsto s_i+1$
induces the monodromy automorphism $T_i$ on $V_{\bR}$
for $i=1,2, \dots, k$.

In the following,
we assume that the local system $\bV$ is of unipotent monodromy,
i.e. $T_i$ is unipotent for all $i$.
The canonical extension of $\cV=\cO_{Y^{\ast}} \otimes \bV$
in the sense of Deligne \cite{DeligneED}
is denoted by $\widetilde{\cV}$.
For a subset $I \subset \ski$,
we set
\begin{equation}
\begin{split}
&Y[I]=\bigcap_{i \in I}\{t_i=0\} \\
&Y[I]^{\ast}=Y[I] \setminus (Y[I] \cap \bigcup_{j \notin I}\{t_j=0\})
\end{split}
\end{equation}
as in \ref{notation for Y and E} below.

By Theorem (4.12) and Theorem (6.16) in\cite{Schmid}
and by Theorem (3.3) in \cite{Cattani-Kaplan},
we have the following
(see also \cite[\S 3]{Steenbrink-Zucker}, \cite[\S 5]{Fujino-Fujisawa}):
\begin{mylist}
\itemno
\label{nilpotent orbit theorem}
The filtration $F$
extends to the canonical extension $\widetilde{\cV}$
with the property that $\gr_F^p\widetilde{\cV}$ is locally free
of finite rank for all $p$.
\itemno
\label{conditions for monodromy weight filtrations}
For a subset $K \subset \ski$,
there exists a unique finite increasing filtration $W(K)$ on $V_{\bR}$
such that the nilpotent endomorphism
$(\sum_{i \in K}c_i \log T_i)^l$
induces an isomorphism
\begin{equation}
\gr_l^{W(K)}V_{\bR}
\overset{\simeq}{\longrightarrow}
\gr_{-l}^{W(K)}V_{\bR}
\end{equation}
for any $l \in \bnnZ$ and for any $(c_i)_{i \in K}$
with $c_i \in \bpR$ for all $i \in K$.
\itemno
\label{conditions for relative monodromy weight filtrations}
For two subsets $K \subset J \subset \ski$,
the endomorphism $(\sum_{i \in J \setminus K}c_i \log T_i)^l$
induces an isomorphism
\begin{equation}
\gr_{l+a}^{W(J)}\gr_a^{W(K)}V_{\bR}
\overset{\simeq}{\longrightarrow}
\gr_{-l+a}^{W(J)}\gr_a^{W(K)}V_{\bR}
\end{equation}
for any $l \in \bnnZ$, for any $a \in \bZ$
and for any $(c_i)_{i \in J \setminus K}$
with $c_i \in \bpR$ for all $i \in J \setminus K$.
\itemno
\label{vmhs on Y[I]}
For a subset $I \subset \ski$,
the data
\begin{equation}
(\cO_{Y[I]} \otimes_{\cO_Y} \widetilde{\cV}, W(I)[m], F)|_{Y[I]^{\ast}}
\end{equation}
underlies an admissible variation of $\bR$-mixed Hodge structures
on $Y[I]^{\ast}$.
\end{mylist}
Here we remark that
$Y[I]^{\ast}=(\pd)^{\overline{I}}$
is the product of $k-|I|$ punctured discs
with the coordinate functions $t_i$
for $i \in \overline{I}=\ski \setminus I$,
and that the $\bR$-local system on $Y[I]^{\ast}$
associated to the variation of $\bR$-mixed Hodge structure
in \eqref{vmhs on Y[I]}
is given by the finite dimensional $\bR$-vector space $V_{\bR}$
equipped with the automorphisms
$(T_i)_{i \in \overline{I}}$.
\end{para}

\begin{para}
The goal of this article
is to carry out the de Rham theoretic realization
of \eqref{nilpotent orbit theorem}--\eqref{vmhs on Y[I]}
for the variation of $\bQ$-Hodge structures
induced by a proper semistable morphism
$f: X \longrightarrow Y$ from a K\"ahler manifold $X$
as in \ref{question in the general form}.

For the case of $k=1$,
Steenbrink proved this analogy in \cite{SteenbrinkLHS}
and \cite{SteenbrinkMHSVC}
(see also El Zein \cite{ElZeinCE},
Guillen-Navarro Aznar \cite{Guillen-NavarroAznarCI},
Saito \cite{SaitoMorihikoMHP},
Usui \cite{UsuiMTTS}).
For the case of $k \ge 2$,
parts of Steenbrink's results were generalized
in \cite{FujisawaLHSSV}.

Here, focusing on the case of $k \ge 2$,
we review previous results
and explain the relation between the results in this article
and the author's previous results
in \cite{FujisawaLHSSV} and \cite{FujisawaDWSS}.
For simplicity, we assume $Y=\Delta^k$
and $E=\{t_1t_2 \cdots t_k=0\}$ for a while.
Moreover, we simplify the presentation
at the cost of technical accuracy
in the remaining of this introduction.
\end{para}

\begin{para}
Let $X$ be a K\"ahler manifold and
$f: X \longrightarrow Y=\Delta^k$
a proper semistable morphism.
The divisor defined by the function $f^{\ast}t_i$ on $X$
is denoted by $D_i$ for $i=1,2, \dots, k$.
Then $D=f^{\ast}E=D_1+D_2+ \dots +D_k$.
We set
\begin{equation}
\bV=R^qf_{\ast}\bQ|_{Y \setminus E}, \quad
\cV=R^qf_{\ast}\Omega_{X/Y}|_{Y \setminus E}
\end{equation}
for a fixed $q \in \bZ$.
The filtration on $\cV$ induced by the stupid filtration on $\Omega_{X/Y}$
is denoted by $F$.
Then the data
$(\bV, (\cV, F))$
is a variation of $\bQ$-Hodge structure of weight $q$
on $Y^{\ast}=Y \setminus E=(\pd)^k$.
In \cite{UsuiRVCSV}, Usui stated that
\begin{equation}
\widetilde{\cV}=R^qf_{\ast}\Omega_{X/Y}(\log D)
\end{equation}
is the canonical extension of $\cV$,
which implies that the local system $\bV$ is of unipotent monodromy
(see Remark \ref{Usui's result} below).
Since $\widetilde{\cV}$ is locally free,
the canonical morphism
\begin{equation}
\label{base change iso:eq}
\cO_{Y[I]} \otimes \widetilde{\cV}
=\cO_{Y[I]} \otimes R^qf_{\ast}\Omega_{X/Y}(\log D)
\longrightarrow
R^qf_{\ast}(f^{-1}\cO_{Y[I]} \otimes_{f^{-1}\cO_Y} \Omega_{X/Y}(\log D))
\end{equation}
is an isomorphism for all $I \subset \ski$.
In particular,
we have the isomorphism
\begin{equation}
\label{base change iso over 0:eq}
\widetilde{\cV}(0)=\bC(0) \otimes \widetilde{\cV}
\simeq
R^qf_{\ast}(f^{-1}\bC(0) \otimes_{f^{-1}\cO_Y} \Omega_{X/Y}(\log D))
\end{equation}
for the case of $I=\ski$.
The stupid filtration on $\Omega_{X/Y}(\log D)$ induces
a finite decreasing filtration $F$ on $\widetilde{\cV}$
and on $\widetilde{\cV}(0)$.
In \cite{FujisawaLHSSV},
it was proved that
there exists a finite increasing filtration $L$ on
$\widetilde{\cV}(0)$
such that
$(\widetilde{\cV}(0), L, F)$ underlies a $\bQ$-mixed Hodge structure.
As a by-product,
it is also proved
that the filtration $F$ satisfies the property
\eqref{nilpotent orbit theorem},
i.e. $\gr_F^p\widetilde{\cV}$ is locally free of finite rank
for all $p$.
The difficulty comes from the fact that
it is impossible to define the filtration $L$
on $f^{-1}\bC(0) \otimes_{f^{-1}\cO_Y} \Omega_{X/Y}(\log D)$ directly.
In order to define the filtration $L$,
a complex
$sB(f)=sB_{\ski}(f)$
and a quasi-isomorphism
\begin{equation}
\label{qis for sB:eq}
f^{-1}\bC(0) \otimes_{f^{-1}\cO_Y} \Omega_{X/Y}(\log D)
\longrightarrow
sB(f) ,
\end{equation}
were constructed in \cite{FujisawaLHSSV}.
(Here we adopt the notation $sB(f)$
as in Definition \ref{definition of B_I(f)}
instead of $sB_X(D_1, \dots, D_k)$ used in \cite{FujisawaLHSSV}.)
By \eqref{base change iso over 0:eq} above,
we have the isomorphism 
\begin{equation}
\widetilde{\cV}(0) \simeq \coh^q(X_0, sB(f))
\end{equation}
where $X_0=f^{-1}(0)$ is the fiber of $f$
over the point $\{0\} \in \Delta^k$.
Then the filtration $L$ above was defined on $sB(f)$
in \cite{FujisawaLHSSV},
which induces the desired properties.
In fact, a filtration $L(K)$ on $sB(f)$
is defined for any subset $K \subset \ski$
(see Definition \ref{definition of L(K) on BI}),
and the filtration $L$ above is nothing but $L(\ski)$.
By interpreting Theorem (4.1) in \cite{FujisawaDWSS}
in terms of $(sB(f), L(K))$,
we obtain the $E_2$-degeneration of
the spectral sequence associated to the filtered complex $(sB(f), L(K))$
for any $K \subset \ski$.
\end{para}

\begin{para}
\label{point-wise vs local in intro}
The main results in \cite{FujisawaLHSSV} and \cite{FujisawaDWSS}
explained above are ``point-wise''
in the sense that
these results concern the complex $sB(f)$,
which computes $\widetilde{\cV}(0)$ as mentioned above.
In this article,
``local'' structure of $\widetilde{\cV}$
is to be investigated.
In other words, not only $\widetilde{\cV}(0)$ but also
$\cO_{Y[I]} \otimes \widetilde{\cV}$ are considered
for all $I \subset \ski$.
As in the case of $sB(f)$,
a complex $sB_I(f)$ and a quasi-isomorphism
\begin{equation}
f^{-1}\cO_{Y[I]} \otimes_{f^{-1}\cO_Y} \Omega_{X/Y}(\log D)
\longrightarrow
sB_I(f)
\end{equation}
for every $I \subset \ski$
are to be constructed
(see Definition \ref{definition of B_I(f)}
and Definition \ref{definition of theta} for the precise definition).
Then an increasing filtration $L(K)$ on $sB_I(f)$
is constructed for every $K \subset I$.
Combining with the isomorphism
\eqref{base change iso:eq},
we obtain the filtration $L(K)$
on $\cO_{Y[I]} \otimes \widetilde{\cV}$ for $K \subset I$.
(The compatibility of the filtration $L(K)$
with respect to the restriction is checked
in Proposition \ref{compatibility of the restriction and L(K)}.)
Then the main result of \cite{FujisawaLHSSV} recalled above
is generalized in Theorem \ref{theorem on VMHS for I},
which states that
\begin{equation}
\label{vmhs on YI* in intro:eq}
(\cO_{Y[I]} \otimes \widetilde{\cV}, L(I), F)|_{Y[I]^{\ast}}
\end{equation} 
underlies a graded polarizable variation of $\bQ$-mixed Hodge structures.
On the other hand,
the $E_2$-degeneration above for $(sB(f), L(K))$
is generalized in Theorem \ref{theorem for E2-degeneracy},
which states that the spectral sequence associated to
the filtered complex $(sB_I(f), L(K))$ degenerates at $E_2$-terms
for any $K \subset I$.
\end{para}

\begin{para}
\label{para:3}
As pointed out in Example 1.5 in \cite{Fujino-Fujisawa},
the local freeness of
\begin{equation}
\label{eq:9}
\gr_F^p\gr_m^{L(I)}(\cO_{Y[I]} \otimes \cV)
\end{equation}
is an indispensable condition
for the semipositivity theorem.
Theorem \ref{local freeness} below states such local freeness
in a more generalized form.

Theorem \ref{theorem on the monodromy weight filtrations},
the main result of this article,
states that the family of filtrations $\{L(J)\}_{J \subset I}$
on $R^n(f_I)_{\ast}sB_I(f)$ satisfies the properties
\eqref{conditions for monodromy weight filtrations}
and \eqref{conditions for relative monodromy weight filtrations}
for any $I \subset \ski$.
More precisely,
it states that
the morphism $N_{(J \setminus K)|I}(f;c)^l$
(for the definition,
see \eqref{definition of NJI(f;c):eq}
in Definition \ref{definition of N} below)
induces an isomorphism
\begin{equation}
\label{conditions for relative monodromy weight filtrations in intro:eq}
\gr_{l+m}^{L(J)}\gr_m^{L(K)}R^n(f_I)_{\ast}sB_I(f)
\overset{\simeq}{\longrightarrow}
\gr_{-l+m}^{L(J)}\gr_m^{L(K)}R^n(f_I)_{\ast}sB_I(f)
\end{equation}
for all $K \subset J \subset I \subset \ski$,
for all $l, m,n \in \bZ$ with $l \ge 0$
and for all $c=(c_i)_{i \in J \setminus K}$
with $c_i \in \bpR$ for all $i \in J \setminus K$
as in \eqref{conditions for relative monodromy weight filtrations}.
For the case of $K=\emptyset$,
the isomorphism above stands for
\eqref{conditions for monodromy weight filtrations}
because $L(\emptyset)$ is the trivial filtration.

The proof of this theorem is by induction on the dimension of $Y$.
Since the $E_1$-terms of the spectral sequence
associated to the filtered complex
$(R(f_I)_{\ast}sB_I(f), L(K))$
are described in terms of the data
obtained from proper semistable morphism
over a lower dimensional polydisc,
the induction hypothesis and
the $E_2$-degeneracy of this spectral sequence
implies the isomorphism
\eqref{conditions for relative monodromy weight filtrations in intro:eq}
for $K \not= \emptyset$.
Then the uniqueness of the relative monodromy weight filtration
implies that $L(J)$ coincides with the filtration $W(J)$
in \eqref{conditions for monodromy weight filtrations}
and \eqref{conditions for relative monodromy weight filtrations} as desired.
Then the admissibility
of the variation of $\bQ$-mixed Hodge structure
\eqref{vmhs on YI* in intro:eq} on $Y[I]^{\ast}$
follows from this theorem.
\end{para}

\begin{para}
By considering $Y=\Delta^k \times S$
instead of $Y=\Delta^k$,
we have technical advantage as follows.
For a subset $I \subset \ski$,
we often encounter the situation that
an object in question is to be considered over
$Y[I]^{\ast}=(\pd)^{\overline{I}} \times S$,
where $(\pd)^{\overline{I}}$ is the product of $k-|I|$ punctured discs
with the coordinate functions $t_i$
for $i \in \overline{I}=\ski \setminus I$
as in \ref{review of [S], [CKS] and so on}.
In such a case,
we may replace $Y$ by its open subset
$\Delta^I \times (\pd)^{\overline{I}} \times S$
and can start with $\Delta^I \times S'$
by setting $S'=(\pd)^{\overline{I}} \times S$,
where $\Delta^I$ is the $|I|$-dimensional polydisc
with the coordinate functions $t_i$ for $i \in I$.
Thus we may assume $I=\ski$ without loss of generality
in the case where the object in question
is considered over $Y[I]^{\ast}$.
\end{para}

\begin{para}
In \cite{GreenGriffithsDTLMHS},
Green and Griffiths
raised the question that the filtration $L$ in \cite{FujisawaLHSSV}
coincides with the monodromy weight filtration.
The main result of this article is the affirmative answer
to their question.

It is expected that
the results in this article
give the alternative proof of the fact
that $R^qf_{\ast}\Omega_{X/Y}(\log D)$ underlies
a logarithmic Hodge structure of weight $q$
in the sense of Kato-Usui
(cf. \cite[Definition 2.6.5]{KatoUsuiAMS}).
However, we will not discuss this question in this article.
\end{para}

\begin{para}
This article is organized as follows.
Section \ref{preliminaries}
presents some preliminaries.
For the later use,
we prove several lemmas concerning
compatibility of taking the cohomology
and the operations $\gr$ and $\otimes^L$
for a complex of sheaves.
In Section \ref{semistable morpshims},
we give the precise definition of the semistable morphism.
Moreover, we fix the notation
which is constantly used in this article.
In Section \ref{section for complex B},
we construct the complex $sB_I(f)$
and study its properties in detail.
Section \ref{gauss-manin}
is devoted to the computation of the Gauss-Manin connection
in terms of the complex $sB_I(f)$.
In Section \ref{section for the rational structure},
we construct rational structures
for $sB_I(f)$,
which play important roles
in the following sections.
Section \ref{section for the proper case}
deals with the case
where the semistable morphism $f$ is proper.
By using the rational structures
constructed in the last section,
we prove that
the higher direct image sheaves of $sB_I(f)$
are locally free of finite rank.
In Section \ref{section for the Kahler case},
we show that the data
\eqref{vmhs on YI* in intro:eq}
is a graded polarizable variation of $\bQ$-mixed Hodge structure
on $Y[I]^{\ast}$.
Moreover,
the $E_2$-degeneracy of the spectral sequence
and the local freeness of \eqref{eq:9}
mentioned in \ref{point-wise vs local in intro}
and \ref{para:3}
are proved.
In the final section, Section \ref{section for the main results},
we prove that our weight filtrations coincide
with the monodromy weight filtrations
and that the variation of mixed Hodge structure
\eqref{vmhs on YI* in intro:eq} on $Y[I]^{\ast}$ above
is admissible.
\end{para}

\subsection*{Notation}

\begin{para}
The cardinality of a finite set $A$
is denoted by $|A|$.
\end{para}

\begin{para}
Once we fix a set $A$, the complement of a subset $B \subset A$
is denoted by $\overline{B}$,
that is, $\overline{B}=A \setminus B$.
\end{para}

\begin{para}
\label{notation for ZI}
For a finite set $I$,
\begin{equation*}
\bZ^I=\bigoplus_{i \in I}\bZ e_i
\end{equation*}
is the free $\bZ$-module of rank $|I|$
generated by $\{e_i\}_{i \in I}$.
We set $e_I=\sum_{i \in I}e_i \in \bZ^I$.
For $q=\sum_{i \in I}q_ie_i \in \bZ^I$,
we set $|q|=\sum_{i \in I}q_i$.
We also use the convention
\begin{align}
&\bnnZ^I
=\{q=\sum_{i \in I}q_ie_i \in \bZ^I; q_i \ge 0 \text{ for all $i$}\} \\
&\bpZ^I
=\{q=\sum_{i \in I}q_ie_i \in \bZ^I; q_i > 0 \text{ for all $i$}\} .
\end{align}
For a subset $J$ of $I$,
we canonically have
\begin{equation*}
\bZ^I = \bZ^J \oplus \bZ^{I \setminus J}
\end{equation*}
which induces the canonical inclusion
$\bZ^J \longrightarrow \bZ^I$
and the canonical projection
$\bZ^I \longrightarrow \bZ^J$.
For the case of $I=\ski$
we use $\bZ^k$ instead of $\bZ^I$.
Similarly, we write $e$ instead of $e_I$.
As usual, we write $q=(q_1,q_2, \dots, q_k)$
for $q=\sum_{i=1}^{k}q_ie_i \in \bZ^k$.
We use the notation $\bQ^I$, $\bR^I$, $\bpR^I$, $\bC^I$ etc.
in the same way as above.
\end{para}

\begin{para}
Let $\Lambda$ be a set.
We denote the set of all the subsets of $\Lambda$
is denoted by $S(\Lambda)$.
Moreover, we set
\begin{equation*}
S_n(\Lambda)=\{\Gamma \in S(\Lambda); |\Gamma|=n\}
\end{equation*}
for $n \in \bZ$.
If a decomposition into disjoint union
\begin{equation*}
\Lambda=\coprod_{i=1}^k\Lambda_i
\end{equation*}
is given,
we set
\begin{align*}
&S^q(\Lambda)
=\{\Gamma \in S(\Lambda); |\Gamma \cap \Lambda_i|=q_i
\text{ for any $i \in J$}\} \\
&S^{\ge q}(\Lambda)
=\{\Gamma \in S(\Lambda); |\Gamma \cap \Lambda_i| \ge q_i
\text{ for any $i \in J$}\} \\
&S^{\le q}(\Lambda)
=\{\Gamma \in S(\Lambda); |\Gamma \cap \Lambda_i| \le q_i
\text{ for any $i \in J$}\}
\end{align*}
for $q \in \bZ^J$ and for $J \subset \ski$.
Moreover, we set
\begin{equation*}
S_n^q(\Lambda)=S_n(\Lambda) \cap S^q(\Lambda), \quad
S_n^{\ge q}(\Lambda)=S_n(\Lambda) \cap S^{\ge q}(\Lambda), \quad
S_n^{\le q}(\Lambda)=S_n(\Lambda) \cap S^{\le q}(\Lambda)
\end{equation*}
for $n \in \bZ$.
\end{para}

\begin{para}
\label{para:2}
Let $X$ be a topological space,
$Y \subset X$ a closed subspace,
$\iota: Y \hookrightarrow X$ the inclusion
and $\cO_Y$ a sheaf of rings on $Y$.
The direct image $\iota_{\ast}$ gives us
an equivalence of the following two abelian categories:
the category of $\cO_Y$-modules on $Y$
and the category of $\iota_{\ast}\cO_Y$-modules on $X$
whose support is contained in $Y$.
The quasi-inverse is given by $\iota^{-1}$.
Since the functors $\iota_{\ast}$ and $\iota^{-1}$ are exact functors,
this equivalence can be extended to the filtered objects.
Then the functors $\iota_{\ast}$ and $\iota^{-1}$
commute with the functor $\gr$ canonically.
Moreover the equivalence above also can be extended
to the (filtered) derived categories.
Taking into account of these equivalences,
we usually omit the symbols $\iota_{\ast}$ and $\iota^{-1}$
for $\cO_Y$-modules on $Y$
and for $\iota_{\ast}\cO_Y$-modules on $X$
whose support is contained in $Y$.
Hence a sheaf of $\iota_{\ast}\cO_Y$-module on $X$
whose support is contained in $Y$
is usually considered as
a sheaf of $\cO_Y$-module on $Y$
and vice versa.
We use the same convention for complexes.
\end{para}

\begin{para}
For the tensor product of modules, sheaves, complexes etc.,
we omit the base ring and use the symbol $\otimes$ simply,
if there is no danger of confusion.
Similarly we use $\otimes^L$ for the derived tensor product.
\end{para}

\begin{para}
Let $X$ be a \ca space and $x \in X$ a point of $X$.
The residue field $\cO_{X,x}/\mathfrak{m}_x$ ($\simeq \bC$)
is denoted by $\bC(x)$.
The tensor product
$\bC(x) \otimes_{\cO_{X,x}} \cF_x$
is simply denoted by $\bC(x) \otimes \cF$
for an $\cO_X$-module $\cF$.
We use the same notation for a complex of $\cO_X$-modules.
Similarly, we simply use the symbol $\bC(x) \otimes^L$
for the derived tensor product.
\end{para}

\section{Preliminaries}
\label{preliminaries}

\begin{para}
In this section $(X, \cO_X)$ denotes a commutative and unitary ringed space,
i.e. $\cO_X$ is a sheaf of commutative and unitary rings on $X$.
We usually use the symbol $X$ instead of $(X, \cO_X)$
if there is no danger of confusion.
\end{para}

\begin{defn}
\label{defn:1}
Let $\cL$ and $\cM$ be $\cO_X$-modules on $X$
and $F$ a finite decreasing filtration on $\cM$.
Then we define a finite decreasing filtration $F$ on $\cL \otimes \cM$
by setting
\begin{equation}
F^p(\cL \otimes \cM)=\image(\cL \otimes F^p\cM \longrightarrow \cL \otimes \cM)
\end{equation} 
for all $p$.
\end{defn}

\begin{rmk}
We have the canonical surjective morphisms
\begin{align}
&\cL \otimes F^p\cM
\longrightarrow
F^p(\cL \otimes \cM)
\label{canonical surjection for tensor product and F:eq} \\
&\cL \otimes \gr_F^p\cM
\longrightarrow
\gr_F^p(\cL \otimes \cM)
\label{canonical surjection for tensor product and gr:eq}
\end{align}
by definition.
\end{rmk}

\begin{rmk}
If $\gr_F^p\cM$ is $\cO_X$-flat for all $p$,
then the canonical morphism
\eqref{canonical surjection for tensor product and F:eq}
is an isomorphism for all $p$.
Therefore the canonical morphism
\eqref{canonical surjection for tensor product and gr:eq}
is an isomorphism for all $p$.
In particular,
the canonical morphism
\eqref{canonical surjection for tensor product and gr:eq}
is an isomorphism for all $p$
if $\gr_F^p\cM$ is a locally free $\cO_X$-module of finite rank
for all $p$.
\end{rmk}

\begin{rmk}
For the case that $\cO_X$ is the constant sheaf $\bQ$,
the canonical morphisms
\eqref{canonical surjection for tensor product and F:eq}
and
\eqref{canonical surjection for tensor product and gr:eq}
are isomorphisms for all $p$.
\end{rmk}

\begin{para}
Let $\cL, \cM$ be $\cO_X$-modules
and $F, G$ finite decreasing filtrations on $\cM$.
Then $F, G$ induces
the filtrations $F, G$ on $\cL \otimes \cM$ respectively
as in Definition \ref{defn:1}.
Then the filtration $G$ on $\gr_F^p(\cL \otimes \cM)$
is induced for all $p$.
On the other hand,
$G$ induces the filtration $G$ on $\gr_F^p\cM$,
which induces the filtration $G$ on $\cL \otimes \gr_F^p\cM$
for all $p$.
Similarly, the filtration $F$ is induced
on $\gr_G^q\cM$ and on $\cL \otimes \gr_G^q\cM$ for all $q$.
It is easy to see
that the morphism
\eqref{canonical surjection for tensor product and gr:eq}
preserves the filtration $G$ on the both sides.
\end{para}

\begin{lem}
\label{lem:2}
In the situation above, we assume that
$\gr_G^q\gr_F^p\cM$
is a locally free $\cO_X$-module of finite rank
for all $p,q$.
Then the filtrations $G$ on $\cL \otimes \gr_F^p\cM$
and on $\gr_F^p(\cL \otimes \cM)$ coincide
under the canonical isomorphism
\eqref{canonical surjection for tensor product and gr:eq}.
In particular,
the canonical morphism
\eqref{canonical surjection for tensor product and gr:eq}
induces the isomorphism
\begin{equation}
\gr_G^q(\cL \otimes \gr_F^p\cM)
\longrightarrow
\gr_G^q\gr_F^p(\cL \otimes \cM)
\end{equation}
for all $p,q$.
\end{lem}
\begin{proof}
Because the question is of local nature,
we may assume that there exists a splitting
\begin{equation}
\gr_G^q\gr_F^p\cM
\longrightarrow
F^p\cM \cap G^q\cM
\end{equation}
of the canonical surjection
\begin{equation}
F^p\cM \cap G^q\cM
\longrightarrow
\gr_G^q\gr_F^p\cM
\end{equation}
for all $p,q$.
The image of this splitting is denoted by $\cM^{p,q}$ for a while.
By using the fact that there exists the canonical isomorphism
$\gr_F^p\gr_G^q\cM \simeq \gr_G^q\gr_F^p\cM$ for all $p,q$,
we obtain the direct sum decomposition
\begin{equation}
\label{eq:23}
\cM=
\bigoplus_{p,q}\cM^{p,q}
\end{equation}
such that
\begin{equation}
\label{eq:24}
F^p\cM=\bigoplus_{p' \ge p}\cM^{p',q}, \quad
G^q\cM=\bigoplus_{q' \ge q}\cM^{p,q'}
\end{equation}
for all $p,q$.
Then the conclusion is clear.
\end{proof}

\begin{rmk}
\label{rmk:1}
For the case of $\cO_X=\bQ$,
the conclusion of the lemma above
holds true without
any assumption.
Namely, we have the following:
Let $\cL, \cM$ be $\bQ$-sheaves on a topological space $X$,
and $F, G$ finite decreasing filtrations on $\cM$.
Then the the filtrations $G$ on $\cL \otimes_{\bQ} \gr_F^p\cM$
and on $\gr_F^p(\cL \otimes_{\bQ} \cM)$ are identified
under the canonical isomorphism
\eqref{canonical surjection for tensor product and gr:eq}.
(cf. \cite[(1.4.3)]{DeligneII}.)
\end{rmk}

\begin{defn}
Let $\cL, \cM$ be $\cO_X$-modules
and $F_1, F_2, \dots, F_l$ finite decreasing filtrations on $\cM$.
For any $i$ with $1 \le i \le l-1$,
the filtration $F_j$ for $i+1 \le j \le l$ on
\begin{equation}
\gr_{F_i}^{p_i}\gr_{F_{i-1}}^{p_{i-1}}
\cdots \gr_{F_2}^{p_2}\gr_{F_1}^{p_1}\cM
\end{equation}
is defined inductively on $i$.
This filtration $F_j$
induces the filtration $F_j$ on
\begin{equation}
\cL \otimes
\gr_{F_i}^{p_i}\gr_{F_{i-1}}^{p_{i-1}}
\cdots \gr_{F_2}^{p_2}\gr_{F_1}^{p_1}\cM
\end{equation}
as in Definition \ref{defn:1}.
We have the canonical morphism
\eqref{canonical surjection for tensor product and gr:eq}
\begin{equation}
\label{eq:1}
\cL \otimes
\gr_{F_i}^{p_i}\gr_{F_{i-1}}^{p_{i-1}}
\cdots \gr_{F_2}^{p_2}\gr_{F_1}^{p_1}\cM
\longrightarrow
\gr_{F_i}^{p_i}
(\cL \otimes
\gr_{F_{i-1}}^{p_{i-1}}
\cdots \gr_{F_2}^{p_2}\gr_{F_1}^{p_1}\cM)
\end{equation}
for $F_i$
on $\gr_{F_{i-1}}^{p_{i-1}}
\cdots \gr_{F_2}^{p_2}\gr_{F_1}^{p_1}\cM$,
which preserves the filtration $F_{i+1}, F_{i+2}, \dots, F_l$
on the both sides.
Thus the canonical morphism
\begin{equation}
\label{the canonical morphism for tensor product and gr's:eq}
\cL \otimes
\gr_{F_i}^{p_i}\gr_{F_{i-1}}^{p_{i-1}}
\cdots \gr_{F_2}^{p_2} \gr_{F_1}^{p_1}\cM
\longrightarrow
\gr_{F_i}^{p_i}\gr_{F_{i-1}}^{p_{i-1}}
\cdots \gr_{F_2}^{p_2} \gr_{F_1}^{p_1}
(\cL \otimes \cM)
\end{equation}
is obtained for any $i$ with $1 \le i \le l$
as the composite
\begin{equation}
\begin{split}
\cL \otimes
\gr_{F_i}^{p_i}\gr_{F_{i-1}}^{p_{i-1}}\gr_{F_{i-2}}^{p_{i-2}}
\cdots \gr_{F_2}^{p_2}\gr_{F_1}^{p_1}\cM
&\longrightarrow
\gr_{F_i}^{p_i}(\cL \otimes \gr_{F_{i-1}}^{p_{i-1}} \gr_{F_{i-2}}^{p_{i-2}}
\cdots \gr_{F_2}^{p_2}\gr_{F_1}^{p_1}\cM) \\
&\longrightarrow
\gr_{F_i}^{p_i}\gr_{F_{i-1}}^{p_{i-1}}(\cL \otimes \gr_{F_{i-2}}^{p_{i-2}}
\cdots \gr_{F_2}^{p_2}\gr_{F_1}^{p_1}\cM) \\
&\cdots \\
&\longrightarrow
\gr_{F_i}^{p_i}\gr_{F_{i-1}}^{p_{i-1}}
\cdots \gr_{F_2}^{p_2}
(\cL \otimes \gr_{F_1}^{p_1}\cM) \\
&\longrightarrow
\gr_{F_i}^{p_i}\gr_{F_{i-1}}^{p_{i-1}}
\cdots \gr_{F_2}^{p_2}\gr_{F_1}^{p_1}(\cL \otimes \cM),
\end{split}
\end{equation}
where each step is the morphism
induced by the canonical morphism \eqref{eq:1}.
The canonical morphism
\eqref{the canonical morphism for tensor product and gr's:eq}
preserves the filtration
$F_{i+1}, F_{i+2}, \dots, F_l$
on the both sides.
\end{defn}

\begin{lem}
\label{lem:1}
Let $\cM$ be an $\cO_X$-module
and $F_1, F_2, \dots, F_l$ finite decreasing filtrations on $\cM$.
If the $\cO_X$-module
\begin{equation}
\gr_{F_l}^{p_l}\gr_{F_{l-1}}^{p_{l-1}}
\cdots \gr_{F_2}^{p_2}\gr_{F_1}^{p_1}\cM
\end{equation}
is locally free of finite rank
for all $p_1, p_2, \dots, p_l$,
then
\begin{equation}
\gr_{F_j}^p\gr_{F_i}^{p_i} \gr_{F_{i-1}}^{p_{i-1}}
\cdots \gr_{F_2}^{p_2}\gr_{F_1}^{p_1}\cM
\end{equation}
is locally free of finite rank for $1 \le i < j \le l$
and for all $p, p_1, p_2, \dots, p_i$.
\end{lem}
\begin{proof}
For the case of $i=l-1$, we have nothing to prove.
For $i < j$,
if we assume that the $\cO_X$-module
\begin{equation}
\gr_{F_j}^p\gr_{F_i}^{p_i} \gr_{F_{i-1}}^{p_{i-1}}
\cdots \gr_{F_2}^{p_2}\gr_{F_1}^{p_1}\cM
\simeq
\gr_{F_i}^{p_i}\gr_{F_j}^p\gr_{F_{i-1}}^{p_{i-1}}
\cdots \gr_{F_2}^{p_2}\gr_{F_1}^{p_1}\cM
\end{equation}
is locally free of finite rank
for all $p_i$,
then the $\cO_X$-module
\begin{equation}
\gr_{F_j}^p\gr_{F_{i-1}}^{p_{i-1}}
\cdots \gr_{F_2}^{p_2}\gr_{F_1}^{p_1}\cM
\end{equation}
is locally free of finite rank.
Thus we obtain the conclusion by the descending induction on $i$.
\end{proof}

\begin{lem}
\label{local freeness and the commutattivity of gr}
Let $\cL$ and $\cM$ be $\cO_X$-modules
and $F_1, F_2, \dots, F_l$ finite decreasing filtrations on $\cM$.
If the $\cO_X$-module
$\gr_{F_l}^{p_l}\gr_{F_{l-1}}^{p_{l-1}}
\cdots \gr_{F_2}^{p_2}\gr_{F_1}^{p_1}\cM$
is locally free of finite rank
for all $p_1,p_2, \dots, p_l$,
then the canonical morphism
\eqref{the canonical morphism for tensor product and gr's:eq}
is an isomorphism
for all $p_1, p_2, \dots, p_i$,
under which the filtrations $F_{i+1}, F_{i+2}, \dots F_l$
are identified on the both sides.
\end{lem}
\begin{proof}
We proceed by induction on $i$.
By Lemma \ref{lem:2} and Lemma \ref{lem:1},
the canonical morphism
\begin{equation}
\cL \otimes \gr_{F_1}^{p_1}\cM
\longrightarrow
\gr_{F_1}^{p_1}(\cL \otimes \cM)
\end{equation}
is an isomorphism,
under which the filtrations $F_2, F_3, \dots, F_l$
are identified on the both sides.
Thus the case of $i=1$ is proved.
By Lemma \ref{lem:2} and Lemma \ref{lem:1} again,
the canonical morphism
\begin{equation}
\label{eq:2}
\cL \otimes
\gr_{F_i}^{p_i}\gr_{F_{i-1}}^{p_{i-1}}
\cdots \gr_{F_2}^{p_2}\gr_{F_1}^{p_1}\cM
\longrightarrow
\gr_{F_i}^{p_i}
(\cL \otimes
\gr_{F_{i-1}}^{p_{i-1}}
\cdots \gr_{F_2}^{p_2}\gr_{F_1}^{p_1}\cM)
\end{equation}
is an isomorphism,
under which the filtrations $F_{i+1}, F_{i+2}, \dots, F_l$
are identified on the both sides.
By the induction hypothesis,
the canonical morphism
\begin{equation}
\label{eq:4}
\cL \otimes
\gr_{F_{i-1}}^{p_{i-1}}
\cdots \gr_{F_2}^{p_2}\gr_{F_1}^{p_1}\cM
\longrightarrow
\gr_{F_{i-1}}^{p_{i-1}}
\cdots \gr_{F_2}^{p_2}\gr_{F_1}^{p_1}(\cL \otimes \cM)
\end{equation}
is an isomorphism
under which the filtrations
$F_i, F_{i+1}, F_{i+2}, \dots, F_l$
are identified on the both sides.
Therefore the isomorphism \eqref{eq:4}
induces the isomorphism
\begin{equation}
\label{eq:3}
\gr_{F_i}^{p_i}
(\cL \otimes
\gr_{F_{i-1}}^{p_{i-1}}
\cdots \gr_{F_2}^{p_2}\gr_{F_1}^{p_1}\cM)
\longrightarrow
\gr_{F_i}^{p_i}\gr_{F_{i-1}}^{p_{i-1}}
\cdots \gr_{F_2}^{p_2}\gr_{F_1}^{p_1}(\cL \otimes \cM)
\end{equation}
under which the filtrations $F_{i+1}, F_{i+2}, \dots, F_l$
are identified on the both sides.
Composing the isomorphisms
\eqref{eq:2} and \eqref{eq:3},
we obtain the conclusion.
\end{proof}

\begin{rmk}
\label{rmk:2}
For the case of $\cO_X=\bQ$,
the conclusion of the lemma above
also holds true
without any assumption.
Namely, we have the following:
Let $\cL, \cM$ be $\bQ$-sheaves on $X$
and $F_1, F_2, \dots, F_l$ finite decreasing filtrations on $\cM$.
Then the canonical morphism
\eqref{the canonical morphism for tensor product and gr's:eq}
is an isomorphism for all $p_1, p_2, \dots, p_i$,
under which the filtrations $F_{i+1}, F_{i+2}, \dots, F_l$
are identified on the both sides.
\end{rmk}

\begin{defn}
Let $K$ be a complex of $\cO_X$-modules on $X$
and $F$ a finite decreasing filtration on $K$.
We say that $(K,F)$ (or $F$ on $K$) is strict
if the differential
$d:K^p \longrightarrow K^{p+1}$
is strictly compatible with $F$ for all $p$.
\end{defn}

\begin{lem}
\label{lemma on successively strict filtrations}
Let $K$ be a complex of $\cO_X$-modules
and $F_1, F_2, \dots, F_l$ finite decreasing filtrations on $K$.
We assume that
$(\gr_{F_i}^{p_i}\gr_{F_{i-1}}^{p_{i-1}} \cdots \gr_{F_1}^{p_1}K, F_j)$
is strict for $0 \le i < j \le l$.
$($For $i=0$, we mean that $(K,F_j)$ is strict for all $j$.$)$
Then there exists an isomorphism
\begin{equation}
\coh^a(\gr_{F_i}^{p_i}\gr_{F_{i-1}}^{p_{i-1}} \cdots \gr_{F_1}^{p_1}K)
\simeq
\gr_{F_i}^{p_i}\gr_{F_{i-1}}^{p_{i-1}} \cdots \gr_{F_1}^{p_1}\coh^a(K)
\end{equation}
for any $i$ with $1 \le i \le l$
and for all $a,p_1,p_2, \dots, p_i$,
under which the filtration $F_j$
is identified on the both sides
for any $j$ with $i< j \le l$.
\end{lem}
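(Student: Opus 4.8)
The plan is to argue by induction on $i$, reducing each step to the case of a single filtration and isolating the genuine content in a bistrictness statement for two filtrations. The starting point is the one-filtration fact: if $(M,F)$ is a complex carrying a single strict finite decreasing filtration, then the canonical maps induce an isomorphism $\coh^a(\gr_F^p M) \simeq \gr_F^p \coh^a(M)$, where $\coh^a(M)$ is equipped with the filtration induced by $F$. This is the degeneration at the first step of the filtered spectral sequence of $(M,F)$, and it is exactly the assertion of the lemma for $i=1$, applied with the hypothesis for the pair $0<j$, i.e. the strictness of $(K,F_j)$. What remains is to propagate this through the higher gradings while keeping track of the filtrations $F_j$ with $j>i$.

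The heart of the matter is the following two-filtration statement, which I would prove separately. Suppose $(M,F)$, $(M,G)$ and $(\gr_F^p M, G)$ are strict for all $p$. Then the differential of $M$ is strictly compatible with the combined filtration $\{F^p M \cap G^q M\}$; equivalently, the isomorphism above carries the filtration induced by $G$ on $\coh^a(\gr_F^p M)$ onto the one induced by $G$ on $\gr_F^p \coh^a(M)$. I would establish the bistrictness by descending induction on the (finite) index $p$: given $b \in \image d \cap F^p M \cap G^q M$, strictness of $(M,F)$ yields $b=dy_1$ with $y_1 \in F^p M$; passing to $\gr_F^p M$ and using strictness of $(\gr_F^p M, G)$ produces $w \in F^p M \cap G^q M$ with $b-dw \in \image d \cap F^{p+1}M \cap G^q M$, after which the inductive hypothesis disposes of the remainder. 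The asserted compatibility of the induced $G$-filtrations under the isomorphism is then a formal consequence of this bistrictness.

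Finally I would run the induction on $i$. Writing $L=\gr_{F_{i-1}}^{p_{i-1}} \cdots \gr_{F_1}^{p_1}K$, the inductive hypothesis supplies an isomorphism $\coh^a(L) \simeq \gr_{F_{i-1}}^{p_{i-1}} \cdots \gr_{F_1}^{p_1}\coh^a(K)$ that is compatible with $F_j$ for $j \ge i$. Since $(L,F_i)$ is strict, the one-filtration fact gives $\coh^a(\gr_{F_i}^{p_i}L) \simeq \gr_{F_i}^{p_i}\coh^a(L)$; for each $j>i$ I would then invoke the two-filtration statement with $M=L$, $F=F_i$ and $G=F_j$, whose three hypotheses are precisely the assumed strictness of $(L,F_i)$, $(L,F_j)$ and $(\gr_{F_i}^{p_i}L,F_j)$, to conclude that this isomorphism respects $F_j$. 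Composing with $\gr_{F_i}^{p_i}$ applied to the inductive isomorphism produces the desired isomorphism for $i$, compatibly with all $F_j$ for $j>i$.

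The main obstacle is the two-filtration step of the second paragraph: the single-filtration degeneration is classical, but two filtrations need not interact well in general, so the crux is that the strictness of the later filtrations on the already-graded complexes forces exactly the bistrictness needed, which is what the induction on the filtration index delivers. Once this is in hand, the induction on $i$ is routine bookkeeping with induced filtrations.
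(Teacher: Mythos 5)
Your argument is correct, and it reaches the conclusion by a genuinely different route from the paper's. The paper runs the same induction on $i$: at each stage the strictness of $F_i$ on the $(i-1)$-fold graded complex gives the isomorphism $\coh^a(\gr_{F_i}^{p_i}\cdots\gr_{F_1}^{p_1}K)\simeq\gr_{F_i}^{p_i}\cdots\gr_{F_1}^{p_1}\coh^a(K)$, but for the identification of the later filtrations $F_j$ it simply invokes Deligne's lemma on two filtrations \cite[Proposition (7.2.5)]{DeligneIII} -- strictness of $F_i$ means the spectral sequence of the $F_i$-filtered complex degenerates at $E_1$, strictness of $F_j$ on $\gr_{F_i}$ means $d_0$ is strictly compatible with $F_j$, and the lemma then makes the induced filtrations $F_{j,d}$, $F_{j,\rec}$, $F_{j,d^{\ast}}$ coincide and match the filtration induced on $\gr_{F_i}\coh^a$. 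You instead prove the needed two-filtration statement from scratch. Your descending induction establishing $dM\cap F^pM\cap G^qM=d(F^pM\cap G^qM)$ from the strictness of $(M,F)$ and of $(\gr_F^pM,G)$ is sound (the base case $F^pM=0$ is trivial, and the step correctly peels off one $F$-level), and it amounts to an elementary, self-contained proof of exactly the special case of Deligne's lemma in which the first spectral sequence degenerates at $E_1$. The citation buys brevity and a well-tested general statement; your version buys transparency, exhibiting concretely how strictness on the graded pieces forces bistrictness. The outer induction on $i$ and the bookkeeping of which hypothesis $(\,\cdot\,,F_j)$ is used where are exactly as in the paper.

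One point deserves more care than your phrase ``a formal consequence of this bistrictness'' suggests: deducing that the isomorphism $\coh^a(\gr_F^pM)\simeq\gr_F^p\coh^a(M)$ matches the induced $G$-filtrations. One inclusion is indeed immediate: a relative cycle $z\in F^pM\cap G^qM$ with $dz\in F^{p+1}M$ satisfies $dz\in dM\cap F^{p+1}M\cap G^qM=d(F^{p+1}M\cap G^qM)$, so $z$ can be corrected to an honest cycle in $F^pM\cap G^qM$ representing the same class. But the reverse inclusion -- that a class lying in $F^p\coh^a(M)\cap G^q\coh^a(M)$ has a representative cycle in $F^pM\cap G^qM$, up to boundaries from $F^pM$ -- is not literally formal from bistrictness: it requires a further induction, for instance raising the $F$-level of a cycle $v\in G^qM$ representing the class step by step, using strictness of $(M,F)$ to realize the class of $v$ as a boundary in $\gr_F^sM$ and strictness of $(\gr_F^sM,G)$ to correct it within $G^q$. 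This argument is of the same flavor as your bistrictness induction, so nothing in your plan fails, but it should be written out; it is precisely the comparison of the direct, recurrent and inverse filtrations with the filtration induced on $\gr_F\coh^a$ that \cite[Proposition (7.2.5)]{DeligneIII} packages for the paper.
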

\begin{proof}
From the assumption that $(K,F_1)$ is strict,
we have the isomorphism
\begin{equation}
\label{eq:8}
\coh^a(\gr_{F_1}^{p_1}K)
\simeq
\gr_{F_1}^{p_1}\coh^a(K)
\end{equation}
for all $a,p_1$,
under which the filtration $F_j$
is identified on the both sides
by the lemma on two filtrations
\cite[Proposition (7.2.5)]{DeligneIII}
and by the strictness of $F_j$ on $\gr_{F_1}^{p_1}K$
for $j=2,3, \dots, l$.
Thus we have the isomorphism
\begin{equation}
\coh^a(\gr_{F_2}^{p_2}\gr_{F_1}^{p_1}K)
\simeq
\gr_{F_2}^{p_2}\coh^a(\gr_{F_1}^{p_1}K)
\end{equation}
for all $p_2$
by the strictness of $F_2$ on $\gr_{F_1}^{p_1}K$.
Lemma on two filtrations implies that
the filtration $F_j$ on the left hand side
is identified with $F_j$ on the right hand side
by the strictness of $F_j$ on $\gr_{F_2}^{p_2}\gr_{F_1}^{p_1}K$
for $j=3,4, \dots, l$.
Combining with the isomorphism
\eqref{eq:8},
we obtain the conclusion for $i=2$.
By applying this procedure repeatedly,
we obtain the conclusion for $i=1,2, \dots, l$.
\end{proof}

\begin{defn}
Let $K$ be a complex of $\cO_X$-modules on $X$
and $F$ a finite decreasing filtration on $K$.
We say that $(K,F)$ (or $F$ on $K$)
is strongly strict
if $(K,F)$ is strict
and if $\coh^a(\gr_F^pK)$ is a locally free $\cO_X$-module
of finite rank for all $a,p$.
\end{defn}

\begin{rmk}
If $(K,F)$ is strongly strict,
then we have the following:
\begin{mylist}
\itemno
The sequence
\begin{equation*}
\begin{CD}
0 @>>> \coh^a(F^{p-1}K)
  @>>> \coh^a(F^pK)
  @>>> \coh^a(\gr_F^pK) @>>> 0
\end{CD}
\end{equation*}
is exact for all $a,p$.
In particular,
the canonical morphism
\begin{equation*}
\coh^a(F^pK) \longrightarrow F^p\coh^a(K)
\end{equation*}
is an isomorphism for all $a,p$.
\itemno
The $\cO_X$-module $\coh^a(F^pK)$
is locally free of finite rank for all $a,p$.
\end{mylist}
\end{rmk}

\begin{rmk}
The notion of strong strictness is well-defined
in the filtered derived category
in the following sense.
If $(K_1,F)$ and $(K_2,F)$
are isomorphic in the filtered derived category,
then $(K_1,F)$ is strongly strict
if and only if $(K_2,F)$ is strongly strict.
\end{rmk}

\begin{lem}
\label{lemma on strong strictness and filtered tensor product}
Let $(K,F)$ be a filtered perfect complex of $\cO_X$-modules
and $\cF$ an $\cO_X$-module.
The filtered derived tensor product is denoted by $(\cF \otimes^L K,F)$,
where $\cF$ is considered as a filtered object by the trivial filtration
$F^0\cF=\cF$, $F^1\cF=0$.
If $(K,F)$ is strongly strict,
then $(\cF \otimes^L K,F)$ is strict
and the canonical morphisms
\begin{align}
&\cF \otimes \coh^a(F^pK)
\longrightarrow
\coh^a(\cF \otimes^L F^pK)
\label{canonical morphism for F:eq} \\
&\cF \otimes \coh^a(\gr_F^pK)
\longrightarrow
\coh^a(\cF \otimes^L \gr_F^pK)
\label{canonical morphism for grF:eq}
\end{align}
are isomorphisms for all $a,p$.
In particular,
$(\cA \otimes^L K, F)$ is strongly strict
as a filtered complex of $\cA$-modules
for a commutative $\cO_X$-algebra $\cA$.
\end{lem}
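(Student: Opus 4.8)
The plan is to reduce the whole statement to the behaviour of $\cF \otimes^L(-)$ on perfect complexes whose cohomology sheaves are locally free of finite rank. Since $(K,F)$ is filtered perfect and the filtration is finite, each $F^pK$ is a perfect complex of $\cO_X$-modules, and hence so is each graded piece $\gr_F^pK$ (the cone of $F^{p+1}K \to F^pK$). By strong strictness the sheaves $\coh^a(\gr_F^pK)$ are locally free of finite rank for all $a,p$, and by the consequences of strong strictness recorded in the remark following the definition, the sheaves $\coh^a(F^pK)$ are locally free of finite rank as well. Thus both $F^pK$ and $\gr_F^pK$ are perfect complexes with locally free cohomology, and I would exploit the standard fact that \emph{a perfect complex $L$ whose cohomology sheaves are all locally free of finite rank is, locally on $X$, quasi-isomorphic to $\bigoplus_a \coh^a(L)[-a]$ with zero differential.}

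Granting this, I would first verify the two isomorphisms. Whether the canonical morphisms \eqref{canonical morphism for F:eq} and \eqref{canonical morphism for grF:eq} are isomorphisms is a local question, so I may replace $F^pK$, resp. $\gr_F^pK$, by the direct sum of its shifted cohomology sheaves. Since $\cF \otimes^L(-)$ commutes with finite direct sums and shifts, and since each cohomology sheaf is locally free, hence flat, so that its derived tensor product with $\cF$ reduces to the ordinary tensor product, one obtains $\coh^a(\cF \otimes^L \gr_F^pK) \cong \cF \otimes \coh^a(\gr_F^pK)$ and likewise for $F^pK$. A routine check that these identifications are induced by the canonical morphisms then shows that \eqref{canonical morphism for F:eq} and \eqref{canonical morphism for grF:eq} are isomorphisms for all $a,p$.

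For strictness, recall that the filtration on $\cF \otimes^L K$ is the one with $F^p(\cF \otimes^L K)=\cF \otimes^L F^pK$ and $\gr_F^p(\cF \otimes^L K)=\cF \otimes^L \gr_F^pK$, the latter because $\cF \otimes^L(-)$ is a triangulated functor. Strictness of $(\cF \otimes^L K, F)$ amounts to saying that, for all $a,p$, the long exact cohomology sequence of the distinguished triangle $\cF \otimes^L F^{p+1}K \to \cF \otimes^L F^pK \to \cF \otimes^L \gr_F^pK \xrightarrow{+1}$ breaks into short exact sequences. Now the sequence $0 \to \coh^a(F^{p+1}K) \to \coh^a(F^pK) \to \coh^a(\gr_F^pK) \to 0$ is exact by strong strictness, and it stays exact after applying $\cF \otimes(-)$ because $\coh^a(\gr_F^pK)$ is locally free. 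Transporting it through the isomorphisms \eqref{canonical morphism for F:eq} and \eqref{canonical morphism for grF:eq} identifies it with $0 \to \coh^a(\cF \otimes^L F^{p+1}K) \to \coh^a(\cF \otimes^L F^pK) \to \coh^a(\cF \otimes^L \gr_F^pK) \to 0$; hence the connecting maps vanish and $(\cF \otimes^L K, F)$ is strict. Finally, taking $\cF=\cA$ gives $\coh^a(\gr_F^p(\cA \otimes^L K)) \cong \cA \otimes \coh^a(\gr_F^pK)$, which is locally free of finite rank over $\cA$ since $\coh^a(\gr_F^pK)$ is locally free of finite rank over $\cO_X$; combined with the strictness just proved, this is exactly strong strictness of $(\cA \otimes^L K, F)$ over $\cA$.

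The main obstacle is the local splitting fact for perfect complexes with locally free cohomology; once it is available the argument is purely formal. The only points requiring care are that the identifications be made through the \emph{canonical} morphisms (so that the conclusion concerns precisely \eqref{canonical morphism for F:eq} and \eqref{canonical morphism for grF:eq}), and that the filtration on the derived tensor product be the one inherited from $F$ on $K$, for which the compatibility $\gr_F^p(\cF \otimes^L K) \cong \cF \otimes^L \gr_F^pK$ is used.
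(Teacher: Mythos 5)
Your proof is correct, and its skeleton is the paper's: reduce everything to the base-change principle for perfect complexes with locally free cohomology, tensor the short exact sequences supplied by strong strictness with $\cF$, and read off strictness from the injectivity of $\coh^a(\cF\otimes^L F^{p+1}K)\to\coh^a(\cF\otimes^L F^pK)$. The one genuine divergence is how \eqref{canonical morphism for F:eq} is obtained. You apply the local splitting fact to $F^pK$ itself, which is legitimate because $\coh^a(F^pK)$ is locally free by item (2) of the remark following the definition of strong strictness; the paper instead invokes the base-change isomorphism only for the graded pieces \eqref{canonical morphism for grF:eq} and then deduces \eqref{canonical morphism for F:eq}, simultaneously with the exactness of the bottom row of its diagram, by induction on $p$ (the finiteness of $F$ starting the induction). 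The paper's inductive route buys a little robustness: the graded case \eqref{canonical morphism for grF:eq} follows from the hypercohomology spectral sequence $E_2^{s,t}=\mathcal{T}\!or_{-s}^{\cO_X}(\cF,\coh^t(\gr_F^pK))\Rightarrow\coh^{s+t}(\cF\otimes^L\gr_F^pK)$, which degenerates because the locally free sheaves $\coh^t(\gr_F^pK)$ are flat, so no splitting of any complex is ever needed --- a point worth noting since the lemma is stated over an arbitrary commutative ringed space, where your splitting fact does still hold (surjections onto locally free sheaves, and more generally onto local direct summands of finite free modules, split locally, so the descending induction on the top degree goes through), but only with the extra bookkeeping that the boundary and cycle sheaves produced along the way are a priori just local summands of free modules rather than locally free. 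Your direct route, in exchange, treats \eqref{canonical morphism for F:eq} and \eqref{canonical morphism for grF:eq} symmetrically as instances of one principle and dispenses with the induction on $p$; the remaining steps --- transport of the locally split tensored sequences through the canonical (hence natural) morphisms, the equivalence of strictness with the vanishing of the connecting maps for the finite filtration, and the local freeness of $\cA\otimes\coh^a(\gr_F^pK)$ over $\cA$ for the final assertion --- match the paper's argument.
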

\begin{proof}
The canonical morphism
\eqref{canonical morphism for grF:eq} is an isomorphism
because $\coh^a(\gr_F^pK)$ is locally free for all $a,p$.
Then we have the commutative diagram with exact rows
\begin{equation}
\begin{CD}
0 @>>> \cF \otimes \coh^a(F^{p+1}K)
  @>>> \cF \otimes \coh^a(F^pK)
  @>>> \cF \otimes \coh^a(\gr_F^pK)
  @>>> 0 \\
@. @VVV @VVV @VV{\simeq}V @. \\
@. \coh^a(\cF \otimes^L F^{p+1}K)
  @>>> \coh^a(\cF \otimes^L F^pK)
  @>>> \coh^a(\cF \otimes^L \gr_F^pK)
\end{CD}
\end{equation}
because $(K, F)$ is strongly strict.
Then the easy diagram chasing shows
that the second row in the diagram above
fits in the exact sequence
\begin{equation}
\begin{CD}
0 @>>> \coh^a(\cF \otimes^L F^{p-1}K)
  @>>> \coh^a(\cF \otimes^L F^pK)
  @>>> \coh^a(\cF \otimes^L \gr_F^pK)
  @>>> 0
\end{CD}
\end{equation}
and the canonical morphism
\eqref{canonical morphism for F:eq} is an isomorphism for all $a,p$
by descending induction on $p$.
The injectivity of the morphism
\begin{equation}
\coh^a(\cF \otimes^L F^{p+1}K)
\longrightarrow
\coh^a(\cF \otimes^L F^pK)
\end{equation}
for all $p$
implies that $(\cF \otimes^L K, F)$ is strict.
\end{proof}

For the case where $X$ is a complex manifold,
we have the following results.

\begin{lem}
\label{strong strictness criteria by pointwise strictness}
Let $X$ be a complex manifold
and $(K,F)$ a filtered perfect complex
such that $\coh^a(K)$ is locally free of finite rank for all $a$.
Then $(K,F)$ is strongly strict
if and only if $(\bC(x) \otimes^L K,F)$ is strict for every point $x \in X$.
\end{lem}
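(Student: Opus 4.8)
The ``only if'' direction is immediate: if $(K,F)$ is strongly strict, then Lemma \ref{lemma on strong strictness and filtered tensor product}, applied to the skyscraper $\cO_X$-module $\cF=\bC(x)$ supported at each point $x$, shows that $(\bC(x)\otimes^L K,F)$ is strict. So the content lies in the converse. Since the assertion is local, I would work over the local ring $\cO=\cO_{X,x_0}$ at a fixed point $x_0$, which is a regular local domain with residue field $\bC$. As $F$ is finite, I may represent $(K,F)$ by a bounded filtered complex of finite free $\cO_X$-modules in which each $F^pK^n$ is a free direct summand of $K^n$; then every $\gr_F^pK^n$ is free, the operation $\bC(x)\otimes^L(-)$ reduces to ordinary fibrewise tensoring, and it commutes with both $F$ and $\gr_F$. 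In particular $\bC(x)\otimes^L\gr_F^pK=\gr_F^p(\bC(x)\otimes^L K)$ for every $x$.

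The first step is to translate the hypothesis numerically. For each $x$, strictness of the filtered complex of $\bC$-vector spaces $(\bC(x)\otimes^L K,F)$ is equivalent, by the standard property of strict filtered complexes, to degeneration at $E_1$ of its spectral sequence, whose $E_1$-term is $\coh^{p+q}(\bC(x)\otimes^L\gr_F^pK)$. Setting $h^a_p(x)=\dim_{\bC}\coh^a(\bC(x)\otimes^L\gr_F^pK)$, degeneration gives $\sum_p h^a_p(x)=\dim_{\bC}\coh^a(\bC(x)\otimes^L K)$ for all $a$ and all $x$. On the other hand, since each $\coh^a(K)$ is locally free, the universal-coefficient (hyper-$\mathrm{Tor}$) spectral sequence degenerates and $\dim_{\bC}\coh^a(\bC(x)\otimes^L K)$ equals the rank $r_a$ of $\coh^a(K)$, which is locally constant. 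Hence $\sum_p h^a_p(x)=r_a$ is locally constant in $x$.

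Next I would observe that each $h^a_p$ is upper semicontinuous: representing $\gr_F^pK$ by a finite free complex, $h^a_p(x)$ is the fibre rank minus the ranks of the two adjacent differential matrices evaluated at $x$, and matrix rank is lower semicontinuous. A finite sum of integer-valued upper semicontinuous functions that is locally constant forces each summand to be locally constant (near any point one has $h^a_p(x)\le h^a_p(x_0)$ on a neighbourhood, and the constancy of the sum upgrades this to equality). Therefore each $h^a_p$ is locally constant, and I would then use that for a perfect complex $P$ the local constancy of all the functions $x\mapsto\dim_{\bC}\coh^a(\bC(x)\otimes^L P)$ forces the $\coh^a(P)$ to be locally free with base change: choosing a representative of $P$ minimal at $x_0$ (differentials with entries in the maximal ideal), constancy of the fibre dimensions makes the differential matrices vanish throughout a neighbourhood, so the germ differentials are $0$ and $\coh^a(P)=P^a$ is free. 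Applied to $P=\gr_F^pK$, this gives that $\coh^a(\gr_F^pK)$ is locally free of finite rank for all $a,p$, which is the second requirement of strong strictness. Strictness of $(K,F)$ itself then follows by the same $E_1$-degeneration criterion applied to the spectral sequence of $(K,F)$ over $\cO$: its $E_1$-term $\coh^{p+q}(\gr_F^pK)$ is free of rank $h^{p+q}_p$ and its abutment $\coh^a(K)$ is free of rank $r_a$, so after tensoring with $\operatorname{Frac}(\cO)$ one gets, over a field, $r_a=\sum_p\dim E^{p,a-p}_\infty\le\sum_p\dim E^{p,a-p}_1=\sum_p h^a_p=r_a$; thus $E_1=E_\infty$ over the fraction field, and since the $E_1$-terms are free (hence torsion-free) $\cO$-modules and the $d_r$ are $\cO$-linear, every $d_r$ vanishes already over $\cO$. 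The spectral sequence degenerates at $E_1$, so $(K,F)$ is strict, completing the proof of strong strictness.

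The main obstacle is this middle step: converting pointwise (fibrewise) strictness into the global freeness of the $\coh^a(\gr_F^pK)$. It requires coordinating the two spectral sequences (over $\bC(x)$ and over $\cO$) with the semicontinuity-plus-constancy argument and the minimal-complex/base-change analysis of perfect complexes, and the care lies in making their numerical invariants line up exactly rather than merely up to inequality.
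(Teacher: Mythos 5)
Your proposal is correct, and its first half runs parallel to the paper's own argument: the paper likewise converts fibrewise strictness into the dimension identity $\dim\coh^q(\bC(x)\otimes^L K)=\sum_p\dim\coh^q(\bC(x)\otimes^L\gr_F^pK)$ (via the short exact sequences that strictness produces on each fibre, which is the same bookkeeping as your $E_1$-degeneration criterion), then runs the same upper-semicontinuity-plus-constant-sum argument to make each $x\mapsto\dim\coh^q(\bC(x)\otimes^L\gr_F^pK)$ locally constant, and invokes Lemma 3.4 (iii) of \cite{Fujino-Fujisawa}, the Grauert-type base-change statement that you instead reprove by hand with minimal complexes. Where you genuinely diverge is the endgame. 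The paper goes on to make $x\mapsto\dim\coh^q(\bC(x)\otimes^L F^pK)$ locally constant as well, obtains local freeness and base change for $\coh^q(F^pK)$, and then lifts the fibrewise exact sequences $0\to\coh^q(F^{p+1}K)\to\coh^q(F^pK)\to\coh^q(\gr_F^pK)\to 0$ to the sheaf level, which is strictness. You bypass $F^pK$ entirely: with the $E_1$-terms $\coh^{p+q}(\gr_F^pK)$ known to be free over the local ring, you pass to the generic fibre, where the rank count forces $E_1$-degeneration, and torsion-freeness pulls the vanishing of the $d_r$ back over $\cO_{X,x_0}$; note that the higher pages are a priori only subquotients of $E_1$, so your parenthetical claim needs the small bootstrap that once $d_1=\dots=d_{r-1}=0$ the page $E_r$ coincides with the free module $E_1$, after which torsion-freeness applies at each stage---immediate, but worth stating. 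Both routes are sound: the paper's version delivers the base-change isomorphisms for $\coh^q(F^pK)$ explicitly (its restatement of Lemma 3.4 (iv) asserts them, and they are convenient elsewhere), while yours is leaner for the stated equivalence, needing base change only for the graded pieces, with local freeness of $\coh^a(F^pK)$ then recovered a posteriori from strong strictness as in the paper's remark. Your ``only if'' direction, via Lemma \ref{lemma on strong strictness and filtered tensor product} applied to $\cF=\bC(x)$, is exactly the intended one.
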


This lemma is a direct consequence
of Lemma 3.4 (iv) in \cite{Fujino-Fujisawa}.
However, the proof of Lemma 3.4 (iv) in \cite{Fujino-Fujisawa}
is somewhat misleading.
So, we restate the result and complete the proof.

\begin{lem}[Lemma 3.4 (iv) of \cite{Fujino-Fujisawa}]
Let $(K, F)$ be a filtered perfect complex on a complex manifold $X$.
Assume that the function
$X \ni x \mapsto \dim \coh^q(\bC(x) \otimes^L K)$
is locally constant.
If the morphisms
\begin{equation}
\label{differentials on C(x) otimes K:eq}
\begin{split}
&d(x):
(\bC(x) \otimes^L K)^{q-1}
\longrightarrow
(\bC(x) \otimes^L K)^q, \\
&d(x):
(\bC(x) \otimes^L K)^q
\longrightarrow
(\bC(x) \otimes^L K)^{q+1}
\end{split}
\end{equation}
are strictly compatible with the filtration
$F(\bC(x) \otimes^L K)$ for all $x \in X$,
then $\coh^q(\gr_F^pK)$ and $\coh^q(F^pK)$ are locally free of finite rank,
the canonical morphisms
\begin{equation}
\label{canonical morphisms for grF and F:eq}
\begin{split}
&\bC(x) \otimes \coh^q(\gr_F^pK)
\longrightarrow
\coh^q(\bC(x) \otimes^L \gr_F^pK) \\
&\bC(x) \otimes \coh^q(F^pK)
\longrightarrow
\coh^q(\bC(x) \otimes^L F^pK)
\end{split}
\end{equation}
are isomorphisms for all $p$ and for all $x \in X$,
and the differentials
\begin{equation}
\label{differentials on K:eq}
\begin{split}
&d: K^{q-1} \longrightarrow K^q \\
&d: K^q \longrightarrow K^{q+1}
\end{split}
\end{equation}
are strictly compatible with the filtration $F$.
\end{lem}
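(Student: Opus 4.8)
The plan is to reduce everything to an explicit local model and then run a semicontinuity argument on ranks. Since the assertion is local and $(K,F)$ is a filtered perfect complex, I would first replace $(K,F)$, locally on $X$, by a representative $(P^\bullet,F)$ in which each $P^i$ is a finite free $\cO_X$-module and each $F^pP^i$ is a subbundle (a locally free direct summand), so that every $\gr_F^pP^\bullet$ and every $F^pP^\bullet$ is again a bounded complex of finite free modules. With such a model the filtered derived tensor products compute naively: $\bC(x)\otimes^L K\simeq\bC(x)\otimes P^\bullet$ as filtered complexes, $\bC(x)\otimes^L\gr_F^pK\simeq\bC(x)\otimes\gr_F^pP^\bullet$ and $\bC(x)\otimes^L F^pK\simeq\bC(x)\otimes F^pP^\bullet$, because tensoring a subbundle stays injective. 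Writing $V^\bullet_x=\bC(x)\otimes P^\bullet$, I would record the elementary dimension formula $\dim\coh^q(V_x)=\dim_{\bC}(\bC(x)\otimes P^q)-\operatorname{rank}d^{q-1}(x)-\operatorname{rank}d^q(x)$, together with its graded analogue for each $\gr_F^pP^\bullet$.

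The key observation is that the pointwise strictness hypothesis is exactly a rank identity. Over the field $\bC(x)$ one has the general fact that a filtered map $\phi$ of finite-dimensional spaces satisfies $\operatorname{rank}\phi\ge\sum_p\operatorname{rank}\gr_F^p\phi$, with equality if and only if $\phi$ is strict. Applying this to $d^{q-1}(x)$ and $d^q(x)$ and subtracting dimensions, the strictness of the two differentials in \eqref{differentials on C(x) otimes K:eq} is equivalent to the fibrewise identity $\dim\coh^q(V_x)=\sum_p\dim\coh^q(\gr_F^pV_x)$ for every $x$. Now the left-hand side is locally constant by hypothesis, while each summand on the right is $\dim\coh^q(\bC(x)\otimes^L\gr_F^pK)$, which is upper semicontinuous in $x$ by the semicontinuity theorem for the perfect complex $\gr_F^pK$. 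A sum of integer-valued upper semicontinuous functions that is locally constant must have each summand locally constant; hence $x\mapsto\dim\coh^q(\bC(x)\otimes^L\gr_F^pK)$ is locally constant for every $p$ (note that only the single degree $q$ of the hypothesis is used). Applying the same upper-semicontinuity argument once more to the graded formula $\dim\coh^q(\gr_F^pV_x)=\dim_{\bC}(\bC(x)\otimes\gr_F^pP^q)-\operatorname{rank}(\gr_F^pd^{q-1})(x)-\operatorname{rank}(\gr_F^pd^q)(x)$, whose first term is genuinely constant, I would conclude that $\operatorname{rank}(\gr_F^pd^{q-1})(x)$ and $\operatorname{rank}(\gr_F^pd^q)(x)$ are both locally constant for every $p$.

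From here everything follows by the standard fact that a morphism of vector bundles of locally constant rank has image, kernel and cokernel that are subbundles compatible with base change. Constancy of $\operatorname{rank}(\gr_F^pd^{q-1})$ and $\operatorname{rank}(\gr_F^pd^q)$ gives that $\coh^q(\gr_F^pK)=\coh^q(\gr_F^pP^\bullet)$ is locally free and that the first morphism in \eqref{canonical morphisms for grF and F:eq} is an isomorphism. Reassembling, the fibrewise identity $\operatorname{rank}d^q(x)=\sum_p\operatorname{rank}(\gr_F^pd^q)(x)$ now has a locally constant right-hand side, so $\operatorname{rank}d^q(x)$ is locally constant, and the same holds for the restrictions $d^q(x)|_{F^pV^q_x}$ since fibrewise strictness gives $\operatorname{rank}(d^q(x)|_{F^pV^q_x})=\sum_{p'\ge p}\operatorname{rank}(\gr_F^{p'}d^q)(x)$; likewise for $d^{q-1}$. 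Constancy of these ranks makes both $d^q(F^pP^q)$ and $\image d^q\cap F^pP^{q+1}$ subbundles of $P^{q+1}$ enjoying base change, and since they have the same fibre everywhere (pointwise strictness) and one is contained in the other, they coincide as $\cO_X$-submodules; this is precisely the strictness of $d^q$, and symmetrically of $d^{q-1}$, with respect to $F$, which is the assertion about \eqref{differentials on K:eq}. Finally, writing $\coh^q(F^pK)=\coh^q(F^pP^\bullet)$ as the quotient of the subbundle $\ker d^q\cap F^pP^q$ by the subbundle $d^{q-1}(F^pP^{q-1})$, both of locally constant rank by the same computation, yields local freeness of $\coh^q(F^pK)$ and the remaining base-change isomorphism in \eqref{canonical morphisms for grF and F:eq}.

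I expect the main obstacle to be the upgrade from pointwise to global strictness, that is, promoting the fibrewise equality of $d^q(F^pP^q)$ and $\image d^q\cap F^pP^{q+1}$ to an equality of $\cO_X$-submodules. This is exactly where local constancy of rank is indispensable: it is what guarantees that both objects are genuine subbundles enjoying base change, so that an equality of fibres forces an equality of sheaves. The general inputs to be invoked carefully are the rank characterization of strictness over a field and the semicontinuity theorem for perfect complexes; granting these, the remainder is bookkeeping of ranks.
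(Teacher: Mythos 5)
Your argument is correct, and it reaches the conclusion by a genuinely different route than the paper, although the two proofs share the same pivot. The paper never chooses a local model: it works with the canonical identifications $\bC(x)\otimes^L F^pK\simeq F^p(\bC(x)\otimes^LK)$, uses pointwise strictness to produce the fibrewise short exact sequences
$0\to\coh^q(\bC(x)\otimes^LF^{p+1}K)\to\coh^q(\bC(x)\otimes^LF^pK)\to\coh^q(\bC(x)\otimes^L\gr_F^pK)\to 0$,
extracts from them the identity $\dim\coh^q(\bC(x)\otimes^LK)=\sum_p\dim\coh^q(\bC(x)\otimes^L\gr_F^pK)$, deduces local constancy of each summand by semicontinuity, then quotes the base-change criterion (Lemma 3.4 (iii) of Fujino--Fujisawa) to obtain local freeness and the isomorphisms \eqref{canonical morphisms for grF and F:eq}, and finally gets strictness of \eqref{differentials on K:eq} by globalizing the exact sequences $0\to\coh^q(F^{p+1}K)\to\coh^q(F^pK)\to\coh^q(\gr_F^pK)\to 0$ using local freeness. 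You instead pass to a strictly perfect local representative $(P^\bullet,F)$ (legitimate, since this is exactly what the definition of a filtered perfect complex supplies locally, and all assertions are local and invariant under filtered quasi-isomorphism) and do linear algebra of ranks. Your formulation of the pivot is sharper: you observe that strictness of the two differentials at $x$ is \emph{equivalent} to the rank identity $\operatorname{rank}\phi=\sum_p\operatorname{rank}\gr_F^p\phi$, i.e.\ to $\dim\coh^q(V_x)=\sum_p\dim\coh^q(\gr_F^pV_x)$, where the paper only uses the implication it needs. Your endgame also differs: where the paper invokes the abstract base-change criterion, you reprove it in the model via the constant-rank subbundle formalism (lower semicontinuity of matrix ranks plus a locally constant sum forces each rank locally constant), and you obtain strictness of $d$ on $K$ not from exact cohomology sequences but from the equality of the two subbundles $d(F^pP^q)\subseteq\image d\cap F^pP^{q+1}$, which have equal fibres and hence coincide by Nakayama. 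What your approach buys is self-containedness (no appeal to Lemma 3.4 (iii) of Fujino--Fujisawa) and a transparent bookkeeping of where each hypothesis enters; what the paper's approach buys is independence from any choice of model, so the argument lives entirely in the filtered derived category. Two small points you should make explicit in a final write-up: that the filtration on $P^\bullet$ is finite, so all sums over $p$ are finite (needed for both semicontinuity arguments), and that the intersection $\image d^q\cap F^pP^{q+1}$ is a subbundle whose fibre is the intersection of fibres precisely because its fibrewise dimension $\operatorname{rank}\bigl(d^q(x)|_{F^pV^q_x}\bigr)=\sum_{p'\ge p}\operatorname{rank}\bigl(\gr_F^{p'}d^q(x)\bigr)$ has been shown locally constant; both are exactly as you indicated.
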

\begin{proof}
Note that we have the canonical isomorphisms
\begin{equation}
\begin{split}
&\bC(x) \otimes^L F^pK
\simeq
F^p(\bC(x) \otimes^L K) \\
&\bC(x) \otimes^L \gr_F^pK
\simeq
\gr_F^p(\bC(x) \otimes^L K)
\end{split}
\end{equation}
by the definition of filtered derived tensor product.
We obtain the exact sequences
\begin{equation}
\label{exact sequence for K otimes C(x):eq}
\begin{CD}
0 @>>> \coh^q(\bC(x) \otimes^L F^{p+1}K)
  @>>> \coh^q(\bC(x) \otimes^L F^pK) \\
@. @>>> \coh^q(\bC(x) \otimes^L \gr_F^pK)
   @>>> 0
\end{CD}
\end{equation}
from the assumption that
the differentials \eqref{differentials on C(x) otimes K:eq}
are strictly compatible with the filtration $F$.
Thus we have the equality
\begin{equation}
\label{equality for the dimension of the cohomology:eq}
\dim \coh^q(\bC(x) \otimes^L F^pK)
=
\sum_{p' \ge p}\dim \coh^q(\bC(x) \otimes^L \gr_F^{p'}K)
\end{equation}
for all $p$ and for all $x \in X$.
Taking $p$ sufficiently small,
we obtain
\begin{equation}
\dim \coh^q(\bC(x) \otimes^L K)
=
\sum_p\dim \coh^q(\bC(x) \otimes^L \gr_F^pK) ,
\end{equation}
which implies that the function
$X \ni x \mapsto \dim \coh^q(\bC(x) \otimes^L \gr_F^pK)$
is locally constant
as in the proof of Lemma 3.4 (iv) in \cite{Fujino-Fujisawa}.
Then the function
$X \ni x \mapsto \dim \coh^q(\bC(x) \otimes^L F^pK)$
is locally constant from the equality
\eqref{equality for the dimension of the cohomology:eq}.
Therefore $\coh^q(F^pK)$ and $\coh^q(\gr_F^pK)$ are locally free
and the canonical morphisms
\eqref{canonical morphisms for grF and F:eq}
are isomorphisms for all $p$
by Lemma 3.4 (iii) in \cite{Fujino-Fujisawa}.
From the exact sequence
\eqref{exact sequence for K otimes C(x):eq},
we have the exact sequence
\begin{equation}
\begin{CD}
0 @>>> \bC(x) \otimes \coh^q(F^{p+1}K)
  @>>> \bC(x) \otimes \coh^q(F^pK) \\
@. @>>> \bC(x) \otimes \coh^q(\gr_F^pK)
   @>>> 0 ,
\end{CD}
\end{equation}
from which we obtain the exact sequence
\begin{equation}
\begin{CD}
0 @>>> \coh^q(F^{p+1}K)
  @>>> \coh^q(F^pK)
  @>>> \coh^q(\gr_F^pK)
  @>>> 0
\end{CD}
\end{equation}
by using the local freeness of $\coh^q(F^{p+1}K)$ for all $p$.
Therefore the differentials
\eqref{differentials on K:eq}
are strictly compatible with $F$ as desired.
\end{proof}

\section{Semistable morphism}
\label{semistable morpshims}

\begin{defn}
\label{definition for semistable morphism}
Let $Y$ be a complex manifold and $E$ a reduced \snc divisor on $Y$.
A morphism of complex manifolds
$f: X \longrightarrow Y$
is said to be semistable along $E$
if the following two conditions are satisfied:
\begin{mylist}
\itemno
\label{condition for D being reduced}
The divisor $D=f^{\ast}E$
is a {\slshape reduced} \snc divisor on $X$.
\itemno
\label{second condition for a semistable morphism}
The morphism
\begin{equation*}
f^{\ast}\Omega_Y^1(\log E) \longrightarrow \Omega_X^1(\log D)
\end{equation*}
is injective
and the cokernel
\begin{equation*}
\Omega_{X/Y}^1(\log D)
=\Omega_X^1(\log D)\Bigl/f^{\ast}\Omega_Y^1(\log E)
\end{equation*}
is locally free.
\end{mylist}
Once we fix the divisor $E$,
the words ``along $E$'' is omitted.
Moreover, we use the notation
\begin{equation*}
\omega^p_Y=\Omega^p_Y(\log E), \quad
\omega^p_X=\Omega^p_X(\log D), \quad
\omega^p_{X/Y}=\Omega^p_{X/Y}(\log D)
\end{equation*}
if there is no danger of confusion.
We set
$Y^{\ast}=Y \setminus E$
and $X^{\ast}=X \setminus D=f^{-1}(Y^{\ast})$.
\end{defn}

\begin{exa}
Let $\Delta$ be the unit disc in $\bC$
with the coordinate function $t$,
and $\Delta^n$ the polydisc in $\bC^n$ with the coordinate functions
$x_1, x_2, \dots, x_n$.
A morphism $f: \Delta^n \longrightarrow \Delta$ given by
\begin{equation}
\label{semistable over Delta}
f^{\ast}t=x_1x_2 \cdots x_n
\end{equation}
is semistable along $\{0\}$.

Let $f_i:\Delta^{n_i} \longrightarrow \Delta$ ($i=1,2,\dots, k$)
be morphisms given by the form \eqref{semistable over Delta}
and $g: T \longrightarrow S$ a smooth morphism of complex manifolds.
Set $Y=\Delta^k \times S$ and $E=\{t_1t_2 \cdots t_k=0\}$
where $t_1, t_2, \dots, t_k$ is the coordinate functions of $\Delta^k$.
Then the morphism
\begin{equation}
\label{local model of semistable morphism:eq}
f_1 \times f_2 \times \dots \times f_k \times g:
\Delta^{n_1} \times \Delta^{n_2} \times \dots \times \Delta^{n_k} \times T
\longrightarrow
\Delta^k \times S
\end{equation}
is also semistable along $E$.
\end{exa}

\begin{lem}
\label{local description of semistable morphism}
Locally on $X$ and $Y$,
any semistable morphism $f: X \longrightarrow Y$
is isomorphic to the morphism of the form
\eqref{local model of semistable morphism:eq}.
In particular, any semistable morphism is equi-dimensional and flat.
\end{lem}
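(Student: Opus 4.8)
The plan is to produce the normal form directly; this is the analytic avatar of Kato's local structure theorem for log smooth morphisms, and conditions \ref{condition for D being reduced} and \ref{second condition for a semistable morphism} of Definition \ref{definition for semistable morphism} are exactly what drive it. Fix $x \in X$ and set $y = f(x)$. Since $E$ is \snc, I would choose local coordinates $t_1, \dots, t_k, s_1, \dots, s_m$ on $Y$ near $y$ with $t_1, \dots, t_k$ the local equations of the branches of $E$ through $y$, so that $E = \{t_1 t_2 \cdots t_k = 0\}$; likewise, since $D$ is reduced \snc, I would choose coordinates $z_1, \dots, z_r, w_1, \dots, w_{n-r}$ on $X$ near $x$ with $D = \{z_1 z_2 \cdots z_r = 0\}$, where $n = \dim X$. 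Because $\operatorname{div}(f^{\ast}t_i)$ is supported on $D$, I may write $f^{\ast}t_i = u_i \prod_j z_j^{a_{ij}}$ with $u_i$ a unit and $a_{ij} \ge 0$. Condition \ref{condition for D being reduced}, applied to $D = \sum_i \operatorname{div}(f^{\ast}t_i)$, forces $\sum_i a_{ij} = 1$ for every component $\{z_j = 0\}$ of $D$; hence the $a_{ij}$ lie in $\{0,1\}$ and the sets $I_i = \{j : a_{ij} = 1\}$ form a partition of $\{1, \dots, r\}$. Absorbing each unit $u_i$ into one of the coordinates $z_j$ with $j \in I_i$ (which remains a coordinate), I arrange $f^{\ast}t_i = \prod_{j \in I_i} z_j$, the form \eqref{semistable over Delta}.

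The second step, which I expect to be the main obstacle, uses condition \ref{second condition for a semistable morphism} to straighten the $s_l$. The pullback $f^{\ast}\Omega^1_Y(\log E) \to \Omega^1_X(\log D)$ sends $f^{\ast}(dt_i/t_i) \mapsto \sum_{j \in I_i} dz_j/z_j$ and $f^{\ast}ds_l \mapsto d(f^{\ast}s_l)$. Since the cokernel $\Omega^1_{X/Y}(\log D)$ is locally free, the sequence is locally split, so this map stays injective after $\otimes\,\bC(x)$. The key point is the behaviour at the degenerate point $x \in D$: writing $d(f^{\ast}s_l) = \sum_j z_j \frac{\partial (f^{\ast}s_l)}{\partial z_j}\frac{dz_j}{z_j} + \sum_a \frac{\partial(f^{\ast}s_l)}{\partial w_a}dw_a$ and using $z_j(x) = 0$, the logarithmic components drop out, so in $\Omega^1_X(\log D) \otimes \bC(x)$ one has $d(f^{\ast}s_l) \equiv \sum_a \frac{\partial(f^{\ast}s_l)}{\partial w_a}(x)\,dw_a$. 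As the classes of $\sum_{j \in I_i}dz_j/z_j$ and of the $dw_a$ span complementary coordinate directions, injectivity on the fibre at $x$ forces the $m \times (n-r)$ matrix $\big(\frac{\partial(f^{\ast}s_l)}{\partial w_a}(x)\big)$ to have rank $m$.

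What remains is routine bookkeeping. After reordering the $w_a$, the minor $\big(\frac{\partial(f^{\ast}s_l)}{\partial w_a}(x)\big)_{1 \le l,a \le m}$ is invertible, so the Jacobian of $(z_1, \dots, z_r, f^{\ast}s_1, \dots, f^{\ast}s_m, w_{m+1}, \dots, w_{n-r})$ with respect to $(z, w)$ is invertible at $x$; by the inverse function theorem these functions form a coordinate system near $x$. In it, $f^{\ast}t_i = \prod_{j \in I_i}z_j$ and $f^{\ast}s_l = w_l$, which is precisely the model \eqref{local model of semistable morphism:eq}: group $\{z_j\}_{j \in I_i}$ as the coordinates of $\Delta^{n_i}$ with $n_i = |I_i|$, take the remaining $w$'s as the coordinates of $T$, and let $g$ be the projection to $S = \Delta^m$. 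This proves the first assertion.

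For the final sentence, the model is manifestly flat and equi-dimensional: each $f_i \colon \Delta^{n_i} \to \Delta$ has all fibres of dimension $n_i - 1$ and is flat because $x_1 x_2 \cdots x_{n_i}$ is a non-zero-divisor, the factor $g$ is smooth and hence flat and equi-dimensional, and both properties pass to products; therefore $f$, being locally of this form, is equi-dimensional and flat, with relative dimension $n - (k+m) = \dim X - \dim Y$.
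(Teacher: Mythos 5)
Your proof is correct and takes essentially the paper's approach: the paper disposes of Lemma \ref{local description of semistable morphism} by referring to the proof of Lemma (6.5) in \cite{FujisawaLHSSV}, which runs along exactly the lines you give --- reducedness of $D=f^{\ast}E$ forces each $f^{\ast}t_i$ into the monomial form \eqref{semistable over Delta} with pairwise disjoint index sets $I_i$, and the injectivity-with-locally-free-cokernel condition of Definition \ref{definition for semistable morphism}, evaluated on the fibre at $x$, supplies the rank condition that lets $f^{\ast}s_1,\dots,f^{\ast}s_m$ be completed to coordinates, yielding the model \eqref{local model of semistable morphism:eq}. The one micro-detail you leave implicit is that each $I_i$ is nonempty (needed both to absorb the unit $u_i$ and for the linear independence of the classes $\sum_{j\in I_i}dz_j/z_j$); this holds because $t_i(f(x))=0$, so $f^{\ast}t_i$ cannot be a unit at $x$.
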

\begin{proof}
Similar to the proof of Lemma (6.5) in \cite{FujisawaLHSSV}.
\end{proof}

\begin{defn}
\label{definition of the filtration G}
Let $f:X \longrightarrow Y$ be a semistable morphism along $E$.
A finite decreasing filtration $G$ on $\omega_X$
is defined by
\begin{equation*}
G^p\omega_X^n
=\image(f^{-1}\omega^p_Y \otimes \omega^{n-p}_X
\overset{\wedge}{\longrightarrow} \omega^n_X)
\end{equation*}
for all $n,p$ as in \cite{Katz-Oda}.
Then we have the canonical isomorphism
\begin{equation}
\label{isomorphism for grGOmega}
f^{-1}\omega^p_Y \otimes \omega_{X/Y}[-p]
\overset{\simeq}{\longrightarrow}
\gr_G^p\omega_X
\end{equation}
for every $p$.
Therefore the spectral sequence
$E_r^{a,b}(Rf_{\ast}\omega_X,G)$
satisfies the property
\begin{equation*}
\begin{split}
E_1^{a,b}(Rf_{\ast}\omega_X,G)
&\simeq
R^bf_{\ast}(f^{-1}\omega^a_Y \otimes \omega_{X/Y}) \\
&\simeq
\omega^a_Y \otimes R^bf_{\ast}\omega_{X/Y}
\end{split}
\end{equation*}
for all $a,b$
because $\omega^a_Y$ is a locally free $\cO_Y$-module of finite rank.
Then the morphism of $E_1$-terms
induces the morphism
\begin{equation}
\label{the Gauss-Manin connection:eq}
R^nf_{\ast}\omega_{X/Y}
\longrightarrow
\omega^1_Y
\otimes
R^nf_{\ast}\omega_{X/Y}
\end{equation}
which is called the Gauss-Manin connection
and denoted by $\nabla$.
The residue of
the Gauss-Manin connection \eqref{the Gauss-Manin connection:eq}
along the divisor $E_i$
is denoted by
\begin{equation}
\label{the residue of nabla:eq}
\res_{E_i}(\nabla):
\cO_{E_i}
\otimes
R^nf_{\ast}\omega_{X/Y}
\longrightarrow
\cO_{E_i}
\otimes
R^nf_{\ast}\omega_{X/Y}
\end{equation}
for every $i=1,2,\dots,k$.
\end{defn}

\begin{rmk}
As in \cite{Katz-Oda},
$\nabla$ is an integrable logarithmic connection,
that is,
$\nabla$ satisfies the Leibniz rule
and $\nabla^2=0$.
Moreover, it satisfies the Griffiths transversality
\begin{equation}
\label{Griffiths transversality:eq}
\nabla(F^pR^nf_{\ast}\omega_{X/Y})
\subset
\omega^1_Y
\otimes
F^{p-1}R^nf_{\ast}\omega_{X/Y}
\end{equation}
where $F$ denotes the filtration
induced by the stupid filtration $F$ on $\omega_{X/Y}$.

On the other hand,
the restriction of $\nabla$ on $Y^{\ast}=Y \setminus E$
is identified with
\begin{equation*}
d \otimes \id:
\cO_{Y^{\ast}} \otimes_{\bC} R^nf_{\ast}\bC_{X^{\ast}}
\longrightarrow
\Omega^1_{Y^{\ast}} \otimes_{\bC} R^nf_{\ast}\bC_{X^{\ast}}
\end{equation*}
via the isomorphisms
\begin{equation*}
R^nf_{\ast}\omega_{X/Y}|_{Y^{\ast}}
=
R^nf_{\ast}\Omega_{X^{\ast}/Y^{\ast}}
\simeq
R^nf_{\ast}f^{-1}\cO_{Y^{\ast}}
\simeq
\cO_{Y^{\ast}} \otimes_{\bC} R^bf_{\ast}\bC_{X^{\ast}}
\end{equation*}
as in \cite{Katz-Oda}, \cite[4.3]{Fujino-Fujisawa}.
\end{rmk}

\begin{para}
\label{notation for Y and E}
Let $Y$ be a complex manifold,
$E$ a reduced \snc divisor on $Y$
and
\begin{equation}
E=\sum_{i=1}^{k}E_i
\end{equation}
the irreducible decomposition of $E$.
We set
\begin{equation}
E_I=\sum_{i \in I}E_i, \quad
Y[I]=\bigcap_{i \in I}E_i
\end{equation}
for $I \subset \ski$.
(We set $E_{\emptyset}=0, Y[\emptyset]=Y$.)
We write $Y_0=Y[\ski]$ for short.
Since $E$ is a reduced \snc divisor,
$Y[I]$ is a closed submanifold of $Y$ for any $I$.
By setting $\overline{I}=\ski \setminus I$,
we have the \snc divisor $E_{\overline{I}}$ on $Y$,
whose restriction to $Y[I]$, denoted by $E_{\overline{I}} \cap Y[I]$,
is a reduced \snc divisor on $Y[I]$.
We use the notation
\begin{equation}
\omega_{Y[I]}^p=\Omega_{Y[I]}^p(\log E_{\overline{I}} \cap Y[I])
\end{equation}
for all $p$.
We set
$Y[I]^{\ast}=Y[I] \setminus (E_{\overline{I}} \cap Y[I])$
for $I \subset \ski$.

We have a sequence of closed submanifolds
\begin{equation*}
Y_0=Y[\ski] \subset Y[I] \subset Y[J] \subset Y[\emptyset]=Y
\end{equation*}
for $J \subset I \subset \ski$.
The closed immersion $Y[I] \hookrightarrow Y[J]$
is denoted by $\iota_{J|I}$.
For the case of $J=\emptyset$,
the closed immersion
$\iota_{\emptyset|I}: Y[I] \hookrightarrow Y$
is denoted by $\iota_{|I}$ for short.
For the case of $I=\ski$,
the closed immersion $\iota_{J|\ski}: Y_0=Y[\ski] \hookrightarrow Y[J]$
is simply denoted by $\iota_{J|}$.
We use the notation $\iota$
instead of $\iota_{\emptyset|\ski}: Y_0 \hookrightarrow Y$ for short.
\end{para}

\begin{para}
\label{setting for a semistable morphism}
Let $f:X \longrightarrow Y$ be a semistable morphism along $E$.
We set $D=f^{\ast}E$ and $D_i=f^{\ast}E_i$ for $i=\ki$.
Then $D$ and $D_i$'s are {\itshape reduced} \snc divisors on $X$
with $D=\sum_{i=1}^{k}D_i$.
We set
\begin{equation*}
D_I=\sum_{i \in I}D_i, \quad
X[I]=\bigcap_{i \in I}D_i
\end{equation*}
for $I \subset \ski$
($D_{\emptyset}=0, X[\emptyset]=X$ as above).
We write $X_0=X[\ski]$ as before.
For $J \subset I \subset \ski$,
we have a sequence of closed subspaces
\begin{equation}
X_0=X[\ski] \subset X[I] \subset X[J] \subset X=X[\emptyset]
\end{equation}
as before.
The closed immersion
$X[I] \hookrightarrow X[I]$
is also denoted by $\iota_{J|I}$
if there is no danger of confusion.
We also use the notation
$\iota_{|I}: X[I] \hookrightarrow X$,
$\iota_{J|}: X_0 \hookrightarrow X[J]$
and $\iota: X_0 \hookrightarrow X$ as above.
We have the cartesian square
\begin{equation}
\label{cartesian square for X[I]:eq}
\begin{CD}
X[I] @>{\iota_{|I}}>> X \\
@V{f_I}VV @VV{f}V \\
Y[I] @>>{\iota_{|I}}> Y
\end{CD}
\end{equation}
in which the left vertical arrow is denoted by $f_I$.
We use the notation $f_{\ski}=f_0$,
which fits in the cartesian square
\begin{equation}
\begin{CD}
X_0 @>{\iota}>> X \\
@V{f_0}VV @VV{f}V \\
Y_0 @>>{\iota}> Y
\end{CD}
\end{equation}
by definition.
We set
\begin{equation*}
X[I]^{\ast}
=X[I] \setminus (D_{\overline{I}} \cap X[I])
=f_I^{-1}(Y[I]^{\ast})
\end{equation*}
for $I \subset \ski$.

Let
$D=\sum_{\lambda \in \Lambda}D_{\lambda}$
be the irreducible decomposition of $D$.
We set
\begin{equation*}
X[\Gamma]=\bigcap_{\lambda \in \Gamma}D_{\lambda}
\end{equation*}
for $\Gamma \subset \Lambda$,
which is a closed submanifold of $X$
because $D$ is a reduced \snc divisor on $X$.
The closed immersion
$X[\Gamma] \hookrightarrow X$ is denoted by $\iota_{\Gamma}$.

From the equality $D=\sum_{i=1}^{k}D_i$ and the fact that $D$ is reduced,
the index set $\Lambda$ decomposes into a disjoint union
\begin{equation*}
\Lambda = \coprod_{i=1}^k\Lambda_i
\end{equation*}
such that
\begin{equation*}
D_i=\sum_{\lambda \in \Lambda_i}D_{\lambda}
\end{equation*}
for $i=\ki$.
We set
\begin{equation*}
\Lambda_I=\coprod_{i \in I}\Lambda_i \subset \Lambda
\end{equation*}
for $I \subset \ski$.
If $\Gamma \cap \Lambda_i \not= \emptyset$ for all $i \in I$,
then $X[\Gamma] \subset X[I]$.
Thus we have a commutative diagram
\begin{equation}
\label{square for Gamma:eq}
\begin{CD}
X[\Gamma] @>{\iota_{\Gamma}}>> X \\
@V{f_{\Gamma}}VV @VV{f}V \\
Y[I] @>>{\iota_{|I}}> Y
\end{CD}
\end{equation}
in which the left vertical arrow is denoted by $f_{\Gamma}$,
once we fix $I \subset \ski$.
For $\Gamma \subset \Lambda_I$,
the restriction $D_{\overline{I}} \cap X[\Gamma]$
of the divisor $D_{\overline{I}}$ to $X[\Gamma]$
is a \snc divisor on $X[\Gamma]$.
If $\Gamma \cap \Lambda_i \not=\emptyset$ for all $i \in I$,
the morphism $f_{\Gamma}:X[\Gamma] \longrightarrow Y[I]$ above
is semistable along $E_{\overline{I}} \cap Y[I]$
by Lemma \ref{local description of semistable morphism}.

In the remainder of this article,
we assume that $\Lambda$ is a finite set
and fix a total order $<$ on $\Lambda$
with $\Lambda_1 < \Lambda_2 < \cdots < \Lambda_k$.
This assumption does not affect to our main results
because they concern the case where $f$ is proper.
\end{para}

\section{Construction of $sB_I(f)$}
\label{section for complex B}

\begin{para}
\label{local situation}
From now on,
we treat the case of $Y=\Delta^k \times S$
equipped with a reduced \snc divisor $E=\{t_1t_2 \cdots t_k=0\}$,
where $t_1, t_2, \dots, t_k$ are the coordinate functions of $\Delta^k$.
By setting $E_i=\{t_i=0\}$ for $i=\ki$,
we have
$E=\sum_{i=1}^kE_i$.
We use the notation in \ref{notation for Y and E}.
We have $S=\{0\} \times S=Y[\ski]=Y_0$ by definition.

There exists the canonical projection
\begin{equation}
\pi_I: Y \longrightarrow Y[I]
\end{equation}
for every $I \subset \ski$.
We trivially have $\pi_I\iota_{|I}=\id$.
When we consider $Y[I]$ as a complex manifold {\itshape below} $Y$
by the projection $\pi_I$ above,
we use the notation $Y_I$ instead of $Y[I]$.
The \snc divisor $E_{\overline{I}} \cap Y[I]$ on $Y[I]$
defines a \snc divisor on $Y_I$,
which is denoted by $E_{\overline{I}} \cap Y_I$.
We always consider
that $Y_I$ is equipped with the divisor $E_{\overline{I}} \cap Y_I$,
unless the otherwise is mentioned.
Thus we use the notation
\begin{equation}
\omega^p_{Y_I}=\Omega^p_{Y_I}(\log E_{\overline{I}} \cap Y_I)
\end{equation}
as in Definition \ref{definition for semistable morphism}
and \ref{notation for Y and E}.
We set
$Y_I^{\ast}=Y_I \setminus (E_{\overline{I}} \cap Y_I)$
for $I \subset \ski$.
We trivially have $Y[I]^{\ast}=Y_I^{\ast}$ as complex manifolds.

For $J \subset I \subset \ski$,
we have a sequence of the projections
\begin{equation*}
Y=Y_{\emptyset}
\longrightarrow Y_J
\longrightarrow Y_I
\longrightarrow Y_{\ski}=S
\end{equation*}
by definition.
We use $\pi:Y \longrightarrow S$ instead of $\pi_{\ski}$.

For $J \subset I \subset \ski$,
the coordinate functions $t_1, t_2, \dots, t_k$
induce the direct sum decomposition
\begin{equation}
\omega^1_{Y[J]}
\simeq
\omega^1_{Y[J]/Y_I} \oplus \iota_{|J}^{\ast}\pi_I^{\ast}\omega^1_{Y_I}
\end{equation}
which induces the projection
\begin{equation}
\omega^p_{Y[J]}
\longrightarrow
\iota_{|J}^{\ast}\pi_I^{\ast}\omega^p_{Y_I}
\end{equation}
for every $p$.
Then we have the morphisms
\begin{equation}
\iota_{J|I}^{\ast}\omega^p_{Y[J]}
\longrightarrow
\omega^p_{Y[I]}
\end{equation}
and
\begin{equation}
\pr_{J|I}:
\iota_{J|I}^{-1}\omega^p_{Y[J]}
\longrightarrow
\omega^p_{Y[I]}
\label{the projection between Omega from Y[J] to Y[I]:eq}
\end{equation}
for every $p$.
\end{para}

\begin{rmk}
For a semistable morphism $f:X \longrightarrow Y$ along $E$,
$\pi_If:X \longrightarrow Y_I$
and $\pi_If\iota_{\Gamma}:X[\Gamma] \longrightarrow Y_I$
are semistable along $E_{\overline{I}} \cap Y_I$
for $I \subset \ski$ and for $\Gamma \subset \Lambda_I$.
We can see these facts
by Lemma \ref{local description of semistable morphism}.
Moreover, we have
$\pi_If\iota_{\Gamma}=f_{\Gamma}$ via the identification
$Y_I=Y[I]$.
\end{rmk}

Now, we construct a complex $sB_I(f)$
which is a replacement for
$f_I^{-1}\cO_{Y[I]} \otimes^L \iota_{|I}^{-1}\omega_{X/Y}$
for $I \subset \ski$.

The finite increasing filtration on
$\omega_X=\Omega_X(\log D)$ by the order of poles along $D_i$
is denoted by $W(D_i)$ for $i=\ki$.
Similarly, $W(D_K)$ denotes the finite increasing filtration on $\omega_X$
by the order of poles along $D_K=\sum_{i \in K}D_i$
for $K \subset \ski$.

\begin{defn}
\label{definition of B_I(f)}
Let $Y$ and $E$ be as above
and $f:X \longrightarrow Y$ a semistable morphism along $E$.
For a subset $I \subset \ski$,
a complex $sB_I(f)$ is defined as follows:
\begin{itemize}
\item
For $p \in \bnnZ, q \in \bnnZ^I$, we set
\begin{equation*}
B_I(f)^{p,q}
=\omega_{X/Y_I}^{p+|q|+|I|}\biggl/
\sum_{i \in I}W(D_i)_{q_i}
\end{equation*}
where $W(D_i)$ denotes the filtration on $\omega_{X/Y_I}$
induced by $W(D_i)$ on $\omega_X$.
\end{itemize}
We can easily see the inclusion
\begin{equation}
\label{support of B:eq}(
\supp(B_I(f)^{p,q}) \subset X[I]
\end{equation}
for any $p,q$.
Moreover, the $\cO_X$-module $B_I(f)^{p,q}$
carries the $(\iota_{|I})_{\ast}\cO_{X[I]}$-module structure
via the surjection $\cO_X \longrightarrow (\iota_{|I})_{\ast}\cO_{X[I]}$.
\begin{itemize}
\item
We set
\begin{equation*}
d_0=(-1)^{|I|}d:B_I(f)^{p,q} \longrightarrow B_I(f)^{p+1,q}
\end{equation*}
where $d$ on the right hand side of the equality
is the morphism induced from $d$
on $\omega_{X/Y_I}$.
\end{itemize}
Here we note that
the morphism $d_0$ is not a morphism of $\cO_X$-modules
but a morphism of $(\iota_{|I})_{\ast}f_I^{-1}\cO_{Y[I]}$-modules
via the canonical morphism
$f_I^{-1}\cO_{Y[I]} \longrightarrow \cO_{X[I]}$.
\begin{itemize}
\item
For $i \in I$,
we set
\begin{equation*}
d_i=(-1)^{|I|}\dlog t_i \wedge:B_I(f)^{p,q} \longrightarrow B_I(f)^{p,q+e_i}
\end{equation*}
where $\dlog t_i \wedge$ denotes the morphism defined by
\begin{equation}
\omega
\mapsto
\dlog t_i \wedge \omega=\frac{dt_i}{t_i} \wedge \omega
\end{equation}
as in \cite{FujisawaPLMHS}.
\end{itemize}
By setting
\begin{equation*}
sB_I(f)^n
=\bigoplus_{p+|q|=n}B_I(f)^{p,q}
=\bigoplus_{q \in \bnnZ^I}
\biggl(\omega_{X/Y_I}^{n+|I|}\biggl/
\sum_{i \in I}W(D_i)_{q_i} \biggr)
\end{equation*}
with the differential
$d=d_0+\sum_{i \in I}d_i$,
we obtain the complex of $(\iota_{|I})_{\ast}f_I^{-1}\cO_{Y[I]}$-modules
$sB_I(f)$ on $X$.
We use the symbol $sB(f)$ for the case of $I=\ski$.
Because of \eqref{support of B:eq},
the complex $sB_I(f)$ is considered as
a complex of $f_I^{-1}\cO_{Y[I]}$-modules on $X[I]$
as remarked in \ref{para:2}.

We define a finite decreasing filtration $F$ on $sB_I(f)$ by
\begin{equation}
\label{definition of F on sBI(f):eq}
F^psB_I(f)^n
=\bigoplus_{\substack{p'+|q|=n \\
                      p' \ge p}}
B_I(f)^{p',q}
=\bigoplus_{\substack{q \in \bnnZ^I \\
                      |q| \le n-p}}
\biggl(\omega_{X/Y_I}^{n+|I|}\biggl/
\sum_{i \in I}W(D_i)_{q_i} \biggr)
\end{equation}
for all $n,p$.

By definition,
$sB_{\emptyset}(f)$ equipped with $F$
is nothing but $\omega_{X/Y}$
equipped with the stupid filtration $F$.
\end{defn}

\begin{rmk}
In the case where $S$ is a point
and $I=\ski$,
the complex $sB(f)$ is same as
$sB_X(D_1, \dots, D_k)$ in \cite[(3.12)]{FujisawaLHSSV}
except the sign of the differentials.
\end{rmk}

\begin{rmk}
\label{remark for grF}
Because we have $d_0(F^psB_I(f)^n) \subset F^{p+1}sB_I(f)^{n+1}$
for all $n,p$,
the complex
$\gr_F^psB_I(f)$ turns out to be a complex of $\cO_{X[I]}$-modules
for all $p$.
We trivially have
\begin{equation}
\label{description of grF:eq}
\gr_F^psB_I(f)^n
=\bigoplus_{\substack{q \in \bnnZ^I \\
                      |q|=n-p}}
B_I(f)^{p,q}
=\bigoplus_{\substack{q \in \bnnZ^I \\
                      |q|=n-p}}
\biggl(\omega^{n+|I|}_{X/Y_I}\biggl/
\sum_{i \in I}W(D_i)_{q_i}\biggr)
\end{equation}
for all $n, p$.
\end{rmk}

\begin{defn}
\label{definition of L(K) on BI}
In the same situation as in Definition \ref{definition of B_I(f)},
a finite increasing filtration $L(K)$ on $B_I(f)^{p,q}$ for $K \subset I$
is defined by
\begin{equation}
L(K)_mB_I(f)^{p,q}
=W(D_K)_{m+2|q_K|+|K|}\omega_{X/Y_I}^{p+|q|+|I|}\biggl/
\sum_{i \in I}W(D_i)_{q_i}
\label{definition of L(K) on BI:eq}
\end{equation}
where $q_K$ is the image of $q$
by the projection $\bZ^I \longrightarrow \bZ^K$
as in \ref{notation for ZI}.
We can easily see that $L(K)$ is preserved by every $d_i$.
Thus we obtain a finite increasing filtration $L(K)$ on $sB_I(f)$.
We write $L=L(\ski)$ on $sB(f)$ for short.
By definition, $L(\emptyset)$ is the trivial filtration,
that is,
$L(\emptyset)_{-1}sB_I(f)=0$
and $L(\emptyset)_0sB_I(f)=sB_I(f)$.
\end{defn}

\begin{lem}
\label{lemma on grL(K)sB}
For any $K \subset I \subset \ski$,
the residue morphism for $W(D_K)$
induces the isomorphism of complexes
\begin{equation}
\label{grL(K)sB:eq}
\gr_m^{L(K)}sB_I(f)
\simeq
\bigoplus_{q \in \bnnZ^K}
\bigoplus_{\Gamma \in S_{m+2|q|+|K|}^{\ge q+e_K}(\Lambda_K)}
sB_{I \setminus K}(f_{\Gamma})[-m-2|q|],
\end{equation}
where $f_{\Gamma}:X[\Gamma] \longrightarrow Y[K]$
is the morphism
in the commutative diagram \eqref{square for Gamma:eq}.
Note that $\Gamma \cap \Lambda_i \not= \emptyset$ for all $i \in K$
if $\Gamma \in S_{m+2|q|+|K|}^{\ge q+e_K}(\Lambda_K)$.
Under the isomorphism \eqref{grL(K)sB:eq} above,
we have
\begin{equation*}
\begin{split}
&L(J)\gr_m^{L(K)}sB_I(f) \\
&\qquad
\simeq
\bigoplus
L(J \setminus K \cap J)
[\,|\Gamma \cap \Lambda_{K \cap J}|-2|q_{K \cap J}|-|K \cap J|\,]\,
sB_{I \setminus K}(f_{\Gamma})[-m-2|q|]
\end{split}
\end{equation*}
for any $J \subset I$ and
\begin{equation*}
F\gr_m^{L(K)}sB_I(f)
\simeq
\bigoplus F[-m-|q|]\,
sB_{I \setminus K}(f_{\Gamma})[-m-2|q|] ,
\end{equation*}
where the direct sums above are taken over the same index set
\begin{equation}
\{(q,\Gamma) \mid
q \in \bnnZ^K,
\Gamma \in S_{m+2|q|+|K|}^{\ge q+e_K}(\Lambda_K)\}
\end{equation}
as in \eqref{grL(K)sB:eq}.
In particular, we have
\begin{equation*}
L(J)\gr_m^{L(K)}sB_I(f)
\simeq
\bigoplus
L(J \setminus K)[m]
sB_{I \setminus K}(f_{\Gamma})[-m-2|q|]
\end{equation*}
for $J \supset K$
and
\begin{equation*}
L(J)\gr_m^{L(K)}sB_I(f)
\simeq
\bigoplus
L(J)
sB_{I \setminus K}(f_{\Gamma})[-m-2|q|]
\end{equation*}
for $J$ with $J \cap K=\emptyset$.
\end{lem}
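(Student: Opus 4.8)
The statement is essentially a computation of the associated graded of the weight filtration $L(K)$ on the complex $sB_I(f)$, together with the behaviour of the auxiliary filtrations $L(J)$ and $F$ under this graded operation. The starting point is the classical residue isomorphism for the weight filtration $W(D_K)$ on the logarithmic de Rham complex: the residue morphism identifies $\gr^{W(D_K)}_r\omega_X$ with a direct sum over strata $X[\Gamma]$ of the appropriate log complexes, indexed by $\Gamma \in S^{\ge e_K}_r(\Lambda_K)$, shifted by $[-r]$. I would first establish the isomorphism \eqref{grL(K)sB:eq} by unwinding the definition \eqref{definition of L(K) on BI:eq}: since $L(K)_mB_I(f)^{p,q}$ is cut out by $W(D_K)_{m+2|q_K|+|K|}$ after quotienting by $\sum_{i\in I}W(D_i)_{q_i}$, taking $\gr^{L(K)}_m$ amounts to taking $\gr^{W(D_K)}_{m+2|q_K|+|K|}$ of the underlying log forms, for each fixed $q$. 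Applying the residue isomorphism componentwise, the degree shift $r = m+2|q|+|K|$ produces the shift $[-m-2|q|]$ once the $|q|$ raising of the form-degree (the $|q|$ in $\omega^{p+|q|+|I|}$) is absorbed, and the grading collapses onto the complex $sB_{I\setminus K}(f_\Gamma)$. The main work here is bookkeeping: verifying that the three differentials $d_0$, $d_i$ ($i\in K$), $d_i$ ($i\in I\setminus K$) transform correctly, i.e.\ that the $d_i$ for $i\in K$ die on the graded (they shift $W(D_i)$-weight) while the remaining data assembles precisely into the differential of $sB_{I\setminus K}(f_\Gamma)$ on the stratum.

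Once \eqref{grL(K)sB:eq} is in place, the identifications of $L(J)$ and $F$ follow by tracking how these filtrations restrict to each summand under the residue isomorphism. For $F$, the stupid filtration on $\omega_{X/Y_I}$ induces on each residue summand the stupid filtration on $\omega_{X[\Gamma]/Y[K]}$; the claimed shift $F[-m-|q|]$ records the combined effect of the form-degree shift $|q|$ and the total degree shift $m$ coming from the placement of the summand. For $L(J)$ with general $J\subset I$, the key is that $W(D_J)$ and $W(D_K)$ interact through $W(D_{K\cap J})$ on the overlap and through $W(D_{J\setminus K})$ transversally; passing to $\gr^{W(D_K)}$ leaves a residual filtration $L(J\setminus K\cap J)$ on each stratum, shifted by the combinatorial quantity $|\Gamma\cap\Lambda_{K\cap J}| - 2|q_{K\cap J}| - |K\cap J|$. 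This shift is exactly the discrepancy between the $L$-normalization on $B_I(f)$ and the induced weight on the stratum, read off from \eqref{definition of L(K) on BI:eq}.

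The two special cases are then immediate simplifications of the general formula. For $J\supset K$ we have $K\cap J=K$, so $\Gamma\cap\Lambda_{K\cap J}=\Gamma\cap\Lambda_K=\Gamma$ and $q_{K\cap J}=q$; since $\Gamma \in S^{\ge q+e_K}_{m+2|q|+|K|}(\Lambda_K)$ forces $|\Gamma|=m+2|q|+|K|$, the shift $|\Gamma|-2|q|-|K|$ collapses to $m$, giving $L(J\setminus K)[m]$. For $J\cap K=\emptyset$ we have $K\cap J=\emptyset$, so $L(J\setminus K\cap J)=L(J)$ and the shift vanishes, giving $L(J)$ with no shift.

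The principal obstacle I anticipate is the compatibility of differentials and filtration shifts in the first step: one must check that the residue isomorphism, which is a priori only an isomorphism of sheaves in each bidegree, is genuinely an isomorphism of complexes intertwining $d=d_0+\sum_i d_i$ with the differential of $\bigoplus sB_{I\setminus K}(f_\Gamma)$, and that the various degree shifts ($2|q|$, $|q|$, $|K|$, and the combinatorial $|\Gamma\cap\Lambda_{K\cap J}|$) are precisely the ones forced by \eqref{definition of L(K) on BI:eq}. This is a sign-and-index verification rather than a conceptual difficulty, but it is the step most prone to error, and I would carry it out by reducing, via Lemma \ref{local description of semistable morphism}, to the local model \eqref{local model of semistable morphism:eq} where the residue maps and the filtrations $W(D_i)$ are explicit.
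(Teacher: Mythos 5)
Your proposal is correct and follows essentially the same route as the paper, whose proof consists precisely of reducing to the local model \eqref{local model of semistable morphism:eq} via Lemma \ref{local description of semistable morphism} and invoking the local residue computation for $W(D_K)$ as in \cite[Proposition 3.6]{DeligneED}; your plan simply spells out the bookkeeping (the vanishing of $d_i$ for $i \in K$ on the graded pieces, the degree shift $[-m-2|q|]$, the index condition $\Gamma \in S^{\ge q+e_K}_{m+2|q|+|K|}(\Lambda_K)$ forced by the quotient by $\sum_{i}W(D_i)_{q_i}$, and the filtration shifts for $F$ and $L(J)$) that the paper leaves implicit in its citation. No gap.
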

\begin{proof}
We obtain the conclusion
by Lemma \ref{local description of semistable morphism}
and by the local computation
as in \cite[Proposition 3.6]{DeligneED}.
Note that we need the total order on $\Lambda$
in \ref{setting for a semistable morphism} here.
\end{proof}

\begin{rmk}
In the lemma above,
we use the residue morphism
defined as in \cite[\RomII.3.7]{DeligneED}.
It is different by the sign
from the residue morphism used in \cite{FujisawaLHSSV}.
\end{rmk}


\begin{lem}
\label{flatness lemma}
For $K \subset I \subset \ski$,
the $\cO_{X[I]}$-sheaf
$\gr_F^p\gr_m^{L(K)}sB_I(f)^n$
is $f_I^{-1}\cO_{Y[I]}$-flat
for all $m,n,p$.
\end{lem}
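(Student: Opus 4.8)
The plan is to strip off the filtration $L(K)$ by means of Lemma \ref{lemma on grL(K)sB}, thereby reducing to the case $K=\emptyset$, and then to prove the remaining flatness of each building block $B_I(f)^{p,q}$ by filtering it with $L(I)$.

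First I would apply $\gr_F^p$ to the identification of $F\gr_m^{L(K)}sB_I(f)$ furnished by Lemma \ref{lemma on grL(K)sB}. This exhibits $\gr_F^p\gr_m^{L(K)}sB_I(f)^n$ as a finite direct sum of sheaves of the form $\gr_F^{p'}sB_{I\setminus K}(f_\Gamma)^{n'}$, where $f_\Gamma\colon X[\Gamma]\to Y[K]$ is again semistable and the base of $sB_{I\setminus K}(f_\Gamma)$ is $Y[K]\cap\bigcap_{i\in I\setminus K}E_i=Y[I]$. Hence the general statement follows once the case $K=\emptyset$ is known for every semistable morphism, namely that $\gr_F^p sB_I(f)^n$ is $f_I^{-1}\cO_{Y[I]}$-flat. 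By the description of $\gr_F^p$ in Remark \ref{remark for grF}, this sheaf is the finite direct sum of the $B_I(f)^{p,q}$ with $|q|=n-p$, so it is enough to show each $B_I(f)^{p,q}$ is flat.

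To see this I would filter $B_I(f)^{p,q}$ by $L(I)$ (Definition \ref{definition of L(K) on BI}). The filtrations $F$ and $L(I)$ commute on $sB_I(f)$, since $F$ only selects which summands $B_I(f)^{p',q'}$ appear while $L(I)$ filters inside each summand; consequently $\gr_m^{L(I)}B_I(f)^{p,q}$ is a direct summand of $\gr_F^p\gr_m^{L(I)}sB_I(f)^n$. Now Lemma \ref{lemma on grL(K)sB} with $K=I$ (so that $I\setminus K=\emptyset$ and $sB_\emptyset(f_\Gamma)=\omega_{X[\Gamma]/Y[I]}$ with its stupid filtration) identifies the latter, through the residue morphism, with a finite direct sum of pushforwards $(\iota_\Gamma)_{\ast}\omega^{\bullet}_{X[\Gamma]/Y[I]}$ indexed by $\Gamma\in S^{\ge q+e_I}(\Lambda_I)$. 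For such $\Gamma$ one has $|\Gamma\cap\Lambda_i|\ge q_i+1\ge 1$ for every $i\in I$, so $\Gamma$ meets every $\Lambda_i$ and $f_\Gamma\colon X[\Gamma]\to Y[I]$ is semistable by Lemma \ref{local description of semistable morphism}; in particular each $\omega^{\bullet}_{X[\Gamma]/Y[I]}$ is locally free over $\cO_{X[\Gamma]}$, hence flat over $f_\Gamma^{-1}\cO_{Y[I]}$, and its pushforward along the closed immersion $\iota_\Gamma$ is flat over $f_I^{-1}\cO_{Y[I]}$.

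Since flatness is stable under finite direct sums and extensions, the finite filtration $L(I)$ on $B_I(f)^{p,q}$ with flat graded pieces forces $B_I(f)^{p,q}$ itself to be flat, which finishes the case $K=\emptyset$ and hence the lemma. I expect the only genuinely substantive point to be the identification of the graded pieces of $B_I(f)^{p,q}$ with pushforwards of locally free sheaves on the semistable intersections $X[\Gamma]$; this is precisely the content of the residue computation underlying Lemma \ref{lemma on grL(K)sB}, after which the remaining assertions — the commutation of $F$ with $L(I)$, the flatness of a locally free sheaf on a semistable $X[\Gamma]$ over the base, and the preservation of that flatness under closed-immersion pushforward — are routine.
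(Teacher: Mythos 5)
Your proof is correct and takes essentially the same route as the paper's: both rest on two applications of Lemma \ref{lemma on grL(K)sB} --- once with $K=I$ to reach the sheaves $\omega_{X[\Gamma]/Y[I]}$, which are locally free over $\cO_{X[\Gamma]}$ with $f_\Gamma$ semistable and hence flat by Lemma \ref{local description of semistable morphism} --- combined with stability of flatness under extensions along the finite filtration $L(I)$. The only difference is the order of the reductions (the paper settles $K=I$ first and then deduces general $K$; you reduce general $K$ to $K=\emptyset$ first), which is cosmetic.
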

\begin{proof}
It is sufficient to consider the stalk at
a point $x \in X[I]$.
First, we treat the case of $K=I$.
Let $x$ be a point of $X[I]$ and $y=f(x)=f_{\Gamma}(x) \in Y[I]$.
We have an isomorphism
\begin{equation*}
\gr_F^p\gr_m^{L(I)}sB_I(f)_x^n
\simeq
\bigoplus
\omega^{n-m-2|q|}_{X[\Gamma]/Y[I],x}
\end{equation*}
by Lemma \ref{lemma on grL(K)sB},
where the direct sum on the right hand side
is taken over the index set
\begin{equation*}
\{(q,\Gamma) \mid q \in \bnnZ^I \text{ with } |q|=n-p,
\Gamma \in S_{m+2|q|+|I|}^{\ge q+e_I}(\Lambda_I), x \in X[\Gamma]\}.
\end{equation*}
Therefore $\gr_F^p\gr_m^{L(I)}sB_I(f)_x^n$
are $\cO_{Y[I],y}$-flat
for all $m,n,p$
because the semistable morphism $f_{\Gamma}:X[\Gamma] \longrightarrow Y[I]$
is flat by Lemma \ref{local description of semistable morphism}.
In particular,
\begin{equation*}
\gr_F^psB_I(f)_x^n,\quad
\gr_m^{L(I)}sB_I(f)_x^n, \quad
sB_I(f)_x^n
\end{equation*}
are $\cO_{Y[I],y}$-flat for all $m,n,p$.
Hence we obtain the conclusion for general $K \subset I$
by Lemma \ref{lemma on grL(K)sB} again.
\end{proof}

\begin{defn}
\label{definition of theta}
For $J \subset I \subset \ski$,
a morphism of $\cO_X$-modules
\begin{equation*}
\omega_X^{n+|J|}
\longrightarrow
\omega_X^{n+|I|}
\end{equation*}
is defined by sending $\omega \in \omega_X^{n+|J|}$ to
\begin{equation}
\dlog t_{i_1} \wedge \dlog t_{i_2}
\wedge \dots \wedge \dlog t_{i_l} \wedge \omega
\in \omega_X^{n+|I|}
\label{morphism on Omega for J|I:eq}
\end{equation}
where $I \setminus J=\{i_1,i_2,\dots, i_l\}$
with $1 \le i_1 < i_2 < \dots < i_l \le k$.
We can easily check that this morphism
induces a morphism of $\iota_{J|I}^{-1}\cO_{X[J]}$-modules
\begin{equation}
\theta_{J|I}(f):
\iota_{J|I}^{-1}B_J(f)^{p,q}
\longrightarrow
B_I(f)^{p,q}
\end{equation}
for $p \in \bnnZ$ and for $q \in \bnnZ^J \subset \bnnZ^I$.
Moreover, the equalities
\begin{align*}
&d_0\theta_{J|I}(f)=\theta_{J|I}(f)d_0 \\
&d_i\theta_{J|I}(f)=\theta_{J|I}(f)d_i \quad \text{for $i \in J$} \\
&d_i\theta_{J|I}(f)=0 \quad \text{for $i \in I \setminus J$}
\end{align*}
are easily seen.
Thus a morphism of complexes
\begin{equation}
\label{the morphism thetaJ|I:eq}
\theta_{J|I}(f):
\iota_{J|I}^{-1}sB_J(f) \longrightarrow sB_I(f)
\end{equation}
is obtained.
It is easy to see that
$\theta_{J|I}(f)$ preserves the filtrations $F$ and $L(K)$ for $K \subset J$.
We use the notation
$\theta_{J|}(f)=\theta_{J|\ski}(f)$,
$\theta_{|I}(f)=\theta_{\emptyset|I}(f)$
and $\theta(f)=\theta_{\emptyset|\ski}(f)$ for short.
\end{defn}

\begin{lem}
\label{transitivity of theta}
For $K \subset J \subset I \subset \ski$,
the diagram
\begin{equation*}
\begin{CD}
\iota_{K|I}^{-1}sB_K(f)
@>{\iota_{K|J}^{-1}\theta_{K|J}(f)}>>
\iota_{J|I}^{-1}sB_J(f) \\
@| @VV{\theta_{J|I}(f)}V \\
\iota_{K|I}^{-1}sB_K(f) @>>{\theta_{K|I}(f)}> sB_I(f)
\end{CD}
\end{equation*}
commutes up to sign.
\end{lem}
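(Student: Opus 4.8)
The plan is to reduce the whole assertion to the anticommutativity of the exterior product of the forms $\dlog t_i$ on $\omega_X$. Indeed, by the construction in Definition \ref{definition of theta}, each of the three morphisms $\theta_{K|J}(f)$, $\theta_{J|I}(f)$, $\theta_{K|I}(f)$ is induced from a morphism $\omega_X^{n+\bullet} \to \omega_X^{n+\bullet'}$ given by wedging with a product of $\dlog t_i$'s, and the descent to the quotients $B_{\bullet}(f)^{p,q}$ has already been verified there. So I would first carry out the computation at the level of $\omega_X$, before passing to $\omega_{X/Y_I}$ and to the quotient by $\sum_{i}W(D_i)_{q_i}$, and only at the end invoke well-definedness to conclude for the complexes $sB_{\bullet}(f)$.

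Concretely, write $I \setminus J=\{i_1<i_2<\dots<i_l\}$ and $J \setminus K=\{j_1<j_2<\dots<j_m\}$ in increasing order. For a local section $\omega$ of $\omega_X^{n+|K|}$, the two composites read
\begin{align*}
\theta_{J|I}(f)\,\theta_{K|J}(f)(\omega)
&=\dlog t_{i_1} \wedge \dots \wedge \dlog t_{i_l}
  \wedge \dlog t_{j_1} \wedge \dots \wedge \dlog t_{j_m} \wedge \omega, \\
\theta_{K|I}(f)(\omega)
&=\dlog t_{k_1} \wedge \dots \wedge \dlog t_{k_{l+m}} \wedge \omega,
\end{align*}
where $I \setminus K=\{k_1<k_2<\dots<k_{l+m}\}$ is listed in increasing order. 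Since $K \subset J \subset I$, the set $I \setminus K$ is the disjoint union of $I \setminus J$ and $J \setminus K$, so the sequence $(i_1,\dots,i_l,j_1,\dots,j_m)$ is a reordering of $(k_1,\dots,k_{l+m})$.

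The key step is then the anticommutativity $\dlog t_a \wedge \dlog t_b=-\,\dlog t_b \wedge \dlog t_a$: sorting $(i_1,\dots,i_l,j_1,\dots,j_m)$ into the increasing sequence $(k_1,\dots,k_{l+m})$ is effected by a permutation $\sigma$ depending only on $K,J,I$, and each adjacent transposition of $\dlog$ factors contributes $-1$. Hence $\theta_{J|I}(f)\,\theta_{K|J}(f)=\varepsilon\,\theta_{K|I}(f)$ with $\varepsilon=\operatorname{sgn}(\sigma)\in\{\pm1\}$, a global sign independent of $\omega$, $p$, $q$ and $n$. Because both composites are induced from the very same wedging operation on $\omega_X$, their compatibility with the differentials and with all the quotients is inherited at once from Definition \ref{definition of theta}, so the identity descends to an equality of morphisms of complexes $sB_K(f) \to sB_I(f)$ up to the sign $\varepsilon$, which is the claim. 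I do not expect any genuine obstacle here: the only subtlety is the bookkeeping of the reordering sign, and the displayed computation isolates it cleanly as the single factor $\varepsilon$.
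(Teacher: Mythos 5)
Your proposal is correct and takes essentially the same route the paper leaves implicit (its proof is just ``Easy''): both composites are wedge products with $\dlog t_i$ for $i \in I \setminus K$, in two different orderings of the same set, so they agree up to the shuffle sign $\varepsilon=\operatorname{sgn}(\sigma)$ by anticommutativity of $1$-forms. Your bookkeeping is sound (e.g.\ for $K=\emptyset$, $J=\{1\}$, $I=\{1,2\}$ one indeed gets $\varepsilon=-1$, so ``up to sign'' is genuinely needed), and the descent to the quotients $B_\bullet(f)^{p,q}$ and the compatibility with the differentials are, as you say, already secured by Definition \ref{definition of theta}.
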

\begin{proof}
Easy.
\end{proof}

\begin{lem}
\label{lemma on grLsBJ to grLsBI}
For $K \subset J \subset I \subset \ski$,
the morphism $\theta_{J|I}(f)$
induces a quasi-isomorphism
\begin{equation}
f_I^{-1}\cO_{Y[I]}
\otimes
\iota_{J|I}^{-1}\gr_F^p\gr_m^{L(K)}sB_J(f)
\overset{\simeq}{\longrightarrow}
\gr_F^p\gr_m^{L(K)}sB_I(f)
\end{equation}
for all $m, p$.
\end{lem}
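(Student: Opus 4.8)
The plan is to peel off the filtrations in two reductions and then settle the essential case by a local residue computation.

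First I would remove $L(K)$. Applying Lemma~\ref{lemma on grL(K)sB} to $sB_J(f)$ and to $sB_I(f)$, both $\gr_m^{L(K)}sB_J(f)$ and $\gr_m^{L(K)}sB_I(f)$ decompose as direct sums over the \emph{same} index set of pairs $(q,\Gamma)$ with $q\in\bnnZ^K$ and $\Gamma\in S_{m+2|q|+|K|}^{\ge q+e_K}(\Lambda_K)$, with summands $sB_{J\setminus K}(f_\Gamma)[-m-2|q|]$ and $sB_{I\setminus K}(f_\Gamma)[-m-2|q|]$ respectively, and the same isomorphisms identify $F$ with $F[-m-|q|]$ on each summand. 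Since $\theta_{J|I}(f)$ preserves $L(K)$ and is wedging by the forms $\dlog t_i$ with $i\in I\setminus K$, it respects these decompositions and induces $\theta_{(J\setminus K)|(I\setminus K)}(f_\Gamma)$ on each summand, by naturality of the residue isomorphism. As $Y[K][J\setminus K]=Y[J]$ and $Y[K][I\setminus K]=Y[I]$, and each $f_\Gamma$ is semistable along $E_{\overline{K}}\cap Y[K]$, the assertion for $(f,J,I,K)$ follows from the case $K=\emptyset$ applied to the morphisms $f_\Gamma$ and the pair $J\setminus K\subset I\setminus K$. Thus it suffices to prove, for an arbitrary semistable morphism and $J\subset I$, that
\[
f_I^{-1}\cO_{Y[I]}\otimes_{f_J^{-1}\cO_{Y[J]}}\gr_F^psB_J(f)\longrightarrow\gr_F^psB_I(f)
\]
is a quasi-isomorphism for all $p$.

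Second I would reduce to the case $|I\setminus J|=1$. Choosing $i_0\in I\setminus J$ and setting $I'=I\setminus\{i_0\}$, Lemma~\ref{transitivity of theta} factors $\theta_{J|I}(f)$ (up to sign) as $\theta_{I'|I}(f)\circ\theta_{J|I'}(f)$. By Lemma~\ref{flatness lemma} every term $\gr_F^psB_{I'}(f)^n$ is $f_{I'}^{-1}\cO_{Y[I']}$-flat, so $f_I^{-1}\cO_{Y[I]}\otimes_{f_{I'}^{-1}\cO_{Y[I']}}(-)$ preserves quasi-isomorphisms; composing the two base changes then lets me assume $I=J\sqcup\{i_0\}$.

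Finally, the statement is local on $X$, so by Lemma~\ref{local description of semistable morphism} I may take $f$ to be the product model $f_1\times\cdots\times f_k\times g$. There the decomposition $\omega^1_{X/Y_I}\simeq\omega^1_{X/Y_J}\oplus\cO_X\,\dlog t_{i_0}$ induced by the coordinate functions as in \ref{local situation}, together with the compatibility of the $W(D_i)$ with the product, makes both $\gr_F^psB_J(f)$ and $\gr_F^psB_I(f)$ into tensor products over the polydisc factors and $g$; on $\gr_F$ the differential $d_0$ vanishes and only the operators $\dlog t_i\wedge$ survive. The map $\theta_{J|I}(f)=\pm\,\dlog t_{i_0}\wedge$ and the base change $f_I^{-1}\cO_{Y[I]}\otimes_{f_J^{-1}\cO_{Y[J]}}(-)$, which is restriction to $\{t_{i_0}=0\}$, involve only the $i_0$-th factor, so by Künneth the problem collapses to the single semistable factor $f_{i_0}$. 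The step I expect to be the main obstacle is the explicit verification there: one must show that wedging with $\dlog t_{i_0}$ on $\omega^\bullet_{X/Y_I}$ modulo the quotients $W(D_{i_0})_\bullet$ defining $B$, after restriction to $t_{i_0}=0$, is a quasi-isomorphism. This is the exactness of the $\dlog t_{i_0}$-Koszul complex on $\omega^\bullet/W(D_{i_0})_\bullet$, which I would establish by the same local residue computation used for Lemma~\ref{lemma on grL(K)sB}, following \cite[Proposition 3.6]{DeligneED} (equivalently Steenbrink's $k=1$ computation); the $\cO_{X[I]}$-flatness furnished by Lemma~\ref{flatness lemma} guarantees that this underived restriction computes the derived base change.
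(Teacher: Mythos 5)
Your proposal is correct, but its middle and end take a genuinely different route from the paper's. The first reduction is identical: the paper also disposes of $L(K)$ by invoking Lemma \ref{lemma on grLsBJ to grLsBI}'s companion Lemma \ref{lemma on grL(K)sB}, exactly as you do. After that the arguments diverge. The paper first settles the case $I=\ski$ with $S$ a point and $J=\emptyset$ by citing Corollary (5.14) of \cite{FujisawaLHSSV} (the several-variable analogue of the computation you propose to redo), then upgrades to arbitrary $J$ by a d\'evissage along the finite filtration $L(J)$ -- the graded pieces reduce to the $J=\emptyset$ case via Lemma \ref{lemma on grL(K)sB}, and the $f_J^{-1}\cO_{Y[J]}$-flatness of $\gr_m^{L(J)}sB_J(f)^n$ from Lemma \ref{flatness lemma} lets the tensor pass through the filtration -- and finally treats general $I$ and $S$ by the product splitting $f=g\times h$, writing $\gr_F^psB_I(f)\simeq\bigoplus_q\pr_1^{\ast}\gr_F^{p-q}sB(g)[-q]\otimes_{\cO_X}\pr_2^{\ast}\omega^q_{Z/Y_I}$. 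You instead peel off one index at a time using Lemma \ref{transitivity of theta} and flat base change (here note that you also need flatness of the \emph{source} terms $f_{I'}^{-1}\cO_{Y[I']}\otimes_{f_J^{-1}\cO_{Y[J]}}\gr_F^psB_J(f)^n$, not only of $\gr_F^psB_{I'}(f)^n$; this is immediate since flatness is preserved under base change, and the complexes are bounded, so the tensor does preserve the quasi-isomorphism), and then collapse the local product model by a K\"unneth-type filtration argument all the way down to the single factor $f_{i_0}$, so that the core becomes the one-variable exactness of $\cO_{X_0}\otimes\omega^p_{X/\Delta}\to(\omega^{p+\bullet+1}_{X/\Delta}/W_\bullet,\ \dlog t\wedge)$. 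That core, which you leave as a sketch, is indeed a known computation -- it is Steenbrink's resolution in \cite{SteenbrinkLHS} (and is the $k=1$ instance of \cite[Corollary (5.14)]{FujisawaLHSSV}), provable by the residue method of \cite[Proposition 3.6]{DeligneED} as you say -- so the sketch is acceptable. What each approach buys: yours is more self-contained and elementary, needing only the classical one-variable fact and avoiding the $L(J)$-bootstrap entirely; the paper's is shorter given the prior multi-variable result and keeps the argument inside the filtration formalism ($L(J)$, flatness of multi-graded pieces) that it reuses throughout the later sections.
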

\begin{proof}
Once we obtain the conclusion for $K=\emptyset$,
we obtain the conclusion for general $K$
by using Lemma \ref{lemma on grL(K)sB}.
So, we assume that $K=\emptyset$.
Namely,
we will prove that the morphism
\begin{equation}
\label{qis theta:eq}
f_I^{-1}\cO_{Y[I]}
\otimes
\iota_{J|I}^{-1}\gr_F^psB_J(f)
\longrightarrow
\gr_F^psB_I(f)
\end{equation}
induced by $\theta_{J|I}(f)$
is a quasi-isomorphism.

First, we treat the case
where $I=\ski$ and $S$ is a point.
If $J=\emptyset$,
the conclusion is already proved in \cite[Corollary (5.14)]{FujisawaLHSSV}.
Then the conclusion for $J=\emptyset$ above
and Lemma \ref{lemma on grL(K)sB}
imply that the morphism
$\theta_{J|}(f)$ induces a filtered quasi-isomorphism
\begin{equation}
f_0^{-1}\cO_{Y_0}
\otimes
\iota_{J|}^{-1}\gr_F^p\gr_m^{L(J)}sB_J(f)
\longrightarrow
\gr_F^p\gr_m^{L(J)}sB(f)
\end{equation}
for all $m,p$ and for any $J$.
Thus we conclude that
the morphism \eqref{qis theta:eq}
is a quasi-isomorphism for $I=\ski$ and for $Y=\Delta^k$
by using the $f_J^{-1}\cO_{Y[J]}$-flatness of
$\gr_F^p\gr_m^{L(J)}sB_J(f)^n
\simeq \gr_m^{L(J)}\gr_F^psB_J(f)^n$
in Lemma \ref{flatness lemma}.

Next, we treat the general case.
By definition,
we have $Y=\Delta^I \times Y_I$,
where $\Delta^I$ denotes the polydisc of dimension $|I|$
with the coordinate functions $(t_i)_{i \in I}$.
Since the problem is of local nature,
we may assume that
\begin{equation*}
X=\Delta^n \times Z, \qquad
f=g \times h,
\end{equation*}
where
$g: \Delta^n \longrightarrow \Delta^I$
is a product of the morphisms
of the form \eqref{semistable over Delta}
and where $h: Z \longrightarrow Y_I$ is a semistable morphism
along $E_{\overline{I}} \cap Y_I$.
We denote the projections
$X \longrightarrow \Delta^n$ and $X \longrightarrow Z$
by $\pr_1$ and $\pr_2$
and set $D_Z=h^{\ast}(E_{\overline{I}} \cap Y_I)$
for a while.
From the equality \eqref{description of grF:eq},
we have the identifications
\begin{equation}
\label{decomposition of  sBI(f):eq}
\begin{split}
&\gr_F^psB_I(f)
\simeq
\bigoplus_{q \ge 0}\pr_1^{\ast}\gr_F^{p-q}sB(g)[-q]
\otimes_{\cO_X} \pr_2^{\ast}\omega_{Z/Y_I}^q \\
&\gr_F^psB_J(f)
\simeq
\bigoplus_{q \ge 0}
\pr_1^{\ast}\gr_F^{p-q}sB_J(g)[-q]
\otimes_{\cO_X} \pr_2^{\ast}\omega_{Z/Y_I}^q ,
\end{split}
\end{equation}
under which the morphism $\gr_F^p\theta_{J|I}(f)$
is identified with the direct sum
of the morphisms $\pr_1^{\ast}\gr_F^{p-q}\theta_{J|}(g) \otimes \id$
over all $q \ge 0$.
From what we have proved above,
$\pr_1^{\ast}\gr_F^{p-q}\theta_{J|}(g) \otimes \id$
induces a quasi-isomorphism
because $\pr_1:X \longrightarrow \Delta^n$ is flat
and $\pr_2^{\ast}\omega_{Z/Y_I}^q$ is a locally free $\cO_X$-module.
Thus we conclude that
$\theta_{J|I}(f)$ induces a quasi-isomorphism \eqref{qis theta:eq}.
\end{proof}

As in the proof above,
the $f_J^{-1}\cO_{Y[J]}$-flatness of
$\gr_F^p\gr_m^{L(J)}sB_J(f)^n$
implies the following:

\begin{cor}
The morphism $\theta_{J|I}(f)$ induces
a quasi-isomorphism
\begin{equation}
f_I^{-1}\cO_{Y[I]}
\otimes
\iota_{J|I}^{-1}sB_J(f)
\longrightarrow
sB_i(f)
\end{equation}
for any $J \subset I \subset \ski$.
\end{cor}

\begin{para}
\label{notation for a base change for S}
Let $S'$ be a complex manifold
and $g:S' \longrightarrow S$ a morphism of complex manifolds.
We set
$Y'=\Delta^k \times S'$,
$g=\id \times g: Y'=\Delta^k \times S' \longrightarrow Y=\Delta^k \times S$,
$E'_i=(\id \times g)^{\ast}E_i$
and $E'=(\id \times g)^{\ast}E$.
By the cartesian square
\begin{equation}
\begin{CD}
X' @>{g'}>> X \\
@V{f'}VV @VV{f}V \\
Y' @>>{\id \times g}> Y
\end{CD}
\end{equation}
we define $f': X' \longrightarrow Y'$,
which is semistable along $E'$
by Lemma \ref{local description of semistable morphism}.
We set $D'=f'^{\ast}E'=g'^{\ast}D$
and $D'_i=f'^{\ast}E'_i=g'^{\ast}D_i$ for $i=1,2, \dots, k$.
We use the notation such as $Y'[I], Y'_I, X'[I], X'[\Gamma]$
as in the case of $f: X \longrightarrow Y$.
From the cartesian squares
\begin{equation}
\begin{CD}
X' @>{g'}>> X \\
@V{f'}VV @VV{f}V \\
Y' @>>{\id \times g}> Y \\
@V{\pi'_I}VV @VV{\pi_I}V \\
Y'_I @>>{\id \times g}> Y_I
\end{CD}
\end{equation}
we have the canonical morphisms
\begin{equation}
g'^{-1}\omega^p_{X/Y_I}
\longrightarrow
\omega^p_{X'/Y'_I}
\end{equation}
for every $I \subset \ski$ and for all $p$,
which preserve the filtrations $W(D_i)$ and $W(D'_i)$
on the both sides for $i \in I$.
Thus we obtain the morphism of complexes
\begin{equation}
\label{base change morphism for sB:eq}
g'^{-1}sB_I(f)
\longrightarrow
sB_I(f')
\end{equation}
for every $I \subset \ski$,
which preserves the filtrations $F$ and $L(J)$ for all $J \subset I$.
\end{para}

\section{The Gauss-Manin connection}
\label{gauss-manin}

In this section,
we construct the Gauss-Manin connection
on $R^n(f_I)_{\ast}sB_I(f)$
as in the case on $R^nf_{\ast}\omega_{X/Y}$
in \eqref{the Gauss-Manin connection:eq}.
For this purpose,
we construct a complex $s\tB_I(f)$
which is a replacement of
$f_I^{-1}\cO_{Y[I]}
\otimes \iota_{|I}^{-1}\omega_X$
for $I \subset \ski$
and follow the arguments
in Definition \ref{definition of the filtration G}.
Then we describe its residues
by a similar way to \cite{SteenbrinkLHS}.

\begin{defn}
For a subset $I \subset \ski$,
we set
\begin{equation}
\tB_I(f)^{p,q}=
\omega^{p+|q|+|I|}_X\biggl/\sum_{i \in I}W(D_i)_{q_i}
\end{equation}
for $p \in \bnnZ$ and $q \in \bnnZ^I$,
with the morphisms
\begin{equation}
\begin{split}
&d_0=(-1)^{|I|}d:
\tB_I(f)^{p,q} \longrightarrow \tB_I(f)^{p+1,q} \\
&d_i=(-1)^{|I|}\dlog t_i \wedge:
\tB_I(f)^{p,q} \longrightarrow \tB_I(f)^{p,q+e_i}
\end{split}
\end{equation}
as in Definition \ref{definition of B_I(f)}.
Then we obtain the complex $s\tB_I(f)$
by setting
\begin{equation*}
s\tB_I(f)^n
=\bigoplus_{p+|q|=n}\tB_I(f)^{p,q}
=\bigoplus_{q \in \bnnZ^I}
\biggl(\omega^{n+|I|}_X\biggl/\sum_{i \in I}W(D_i)_{q_i} \biggr)
\end{equation*}
with the differential
$d=d_0+\sum_{i \in I}d_i$
as in Definition \ref{definition of B_I(f)}.
Then we have
\begin{equation}
\label{eq:7}
\supp(\tB_I(f)^{p,q}) \subset X[I]
\end{equation}
for any $p,q$ as before.
Although $\tB_I(f)^{p,q}$ is an $\cO_{X[I]}$-module for all $p,q$,
we consider $s\tB_I(f)$ as a complex of $\bC$-sheaves
because the morphism $d_0$ is not a morphism of $\cO_{X[I]}$-modules.
Because of \eqref{eq:7},
the complex $s\tB_I(f)$ is considered
as a complex of $\bC$-sheaves on $X[I]$.

The filtration $F$ on $s\tB_I(f)$ is defined by
\begin{equation}
F^ps\tB_I(f)^n
=\bigoplus_{\substack{p'+|q|=n \\
                      p' \ge p}}
\tB_I(f)^{p',q}
=\bigoplus_{\substack{q \in \bnnZ^I\\
                      |q| \le n-p}}
\biggl(\omega^{n+|I|}_X\biggl/\sum_{i \in I}W(D_i)_{q_i} \biggr)
\end{equation}
as in \eqref{definition of F on sBI(f):eq}.
For $K \subset I$,
the filtration $L(K)$ on $\tB_I(f)$ is defined by
\begin{equation}
L(K)_m\tB_I(f)^{p,q}
=W(D_K)_{m+2|q_K|+|K|}\omega_X^{p+|q|+|I|}\biggl/
\sum_{i \in I}W(D_i)_{q_i}
\end{equation}
as in \eqref{definition of L(K) on BI:eq}.
The canonical projection
$\omega^n_X \longrightarrow \omega^n_{X/Y_I}$ induces
a morphism of complexes
\begin{equation}
s\tB_I(f) \longrightarrow sB_I(f) ,
\end{equation}
which is denoted by $\pr_I$ if there is no danger of confusion.
It is easy to see that $\pr_I$ preserves the filtrations $F$ and $L(K)$.
\end{defn}

\begin{defn}
For a subset $I \subset \ski$,
a finite decreasing filtration $G(I)$ on $\omega_X$ is defined by
\begin{equation}
G(I)^p\omega^n_X
=\image((\pi_I f)^{-1}\omega^p_{Y_I}
\otimes_{(\pi_I f)^{-1}\cO_{Y_I}} \omega^{n-p}_X
\overset{\wedge}{\longrightarrow} \omega^n_X)
\end{equation}
for all $n,p$.
By definition,
the filtration $G(\emptyset)$ is nothing but the filtration $G$
defined in Definition \ref{definition of the filtration G}.
Because we have
\begin{equation}
\dlog t_i \wedge G(I)^p\omega_X \subset G(I)^p\omega_X
\end{equation}
for every $i \in I$ and for every $p$,
the filtration $G$ on $\tB_I(f)$ is defined by
\begin{equation}
G^r\tB_I(f)^{p,q}
=G(I)^r\omega^{p+|q|+|I|}_X \biggl/
\sum_{i \in I}W(D_i)_{q_i}
\end{equation}
for all $p,q,r$.
Thus we obtain a finite decreasing filtration $G$ on $s\tB_I(f)$.
\end{defn}


\begin{lem}
\label{lemma on grGstBI}
We have the isomorphism
\begin{equation}
f_I^{-1}\omega^p_{Y[I]}
\otimes
sB_I(f)[-p]
\overset{\simeq}{\longrightarrow}
\gr_G^ps\tB_I(f)
\label{identification for grG:eq}
\end{equation}
for every $p$,
under which
\begin{align}
&f_I^{-1}\omega^p_{Y[I]}
\otimes
F^{r-p}sB_I(f)[-p]
\simeq
F^r\gr_G^ps\tB_I(f) \\
&f_I^{-1}\omega^p_{Y[I]}
\otimes
L(K)_msB_I(f)
\simeq
L(K)_m\gr_G^ps\tB_I(f)
\end{align}
for $K \subset I \subset \ski$
and for all $m,r$.
\end{lem}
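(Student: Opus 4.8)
The plan is to deduce the isomorphism one bidegree at a time from the Katz--Oda description of $G$ attached to the semistable morphism $\pi_I f : X \longrightarrow Y_I$, and then to reassemble the pieces into an isomorphism of complexes; throughout I write $a$ for the superscript of $\gr_G^a$ so as not to clash with the first index of $\tB_I(f)^{p,q}$. The morphism $\pi_I f$ is semistable along $E_{\overline{I}} \cap Y_I$ by Lemma \ref{local description of semistable morphism}, and $G(I)$ is precisely the filtration $G$ of Definition \ref{definition of the filtration G} for this morphism. Hence there is a canonical isomorphism
\begin{equation*}
(\pi_I f)^{-1}\omega^a_{Y_I} \otimes_{(\pi_I f)^{-1}\cO_{Y_I}} \omega^{N-a}_{X/Y_I}
\overset{\simeq}{\longrightarrow}
\gr_{G(I)}^a \omega^N_X
\end{equation*}
for all $a,N$, where $\omega_{X/Y_I}$ is the relative log complex of $\pi_I f$; since $\omega^a_{Y_I}$ is locally free this exhibits $\gr_{G(I)}^a$ as tensoring the relative object by $f_I^{-1}\omega^a_{Y[I]}$.

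The key step, and the main obstacle, is to show that this isomorphism is compatible with the quotient by $\sum_{i \in I}W(D_i)_{q_i}$, equivalently that for $i \in I$ the filtration $W(D_i)$ on $\omega_X$ is carried into the relative factor $\omega_{X/Y_I}$. This is where the hypothesis $i \in I$ matters: the component $E_i$ contains $Y[I]$ and is collapsed by $\pi_I$, so $\dlog t_i$ is a relative form for $\pi_I f$ and the poles counted by $W(D_i)$ lie along the vertical directions. I would make this precise through the local product description of Lemma \ref{local description of semistable morphism}, in which the factors indexed by $i \in I$ are relative over $Y_I$; there $W(D_i)$ for $i \in I$ is visibly induced from $\omega_{X/Y_I}$, so the subobject $\sum_{i \in I}W(D_i)_{q_i}$ is compatible with the tensor decomposition above. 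Using that $\omega^a_{Y[I]}$ is flat, the displayed isomorphism then descends to
\begin{equation*}
f_I^{-1}\omega^a_{Y[I]} \otimes_{f_I^{-1}\cO_{Y[I]}} B_I(f)^{p-a,q}
\overset{\simeq}{\longrightarrow}
\gr_{G(I)}^a \tB_I(f)^{p,q}
\end{equation*}
for all $a,p,q$, with $B_I(f)^{p-a,q}$ understood to vanish for $p<a$.

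Summing over $p+|q|=n$ and reindexing by $p-a$ turns the left-hand side into $f_I^{-1}\omega^a_{Y[I]} \otimes sB_I(f)^{n-a}$, giving the degreewise identification $\gr_G^a s\tB_I(f)^n \simeq (f_I^{-1}\omega^a_{Y[I]} \otimes sB_I(f)[-a])^n$. To see that this is an isomorphism of complexes I would examine the differential $d = d_0 + \sum_{i \in I}d_i$: under $G(I)$ the de Rham part $d_0$ splits into a piece raising the $G(I)$-degree (the connection direction) and the relative differential, and only the latter survives in $\gr_G^a$, matching $d_0$ of $sB_I(f)$; each $d_i$ for $i \in I$ preserves $G(I)$ because $\dlog t_i$ is relative, so it survives as $d_i$ of $sB_I(f)$. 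The Koszul signs produced by the shift $[-a]$ are routine to reconcile.

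Finally, the compatibility with $F$ and $L(K)$ is read off from the same bookkeeping. Under the reindexing the constraint $p \ge r$ defining $F^r$ becomes $p-a \ge r-a$, which cuts out $F^{r-a}sB_I(f)$ and yields the shift $r \mapsto r-a$ in the first displayed identity of the statement. Since $L(K)$ is defined through $W(D_K)$, which by the key step lies in the relative factor, it is matched with $L(K)$ on $sB_I(f)$ with no shift; this gives the second identity, whose left-hand side should read $f_I^{-1}\omega^p_{Y[I]} \otimes_{f_I^{-1}\cO_{Y[I]}} L(K)_m sB_I(f)[-p]$.
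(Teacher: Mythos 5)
Your proposal is correct and takes essentially the same route that the paper compresses into its one-line proof: the Katz--Oda graded isomorphism for the semistable morphism $\pi_I f$, the observation that for $i \in I$ both $\dlog t_i$ and the poles measured by $W(D_i)$ live in the relative factor (verified on the local models of Lemma \ref{local description of semistable morphism}), and the identification $\iota_I^{-1}(\pi_I f)^{-1}\cO_{Y_I}=f_I^{-1}\cO_{Y[I]}$ on the support $X[I]$, which is the only point the paper's proof records. You are also right that the second displayed compatibility in the statement contains a typo: its left-hand side should read $f_I^{-1}\omega^p_{Y[I]} \otimes_{f_I^{-1}\cO_{Y[I]}} L(K)_m sB_I(f)[-p]$.
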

\begin{proof}
Easy from $\iota_{|I}^{-1}(\pi_If)^{-1}=(\pi_If\iota_{|I})^{-1}=f_I^{-1}$.
\end{proof}

\begin{defn}
The morphism of $E_1$-terms
\begin{equation}
E_1^{p,n}(R(f_I)_{\ast}s\tB_I(f),G)
\longrightarrow
E_1^{p+1,n}(R(f_I)_{\ast}s\tB_I(f),G)
\end{equation}
induces a morphism of $\bC$-sheaves
\begin{equation}
\label{nabla(I):eq}
\nabla(I):
\omega^p_{Y[I]} \otimes R^n(f_I)_{\ast}sB_I(f)
\longrightarrow
\omega^{p+1}_{Y[I]} \otimes R^n(f_I)_{\ast}sB_I(f)
\end{equation}
via the identification
\begin{equation}
E_1^{p,n}(R(f_I)_{\ast}s\tB_I(f),G)
\simeq
\omega^p_{Y[I]} \otimes R^n(f_I)_{\ast}sB_I(f)
\end{equation}
which is given by Lemma \ref{lemma on grGstBI}
and by the projection formula
together with the local freeness of $\omega_{Y[I]}^p$.
We have
\begin{equation}
\begin{split}
&\nabla(I)(\omega^p_{Y[I]}
\otimes
L(K)_mR^n(f_I)_{\ast}sB_I(f))
\subset
\omega^{p+1}_{Y[I]}
\otimes
L(K)_mR^n(f_I)_{\ast}sB_I(f) \\
&\nabla(I)(\omega^p_{Y[I]}
\otimes
F^rR^n(f_I)_{\ast}sB_I(f))
\subset
\omega^{p+1}_{Y[I]}
\otimes
F^{r-1}R^n(f_I)_{\ast}sB_I(f)
\end{split}
\end{equation}
for all $m,r$ by Lemma \ref{lemma on grGstBI} again.
\end{defn}

\begin{rmk}
For the case of $I=\emptyset$,
the filtered complex $(s\tB_{\emptyset}(f),G)$
coincides with $(\omega_X,G)$
in Definition \ref{definition of the filtration G}.
Therefore we have $\nabla(\emptyset)=\nabla$.
\end{rmk}

\begin{prop}
\label{wedge product for omegaY and B}
For $I \subset \ski$, we have
\begin{equation}
(-1)^{|I|}\nabla(I)(\omega \otimes v)
=d\omega \otimes v+(-1)^p\omega \wedge (-1)^{|I|}\nabla(I)(v)
\end{equation}
for $\omega \in \omega^p_{Y[I]}$
and $v \in R^n(f_I)_{\ast}sB_I(f)$.
\end{prop}
\begin{proof}
We consider the complex
$f_I^{-1}\omega_{Y[I]} \otimes_{\bC} s\tB_I(f)$
equipped with the filtration $G$ defined by
\begin{equation}
G^r(f_I^{-1}\omega_{Y[I]} \otimes_{\bC} s\tB_I(f))
=\sum_{a+b=r}
f_I^{-1}G^a\omega_{Y[I]} \otimes_{\bC} G^bs\tB_I(f) ,
\end{equation}
where the filtration $G$ on $\omega_{Y[I]}$ denotes the stupid filtration.
Then we have the isomorphism
\begin{equation}
\label{direct sum decomposition of omegaY[I] otimes stB:eq}
\begin{split}
E_1^{p,n}(R(f_I)_{\ast}f_I^{-1}\omega_{Y[I]}
&\otimes_{\bC}
s\tB_I(f), G) \\
&\simeq
\bigoplus_{a+b=p}
\omega^a_{Y[I]} \otimes_{\bC} \omega^b_{Y[I]}
\otimes_{\cO_{Y[I]}}R^n(f_I)_{\ast}sB_I(f)
\end{split}
\end{equation}
for every $n,p$.
On the other hand, a morphism
\begin{equation}
(\pi_If)^{-1}\omega_{Y_I}^p
\otimes_{\bC}
\omega_X^q
\longrightarrow
\omega_X^{p+q}
\end{equation}
is defined by sending $\omega \otimes \eta$
to $(-1)^{p|I|}\omega \wedge \eta$.
Direct computation shows
that this morphism gives a morphism of complexes
\begin{equation}
\label{product on omegaY[I] and stB:eq}
\iota_{|I}^{-1}(\pi_If)^{-1}\omega_{Y_I}
\otimes_{\bC}
s\tB_I(f)
=
f_I^{-1}\omega_{Y[I]} \otimes_{\bC} s\tB_I(f)
\longrightarrow
s\tB_I(f),
\end{equation}
which preserves the filtrations $G$ on the both sides.
Thus we obtain the morphism of $E_1$-terms
\begin{equation}
E_1^{p,n}(R(f_I)_{\ast}
(f_I^{-1}\omega_{Y[I]} \otimes_{\bC} s\tB_I(f)), G)
\longrightarrow
E_1^{p,n}(R(f_I)_{\ast}s\tB_I(f),G)
\end{equation}
which is compatible with the morphism of $E_1$-terms.
The compatibility of this morphism on the direct summand
$\omega^p_{Y[I]} \otimes_{\bC} R^n(f_I)_{\ast}sB_I(f)$
of \eqref{direct sum decomposition of omegaY[I] otimes stB:eq}
implies the equality
\begin{equation}
(-1)^{p|I|}\nabla(I)(\omega \otimes v)
=(-1)^{(p+1)|I|}d\omega \otimes v+(-1)^{p|I|+p}\omega \wedge \nabla(I)(v)
\end{equation}
for $\omega \in \omega^p_{Y[I]}$
and $v \in R^n(f_I)_{\ast}sB_I(f)$.
\end{proof}

\begin{defn}
The filtration $G$ on $s\tB_I(f)$
induces the morphism
\begin{equation}
\begin{split}
\gamma_I(f):
f_I^{-1}\omega^p_{Y[I]}
\otimes
sB_I(f)[-p]
\longrightarrow
f_I^{-1}\omega^{p+1}_{Y[I]}
\otimes
sB_I(f)[-p]
\end{split}
\end{equation}
in the derived category
for every $p$.
Then we have
\begin{equation}
\nabla(I)=R^{p+n}(f_I)_{\ast}(\gamma_I(f))
\end{equation}
by definition.
\end{defn}

\begin{defn}
For $J \subset I \subset \ski$,
the morphism
defined by \eqref{morphism on Omega for J|I:eq}
induces a morphism of complexes
\begin{equation}
\widetilde{\theta}_{J|I}(f):
\iota_{J|I}^{-1}s\tB_J(f)
\longrightarrow
s\tB_I(f)
\end{equation}
as in Definition \ref{definition of theta}.
\end{defn}

\begin{lem}
The morphism $\widetilde{\theta}_{J|I}(f)$
preserves the filtration $G$ on the both sides.
Under the isomorphism \eqref{identification for grG:eq},
the morphism
\begin{equation}
\gr_G^p\widetilde{\theta}_{J|I}(f):
\gr_G^p\iota_{J|I}^{-1}\tB_J(f)
\longrightarrow
\gr_G^p\tB_I(f)
\end{equation}
is identified with the morphism
\begin{equation}
\begin{split}
(-1)^{p(|I|-|J|)}f_I^{-1}\pr_{J|I}
&\otimes
\theta_{J|I}(f)[-p] \\
&:f_I^{-1}\iota_{J|I}^{-1}\omega^p_{Y[J]}
\otimes
\iota_{J|I}^{-1}sB_J(f)[-p]
\longrightarrow
f_I^{-1}\omega^p_{Y[I]}
\otimes
sB_I(f)[-p]
\end{split}
\end{equation}
for every $p$,
where $\pr_{J|I}$ is the morphism
\eqref{the projection between Omega from Y[J] to Y[I]:eq}.
\end{lem}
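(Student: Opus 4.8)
The plan is to reduce the whole statement to an explicit computation with logarithmic forms on $X$, since both $\widetilde{\theta}_{J|I}(f)$ and the filtration $G$ are built out of wedge operations on $\omega_X$. The only structural input I would use is the splitting $\omega^1_{Y[J]} \simeq \omega^1_{Y[J]/Y_I} \oplus \omega^1_{Y[I]}$ from \ref{local situation}, together with the observation that, after pullback to $X$, the summand $\omega^1_{Y[J]/Y_I}$ is spanned over $\cO_X$ by the forms $\dlog t_i$ with $i \in I \setminus J$. These are exactly the factors occurring in $\eta := \dlog t_{i_1} \wedge \cdots \wedge \dlog t_{i_l}$ (with $I \setminus J = \{i_1 < \cdots < i_l\}$), and the key point is their asymmetric behaviour: they are horizontal for $\pi_J f : X \longrightarrow Y_J$ but vertical for $\pi_I f : X \longrightarrow Y_I$.

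First I would prove that $\widetilde{\theta}_{J|I}(f)$ preserves $G$. Take $\alpha \in G(J)^r\omega_X$ and write $\alpha = \sum_k (\pi_J f)^{\ast}\beta_k \wedge \gamma_k$ with $\beta_k \in \omega^r_{Y[J]}$. Decomposing each $\beta_k$ according to the splitting above and pulling back, any component of $\beta_k$ carrying a factor $\dlog t_i$ with $i \in I \setminus J$ is annihilated upon wedging with $\eta$, because $\dlog t_i \wedge \dlog t_i = 0$ and $\eta$ already contains every such factor. Hence $\eta \wedge (\pi_J f)^{\ast}\beta_k = \eta \wedge (\pi_I f)^{\ast}\pr_{J|I}(\beta_k)$, where I use that $\pi_I$ factors through $\pi_J$ on forms pulled back from $Y_I$. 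Since $(\pi_I f)^{\ast}\pr_{J|I}(\beta_k)$ is a degree-$r$ form pulled back from $Y_I$, the product $\eta \wedge \alpha$ lies in $G(I)^r$, which gives the first assertion.

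The same computation pins down the map on $\gr_G^p$. Under the isomorphisms of Lemma \ref{lemma on grGstBI}, a class in $\gr_G^p s\tB_J(f)$ is represented by $(\pi_J f)^{\ast}\beta \wedge \tilde v$ with $\beta \in \omega^p_{Y[J]}$ and $\tilde v$ a lift of $v \in sB_J(f)$; applying $\widetilde{\theta}_{J|I}(f)$ and invoking the vanishing above produces $\eta \wedge (\pi_I f)^{\ast}\pr_{J|I}(\beta) \wedge \tilde v$. Commuting $\eta$ (of degree $l = |I| - |J|$) past the degree-$p$ form $\pr_{J|I}(\beta)$ rewrites this as $(-1)^{pl}\,(\pi_I f)^{\ast}\pr_{J|I}(\beta) \wedge (\eta \wedge \tilde v)$, and $\eta \wedge \tilde v$ is, after projecting to the $Y_I$-relative quotient, a representative of $\theta_{J|I}(f)(v)$ in $sB_I(f)$ by the very formula \eqref{morphism on Omega for J|I:eq} defining both $\theta_{J|I}(f)$ and $\widetilde{\theta}_{J|I}(f)$. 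This is exactly $\pr_{J|I}(\beta) \otimes \theta_{J|I}(f)(v)$ up to the scalar $(-1)^{pl}$; since $l$ and $|I| + |J|$ have the same parity, $(-1)^{pl} = (-1)^{p(|I|+|J|)}$, matching the claimed factor.

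The main obstacle is entirely the sign bookkeeping. The Koszul sign $(-1)^{pl}$ from transposing $\eta$ and $\pr_{J|I}(\beta)$ must be reconciled with the normalization of the wedge-product isomorphism of Remark \ref{wedge product for omegaY and B} that is used to identify $\gr_G^p s\tB_I(f)$ and $\gr_G^p s\tB_J(f)$ in Lemma \ref{lemma on grGstBI}; the genuinely careful point is to check that the normalization constants on the $I$-side and the $J$-side cancel, so that the surviving sign is precisely the Koszul one. I would handle this by fixing once and for all the representatives supplied by that isomorphism and carrying the signs through the two displays above. Everything else is routine: compatibility with the $\cO_{X[I]}$-module structures and with the quotients by $\sum_{i}W(D_i)_{q_i}$ is immediate, since wedging with $\eta$ manifestly respects these weight filtrations, and the degree shifts $[-p]$ introduce no further signs.
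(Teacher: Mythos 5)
Your proof is correct and is exactly the definitional verification the paper leaves implicit (its proof is just ``Easy by definition.''): the identification \eqref{identification for grG:eq} is the unsigned wedge map $\omega \otimes v \mapsto [\omega \wedge \widetilde{v}]$, so your computation --- kill the components of $\omega$ involving $\dlog t_i$ with $i \in I \setminus J$, then transpose $\eta$ past $\pr_{J|I}(\omega)$ to pick up the Koszul sign $(-1)^{pl} = (-1)^{p(|I|+|J|)}$ --- yields precisely the stated factor. The normalization worry you raise resolves in favor of the unsigned reading: the sign $(-1)^{p|I|}$ of Remark \ref{wedge product for omegaY and B} enters only the later definition of $\widetilde{\alpha}_I(f)$ (which is why the paper then records $\gr_G^p\widetilde{\alpha}_I(f) = (-1)^{p|I|}\id \otimes \alpha_I(f)[-p]$), not the identification of Lemma \ref{lemma on grGstBI}, and indeed a signed identification on both sides would force the graded map to have trivial sign, contradicting the lemma itself.
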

\begin{proof}
Easy by definition.
\end{proof}

\begin{cor}
\label{cor on compatibility of nabla(J) and nabla(I)}
The diagram
\begin{equation}
\begin{CD}
\iota_{J|I}^{-1}\omega^p_{Y[J]}
\otimes
\iota_{J|I}^{-1}R^n(f_J)_{\ast}sB_J(f)
@>{(-1)^{|J|}\iota_{J|I}^{-1}\nabla(J)}>>
\iota_{J|I}^{-1}\omega^{p+1}_{Y[J]}
\otimes
\iota_{J|I}^{-1}R^n(f_J)_{\ast}sB_J(f) \\
@V{\pr_{J|I} \otimes \theta_{J|I}(f)}VV
@VV{\pr_{J|I} \otimes \theta_{J|I}(f)}V \\
\omega^p_{Y[I]}
\otimes
R^n(f_I)_{\ast}sB_I(f)
@>>{(-1)^{|I|}\nabla(I)}>
\omega^{p+1}_{Y[I]}
\otimes
R^n(f_I)_{\ast}sB_I(f)
\end{CD}
\end{equation}
is commutative.
\end{cor}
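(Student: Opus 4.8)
The plan is to deduce the commutativity entirely from the functoriality of the spectral sequence attached to the filtration $G$, applied to the filtered morphism $\widetilde{\theta}_{J|I}(f)$. By the preceding lemma, $\widetilde{\theta}_{J|I}(f)$ is a morphism of filtered complexes $(s\tB_J(f),G) \to (s\tB_I(f),G)$. Since $s\tB_I(f)$ is supported on $X[I]$ and the square formed by the closed immersions $X[I] \hookrightarrow X[J]$, $Y[I] \hookrightarrow Y[J]$ and the morphisms $f_I, f_J$ commutes, the exactness of the closed-immersion pushforward identifies $R(f_J)_{\ast}s\tB_I(f)$ with $R(f_I)_{\ast}s\tB_I(f)$ regarded as a complex on $Y[J]$ via $Y[I] \hookrightarrow Y[J]$; the projection $\pr_{J|I}$ accounts for the passage from $\omega_{Y[J]}$ to $\omega_{Y[I]}$. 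Consequently $R(f_J)_{\ast}\widetilde{\theta}_{J|I}(f)$ induces a morphism of the spectral sequences $E_r(R(f_J)_{\ast}s\tB_J(f),G) \to E_r(R(f_I)_{\ast}s\tB_I(f),G)$ that is compatible with all differentials.

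First I would recall that, by the very construction in the definition of $\nabla(I)$, the maps $\nabla(J)$ and $\nabla(I)$ are precisely the $d_1$-differentials of these two spectral sequences under the identification of Lemma \ref{lemma on grGstBI}. Functoriality of $d_1$ then gives, on the $E_1$-terms in degree $p$, a commutative square relating $d_1^{(J)}$, $d_1^{(I)}$ and the induced $E_1$-maps in degrees $p$ and $p+1$. Next I would substitute the explicit description from the preceding lemma, according to which the induced map on $\gr_G^p$ is $(-1)^{p(|I|+|J|)}\pr_{J|I}\otimes\theta_{J|I}(f)$. Passing to cohomology, the functoriality identity becomes
\[
\nabla(I) \circ \bigl( (-1)^{p(|I|+|J|)} \pr_{J|I} \otimes \theta_{J|I}(f) \bigr)
= \bigl( (-1)^{(p+1)(|I|+|J|)} \pr_{J|I} \otimes \theta_{J|I}(f) \bigr) \circ \nabla(J) .
\]

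The remaining step is pure sign bookkeeping. Cancelling the common factor $(-1)^{p(|I|+|J|)}$ and then multiplying both sides by $(-1)^{|I|}$ transforms the identity into
\[
\bigl( \pr_{J|I} \otimes \theta_{J|I}(f) \bigr) \circ (-1)^{|J|}\nabla(J)
= (-1)^{|I|}\nabla(I) \circ \bigl( \pr_{J|I} \otimes \theta_{J|I}(f) \bigr) ,
\]
where I have used $(-1)^{2|I|+|J|}=(-1)^{|J|}$; this is exactly the asserted commutativity. The main (minor) obstacle I anticipate is keeping the signs straight: one must check that the $p$-dependent signs on source and target differ precisely by $(-1)^{|I|+|J|}$, and that this residual sign is exactly what converts the bare $d_1$-differentials into the normalized connections $(-1)^{|J|}\nabla(J)$ and $(-1)^{|I|}\nabla(I)$ displayed in the diagram. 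This normalization is the natural one, since $(-1)^{|I|}\nabla(I)$ is the genuine integrable logarithmic connection, as noted in the remark on the wedge product; with it the comparison map $\pr_{J|I}\otimes\theta_{J|I}(f)$ commutes with the connections on the nose.
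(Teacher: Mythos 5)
Your proposal is correct and is exactly the argument the paper intends: the corollary is stated without proof as an immediate consequence of the preceding lemma, the point being that the filtered morphism $\widetilde{\theta}_{J|I}(f)$ (pushed forward over the closed immersion $Y[I] \hookrightarrow Y[J]$, which is exact) induces a morphism of the $G$-spectral sequences whose $E_1$-differentials are $\nabla(J)$ and $\nabla(I)$ by definition, while the lemma identifies the induced $E_1$-maps as $(-1)^{p(|I|+|J|)}\pr_{J|I}\otimes\theta_{J|I}(f)$. Your sign bookkeeping, cancelling $(-1)^{p(|I|+|J|)}$ and absorbing the residual $(-1)^{|I|+|J|}$ into the normalizations $(-1)^{|J|}\nabla(J)$ and $(-1)^{|I|}\nabla(I)$, is accurate.
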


\begin{defn}
\label{definition of nu}
For $j \in I$,
the canonical projection
\begin{equation}
\begin{split}
B_I(f)^{p,q}
=\omega^{p+|q|+|I|}_{X/Y_I}&\left/\sum_{i \in I}W(D_i)_{q_i} \right. \\
&\longrightarrow
B_I(f)^{p-1,q+e_j}
=\omega^{p+|q|+|I|}_{X/Y_I} \left/
\sum_{i \in I, i \not= j}W(D_i)_{q_i}+W(D_j)_{q_j+1} \right.
\end{split}
\end{equation}
induces a morphism of complexes
\begin{equation}
\nu_{j|I}(f): sB_I(f) \longrightarrow sB_I(f)
\end{equation}
for $j \in I$.
On the other hand,
we define a morphism of complexes
$\mu_{J|I}(f)$ by
\begin{equation}
\mu_{J|I}(f)=\sum_{j \in I \setminus J}\dlog t_j \otimes \nu_{j|I}(f):
sB_I(f)
\longrightarrow
(f_I)^{-1}\iota_{J|I}^{-1}\omega_{Y[J]/Y_I}^1
\otimes
sB_I(f)
\end{equation}
for $J \subsetneq I$.
Similarly,
the projection
\begin{align}
\tB_I(f)^{p,q}
=\omega^{p+|q|+|I|}_X&\left/\sum_{i \in I}W(D_i)_{q_i} \right. \\
&\longrightarrow
\tB_I(f)^{p-1,q+e_j}
=\omega^{p+|q|+|I|}_X\left/
\sum_{i \in I, i \not= j}W(D_i)_{q_i}+W(D_j)_{q_j+1} \right.
\end{align}
induces a morphism of complexes
\begin{equation}
\widetilde{\nu}_{j|I}(f): s\tB_I(f) \longrightarrow s\tB_I(f)
\end{equation}
for $j \in I$.
\end{defn}

\begin{lem}
\label{lemma for nu and L}
For $K \subset I \subset \ski$,
we have
\begin{equation*}
\begin{split}
&\nu_{j|I}(f)(L(K)_msB_I(f)) \subset L(K)_msB_I(f), \\
&\nu_{j|I}(f)(F^psB_I(f)) \subset F^{p-1}sB_I(f)
\end{split}
\end{equation*}
for all $m,p$ and for all $j \in I$.
If $j \in K$ in addition,
we have
\begin{equation*}
\nu_{j|I}(f)(L(K)_msB_I(f)) \subset L(K)_{m-2}sB_I(f)
\end{equation*}
for every $m$.
Therefore we have
\begin{equation}
\mu_{J|I}(f)(L(K)_msB_I(f))
\subset
(f_I)^{-1}\omega_{Y[J]/Y_I}^1
\otimes
L(K)_{m-2}sB_I(f)
\end{equation}
for every $m$ if $K \cup J=I$.
\end{lem}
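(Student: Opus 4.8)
The plan is to verify all three inclusions directly from the definitions, reducing everything to bookkeeping on a single summand $B_I(f)^{p,q}$. Since $\nu_{j|I}(f)$ respects the direct sum decomposition of $sB_I(f)$ — it carries the summand $B_I(f)^{p,q}$ into $B_I(f)^{p-1,q+e_j}$, which lies in the same total degree $n=p+|q|$ — and since both $F$ and each $L(K)$ are defined summandwise, it suffices to track the effect of $\nu_{j|I}(f)$ on one component at a time.

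For the $F$-claim this is immediate: a summand $B_I(f)^{p',q}$ occurring in $F^psB_I(f)$ satisfies $p'\ge p$, and is sent by $\nu_{j|I}(f)$ to $B_I(f)^{p'-1,q+e_j}$, whose first index $p'-1$ is $\ge p-1$; hence its image lies in $F^{p-1}sB_I(f)$ by the defining formula \eqref{definition of F on sBI(f):eq}.

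For the $L(K)$-claims, I would use that $\nu_{j|I}(f)$ is induced on each summand by the identity map on the ambient sheaf $\omega^{p+|q|+|I|}_{X/Y_I}$, followed only by enlarging the denominator in the $j$-th slot from $W(D_j)_{q_j}$ to $W(D_j)_{q_j+1}$; in particular it leaves the ambient logarithmic de Rham degree unchanged. Consequently it sends the numerator level $W(D_K)_{m+2|q_K|+|K|}$ defining $L(K)_mB_I(f)^{p,q}$ in \eqref{definition of L(K) on BI:eq} into the corresponding numerator level at the target $B_I(f)^{p-1,q+e_j}$. Writing $q'=q+e_j$, the defining numerator level of $L(K)_{m'}B_I(f)^{p-1,q'}$ is $W(D_K)_{m'+2|q'_K|+|K|}$, so the only computation is the comparison of $|q'_K|$ with $|q_K|$. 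When $j \notin K$ one has $q'_K=q_K$, and matching the numerator levels gives $m'=m$, so that $\nu_{j|I}(f)(L(K)_msB_I(f)) \subset L(K)_msB_I(f)$. When $j \in K$ one has $|q'_K|=|q_K|+1$, whence $m+2|q_K|+|K|=(m-2)+2|q'_K|+|K|$, forcing $m'=m-2$; this yields the refined inclusion $\nu_{j|I}(f)(L(K)_msB_I(f)) \subset L(K)_{m-2}sB_I(f)$ and, a fortiori, the inclusion into $L(K)_m$ asserted in the first part.

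Finally, the statement for $\mu_{J|I}(f)=\sum_{j \in I \setminus J}\dlog t_j \otimes \nu_{j|I}(f)$ follows by combining the refined bounds over the summands. The hypothesis $K \cup J=I$ means precisely that $I \setminus J \subset K$, so every index $j$ occurring in the sum defining $\mu_{J|I}(f)$ lies in $K$. Applying the case $j \in K$ to each such $j$ gives $\nu_{j|I}(f)(L(K)_msB_I(f)) \subset L(K)_{m-2}sB_I(f)$, and hence $\mu_{J|I}(f)(L(K)_msB_I(f)) \subset (f_J)^{-1}\omega^1_{Y[J]} \otimes_{(f_J)^{-1}\cO_{Y[J]}} L(K)_{m-2}sB_I(f)$. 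There is no serious obstacle: the entire argument is index bookkeeping, and the only place demanding care is keeping the two cases $j \in K$ and $j \notin K$ straight when comparing weight levels, together with the observation that $\nu_{j|I}(f)$ acts only on the denominators.
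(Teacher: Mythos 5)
Your proposal is correct and takes exactly the approach the paper intends when it dismisses this lemma with ``Easy'': since $\nu_{j|I}(f)$ is induced on each summand by the identity on the ambient sheaf $\omega^{p+|q|+|I|}_{X/Y_I}$ (only the denominator in the $j$-th slot is enlarged), all three inclusions reduce to the summandwise comparison of $m+2|q_K|+|K|$ with $m'+2|(q+e_j)_K|+|K|$, which you carry out correctly in both cases $j\in K$ and $j\notin K$, together with the shift $p'\mapsto p'-1$ for $F$. Your observation that $K\cup J=I$ forces $I\setminus J\subset K$, so every index in the sum defining $\mu_{J|I}(f)$ falls under the refined case $j\in K$, completes the argument exactly as needed.
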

\begin{proof}
Easy.
\end{proof}

\begin{lem}
The morphism $\widetilde{\nu}_{j|I}(f)$ preserves the filtration $G$
on $s\tB_I(f)$ for every $j \in I$.
Moreover
$\gr_G^p\widetilde{\nu}_{j|I}(f)$
coincides with
$\id \otimes \nu_{j|I}(f)[-p]$
for every $j \in I$
under the identification
\eqref{identification for grG:eq}.
\end{lem}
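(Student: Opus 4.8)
The final lemma asserts two things about the morphism $\widetilde{\nu}_{j|I}(f)$: first, that it preserves the filtration $G$ on $s\tB_I(f)$; second, that the induced graded morphism $\gr_G^p\widetilde{\nu}_{j|I}(f)$ coincides with $\id \otimes \nu_{j|I}(f)[-p]$ under the identification \eqref{identification for grG:eq}. Both assertions are compatibility statements that should reduce to unwinding the definitions of $G(I)$ on $\omega_X$ and of the projection morphisms that define $\widetilde{\nu}_{j|I}(f)$ and $\nu_{j|I}(f)$.

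**Plan for the first assertion.** The morphism $\widetilde{\nu}_{j|I}(f)$ is induced, on each graded piece $\tB_I(f)^{p,q}$, by the canonical projection $\omega_X^{p+|q|+|I|}/\sum_i W(D_i)_{q_i} \to \omega_X^{p+|q|+|I|}/(\sum_{i\ne j}W(D_i)_{q_i}+W(D_j)_{q_j+1})$. This is simply the identity on $\omega_X$ passed through a coarser quotient; it does not move forms within $\omega_X$ at all. The first step is therefore to observe that, at the level of $\omega_X$ itself, $\widetilde{\nu}_{j|I}(f)$ is induced by the identity map $\omega_X \to \omega_X$ followed by a change of quotient. Since $G(I)^r\omega_X^n$ is defined as the image of $(\pi_If)^{-1}\omega_{Y_I}^r \otimes \omega_X^{n-r} \to \omega_X^n$, and the identity clearly sends $G(I)^r$ into $G(I)^r$, the filtration $G$ is preserved before passing to the quotient. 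I would then check that the quotient by $\sum_i W(D_i)_{q_i}$ (resp.\ the coarser one) respects $G$, which follows from the definition $G^r\tB_I(f)^{p,q}=G(I)^r\omega_X^{p+|q|+|I|}/\sum_i W(D_i)_{q_i}$: the filtration $G$ on $\tB_I(f)^{p,q}$ is by construction the image of $G(I)^r$ in the quotient, so any map induced by the identity on $\omega_X$ compatible with both quotients automatically carries $G^r$ to $G^r$.

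**Plan for the second assertion.** Here I would use the isomorphism \eqref{identification for grG:eq}, namely $f_I^{-1}\omega_{Y[I]}^p \otimes_{f_I^{-1}\cO_{Y[I]}} sB_I(f)[-p] \xrightarrow{\simeq} \gr_G^p s\tB_I(f)$, which is induced by the wedge-product morphism of Remark \ref{wedge product for omegaY and B}. The key point is that $\widetilde{\nu}_{j|I}(f)$ does not touch the $\omega_{Y_I}^p$-factor: it only alters the relative part, i.e.\ the factor living over $\gr_G^p$, which under the identification is exactly $sB_I(f)$. Concretely, on a decomposable element $\omega \wedge \eta$ with $\omega \in (\pi_If)^{-1}\omega_{Y_I}^p$ and $\eta$ representing a class in $sB_I(f)$, the projection defining $\widetilde{\nu}_{j|I}(f)$ acts by coarsening the $W(D_j)$-quotient in the $\eta$-slot while leaving $\omega$ fixed. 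This is precisely $\id \otimes \nu_{j|I}(f)$, with the shift $[-p]$ coming from the degree bookkeeping in \eqref{identification for grG:eq}. The step I would carry out carefully is matching the signs introduced by the wedge convention $(-1)^{p|I|}\omega\wedge\eta$ in Remark \ref{wedge product for omegaY and B}: since $\nu_{j|I}(f)$ is defined without extra sign and the wedge sign depends only on $p$ and $|I|$, and the projection commutes with wedging by $\omega$, the sign factors cancel and no correction appears on the graded level.

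**Main obstacle.** None of the steps are genuinely hard; the lemma is a bookkeeping verification, which is why the paper states the proof is ``easy.'' The one place demanding care is confirming that the projection defining $\widetilde{\nu}_{j|I}(f)$ is genuinely well-defined with respect to the $G$-filtration on the \emph{source} quotient $\sum_i W(D_i)_{q_i}$ versus the coarser \emph{target} quotient involving $W(D_j)_{q_j+1}$, and that the identification \eqref{identification for grG:eq} is natural enough to intertwine $\widetilde{\nu}_{j|I}(f)$ with $\id \otimes \nu_{j|I}(f)$ slot by slot. I expect this to follow from the fact that $G(I)$ is defined via $\pi_If$-pullback and hence is orthogonal to the operations on the relative part, exactly as in the parallel statement for $\widetilde{\theta}_{J|I}(f)$ proved just above.
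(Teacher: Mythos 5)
Your proposal is correct and is exactly the definition-unwinding the paper has in mind: its proof of this lemma is literally ``Easy by definition,'' and your observations that $\widetilde{\nu}_{j|I}(f)$ is the identity on $\omega_X^{p+|q|+|I|}$ composed with a coarsening of the quotient (hence preserves $G^r$, which is the image of $G(I)^r$ in either quotient) and that this coarsening commutes with wedging by $f_I^{-1}\omega^p_{Y[I]}$, with the sign $(-1)^{p|I|}$ depending only on the unchanged $G$-degree $p$ and on $|I|$, supply precisely the omitted verification. Nothing is missing.
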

\begin{proof}
Easy by definition.
\end{proof}

\begin{cor}
\label{corollary for commutativity of nu and gamma}
The diagram
\begin{equation}
\begin{CD}
sB_I(f)
@>{\gamma_I(f)}>>
f_I^{-1}\omega^1_{Y[I]}
\otimes
sB_I(f) \\
@V{\nu_{j|I}(f)}VV
@VV{\id \otimes \nu_{j|I}(f)}V \\
sB_I(f)
@>>{\gamma_I(f)}>
f_I^{-1}\omega^1_{Y[I]}
\otimes
sB_I(f)
\end{CD}
\end{equation}
is commutative in the derived category.
\end{cor}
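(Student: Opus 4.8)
The plan is to deduce the corollary from the functoriality of the connecting morphism associated with the filtration $G$. By its very definition, $\gamma_I(f)$ is the boundary morphism in the derived category attached to the short exact sequences
\begin{equation*}
0 \longrightarrow \gr_G^{p+1}s\tB_I(f) \longrightarrow G^ps\tB_I(f)/G^{p+2}s\tB_I(f) \longrightarrow \gr_G^ps\tB_I(f) \longrightarrow 0 ,
\end{equation*}
read off through the identification \eqref{identification for grG:eq}. Such boundary morphisms are natural with respect to morphisms of $G$-filtered complexes, and I would apply this naturality to the filtered endomorphism $\widetilde{\nu}_{j|I}(f)$ of $s\tB_I(f)$.

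First I would invoke the lemma immediately preceding the corollary: it asserts that $\widetilde{\nu}_{j|I}(f)$ preserves $G$ and that $\gr_G^p\widetilde{\nu}_{j|I}(f)$ is identified with $\id \otimes \nu_{j|I}(f)[-p]$ under \eqref{identification for grG:eq}. Since $\widetilde{\nu}_{j|I}(f)$ respects $G$, it induces an endomorphism of the short exact sequence above for every $p$, so that the square
\begin{equation*}
\begin{CD}
\gr_G^ps\tB_I(f) @>>> \gr_G^{p+1}s\tB_I(f)[1] \\
@V{\gr_G^p\widetilde{\nu}_{j|I}(f)}VV @VV{\gr_G^{p+1}\widetilde{\nu}_{j|I}(f)[1]}V \\
\gr_G^ps\tB_I(f) @>>> \gr_G^{p+1}s\tB_I(f)[1]
\end{CD}
\end{equation*}
commutes in the derived category, the horizontal arrows being the boundary morphisms.

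Finally I would specialize to $p=0$ and translate through \eqref{identification for grG:eq}. Since $\omega^0_{Y[I]}=\cO_{Y[I]}$, the source $\gr_G^0s\tB_I(f)$ becomes $sB_I(f)$, while $\gr_G^1s\tB_I(f)[1]$ becomes $f_I^{-1}\omega^1_{Y[I]} \otimes_{f_I^{-1}\cO_{Y[I]}} sB_I(f)$; both horizontal arrows become $\gamma_I(f)$, the left vertical becomes $\nu_{j|I}(f)$, and the right vertical becomes $\id \otimes \nu_{j|I}(f)$. This reproduces exactly the square claimed in the corollary. The only delicate point, and the main though modest obstacle, is the bookkeeping of the shifts: one must verify that the $[-1]$ in $\gr_G^1\widetilde{\nu}_{j|I}(f)=\id \otimes \nu_{j|I}(f)[-1]$ cancels against the $[1]$ of the boundary morphism, so that the right vertical arrow is genuinely $\id \otimes \nu_{j|I}(f)$ and not a shifted variant. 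I would carry out this cancellation explicitly for $p=0$ to confirm that the corollary's diagram is recovered verbatim.
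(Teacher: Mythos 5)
Your proposal is correct and is precisely the argument the paper intends: the corollary is stated without a separate proof because it follows immediately from the preceding lemma by the naturality of the connecting morphism of the $G$-filtration short exact sequences defining $\gamma_I(f)$, applied to the $G$-filtered endomorphism $\widetilde{\nu}_{j|I}(f)$. Your shift bookkeeping at $p=0$ is also right — the $[-1]$ in $\gr_G^1\widetilde{\nu}_{j|I}(f)=\id\otimes\nu_{j|I}(f)[-1]$ cancels against the $[1]$ of the boundary morphism, and since the vertical arrows are genuine morphisms of complexes (not connecting maps), no sign issues arise.
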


\begin{lem}
\label{lemma for nu and residue of nabla}
For $J \subsetneq I \subset \ski$,
the diagram
\begin{equation*}
\begin{CD}
\iota_{J|I}^{-1}sB_J(f)
@>{\iota_{J|I}^{-1}\gamma_J(f)}>>
f_I^{-1}\iota_{J|I}^{-1}\omega_{Y[J]}^1
\otimes
\iota_{J|I}^{-1}sB_J(f) \\
@V{\theta_{J|I}(f)}VV
@VV{f_I^{-1}\iota_{J|I}^{-1}\pr \otimes \theta_{J|I}(f)}V \\
sB_I(f)
@>>{(-1)^{|J|+1}\mu_{J|I}(f)}>
f_I^{-1}\iota_{J|I}^{-1}\omega_{Y[J]/Y_I}^1
\otimes
sB_I(f)
\end{CD}
\end{equation*}
is commutative,
where $\pr: \omega_{Y[J]}^1 \longrightarrow \omega_{Y[J]/Y_I}^1$
denotes the canonical projection.
\end{lem}
\begin{proof}
We define a complex of
$\bC$-sheaves
$C_{J|I}(f)$ on $X[I]$ by
\begin{equation*}
C_{J|I}(f)^p=
(f_I^{-1}\iota_{J|I}^{-1}\omega^1_{Y[J]/Y_I}
\otimes
sB_I(f)^{p-1})
\oplus
sB_I(f)^p
\end{equation*}
with the differentials
\begin{equation*}
d(\omega \otimes x, y)
=(-\omega \otimes dx+(-1)^{|J|+1}\mu_{J|I}(f)(y), dy)
\end{equation*}
for $\omega \in f_I^{-1}\iota_{J|I}^{-1}\omega^1_{Y[J]/Y_I}$,
$x \in sB_I(f)^{p-1}$
and $y \in sB_I(f)^p$.
We can easily check the equality $d^2=0$
from the fact that $\mu_{J|I}(f)$ is a morphism of complexes.
Then a morphism of complexes
\begin{equation*}
C_{J|I}(f) \longrightarrow sB_I(f)
\end{equation*}
is defined by the projection
$C_{J|I}(f)^p \longrightarrow sB_I(f)^p$.
Moreover, the injection
\begin{equation}
f_I^{-1}\iota_{J|I}^{-1}\omega_{Y[J]/Y_I}^1
\otimes
sB_I(f)^{p-1}
\longrightarrow
C_{J|I}(f)^p
\end{equation}
induces a morphism of complexes
\begin{equation*}
f_I^{-1}\iota_{J|I}^{-1}\omega_{Y[J]/Y_I}^1
\otimes
sB_I(f)[-1]
\longrightarrow
C_{J|I}(f)
\end{equation*}
which fits in the exact sequence
\begin{equation*}
\begin{CD}
0 @>>> f_I^{-1}\iota_{J|I}^{-1}\omega_{Y[J]/Y_I}^1 \otimes sB_I(f)[-1]
  @>>> C_{J|I}(f)
  @>>> sB_I(f) @>>> 0
\end{CD}
\end{equation*}
by definition.

We denote by $I \setminus J=\{i_1,i_2,\dots,i_l\}$
with $1 \le i_1 < i_2 < \dots < i_l \le k$.
The morphism
\begin{equation*}
\omega_X^p \longrightarrow \omega_X^{p-1+l}
\end{equation*}
sending $\omega \in \omega_X^p$ to
\begin{equation*}
(-1)^{l-j}\dlog t_{i_1} \wedge \dots \wedge \dlog t_{i_{j-1}}
\wedge \dlog t_{i_{j+1}} \wedge \dots \wedge \dlog t_{i_l} \wedge \omega
\end{equation*}
induces a morphism
\begin{equation*}
\eta_j:
\iota_{J|I}^{-1}\tB_J(f)^p
\longrightarrow
B_I(f)^{p-1}
\end{equation*}
which satisfies the equality
\begin{equation}
\eta_j(G^2\iota_{J|I}^{-1}s\tB_J(f)^p)=0
\end{equation}
for $p \in \bZ$ and for $j=\li$.

Now a morphism
\begin{equation*}
\eta:
\iota_{J|I}^{-1}s\tB_J(f)^p\bigl/G^2\iota_{J|I}^{-1}s\tB_J(f)^p
\longrightarrow
C_{J|I}(f)^p
\end{equation*}
is defined by
\begin{equation*}
\eta(\omega)
=(\sum_{j=1}^{l}\dlog t_{i_j} \otimes \eta_j(\omega),
\pr_I \cdot \widetilde{\theta}_{J|I}(f)(\omega))
\end{equation*}
for $\omega \in \iota_{|I}^{-1}s\tB_J(f)^p$.
It is easy to check that $\eta$ defines a morphism of complexes.
Then the diagram
\begin{equation}
\begin{CD}
0 @. 0 \\
@VVV @VVV \\
f_I^{-1}\iota_{J|I}^{-1}\omega_{Y[J]}^1
\otimes
\iota_{J|I}^{-1}sB_J(f)[-1]
@>{f_I^{-1}\iota_{J|I}^{-1}\pr \otimes \theta_{J|I}(f)[-1]}>>
f_I^{-1}\iota_{J|I}^{-1}\omega_{Y[J]/Y_I}^1
\otimes
sB_I(f)[-1] \\
@VVV @VVV \\
\iota_{J|I}^{-1}s\tB_J(f)\bigl/G^2\iota_{J|I}^{-1}s\tB_J(f)
@>{\eta}>>
C_{J|I}(f) \\
@VVV @VVV \\
\iota_{J|I}^{-1}sB_J(f)
@>{\theta_{J|I}(f)}>>
sB_I(f) \\
@VVV @VVV \\
0 @. 0
\end{CD}
\end{equation}
is commutative with exact columns.
Thus we obtain the conclusion.
\end{proof}

\begin{defn}
\label{definition of N}
The morphism
\begin{equation}
R^n(f_I)_{\ast}(\nu_{j|I}(f)):
R^n(f_I)_{\ast}sB_I(f)
\longrightarrow
R^n(f_I)_{\ast}sB_I(f)
\end{equation}
induced by $\nu_{j|I}(f)$ for every $n$
is denoted by $N_{j|I}(f)$ for short.
We set
\begin{equation}
\label{definition of NJI(f;c):eq}
N_{J|I}(f;c)=\sum_{j \in J}c_jN_{j|I}(f):
R^n(f_I)_{\ast}sB_I(f)
\longrightarrow
R^n(f_I)_{\ast}sB_I(f)
\end{equation}
for $J \subset I$
and for $c=(c_j)_{j \in J} \in \bC^J$.
We simply denote $N_{J|I}(f)=N_{J|I}(f;e_J)$.
We use the convention
$N_I(f;c)=N_{I|I}(f;c)$,
$N_I(f)=N_I(f;e_I)$,
$N(f;c)=N_{\ski}(f;c)$
and $N(f)=N(f;e)$.
\end{defn}

\begin{lem}
For $K \subset I \subset \ski$,
we have
\begin{align}
&N_{j|I}(f)(F^pR^n(f_I)_{\ast}sB_I(f))
\subset F^{p-1}R^n(f_I)_{\ast}sB_I(f) \\
&N_{j|I}(f)(L(K)_mR^n(f_I)_{\ast}sB_I(f))
\subset L(K)_mR^n(f_I)_{\ast}sB_I(f)
\end{align}
for all $m,n,p$.
Moreover,
we have
\begin{equation}
N_{j|I}(f)(L(K)_mR^n(f_I)_{\ast}sB_I(f))
\subset L(K)_{m-2}R^n(f_I)_{\ast}sB_I(f)
\end{equation}
for all $m, n$
if $j \in K$.
Therefore
\begin{equation}
N_{J|I}(f;c)(L(K)_mR^n(f_I)_{\ast}sB_I(f))
\subset L(K)_{m-2}R^n(f_I)_{\ast}sB_I(f)
\end{equation}
for $J \subset K$, for $c \in \bC^J$ and for all $m,n,p$.
\end{lem}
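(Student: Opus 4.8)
The plan is to obtain every inclusion by transporting the chain-level statements of Lemma \ref{lemma for nu and L} up to cohomology via the functor $R^n(f_I)_{\ast}$, using only the functoriality of induced filtrations. First I recall that, by Definition \ref{definition of N}, the operator $N_{j|I}(f)$ is by construction $R^n(f_I)_{\ast}(\nu_{j|I}(f))$, and that the filtrations $F$ and $L(K)$ on $R^n(f_I)_{\ast}sB_I(f)$ are the filtrations induced from $F$ and $L(K)$ on the complex $sB_I(f)$. Concretely, $F^pR^n(f_I)_{\ast}sB_I(f)$ is the image of the canonical morphism $R^n(f_I)_{\ast}(F^psB_I(f)) \longrightarrow R^n(f_I)_{\ast}sB_I(f)$ induced by the inclusion $F^psB_I(f) \hookrightarrow sB_I(f)$, and likewise $L(K)_mR^n(f_I)_{\ast}sB_I(f)$ is the image of $R^n(f_I)_{\ast}(L(K)_msB_I(f)) \longrightarrow R^n(f_I)_{\ast}sB_I(f)$.

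Next I use that Lemma \ref{lemma for nu and L} says precisely that $\nu_{j|I}(f)$ restricts to morphisms of complexes $F^psB_I(f) \longrightarrow F^{p-1}sB_I(f)$ and $L(K)_msB_I(f) \longrightarrow L(K)_msB_I(f)$, and, when $j \in K$, to $L(K)_msB_I(f) \longrightarrow L(K)_{m-2}sB_I(f)$. Each such restriction fits into a commutative square whose vertical arrows are the filtration inclusions and whose bottom arrow is $\nu_{j|I}(f)$ on $sB_I(f)$. Applying $R^n(f_I)_{\ast}$ produces a commutative square whose bottom arrow is $N_{j|I}(f)$; a direct diagram chase (an element of $F^pR^n(f_I)_{\ast}sB_I(f)$ is the image of some class in $R^n(f_I)_{\ast}(F^psB_I(f))$, and its image under $N_{j|I}(f)$ factors through $R^n(f_I)_{\ast}(F^{p-1}sB_I(f))$) gives $N_{j|I}(f)(F^pR^n(f_I)_{\ast}sB_I(f)) \subset F^{p-1}R^n(f_I)_{\ast}sB_I(f)$. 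The identical argument for $L(K)$ yields the inclusion into $L(K)_m$ in general and into $L(K)_{m-2}$ when $j \in K$.

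Finally, for the concluding assertion I invoke linearity. Since $N_{J|I}(f;c)=\sum_{j \in J}c_jN_{j|I}(f)$ and $J \subset K$ forces every index $j$ of the sum to lie in $K$, each summand carries $L(K)_mR^n(f_I)_{\ast}sB_I(f)$ into $L(K)_{m-2}R^n(f_I)_{\ast}sB_I(f)$ by the previous step; as $L(K)_{m-2}R^n(f_I)_{\ast}sB_I(f)$ is a submodule, so does any $\bC$-linear combination, which is exactly the claim. I do not anticipate a genuine obstacle: the entire content is carried by the chain-level Lemma \ref{lemma for nu and L}, and the only point needing (routine) verification is that passing to the image-filtration commutes with $R^n(f_I)_{\ast}$, i.e. the standard functoriality of induced filtrations under a filtered morphism of complexes.
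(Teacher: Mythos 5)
Your proposal is correct and is exactly the argument the paper intends: its proof consists of the single line ``Easy by Lemma \ref{lemma for nu and L}'', and your write-up simply makes explicit the routine transport of the chain-level inclusions through $R^n(f_I)_{\ast}$ via functoriality of the induced filtrations, plus linearity for $N_{J|I}(f;c)$. No gap and no genuinely different route --- you have just spelled out the details the author left implicit.
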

\begin{proof}
Easy by Lemma \ref{lemma for nu and L}.
\end{proof}

\begin{lem}
The diagram
\begin{equation}
\begin{CD}
R^n(f_I)sB_I(f)
@>{\nabla(I)}>>
\omega_{Y[I]}^1
\otimes
R^n(f_I)sB_I(f) \\
@V{N_{j|I}(f)}VV @VV{\id \otimes N_{j|I}(f)}V \\
R^n(f_I)sB_I(f)
@>{\nabla(I)}>>
\omega_{Y[I]}^1
\otimes
R^n(f_I)sB_I(f)
\end{CD}
\end{equation}
is commutative for every $j \in I$.
Therefore the diagram
\begin{equation}
\begin{CD}
R^n(f_I)sB_I(f)
@>{\nabla(I)}>>
\omega_{Y[I]}^1
\otimes
R^n(f_I)sB_I(f) \\
@V{N_{J|I}(f;c)}VV @VV{\id \otimes N_{J|I}(f;c)}V \\
R^n(f_I)sB_I(f)
@>{\nabla(I)}>>
\omega_{Y[I]}^1
\otimes
R^n(f_I)sB_I(f)
\end{CD}
\end{equation}
is commutative for all $J \subset I \subset \ski$
and for all $c \in \bC^J$.
\end{lem}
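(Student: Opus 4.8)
The plan is to obtain the commutativity of the diagram by applying the derived direct image functor $R(f_I)_*$, followed by the cohomology functor $\coh^n$, to the commutative square in the derived category already furnished by Corollary \ref{corollary for commutativity of nu and gamma}. The essential work has been done there; what remains is to match the resulting maps with $\nabla(I)$ and $\id \otimes N_{j|I}(f)$.

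First I would recall the two defining identities on which everything rests. By definition $\nabla(I)=R^{p+n}(f_I)_{\ast}(\gamma_I(f))$; taking $p=0$ and invoking the identification of Lemma \ref{lemma on grGstBI} together with the local freeness of $\omega^1_{Y[I]}$, this reads
\[
R^n(f_I)_{\ast}(\gamma_I(f))=\nabla(I):
R^n(f_I)_{\ast}sB_I(f)
\longrightarrow
\omega^1_{Y[I]} \otimes_{\cO_{Y[I]}} R^n(f_I)_{\ast}sB_I(f).
\]
Likewise $N_{j|I}(f)=R^n(f_I)_{\ast}(\nu_{j|I}(f))$ by the definition in \ref{definition of N}. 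Since $R(f_I)_{\ast}$ is a triangulated functor, a commutative square in the derived category stays commutative after applying it and passing to cohomology sheaves. Applying this to Corollary \ref{corollary for commutativity of nu and gamma}, the left vertical arrow becomes $N_{j|I}(f)$, both horizontal arrows become $\nabla(I)$, and the right vertical arrow becomes $R^n(f_I)_{\ast}(\id \otimes \nu_{j|I}(f))$.

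The one point requiring care is to identify this last arrow, under the isomorphism of Lemma \ref{lemma on grGstBI}, with $\id \otimes N_{j|I}(f)$. This follows from the naturality of the projection-formula isomorphism and the local freeness of $\omega^1_{Y[I]}$: these transport the operator $\id \otimes \nu_{j|I}(f)$ on $f_I^{-1}\omega^1_{Y[I]} \otimes sB_I(f)$ to $\id \otimes N_{j|I}(f)$ on $\omega^1_{Y[I]} \otimes R^n(f_I)_{\ast}sB_I(f)$, compatibly with the identification defining the target of $\nabla(I)$. This establishes the first diagram for every $j \in I$. The ``in particular'' statement is then immediate by $\bC$-linearity: since $N_{J|I}(f)=\sum_{j \in J}N_{j|I}(f)$ and both $\nabla(I)$ and $\id \otimes (-)$ are additive, summing the commutative squares for the individual $j \in J$ yields the commutative square for $N_{J|I}(f)$.

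I do not expect a genuine obstacle here. The nontrivial homotopy-commutativity was already extracted at the level of complexes in Corollary \ref{corollary for commutativity of nu and gamma}, itself built on Lemma \ref{lemma for nu and residue of nabla} and the behaviour of $\widetilde{\nu}_{j|I}(f)$ with respect to $\gr_G$. The present argument is therefore purely formal bookkeeping: transporting derived-category commutativity through $R(f_I)_{\ast}$ and recognizing the induced maps, the only subtlety being the functoriality of the projection-formula identification with respect to $\id \otimes \nu_{j|I}(f)$.
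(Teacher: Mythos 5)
Your proposal is correct and takes essentially the same route as the paper, whose entire proof is ``Easy by Corollary \ref{corollary for commutativity of nu and gamma}'': applying $R(f_I)_{\ast}$ and passing to cohomology in that derived-category square, then identifying the induced arrows with $\nabla(I)$, $N_{j|I}(f)$ and $\id \otimes N_{j|I}(f)$ via the identification of Lemma \ref{lemma on grGstBI} and the local freeness of $\omega^1_{Y[I]}$, is precisely the intended argument, and your additivity observation disposes of the ``in particular'' statement just as the paper implicitly does.
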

\begin{proof}
Easy by Corollary \ref{corollary for commutativity of nu and gamma}.
\end{proof}

\begin{thm}
\label{thm on the compatibility of N and nu}
For the residue of $\nabla$
in \eqref{the residue of nabla:eq},
the diagram
\begin{equation}
\begin{CD}
\cO_{Y[J]}
\otimes
\iota_{|I}^{-1}R^nf_{\ast}\omega_{X/Y}
@>{\id \otimes \iota_{j|I}^{-1}\res_j(\nabla)}>>
\cO_{Y[J]}
\otimes
\iota_{|I}^{-1}R^nf_{\ast}\omega_{X/Y} \\
@VVV @VVV \\
R^n(f_I)_{\ast}sB_I(f)
@>>{-N_{j|I}(f)}>
R^n(f_I)_{\ast}sB_I(f)
\end{CD}
\end{equation}
is commutative for $j \in I$,
where the two vertical arrows are the morphisms
induced by $\theta_{|I}(f)$.
\end{thm}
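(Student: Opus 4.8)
The plan is to derive this from Lemma \ref{lemma for nu and residue of nabla} specialized to $J=\emptyset$ (which is allowed since $j \in I$ forces $\emptyset \subsetneq I$), by applying the direct image functor and then taking the residue along $E_j$. For $J=\emptyset$ we have $sB_{\emptyset}(f)=\omega_{X/Y}$, $Y[\emptyset]=Y$, $\theta_{\emptyset|I}(f)=\theta_{|I}(f)$, and $(-1)^{|\emptyset|+1}=-1$, so the lemma furnishes a square, commutative in the derived category,
\begin{equation*}
\begin{CD}
\omega_{X/Y} @>{\gamma_{\emptyset}(f)}>>
  f^{-1}\omega_Y^1 \otimes_{f^{-1}\cO_Y} \omega_{X/Y} \\
@V{\theta_{|I}(f)}VV @VV{\id \otimes \theta_{|I}(f)}V \\
sB_I(f) @>>{-\mu_{\emptyset|I}(f)}>
  f^{-1}\omega_Y^1 \otimes_{f^{-1}\cO_Y} sB_I(f),
\end{CD}
\end{equation*}
where $\mu_{\emptyset|I}(f)=\sum_{l\in I}\dlog t_l \otimes \nu_{l|I}(f)$ by Definition \ref{definition of nu}.

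First I would apply $R^nf_*$ to this square. Since $\omega_Y^1$ is locally free, the projection formula identifies $R^nf_*$ of the two right-hand corners with $\omega_Y^1 \otimes_{\cO_Y} R^nf_*\omega_{X/Y}$ and with $\omega_Y^1 \otimes_{\cO_Y} R^n(f_I)_*sB_I(f)$, respectively; here I use that $sB_I(f)$ is supported on $X[I]$ and $f|_{X[I]}=\iota_I f_I$, so that $R^nf_*sB_I(f)=\iota_{I*}R^n(f_I)_*sB_I(f)$. By the very definition $\nabla = R^nf_*(\gamma_{\emptyset}(f))$ the top arrow becomes the Gauss--Manin connection $\nabla$, while the bottom arrow becomes $-\sum_{l\in I}\dlog t_l \otimes N_{l|I}(f)$ with $N_{l|I}(f)=R^n(f_I)_*(\nu_{l|I}(f))$. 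Thus I obtain
\begin{equation*}
(\id \otimes \theta_{|I}(f)) \circ \nabla
= -\Bigl( \sum_{l \in I} \dlog t_l \otimes N_{l|I}(f) \Bigr) \circ \theta_{|I}(f).
\end{equation*}

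Next I would compose with $\res_{E_j} \otimes \id$, where $\res_{E_j}:\omega_Y^1 \to \cO_{E_j}$ sends $\dlog t_j\mapsto 1$ and annihilates $\dlog t_l$ for $l\neq j$ together with the holomorphic part. Because $\res_{E_j}$ acts on the $\omega_Y^1$-factor it commutes with $\id\otimes\theta_{|I}(f)$, so the left-hand side becomes $\theta_{|I}(f)\circ\res_{E_j}(\nabla)$ by the definition of $\res_{E_j}(\nabla)$ in \eqref{the residue of nabla:eq}; here the target $\cO_{E_j}\otimes R^n(f_I)_*sB_I(f)$ collapses to $R^n(f_I)_*sB_I(f)$ since this sheaf is supported on $Y[I]\subset E_j$, which is exactly why $\theta_{|I}(f)$ factors through the restriction to $E_j$ appearing in the vertical arrows of the statement. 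On the right-hand side only the summand $l=j$ survives, giving $-N_{j|I}(f)\circ\theta_{|I}(f)$. Combining the two sides yields $\theta_{|I}(f)\circ\res_{E_j}(\nabla)=-N_{j|I}(f)\circ\theta_{|I}(f)$, the asserted commutativity.

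The bookkeeping above is routine; the step that requires care is the passage from the derived-category square of Lemma \ref{lemma for nu and residue of nabla} to an honest commutative square of $\cO_{Y[I]}$-modules after applying $R^nf_*$. In particular I expect the main point to be checking that the identification $\nabla=R^nf_*(\gamma_{\emptyset}(f))$ is genuinely compatible with the residue operation built from the $G$-filtration spectral sequence used to define $\nabla$, and that taking $\res_{E_j}$ really selects the single summand $\nu_{j|I}(f)$ of $\mu_{\emptyset|I}(f)$ rather than a combination of them. Once these compatibilities are in place, the sign $-1$ and the restriction to the single divisor $E_j$ emerge directly from the $(-1)^{|\emptyset|+1}$ factor and from $\res_{E_j}(\dlog t_l)=\delta_{jl}$.
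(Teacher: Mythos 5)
Your proposal is correct and takes essentially the same route as the paper, whose entire proof reads ``By Lemma \ref{lemma for nu and residue of nabla} for $J=\emptyset$, we can easily obtain the conclusion'': you carry out precisely that specialization, filling in the push-forward via the support of $sB_I(f)$ on $X[I]$, the projection formula, the identification $\nabla=R^nf_{\ast}(\gamma_{\emptyset}(f))$, and the selection of the single summand via $\res_{E_j}(\dlog t_l)=\delta_{jl}$. These are exactly the routine verifications the paper's ``easily'' elides, and your flagged points of care are the right ones.
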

\begin{proof}
By Lemma \ref{lemma for nu and residue of nabla}
for $J=\emptyset$,
we can easily obtain the conclusion.
\end{proof}

\begin{rmk}
\label{nu and base change for S}
Let $g: S' \longrightarrow S$ be a morphism of complex manifolds.
Then the construction of $\nu_{j|I}(f)$
is compatible with the base change by $g$.
More precisely, we have the commutative diagram
\begin{equation}
\begin{CD}
g'^{-1}sB_I(f) @>>> sB_I(f') \\
@V{(g')^{-1}\nu_{j|I}(f)}VV @VV{\nu_{j|I}(f')}V \\
g'^{-1}sB_I(f) @>>> sB_I(f')
\end{CD}
\end{equation}
where the horizontal arrows are the morphism
\eqref{base change morphism for sB:eq}.
\end{rmk}

\section{Rational structures}
\label{section for the rational structure}

In this section,
we construct a rational structure on $sB_I(f)$
as in \cite{SteenbrinkLE},
which is inspired by the log geometry of Fontaine-Illusie
developed in \cite{KazuyaKato}.
By using these rational structures,
we prove the local freeness of
$R^n(f_I)_{\ast}\iota_{|I}^{-1}sB_I(f)$,
$\gr_m^{L(K)}R^n(f_I)_{\ast}\iota_{|I}^{-1}sB_I(f)$ etc.\ for
a proper semistable morphism $f$
in the next section.

\begin{para}
\label{situation for hodge theoretic properties}
Let $Y=\Delta^k \times S$ and $E$ be as in \ref{local situation}
and $f:X \longrightarrow Y$ a semistable morphism along $E$
as before.
We use the notation in \ref{notation for Y and E},
\ref{setting for a semistable morphism}
and \ref{local situation}.
\end{para}

\begin{para}
For $I \subset \ski$,
the open immersion
$X \setminus D_I \hookrightarrow X$
is denoted by $j_I$.
Then the sheaf of monoids
\begin{equation*}
M_X(D_I)=(j_I)_{\ast}\cO_{X \setminus D_I}^{\ast} \cap \cO_X
\end{equation*}
defines a log structure on $X$ (see \cite{KazuyaKato}).
We use the notation $M_X(D)$
for $I=\ski$ as before.
We have
\begin{equation*}
\cO_X^{\ast}=M_X(D_{\emptyset})
\subset M_X(D_J)
\subset M_X(D_I)
\subset M_X(D)
\end{equation*}
for $J \subset I \subset \ski$.
A morphism of abelian sheaves
\begin{equation*}
\mathbf{e}(f):\cO_X \longrightarrow M_X(D)\gp
\end{equation*}
is the composite of the exponential map
\begin{equation*}
\cO_X \ni g \mapsto \exp(2\pi\sqrt{-1} g) \in \cO_X^{\ast}
\end{equation*}
and the inclusion $\cO_X^{\ast} \hookrightarrow M_X(D)$.
We denote by
\begin{equation*}
\mathbf{e}(f)_{\bQ}:
\cO_X \longrightarrow M_X(D)\gp_{\bQ}=M_X(D)\gp \otimes_{\bZ} \bQ
\end{equation*}
the base extension of the morphism $\mathbf{e}(f)$.
For $I \subset \ski$,
a morphism of abelian sheaves
\begin{equation*}
\bZ^I \longrightarrow M_X(D)\gp
\end{equation*}
is defined by sending $e_i \in \bZ^I$
to the global section $t_i$ of $M_X(D)$ for $i \in I$.
Then a morphism of $\bQ$-sheaves
\begin{equation*}
\bQ^I \longrightarrow M_X(D)\gp_{\bQ}
\end{equation*}
is obtained by the base extension to $\bQ$.
The direct sum of the morphism above
and $\mathbf{e}(f)_{\bQ}$
gives us the morphism
\begin{equation*}
\bQ^I \oplus \cO_X \longrightarrow M_X(D)_{\bQ}\gp
\end{equation*}
denoted by $\varphi^I(f)$.
We write $\varphi(f)$ for $\varphi^{\ski}(f)$.
On the other hand,
$\varphi^{\emptyset}(f)$
is nothing but the morphism $\mathbf{e}(f)_{\bQ}$

A global section $1$ of $\cO_X$ can be considered
as a global section of $\bQ^I \oplus \cO_X$
by the canonical inclusion $\cO_X \subset \bQ^I \oplus \cO_X$.
Then the global section $1$
is contained in $\kernel(\varphi^I)$ for every $I \subset \ski$.
Hence the complex of $\bQ$-sheaves
\begin{equation*}
\kos(\varphi^I(f);\infty;1)
\end{equation*}
is defined in \cite[Definition 1.8]{FujisawaMHSLSD}.
For $J \subset \overline{I}=\ski \setminus I$,
we have
\begin{equation}
\image(\varphi^I(f))
\subset
M_X(D_{\overline{J}})_{\bQ}\gp
\end{equation}
because $I \subset \overline{J}=\ski \setminus J$.
Then an increasing filtration
$W(J)$ on $\kos(\varphi^I(f);\infty;1)$
is defined as $W(M_X(D_{\overline{J}})\gp_{\bQ})$
in \cite[Definition 1.8]{FujisawaMHSLSD}.
For the case of $J=\{i\}$ with $i \notin I$,
we write $W(i)$ instead of $W(\{i\})$.
\end{para}

\begin{defn}
For $I \subset I' \subset \ski$,
we set
\begin{equation*}
A_{I|I'}(f)^{p,q}
=\kos(\varphi^{I' \setminus I}(f);\infty;1)^{p+|q|+|I|}
\left/\sum_{i \in I}W(i)_{q_i} \right.
\end{equation*}
for $p \in \bnnZ, q \in \bnnZ^I$.
We set
\begin{align*}
&d_0=(-1)^{|I|}d:
A_{I|I'}(f)^{p,q} \longrightarrow A_{I|I'}(f)^{p+1,q} \\
&d_i=(-1)^{|I|}t_i\wedge:
A_{I|I'}(f)^{p,q} \longrightarrow A_{I|I'}(f)^{p,q+e_i}
\qquad \text{$i \in I$,}
\end{align*}
where $d$ on the right hand side of the first equality
denotes the differential of the complex
$\kos(\varphi^{I' \setminus I}(f);\infty;1)$
and where $t_i \wedge$
denotes the morphism of complexes
\begin{equation*}
\kos(\varphi^{I' \setminus I}(f);\infty;1)
\longrightarrow
\kos(\varphi^{I' \setminus I}(f);\infty;1)[1]
\end{equation*}
induced by the wedge product $t_i \wedge$
as in \cite[(1.11)]{FujisawaMHSLSD}.
By setting
\begin{equation}
sA_{I|I'}(f)^n=
\bigoplus_{p+|q|=n}A_{I|I'}(f)^{p,q}
\end{equation}
with the differential
$d=d_0+\sum_{i \in I}d_i$,
we obtain a complex $sA_{I|I'}(f)$.
By definition,
$sA_{\emptyset|I'}(f)$ is nothing but $\kos(\varphi^{I'}(f);\infty;1)$.
We write $sA_I(f)$
for $sA_{I|I}(f)$.
Moreover, $sA(f)$ denotes $sA_{\ski}(f)$.
We use $sA_{I|}(f)$ and $sA_{|I'}(f)$
instead of $sA_{I|\ski}(f)$ and $sA_{\emptyset|I'}(f)$ respectively.

An increasing filtration
$L(K)$ on $A_{I|I'}(f)^{p,q}$ for $K \subset I$
is defined by
\begin{equation*}
L(K)_mA_{I|I'}(f)^{p,q}
=W(K)_{m+2|q_K|+|K|}\kos(\varphi^{I' \setminus I};\infty;1)^{p+|q|+|I|}
\left/\sum_{i \in I}W(i)_{q_i} \right.
\end{equation*}
for every $m$.
It is easy to check that this defines an increasing filtration
$L(K)$ on $sA_{I|I'}(f)$.
We use the notation $L=L(\ski)$ on $sA(f)$.
\end{defn}

\begin{rmk}
We can easily see the inclusion
\begin{equation*}
\supp(A_{I|I'}(f)^{p,q}) \subset X[I]
\end{equation*}
for all $p,q$.
Therefore the complex
$sA_{I|I'}(f)$ is considered
as a complex of $\bQ$-sheaves on $X[I]$
as in \ref{para:2}.
\end{rmk}

\begin{lem}
\label{Lemma of grLsA}
We have an isomorphism of complexes
\begin{equation}
\label{equality for grLsA:eq}
\gr_m^{L(K)}sA_{I|I'}(f)
\simeq
\bigoplus_{q \in \bnnZ^K}
\bigoplus_{\Gamma \in S_{m+2|q|+|K|}^{\ge q+e_K}(\Lambda_K)}
\iota_{\Gamma}^{-1}
sA_{(I \setminus K)|(I' \setminus K)}(\pi_Kf)[-m-2|q|]
\end{equation}
for every integer $m$,
under which
the filtration
$L(J)\gr_m^{L(K)}sA_{I|I'}(f)$
coincides with the direct sum of
\begin{equation}
L(J \setminus K \cap J)
[\,|\Gamma \cap \Lambda_{K \cap J}|-2|q_{K \cap J}|-|K \cap J|\,]\,
\iota_{\Gamma}^{-1}sA_{(I \setminus K)|(I' \setminus K)}(\pi_Kf)[-m-2|q|]
\end{equation}
over the same index set
as in \eqref{equality for grLsA:eq}.
\end{lem}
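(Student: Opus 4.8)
The plan is to mirror the proof of Lemma \ref{lemma on grL(K)sB}, with the logarithmic de Rham complex replaced throughout by the Koszul complex $\kos(\varphi^{I' \setminus I}(f);\infty;1)$ and its weight filtration. Since the assertion is local on $X$, I would first apply Lemma \ref{local description of semistable morphism} to reduce to the product model in which $f$ is a product of standard semistable morphisms and each irreducible component $D_\lambda$ ($\lambda \in \Lambda$) is a coordinate hyperplane. The key input is the residue morphism for the weight filtration $W(K)$ on $\kos(\varphi^{I'\setminus I}(f);\infty;1)$, the Koszul-complex counterpart of the Poincar\'e residue supplied by \cite{FujisawaMHSLSD}; granting this, the desired identity reduces to the same local computation as in \cite[Proposition 3.6]{DeligneED}.

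First I would fix $K \subset I$ and compute $\gr_m^{L(K)}sA_{I|I'}(f)$ directly from the defining relation $L(K)_m = W(K)_{m+2|q_K|+|K|}$, applied termwise to $A_{I|I'}(f)^{p,q}$. The graded pieces of $W(K)$ are computed by iterated residues along the components indexed by $\Lambda_K$. After accounting for the normalizing shift, the weight-$m$ piece selects, for each $q \in \bnnZ^K$, precisely those $\Gamma \subset \Lambda_K$ that meet each $\Lambda_i$ ($i \in K$) in at least $q_i+1$ points and have total cardinality $m+2|q|+|K|$, i.e. $\Gamma \in S^{\ge q+e_K}_{m+2|q|+|K|}(\Lambda_K)$. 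On each such stratum the residue identifies the resulting complex with $\iota_\Gamma^{-1}sA_{(I\setminus K)|(I'\setminus K)}(\pi_K f)$, while the degree shift $[-m-2|q|]$ records the $|q|$ occurrences of the operators $t_i\wedge$ together with the residue normalization. Summing over $q$ and $\Gamma$ produces the first displayed isomorphism.

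Next I would check compatibility with $L(J)$ for arbitrary $J \subset I$ by splitting $J=(J\cap K)\sqcup(J\setminus K)$. The part of the defining filtration $W(J)$ supported on $\Lambda_{J\setminus K}$ is unaffected by the residue for $W(K)$ and descends to the filtration $L(J\setminus K\cap J)$ on the restricted complex $sA_{(I\setminus K)|(I'\setminus K)}(\pi_K f)$. The part supported on $\Lambda_{K\cap J}$ interacts with the residue filtration, and the standard formula for the behaviour of a weight filtration under its own residue yields the index shift $[\,|\Gamma\cap\Lambda_{K\cap J}|-2|q_{K\cap J}|-|K\cap J|\,]$, exactly as in the de Rham case.

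The main obstacle is the bookkeeping for the two interacting filtrations $W(K)$ and $W(J)$ when $K\cap J\neq\emptyset$: one must verify that the normalizing shifts $+2|q_K|+|K|$ and $+2|q_{K\cap J}|+|K\cap J|$ combine with the residue grading so that the net shift on $\Lambda_{K\cap J}$ comes out to be exactly $|\Gamma\cap\Lambda_{K\cap J}|-2|q_{K\cap J}|-|K\cap J|$. Once the single-residue computation is set up in the product model, this is the same verification as was already carried out for $sB$ in Lemma \ref{lemma on grL(K)sB}, and no essentially new difficulty beyond careful indexing appears.
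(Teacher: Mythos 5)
Your proposal is correct and follows essentially the same route as the paper, whose entire proof is the citation of Proposition 1.10 in \cite{FujisawaMHSLSD} — precisely the Koszul-complex analogue of the residue computation that you invoke, with your local-model reduction and $J=(J\cap K)\sqcup(J\setminus K)$ bookkeeping merely unpacking what that proposition delivers. You also correctly identify the graded pieces with $\iota_{\Gamma}^{-1}sA_{(I\setminus K)|(I'\setminus K)}(\pi_Kf)$ rather than with $sA_{(I\setminus K)|(I'\setminus K)}(f_{\Gamma})$, a distinction the paper defers to Lemma \ref{lemma for closed immersion}.
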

\begin{proof}
Easy from Proposition 1.10 in \cite{FujisawaMHSLSD}.
Here we note that we need the total order on $\Lambda$
in \ref{setting for a semistable morphism}
as in Lemma \ref{lemma on grL(K)sB}.
\end{proof}

\begin{para}
Under the situation
in Lemma \ref{Lemma of grLsA},
we have the commutative diagram of the canonical morphisms
\begin{equation*}
\begin{CD}
\iota_{\Gamma}^{-1}(\bQ^{I' \setminus I} \oplus \cO_X)
@>{\iota_{\Gamma}^{-1}\varphi^{I' \setminus I}(\pi_K f)}>>
\iota_{\Gamma}^{-1}M_X(D_{\overline{K}})\gp_{\bQ} \\
@VVV @VVV \\
\bQ^{I' \setminus I} \oplus \cO_{X[\Gamma]}
@>>{\varphi^{I' \setminus I}(f_{\Gamma})}>
M_{X[\Gamma]}(D_{\overline{K}} \cap X[\Gamma])\gp_{\bQ}
\end{CD}
\end{equation*}
for $\Gamma \in S_{m+2|q|+|K|}^{\ge q+e_K}(\Lambda_K)$,
which induces morphisms of complexes
\begin{equation}
\label{the canonical morphism of kos for closed immersion:eq}
\iota_{\Gamma}^{-1}\kos(\varphi^{I' \setminus I}(\pi_K f);\infty;1)
\longrightarrow
\kos(\varphi^{I' \setminus I}(f_{\Gamma});\infty;1)
\end{equation}
and
\begin{equation}
\label{the canonical morphism of A for closed immersion:eq}
\iota_{\Gamma}^{-1}sA_{(I \setminus K)|(I' \setminus K)}(\pi_K f)
\longrightarrow
sA_{(I \setminus K)|(I' \setminus K)}(f_{\Gamma})
\end{equation}
for $K \subset I \subset I' \subset \ski$.
\end{para}

\begin{lem}
\label{lemma for closed immersion}
The morphism
\eqref{the canonical morphism of A for closed immersion:eq}
is a filtered quasi-isomorphism
with respect to $L(J)$
for $J \subset I \setminus K$.
\end{lem}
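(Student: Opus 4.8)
The plan is to reduce the assertion, in two steps, to the most elementary comparison of Koszul complexes, for which one may appeal to \cite{FujisawaMHSLSD}. Write $I_0=I\setminus K$ and $I_0'=I'\setminus K$, so that the morphism in question is
\begin{equation*}
\iota_{\Gamma}^{-1}sA_{I_0|I_0'}(\pi_Kf)
\longrightarrow
sA_{I_0|I_0'}(f_{\Gamma}),
\end{equation*}
and note that $I_0'\setminus I_0=I'\setminus I$. To prove that this is a filtered quasi-isomorphism for $L(J)$ with $J\subset I_0$, it suffices to show that $\gr_m^{L(J)}$ of it is a quasi-isomorphism for every $m$, since $L(J)$ is a finite filtration.

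For the first step I would apply Lemma \ref{Lemma of grLsA} to both the source and the target, taking the ``$K$'' of that lemma to be $J$. The decompositions there are induced by the residue morphisms of \cite[\RomII.3.7]{DeligneED}, which are natural with respect to the restriction $\iota_{\Gamma}$; hence the displayed morphism is carried to a direct sum, over matching index sets, of morphisms of the very same shape \eqref{the canonical morphism of A for closed immersion:eq}, but now for the smaller index data $(I_0\setminus J\mid I_0'\setminus J)$, for the semistable morphism $\pi_J\pi_Kf=\pi_{K\cup J}f$, and with no residual filtration to keep track of. Thus the filtered statement for $L(J)$ follows once one knows the plain (unfiltered) quasi-isomorphism statement, that is, the case $J=\emptyset$, for an arbitrary triple $K\subset I\subset I'$.

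For the second step I would establish this unfiltered statement by the same device, now using the full filtration $L(I_0)$: applying Lemma \ref{Lemma of grLsA} with its ``$K$'' equal to $I_0$ identifies $\gr_m^{L(I_0)}$ of both the source and the target with direct sums of shifted copies of $sA_{\emptyset|(I'\setminus I)}(\cdot)=\kos(\varphi^{I'\setminus I}(\cdot);\infty;1)$, indexed by the relevant strata of $D_{I_0}$. Under these identifications the comparison morphism becomes a direct sum of the canonical Koszul morphisms \eqref{the canonical morphism of kos for closed immersion:eq}, each attached to a closed immersion of a normal crossing stratum into $X$. Since the question is local, I would invoke the local model of Lemma \ref{local description of semistable morphism} and conclude that each of these is a quasi-isomorphism by the corresponding statement in \cite{FujisawaMHSLSD}, which describes precisely how $\kos(\varphi^{\bullet}(\cdot);\infty;1)$ behaves under restriction to a stratum. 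Combined with the first step, this yields the lemma.

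The main obstacle will be the bookkeeping of the two stratifications entering these graded computations: on the source the strata run over subsets of the component set $\Lambda_{I_0}$ of $D_{I_0}$ on $X$, whereas on the target they run over the components of $D_{I_0}\cap X[\Gamma]$ on $X[\Gamma]$, and one must verify that the natural residue maps identify these indexings compatibly, so that the graded morphism really is the expected direct sum of the Koszul comparisons \eqref{the canonical morphism of kos for closed immersion:eq}. Once this matching is in place, the naturality of the residue morphisms of \cite[\RomII.3.7]{DeligneED} together with the cited local computation furnishes the quasi-isomorphism on each $\gr_m^{L(J)}$, and the filtered assertion for $L(J)$ drops out.
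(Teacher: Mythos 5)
Your proposal follows essentially the same route as the paper: both reduce via Lemma \ref{Lemma of grLsA} (applied once for general $J$ and once with the full filtration $L(I\setminus K)$) to the unfiltered Koszul comparison \eqref{the canonical morphism of kos for closed immersion:eq}, verified after passing to the local model of Lemma \ref{local description of semistable morphism}; the only differences are the ordering (you argue top-down, the paper bottom-up) and the base-case reference. Note that the paper deduces the Koszul quasi-isomorphism from Lemma (1.4) of \cite{SteenbrinkLE}, after checking that $\iota_{\Gamma}^{-1}\varphi^{I' \setminus I}(\pi_K f)$ and $\varphi^{I' \setminus I}(f_{\Gamma})$ have equal kernel and cokernel in the local model, rather than from a restriction statement in \cite{FujisawaMHSLSD}.
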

\begin{proof}
Because the problem is of local nature,
Lemma \ref{local description of semistable morphism}
enables us to assume the following:
\begin{itemize}
\item
$X=X[\Gamma] \times \Delta^{\Gamma}$
where $\Delta^{\Gamma}$ denotes the polydisc
with the coordinate functions $(x_{\lambda})_{\lambda \in \Gamma}$.
\item
$\pi_K f$ coincides with the composite of the projection
$X=X[\Gamma] \times \Delta^{\Gamma} \longrightarrow X[\Gamma]$
and $f_{\Gamma}:X[\Gamma] \longrightarrow Y[K]$
via the identification $Y[K]=Y_K$.
\end{itemize}
The morphism \eqref{the canonical morphism of kos for closed immersion:eq}
is a quasi-isomorphism
by Lemma (1.4) in \cite{SteenbrinkLE}
because
\begin{equation}
\kernel(\iota_{\Gamma}^{-1}\varphi^{I' \setminus I}(\pi_K f))
=\kernel(\varphi^{I' \setminus I}(f_{\Gamma}))
\end{equation}
and
\begin{equation}
\cok(\iota_{\Gamma}^{-1}\varphi^{I' \setminus I}(\pi_K f))
=\cok(\varphi^{I' \setminus I}(f_{\Gamma})) .
\end{equation}
Therefore the morphism
\eqref{the canonical morphism of A for closed immersion:eq}
is a quasi-isomorphism for the case of $K=I$.
Then the morphism
\eqref{the canonical morphism of A for closed immersion:eq}
is a filtered quasi-isomorphism with respect to $L(I \setminus K)$
by Lemma \ref{Lemma of grLsA}.
In particular, the morphism
\eqref{the canonical morphism of A for closed immersion:eq}
is a quasi-isomorphism.
Therefore we obtain the conclusion by Lemma \ref{Lemma of grLsA} again.
\end{proof}

\begin{para}
Let $g: S' \longrightarrow S$ be a morphism of complex manifolds.
We set $f': X' \longrightarrow Y'=\Delta^k \times S'$ etc.
as in \ref{notation for a base change for S}.
Then we have the commutative diagram
\begin{equation}
\begin{CD}
g'^{-1}(\bQ^{I' \setminus I} \oplus \cO_X)
@>{g'^{-1}\varphi^{I' \setminus I}(f)}>>
g'^{-1}M_X(D)\gp_{\bQ} \\
@VVV @VVV \\
\bQ^{I' \setminus I} \oplus \cO_{X'}
@>>{\varphi^{I' \setminus I}(f')}>
M_{X'}(D)\gp_{\bQ}
\end{CD}
\end{equation}
which induces a morphism of complexes
\begin{equation}
g'^{-1}\kos(\varphi^{I' \setminus I}(f);\infty;1)
\longrightarrow
\kos(\varphi^{I' \setminus I}(f');\infty;1)
\end{equation}
for $I \subset I' \subset \ski$.
Then we obtain the morphism of complexes
\begin{equation}
\label{base change morphism for sA:eq}
g'^{-1}sA_{I|I'}(f) \longrightarrow sA_{I|I'}(f')
\end{equation}
which preserves the filtration $L(K)$ for $K \subset I$.
\end{para}

\begin{lem}
\label{lemma for base change isomorphism for sA}
The morphism \eqref{base change morphism for sA:eq}
is a filtered quasi-isomorphism
with respect to the filtration $L(K)$ for $K \subset I$.
\end{lem}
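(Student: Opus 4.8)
The plan is to mirror the proof of Lemma \ref{lemma for closed immersion}: reduce everything, via the description of the graded pieces in Lemma \ref{Lemma of grLsA}, to a statement about the Koszul complexes $\kos(\varphi^{\bullet}(\cdot);\infty;1)$, and then invoke Lemma (1.4) of \cite{SteenbrinkLE} by comparing kernels and cokernels of the relevant morphisms $\varphi$. Since the morphism \eqref{base change morphism for sA:eq} preserves $L(K)$, it suffices to show that it induces a quasi-isomorphism on $\gr_m^{L(K)}$ for every $m$.

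First I would treat the case $I=\emptyset$, where $sA_{\emptyset|I'}(f)=\kos(\varphi^{I'}(f);\infty;1)$ and the morphism in question is
\[
g'^{-1}\kos(\varphi^{I'}(f);\infty;1)\longrightarrow\kos(\varphi^{I'}(f');\infty;1).
\]
By Lemma (1.4) of \cite{SteenbrinkLE} it is enough to check that the base change morphism induces isomorphisms
\[
g'^{-1}\kernel(\varphi^{I'}(f))\overset{\simeq}{\longrightarrow}\kernel(\varphi^{I'}(f')),\qquad
g'^{-1}\cok(\varphi^{I'}(f))\overset{\simeq}{\longrightarrow}\cok(\varphi^{I'}(f')),
\]
where I use that $g'^{-1}$ is exact, so that $g'^{-1}\kernel(\varphi^{I'}(f))=\kernel(g'^{-1}\varphi^{I'}(f))$ and likewise for cokernels.

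This kernel/cokernel comparison is where the real work lies, so I expect it to be the main obstacle. I would carry it out by a local computation based on the product description of semistable morphisms (Lemma \ref{local description of semistable morphism}) together with the fact that $D'=g'^{\ast}D$, so that the index set $\Lambda$, its decomposition $\Lambda=\coprod_i\Lambda_i$, and hence all the divisorial combinatorics are unchanged by the base change. Concretely, since $\exp$ is surjective onto $\cO_X^{\ast}$ and $\cO_X^{\ast}$ is divisible, the map $\mathbf{e}(f)_{\bQ}$ is surjective onto the unit part of $M_X(D)\gp_{\bQ}$; thus $\cok(\varphi^{I'}(f))$ is computed from $M_X(D)\gp_{\bQ}/\cO_X^{\ast}$ modulo the divisor classes of the $t_i$, a combinatorial sheaf supported on $D$ that commutes with $g'^{-1}$ because inverse image commutes with pushforward along the closed immersions $D_{\lambda}\hookrightarrow X$ in the cartesian squares. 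Similarly $\kernel(\varphi^{I'}(f))$ is an extension of the purely combinatorial subsheaf $\{c\in\bQ^{I'};\sum_i c_i[t_i]=0\}$ by the constant sheaf $\kernel(\mathbf{e}(f)_{\bQ})$, both of which are manifestly compatible with $g'^{-1}$. The delicate, if routine, point is to verify that the base change morphism actually realizes these identifications.

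With the case $I=\emptyset$ in hand, I would deduce the general unfiltered statement from Lemma \ref{Lemma of grLsA}: taking $K=I$, each graded piece $\gr_m^{L(I)}sA_{I|I'}(f)$ is a direct sum of complexes $\iota_{\Gamma}^{-1}sA_{\emptyset|(I'\setminus I)}(\pi_I f)=\iota_{\Gamma}^{-1}\kos(\varphi^{I'\setminus I}(\pi_I f);\infty;1)$, and — since the residue isomorphism of Lemma \ref{Lemma of grLsA} is compatible with base change and $g'^{-1}\iota_{\Gamma}^{-1}$ is the restriction of the base change to $X[\Gamma]$ — the morphism $\gr_m^{L(I)}$ of \eqref{base change morphism for sA:eq} is a direct sum of Koszul base change morphisms for $\pi_I f$. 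These are quasi-isomorphisms by the case just treated, so \eqref{base change morphism for sA:eq} is a filtered quasi-isomorphism for $L(I)$, in particular a quasi-isomorphism, for all $I\subset I'$. Finally, for arbitrary $K\subset I$, Lemma \ref{Lemma of grLsA} identifies $\gr_m^{L(K)}$ of \eqref{base change morphism for sA:eq} with a direct sum of base change morphisms for $sA_{(I\setminus K)|(I'\setminus K)}(\pi_K f)$, which are quasi-isomorphisms by the unfiltered statement applied to the semistable morphism $\pi_K f$ over $Y_K$. Hence \eqref{base change morphism for sA:eq} is a quasi-isomorphism on every $\gr_m^{L(K)}$, i.e.\ a filtered quasi-isomorphism with respect to $L(K)$, as required.
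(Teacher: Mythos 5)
Your proposal is correct and follows essentially the same route as the paper: the paper's own proof observes exactly the kernel/cokernel identifications $\kernel(g'^{-1}\varphi^{I'\setminus I}(f))=\kernel(\varphi^{I'\setminus I}(f'))$ and $\cok(g'^{-1}\varphi^{I'\setminus I}(f))=\cok(\varphi^{I'\setminus I}(f'))$ and then says to proceed as in Lemma \ref{lemma for closed immersion}, i.e.\ Lemma (1.4) of \cite{SteenbrinkLE} at the Koszul level followed by the bootstrap through the graded pieces of Lemma \ref{Lemma of grLsA}, which is precisely your argument. You merely spell out in more detail the kernel/cokernel comparison (which the paper asserts without proof) and the reduction chain $K=I$, then general $K\subset I$ via the unfiltered statement for $\pi_K f$.
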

\begin{proof}
Because we have
\begin{equation}
\kernel(g'^{-1}\varphi^{I' \setminus I}(f))
=\kernel(\varphi^{I' \setminus I}(f'))
\end{equation}
and
\begin{equation}
\cok(g'^{-1}\varphi^{I' \setminus I}(f))
=\cok(\varphi^{I' \setminus I}(f')) ,
\end{equation}
we can proceed the same way as in Lemma \ref{lemma for closed immersion}.
\end{proof}

\begin{defn}
For $I \subset \ski$,
the projection
\begin{equation*}
\bQ^I \oplus \cO_X \longrightarrow \cO_X
\end{equation*}
is denoted by $\tau^I(f)$ for a while.
Then a morphism
\begin{equation*}
\psi^I(f): \kos(\varphi^I(f);n)^p \longrightarrow \omega^p_X
\end{equation*}
is defined by setting
\begin{equation*}
\psi^I(f)(x_1^{[n_1]}x_2^{[n_2]} \cdots x_k^{[n_k]} \otimes y)
=\frac{(2\pi\sqrt{-1})^{-p}}{n_1!n_2! \cdots n_k!}
\tau^I(x_1)^{n_1}\tau^I(x_2)^{n_2} \cdots \tau^I(x_k)^{n_k}
(\bigwedge^p\dlog)(y)
\end{equation*}
for $x_1, x_2, \dots, x_k \in \bQ^I \oplus \cO_X$, $y \in \bigwedge^pM_X(D)\gp$
and $n_1, n_2, \dots, n_k \in \bnnZ$ with $n_1+n_2+\dots+n_k=n-p$
as in \cite[(2.4)]{FujisawaMHSLSD}.
It is easy to check that these morphisms for all $n$
induce a morphism
\begin{equation}
\psi^I(f): \kos(\varphi^I(f);\infty;1)^p \longrightarrow \omega^p_X
\end{equation}
denoted by the same letter $\psi^I(f)$.
We simply denote $\psi^{\emptyset}(f)$ by $\psi(f)$.
Note that the morphism $\psi^I(f)$
does not define a morphism of complexes
except for the case of $I=\emptyset$.
\end{defn}

\begin{lem}
\label{lemma for t^ and dlog t ^}
The diagram
\begin{equation*}
\begin{CD}
\kos(\varphi^I(f);\infty;1)^p @>{\psi^I(f)}>> \omega^p_X \\
@V{t \wedge}VV @VV{\dlog t \wedge}V \\
\kos(\varphi^I(f);\infty;1)^{p+1}
@>>{(2\pi\sqrt{-1})\psi^I(f)}>
\omega^{p+1}_X
\end{CD}
\end{equation*}
is commutative
for any global section $t$ of $M_X(D)$.
\end{lem}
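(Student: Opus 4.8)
The plan is to check the identity $\dlog t \wedge \psi^I(f)(\omega) = (2\pi\sqrt{-1})\,\psi^I(f)(t \wedge \omega)$ directly on generators of $\kos(\varphi^I(f);\infty;1)^p$. Since $t \wedge$, $\dlog t \wedge$ and $\psi^I(f)$ are all additive, it suffices to verify the equality on a typical generator $x_1^{[n_1]} \cdots x_k^{[n_k]} \otimes y$ with $x_i \in \bQ^I \oplus \cO_X$, with $y \in \bigwedge^p M_X(D)\gp$, and with $n_1 + \cdots + n_k = n - p$.

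First I would pin down that the chain map $t \wedge$ (induced by the wedge product as in \cite[(1.11)]{FujisawaMHSLSD}) acts only on the exterior factor and leaves the divided-power part untouched, so that $t \wedge (x_1^{[n_1]} \cdots x_k^{[n_k]} \otimes y) = x_1^{[n_1]} \cdots x_k^{[n_k]} \otimes (t \wedge y)$ with $t \wedge y \in \bigwedge^{p+1} M_X(D)\gp$. This is the one point that genuinely needs care: it uses that $t$ is a global section of $M_X(D)$, hence of $M_X(D)\gp$, and not of the summand $\bQ^I \oplus \cO_X$, so it does not interact with the scalar factors $\tau^I(x_i)$ produced by $\psi^I(f)$.

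Next I would invoke the functoriality of exterior powers applied to $\dlog : M_X(D)\gp \to \omega^1_X$, giving $(\bigwedge^{p+1}\dlog)(t \wedge y) = \dlog t \wedge (\bigwedge^p\dlog)(y)$ for every $y \in \bigwedge^p M_X(D)\gp$. Substituting this into the definition of $\psi^I(f)$ on $\kos(\varphi^I(f);\infty;1)^{p+1}$, and using that the coefficients $\tau^I(x_i)^{n_i} \in \cO_X$ commute with $\dlog t \wedge$, both composites in the square reduce to $\frac{(2\pi\sqrt{-1})^{-p}}{n_1! \cdots n_k!}\,\tau^I(x_1)^{n_1} \cdots \tau^I(x_k)^{n_k}\,\dlog t \wedge (\bigwedge^p\dlog)(y)$; note that $t \wedge$ changes the exterior degree from $p$ to $p+1$ but not the divided-power degrees $n_i$, so the factorials match.

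The remaining bookkeeping is purely the power of $2\pi\sqrt{-1}$: the map $\psi^I(f)$ carries the factor $(2\pi\sqrt{-1})^{-(p+1)}$ on $\kos^{p+1}$ against $(2\pi\sqrt{-1})^{-p}$ on $\kos^p$, and the identity $(2\pi\sqrt{-1})^{-p} = (2\pi\sqrt{-1}) \cdot (2\pi\sqrt{-1})^{-(p+1)}$ is precisely what yields the factor $2\pi\sqrt{-1}$ on the bottom horizontal arrow. I do not expect any real obstacle beyond isolating the action of $t \wedge$ to the exterior factor; once that is settled the statement is scalar arithmetic.
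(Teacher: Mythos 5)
Your proposal is correct and is exactly the argument the paper intends: the paper's proof is simply ``Easy by definition,'' i.e.\ the direct verification on divided-power generators that you carry out, with $t \wedge$ acting only on the exterior factor $\bigwedge^p M_X(D)\gp$, functoriality of $\bigwedge \dlog$, and the shift from $(2\pi\sqrt{-1})^{-p}$ to $(2\pi\sqrt{-1})^{-(p+1)}$ accounting for the factor $2\pi\sqrt{-1}$. Your observation that the divided-power degrees $n_i$, and hence the factorials, are untouched is precisely the point that makes the unwinding go through.
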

\begin{proof}
Easy by definition.
\end{proof}

\begin{defn}
\label{definition of alpha}
For $I \subset I' \subset \ski$
and for $p \in \bnnZ, q \in \bnnZ^I$,
the composite of the morphism
\begin{equation}
\label{morphism psi^{I' setminus I}:eq}
(2\pi\sqrt{-1})^{|q|+|I|}\psi^{I' \setminus I}(f):
\kos(\varphi^{I' \setminus I}(f);\infty;1)^{p+|q|+|I|}
\longrightarrow
\omega_X^{p+|q|+|I|}
\end{equation}
and the projection
\begin{equation*}
\omega_X^{p+|q|+|I|}
\longrightarrow
\omega_{X/Y_I}^{p+|q|+|I|}
\end{equation*}
induces a morphism
\begin{equation}
\label{morphism from AII' to BI:eq}
A_{I|I'}(f)^{p,q} \longrightarrow B_I(f)^{p,q}
\end{equation}
because we have
$\psi^{I' \setminus I}(f)(W(i)_m) \subset W(D_i)_m$ for all $m$.
It is easy to check that
the morphism \eqref{morphism from AII' to BI:eq}
is compatible with
$d_0$.
Moreover Lemma \ref{lemma for t^ and dlog t ^}
implies that the morphism \eqref{morphism from AII' to BI:eq}
is compatible with $d_i$ for any $i \in I$.
Thus a morphism of complexes
\begin{equation}
\label{definition of alpha:eq}
\alpha_{I|I'}(f): sA_{I|I'}(f) \longrightarrow sB_I(f)
\end{equation}
is obtained.
Clearly, $\alpha_{I|I'}(f)$
preserves the filtration $L(K)$ for $K \subset I$.
We use $\alpha_I(f)=\alpha_{I|I}(f)$,
$\alpha_{I|}(f)=\alpha_{I|\ski}(f)$,
$\alpha_{|I'}(f)=\alpha_{\emptyset|I'}(f)$,
$\alpha_0(f)=\alpha_{\emptyset|\ski}(f)$
and $\alpha(f)=\alpha_{\ski}(f)=\alpha_{\ski|\ski}(f)$ for short.
\end{defn}

\begin{lem}
\label{lemma for alpha0}
The morphism $\alpha_0(f)$ induces a quasi-isomorphism
\begin{equation*}
\cO_{Y,f(x)} \otimes_{\bQ} \kos(\varphi(f);\infty;1)_x
\longrightarrow
\omega_{X/Y,x}
\end{equation*}
for any $x \in X_0$.
\end{lem}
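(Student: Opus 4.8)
The plan is to reduce the statement to an explicit computation on the standard local model and then to the two elementary building blocks of that model. Since the assertion concerns stalks at a point $x\in X_0$ and quasi-isomorphism is a local property, I would first invoke Lemma \ref{local description of semistable morphism} to assume that
\[
f = f_1\times\cdots\times f_k\times g\colon
\Delta^{n_1}\times\cdots\times\Delta^{n_k}\times T\longrightarrow \Delta^k\times S ,
\]
where each $f_i\colon\Delta^{n_i}\to\Delta$ is given by $f_i^{\ast}t_i=x^{(i)}_1\cdots x^{(i)}_{n_i}$ and $g\colon T\to S$ is smooth. The two features to exploit are that the construction of $\kos(\varphi(f);\infty;1)$ is multiplicative for this product decomposition (the morphism $\varphi(f)$ splits as a direct sum of the $\varphi(f_i)$ on the disc factors and of $\mathbf{e}(g)_{\bQ}$ on the smooth factor), and that the relative log de Rham complex $\omega_{X/Y}$ is the external tensor product of the $\omega_{\Delta^{n_i}/\Delta}$ and of $\Omega_{T/S}$. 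The comparison morphism $\alpha_0(f)$ is compatible with both decompositions. Because $\bQ$ is a field, the functor $\cO_{Y,f(x)}\otimes_{\bQ}(-)$ is exact, so no derived-tensor subtlety arises on the Koszul side, and it suffices, after a K\"unneth argument, to treat each factor separately.

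The first type of factor is a single semistable map $f_i\colon\Delta^{n_i}\to\Delta$ with base a disc. Here a direct computation gives $\kernel(\varphi(f_i))=\bQ\cdot 1$ and identifies $\cok(\varphi(f_i))$ with the $\bQ$-vector space spanned by the classes $\dlog x^{(i)}_1,\dots,\dlog x^{(i)}_{n_i}$ modulo their sum, which is exactly the space of relative log differentials of $\Delta^{n_i}$ over $\Delta$. I would then quote the structural comparison of \cite{SteenbrinkLE} and \cite{FujisawaMHSLSD} (the logarithmic Poincar\'e lemma realized by the map $\psi$, cf.\ Lemma (1.4) of \cite{SteenbrinkLE}) to conclude that $\cO_{\Delta}\otimes_{\bQ}\kos(\varphi(f_i);\infty;1)$ is quasi-isomorphic via $\alpha$ to the exterior algebra on these relative log forms, that is, to $\omega_{\Delta^{n_i}/\Delta}$.

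The second type of factor is the smooth map $g\colon T\to S$, where there is no divisor and $\varphi$ reduces to $\mathbf{e}(g)_{\bQ}\colon\cO_T\to\cO_T^{\ast}\otimes_{\bZ}\bQ$. Over $\bQ$ this morphism has kernel $\bQ\cdot 1$ and is surjective, so its combinatorial content is trivial; the honest holomorphic directions $\Omega_{T/S}$ must instead emerge from $\mathbf{e}(g)_{\bQ}$ together with $\dlog$ only after the base extension $\cO_S\otimes_{\bQ}(-)$. The required statement is precisely the relative holomorphic Poincar\'e lemma in the present Koszul framework, namely that the induced morphism $\cO_S\otimes_{\bQ}\kos(\mathbf{e}(g)_{\bQ};\infty;1)\to\Omega_{T/S}$ is a quasi-isomorphism. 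This is where the genuine work lies: the passage from the $\bQ$-structure to the $\cO_Y$-structure is what converts the (otherwise acyclic) image directions of $\varphi$ into the de Rham differential, and verifying it amounts to the explicit Poincar\'e-lemma computation for the divided-power complex attached to $\mathbf{e}(g)_{\bQ}$.

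I expect this second step to be the main obstacle. The combinatorial, logarithmic part of the comparison is formal and follows from the kernel/cokernel computation together with the cited structural results, but the genuinely holomorphic part forces one to analyze the base change $\otimes_{\bQ}\cO_Y$ carefully, since it is only after this base change that the function directions carried by $\cO_X$ inside the Koszul complex unfold into $\Omega_{T/S}$. Once the two factors are settled, assembling them by the multiplicativity of $\kos$, of $\omega_{X/Y}$, and of $\alpha_0(f)$ --- using the flatness of the relevant graded pieces recorded in Lemma \ref{flatness lemma} to legitimise the K\"unneth step --- yields the quasi-isomorphism for the full local model, and hence for $f$.
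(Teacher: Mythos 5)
Your proposal picks the right local model (Lemma \ref{local description of semistable morphism}) but mislocates, and ultimately never discharges, the actual content of the statement. First, for the disc factors $f_i\colon\Delta^{n_i}\to\Delta$ you claim the conclusion follows from the kernel/cokernel computation together with Lemma (1.4) of \cite{SteenbrinkLE}. That lemma, as it is used throughout this paper (see the proof of Lemma \ref{lemma for closed immersion}), lives entirely on the $\bQ$-side: a morphism of data inducing isomorphisms on $\kernel$ and $\cok$ induces a quasi-isomorphism of the associated Koszul complexes. It determines the quasi-isomorphism type of $\kos(\varphi(f_i);\infty;1)$, but it says nothing about the morphism $\alpha_0(f)$, i.e.\ about the comparison with $\omega_{\Delta^{n_i}/\Delta}$ after the base change $\cO_{\Delta,0}\otimes_{\bQ}(-)$. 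That comparison is precisely the nontrivial one-variable computation, which the paper's proof imports as Proposition (1.13) of \cite{SteenbrinkLHS}; this reference (or any substitute computation of the stalkwise cohomology $\coh^q(\omega_{\Delta^{n_i}/\Delta,x})$) is absent from your argument. Second, the step you single out as ``where the genuine work lies''---the smooth factor $g\colon T\to S$---is in fact the easy one, and you leave it unproven: since $\kernel(\mathbf{e}(g)_{\bQ})=\bQ_T\ni 1$ and $\cok(\mathbf{e}(g)_{\bQ})=0$, Lemma (1.4) of \cite{SteenbrinkLE} gives $\kos(\mathbf{e}(g)_{\bQ};\infty;1)\simeq\bQ_T$ concentrated in degree $0$, and because $\otimes_{\bQ}$ is exact the assertion for this factor reduces to the classical relative holomorphic Poincar\'e lemma $g^{-1}\cO_S\overset{\simeq}{\longrightarrow}\Omega_{T/S}$. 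So the difficulty of your plan is misidentified, and the genuinely hard ingredient is missing.

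The assembly step is also not sound as stated. Analytic local rings do not factor under products: $\cO_{X,x}$ is not $\cO_{\Delta^{n_1},x_1}\otimes_{\bC}\cdots\otimes_{\bC}\cO_{T,x'}$ but only a completed tensor product, so neither $\omega_{X/Y,x}$ nor the stalk $\kos(\varphi(f);\infty;1)_x$---whose terms involve divided powers of $\bQ^k\oplus\cO_{X,x}$---decomposes as an external tensor product of the factorwise stalks, and a na\"ive K\"unneth argument fails on both sides. Lemma \ref{flatness lemma}, which you invoke to legitimise this step, asserts $f_I^{-1}\cO_{Y[I]}$-flatness of graded pieces of $sB_I(f)$ and has no bearing on it. The paper's proof avoids K\"unneth entirely: after passing to the local model it applies Lemma (1.4) of \cite{SteenbrinkLE} to reduce to the case $k=1$ (the kernel and cokernel of $\varphi(f)$ are small, explicitly computable sheaves, for which the product structure is harmless) and then concludes by Lemma (1.4) together with Proposition (1.13) of \cite{SteenbrinkLHS}. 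If you want to retain a separation-of-variables argument, you must first contract each Koszul complex onto its kernel/cokernel data via Lemma (1.4) and only then split into factors, rather than tensor-decomposing the full complexes of analytic sheaves.
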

\begin{proof}
Since the question is of local nature,
we may assume that $f$ is a morphism of the form
\eqref{local model of semistable morphism:eq}.
By using Lemma (1.4) in \cite{SteenbrinkLE},
we can easily reduce the problem
to the case of $k=1$.
Thus we obtain the conclusion
by Lemma (1.4) in \cite{SteenbrinkLE}
and by Proposition (1.13) in \cite{SteenbrinkLHS}.
\end{proof}

\begin{prop}
\label{proposition for alpha being fqis}
The morphism
$\alpha_{I|I'}(f)$
induces a filtered quasi-isomorphism
\begin{equation*}
(f_I^{-1}\cO_{Y[I]}
\otimes
sA_{I|I'}(f))\bigl|_{X[I']^{\ast}}
\longrightarrow
sB_I(f)\bigl|_{X[I']^{\ast}}
\end{equation*}
with respect to $L(K)$
for $K \subset I \subset I' \subset \ski$.
\end{prop}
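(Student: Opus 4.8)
Since $L(K)$ is a finite increasing filtration, the morphism in question is a filtered quasi-isomorphism with respect to $L(K)$ as soon as the induced map on each graded quotient $\gr_m^{L(K)}$ is a quasi-isomorphism; because $\alpha_{I|I'}(f)$ preserves $L(K)$ (Definition \ref{definition of alpha}) and because tensoring over the field $\bQ$ with $f_I^{-1}\cO_{Y[I]}$ is exact and hence commutes with $\gr_m^{L(K)}$, the plan is to analyse $\gr_m^{L(K)}\alpha_{I|I'}(f)$ for each $m$. Applying Lemma \ref{lemma on grL(K)sB} to the target and Lemma \ref{Lemma of grLsA} to the source, both $\gr_m^{L(K)}sB_I(f)$ and $\gr_m^{L(K)}sA_{I|I'}(f)$ decompose as direct sums over the same index set $\{(q,\Gamma): q\in\bnnZ^K,\ \Gamma\in S^{\ge q+e_K}_{m+2|q|+|K|}(\Lambda_K)\}$, with $(q,\Gamma)$-summands $sB_{I\setminus K}(f_\Gamma)[-m-2|q|]$ and $\iota_\Gamma^{-1}sA_{(I\setminus K)|(I'\setminus K)}(\pi_Kf)[-m-2|q|]$ respectively. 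Both structure isomorphisms are induced by the residue morphism for $W(D_K)$ (resp.\ $W(K)$), and $\alpha_{I|I'}(f)$ is built from $\psi$, which carries $W(i)$ into $W(D_i)$ and hence $W(K)$ into $W(D_K)$; therefore $\gr_m^{L(K)}\alpha_{I|I'}(f)$ respects the two decompositions, and on the $(q,\Gamma)$-summand it is identified, up to the shift, with the tensor of $f_{I\setminus K}^{-1}\cO$ with the composite of the closed-immersion comparison $\iota_\Gamma^{-1}sA_{(I\setminus K)|(I'\setminus K)}(\pi_Kf)\to sA_{(I\setminus K)|(I'\setminus K)}(f_\Gamma)$ and the map $\alpha_{(I\setminus K)|(I'\setminus K)}(f_\Gamma)$.

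By Lemma \ref{lemma for closed immersion} the first factor is a quasi-isomorphism, so the whole $\gr_m^{L(K)}$-statement reduces to the unfiltered assertion that $\alpha_{(I\setminus K)|(I'\setminus K)}(f_\Gamma)$ induces a quasi-isomorphism after tensoring over $\bQ$ with $f_{I\setminus K}^{-1}\cO_{Y[K]}$, over $X[\Gamma]\cap X[I']^{\ast}$. This is exactly the unfiltered form of the proposition for the semistable morphism $f_\Gamma\colon X[\Gamma]\to Y[K]$ along $E_{\overline K}\cap Y[K]$, with $I\setminus K\subset I'\setminus K\subset\overline K$ in place of $I\subset I'\subset\ski$. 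Consequently it suffices to prove the unfiltered statement for every semistable morphism and every pair $I\subset I'$, since the class of semistable morphisms is stable under passage to $f_\Gamma$ (Lemma \ref{local description of semistable morphism}).

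To prove the unfiltered statement I would first treat $I=\emptyset$, i.e.\ that $\alpha_{\emptyset|I'}(f)\colon f^{-1}\cO_Y\otimes_\bQ\kos(\varphi^{I'}(f);\infty;1)\to\omega_{X/Y}$ is a quasi-isomorphism on $X[I']^{\ast}$. At a point $x\in X[I']^{\ast}$ one has $x\notin D_\lambda$ for $\lambda\in\Lambda_{\overline{I'}}$, so near $x$ only the components indexed by $\Lambda_{I'}$ occur, $M_X(D)=M_X(D_{I'})$, and $f$ is locally semistable along $E_{I'}$ with $x$ lying in the corresponding deepest stratum $X[I']$; Lemma \ref{lemma for alpha0} applied to this local model then gives the quasi-isomorphism at $x$, and since $x\in X[I']^{\ast}$ was arbitrary the base case follows. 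Granting it, the unfiltered statement for arbitrary $I$ is obtained by taking $K=I$ in the reduction above: the $\gr_m^{L(I)}$-summands are precisely the base-case morphisms $\alpha_{\emptyset|(I'\setminus I)}(f_\Gamma)$ for the $f_\Gamma\colon X[\Gamma]\to Y[I]$, so $\alpha_{I|I'}(f)$ is a filtered quasi-isomorphism for $L(I)$ and in particular a quasi-isomorphism. Feeding this now-established unfiltered statement back into the $\gr_m^{L(K)}$-reduction of the first two paragraphs settles the filtered statement for arbitrary $K\subset I$.

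The step I expect to be the main obstacle is the compatibility bookkeeping asserted in the first paragraph: one must verify that $\gr_m^{L(K)}\alpha_{I|I'}(f)$ literally matches the direct-sum descriptions of Lemma \ref{lemma on grL(K)sB} and Lemma \ref{Lemma of grLsA} and is identified summand-by-summand with $\alpha_{(I\setminus K)|(I'\setminus K)}(f_\Gamma)$ precomposed with the closed-immersion map. This is a local computation that must track the residue isomorphisms, the normalizing factors $(2\pi\sqrt{-1})^{|q|+|I|}$ in the definition of $\alpha$, and the signs; everything else is a formal consequence of the lemmas already available.
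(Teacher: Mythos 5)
Your plan follows the paper's own proof essentially step for step: the paper likewise reduces along the graded pieces of $L(K)$ via Lemma \ref{lemma on grL(K)sB}, Lemma \ref{Lemma of grLsA} and Lemma \ref{lemma for closed immersion} (assembled in its commutative diagram \eqref{commutative diagram for grA and grB:eq}), settles the base case with Lemma \ref{lemma for alpha0}, and bootstraps exactly as you do from the $K=I$ case to the unfiltered statement and back to general $K \subset I$ for arbitrary semistable morphisms $f_{\Gamma}$. The only cosmetic difference is that the paper disposes of $I'$ at the outset by the global reduction ``we may assume $I'=\ski$'' rather than your pointwise localization at $x \in X[I']^{\ast}$, which amounts to the same observation that only the components indexed by $\Lambda_{I'}$ survive there.
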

\begin{proof}
We may assume $I'=\ski$,
because we consider the situation over $X[I']^{\ast}$.
Therefore it is sufficient to prove
that the morphism $\alpha_{I|}(f)$
induces a filtered quasi-isomorphism
\begin{equation*}
(f_I^{-1}\cO_{Y[I]} \otimes sA_{I|}(f))\bigl|_{X_0}
\longrightarrow
sB_I(f)\bigl|_{X_0}
\end{equation*}
with respect to $L(K)$ for $K \subset I$.

By Lemma \ref{lemma on grL(K)sB},
Lemma \ref{Lemma of grLsA} and
Lemma \ref{lemma for closed immersion}
we obtain a commutative diagram
\begin{equation}
\begin{CD}
\gr_m^{L(K)}sA_{I|}(f)
@>{\gr_m^{L(K)}\alpha_{I|}(f)}>>
\gr_m^{L(K)}sB_I(f) \\
@A{\simeq}AA @| \\
\bigoplus
\iota_{\Gamma}^{-1}
sA_{(I \setminus K)|}(\pi_Kf)[-m-2|q|]
@.
\gr_m^{L(K)}sB_I(f) \\
@V{\simeq}VV @AA{\simeq}A \\
\bigoplus
sA_{(I \setminus K)|}(f_{\Gamma})[-m-2|q|]
@>>>
\bigoplus
sB_{I \setminus K}(f_{\Gamma})[-m-2|q|],
\end{CD}\label{commutative diagram for grA and grB:eq}
\end{equation}
where the direct sums are taken over the index set
\begin{equation}
\label{index set for grL(K):eq}
\{(q, \Gamma); q \in \bnnZ^K, \Gamma \in S_{m+2|q|+|K|}^{\ge q+e_K}(\Lambda_K)\}
\end{equation}
as in \eqref{grL(K)sB:eq},
and where the top horizontal arrow
is the morphism $\gr_m^{L(K)}\alpha_{I|}(f)$
and the bottom horizontal arrow is the direct sum
of the morphisms
\begin{equation*}
(2\pi\sqrt{-1})^{-m-|q|}
\alpha_{(I \setminus K)|}(f_{\Gamma})[-m-2|q|]
\end{equation*}
over the index set \eqref{index set for grL(K):eq}.
Therefore Lemma \ref{lemma for alpha0} implies that
$\alpha_{I|}(f)$ induces a filtered quasi-isomorphism
with respect to $L(I)$.
In particular, $\alpha_{I|}(f)$ induces a quasi-isomorphism.
Then we can conclude that
$\alpha_{I|}(f)$ induces a filtered quasi-isomorphism
with respect to $L(K)$ for $K \subset I$
by the commutative diagram
\eqref{commutative diagram for grA and grB:eq} again.
\end{proof}

\begin{cor}
\label{corollary on grL's for sA and sB}
The morphism $\alpha_{I|I'}(f)$ induces
a bifiltered quasi-isomorphism
\begin{equation}
(f_I^{-1}\cO_{Y[I]}
\otimes
sA_{I|I'}(f),L(J), L(K))\bigl|_{X[I']^{\ast}}
\overset{\simeq}{\longrightarrow}
(sB_I(f), L(J), L(K))\bigl|_{X[I']^{\ast}}
\end{equation}
for $J, K \subset I \subset \ski$.
\end{cor}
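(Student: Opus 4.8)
The plan is to reduce the bifiltered statement to the monofiltered Proposition \ref{proposition for alpha being fqis} by passing to the graded pieces of the inner filtration $L(K)$. Recall that, by definition, $\alpha_{I|I'}(f)$ is a bifiltered quasi-isomorphism for $(L(J),L(K))$ if and only if, for every $m$, the induced morphism $\gr_m^{L(K)}\alpha_{I|I'}(f)$ is a filtered quasi-isomorphism with respect to the residual filtration $L(J)$; equivalently, $\gr_n^{L(J)}\gr_m^{L(K)}\alpha_{I|I'}(f)$ is a quasi-isomorphism for all $m,n$, since $\gr_n^{L(J)}\bigl(\gr_m^{L(K)}\alpha_{I|I'}(f)\bigr)=\gr_n^{L(J)}\gr_m^{L(K)}\alpha_{I|I'}(f)$. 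Thus it suffices to prove that, over $X[I']^{\ast}$, the morphism
\begin{equation*}
\gr_m^{L(K)}\alpha_{I|I'}(f):
\gr_m^{L(K)}(f_I^{-1}\cO_{Y[I]} \otimes_{\bQ} sA_{I|I'}(f))
\longrightarrow
\gr_m^{L(K)}sB_I(f)
\end{equation*}
is a filtered quasi-isomorphism with respect to $L(J)$ for every $m$.

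To compute this graded piece I would apply Lemma \ref{Lemma of grLsA} to the source and Lemma \ref{lemma on grL(K)sB} to the target. Both produce direct sum decompositions indexed by the same set $\{(q,\Gamma); q \in \bnnZ^K, \Gamma \in S_{m+2|q|+|K|}^{\ge q+e_K}(\Lambda_K)\}$, the summands being a shift of $\iota_{\Gamma}^{-1}sA_{(I \setminus K)|(I' \setminus K)}(\pi_Kf)$ on the source and of $sB_{I \setminus K}(f_{\Gamma})$ on the target. Exactly as in the construction of diagram \eqref{commutative diagram for grA and grB:eq}, combining these two lemmas with the closed-immersion comparison of Lemma \ref{lemma for closed immersion}, the morphism $\gr_m^{L(K)}\alpha_{I|I'}(f)$ respects the decompositions and restricts, on the summand indexed by $(q,\Gamma)$, to a non-zero scalar multiple of $\alpha_{(I \setminus K)|(I' \setminus K)}(f_{\Gamma})$. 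Moreover, the descriptions of $L(J)\gr_m^{L(K)}$ in those same lemmas show that on each summand the residual $L(J)$ is identified with $L(J \setminus K)$ (note $J \setminus K \cap J=J \setminus K$) up to a shift of the filtration index by $|\Gamma \cap \Lambda_{K \cap J}|-2|q_{K \cap J}|-|K \cap J|$; crucially this shift depends only on $(q,\Gamma)$ and is literally the same on the source and on the target.

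It therefore remains to check, on each summand, that $\alpha_{(I \setminus K)|(I' \setminus K)}(f_{\Gamma})$ is a filtered quasi-isomorphism with respect to $L(J \setminus K)$ over the relevant open locus. Since $f_{\Gamma}: X[\Gamma] \longrightarrow Y[K]$ is again a semistable morphism along $E_{\overline{K}} \cap Y[K]$ and $J \setminus K \subset I \setminus K \subset I' \setminus K$, this is precisely Proposition \ref{proposition for alpha being fqis} applied to $f_{\Gamma}$ with the index sets $I \setminus K \subset I' \setminus K$ and the filtration index $J \setminus K$. A uniform shift of filtration indices does not affect the property of being a filtered quasi-isomorphism, so the common shift recorded above may be ignored, and taking the direct sum over $(q,\Gamma)$ shows that $\gr_m^{L(K)}\alpha_{I|I'}(f)$ is a filtered quasi-isomorphism for $L(J)$ for every $m$, as required. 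The step demanding the most care — and hence the main obstacle — is the bookkeeping that the residual $L(J)$-filtrations on source and target match term by term under the identifications of Lemmas \ref{Lemma of grLsA} and \ref{lemma on grL(K)sB}, that is, that the index shifts coincide on each summand; once this compatibility is confirmed the statement follows immediately from the single-filtration case.
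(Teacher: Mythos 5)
Your proposal is correct and follows essentially the same route as the paper: the paper's one-line proof invokes the commutative diagram \eqref{commutative diagram for grA and grB:eq}, which encodes exactly your reduction via $\gr_m^{L(K)}$, the matching decompositions of Lemma \ref{Lemma of grLsA}, Lemma \ref{lemma on grL(K)sB} and Lemma \ref{lemma for closed immersion}, and the identification of each summand with a scalar multiple of $\alpha_{(I \setminus K)|(I' \setminus K)}(f_{\Gamma})$, so that Proposition \ref{proposition for alpha being fqis} applied to $f_{\Gamma}$ with filtration $L(J \setminus K)$ finishes the argument. Your explicit check that the residual $L(J)$-filtrations carry the same index shift $|\Gamma \cap \Lambda_{K \cap J}|-2|q_{K \cap J}|-|K \cap J|$ on source and target is precisely the bookkeeping the paper leaves implicit.
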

\begin{proof}
By the commutative diagram \eqref{commutative diagram for grA and grB:eq}
we can easily obtain the conclusion.
\end{proof}

\begin{defn}
From the morphism
$\alpha_{I|I'}(f): sA_{I|I'}(f) \longrightarrow sB_I(f)$,
we obtain morphisms
\begin{equation*}
R(f_I)_{\ast}sA_{I|I'}(f)
\longrightarrow
R(f_I)_{\ast}sB_I(f)
\end{equation*}
and
\begin{equation*}
\cO_{Y[I]} \otimes R(f_I)_{\ast}sA_{I|I'}(f)
\longrightarrow
R(f_I)_{\ast}sB_I(f)
\end{equation*}
which we denote by the same letter $\alpha_{I|I'}(f)$
if there is no danger of confusion.
These morphisms preserves the filtration $L(K)$
for $K \subset I$.
\end{defn}

\begin{lem}
\label{lemma on base change for sB}
Under the situation in $\ref{notation for a base change for S}$,
the morphism \eqref{base change morphism for sB:eq}
induces a quasi-isomorphism
\begin{equation}
f'^{-1}\cO_{Y'[I]}
\otimes
g'^{-1}\gr_m^{L(K)}sB_I(f)
\longrightarrow
\gr_m^{L(K)}sB_I(f')
\end{equation}
for all $m \in \bZ$ and for all $K \subset I$.
\end{lem}
\begin{proof}
For any $I' \supset I$,
we have the commutative diagram
\begin{equation}
\small
\begin{CD}
(f_I'^{-1}\cO_{Y'[I]}
\otimes
g'^{-1}\gr_m^{L(K)}sA_{I|I'}(f))|_{X'[I']^{\ast}}
@>>>
(f_I'^{-1}\cO_{Y'[I]}
\otimes
\gr_m^{L(K)}sA_{I|I'}(f'))|_{X'[I']^{\ast}} \\
@VVV @VVV \\
(f_I'^{-1}\cO_{Y'[I]}
\otimes
g'^{-1}\gr_m^{L(K)}sB_I(f))|_{X'[I']^{\ast}}
@>>>
\gr_m^{L(K)}sB_I(f')|_{X'[I']^{\ast}}
\end{CD}
\end{equation}
where the top horizontal arrow
and the two vertical arrows are quasi-isomorphisms
by Lemma \ref{lemma for base change isomorphism for sA}
and Proposition \ref{proposition for alpha being fqis}.
Thus we obtain the conclusion.
\end{proof}

Now we compare the canonical connection $d \otimes \id$
on $\cO_{Y[I]} \otimes R^n(f_I)_{\ast}sA_I(f)$
and the morphism
$(-1)^{|I|}\nabla(I)$ on $R^n(f_I)_{\ast}sB_I(f)$
via the morphism $\alpha_I(f)$.

\begin{defn}
For $I \subset \ski$,
the morphism
\begin{equation}
(2\pi\sqrt{-1})^{|q|+|I|}\psi^{\emptyset}(f):
\kos(\e(f)_{\bQ},\infty,1)^{p+|q|+|I|}
\longrightarrow
\omega_X^{p+|q|+|I|}
\end{equation}
induces a morphism
$A_I(f)^{p,q} \longrightarrow \tB_I(f)^{p,q}$
for $p \in \bnnZ, q \in \bnnZ^I$.
We define a morphism
\begin{equation}
f_I^{-1}\omega^r_{Y[I]}
\otimes
A_I(f)^{p,q}
\longrightarrow
\tB_I(f)^{p+r,q}
\end{equation}
by sending $\omega \otimes \eta$
to $(-1)^{r|I|}\omega \wedge \alpha_I(f)(\eta)$
for $p \in \bnnZ, q \in \bnnZ^I, r \in \bnnZ$
as in the proof of Proposition \ref{wedge product for omegaY and B}.
It is easy to check that these morphisms
induce a morphism of complexes
\begin{equation}
f_I^{-1}\omega_{Y[I]}
\otimes
sA_I(f)
\longrightarrow
s\tB_I(f)
\end{equation}
which is denoted by $\widetilde{\alpha}_I(f)$.
A finite decreasing filtration $G$
on $f_I^{-1}\omega_{Y[I]} \otimes sA_I(f)$
is induced from the stupid filtration on $\omega_{Y[I]}$.
Then $\widetilde{\alpha}_I(f)$ preserves the filtrations $G$
on the both sides.
\end{defn}

Then the following lemma is an easy consequence:

\begin{lem}
\label{remark on the canonical and Gauss-Manin}
The diagram
\begin{equation*}
\begin{CD}
\omega^p_{Y[I]} \otimes R^n(f_I)_{\ast}sA_I(f)
@>{\id \otimes \alpha_I(f)}>>
\omega^p_{Y[I]} \otimes R^n(f_I)_{\ast}sB_I(f) \\
@V{d \otimes \id}VV @VV{(-1)^{|I|}\nabla(I)}V \\
\omega^{p+1}_{Y[I]} \otimes R^n(f_I)_{\ast}sA_I(f)
@>>{\id \otimes \alpha_I(f)}>
\omega^{p+1}_{Y[I]} \otimes R^n(f_I)_{\ast}sB_I(f)
\end{CD}
\end{equation*}
is commutative for all $p$.
\end{lem}

\begin{lem}
\label{lemma for alphaI to BI}
The restriction
\begin{equation}
\widetilde{\alpha}_I(f)\bigl|_{X[I]^{\ast}}:
(f_I^{-1}\omega_{Y[I]}
\otimes
sA_I(f))\bigl|_{X[I]^{\ast}}
\longrightarrow
s\tB_I(f)\bigl|_{X[I]^{\ast}}
\end{equation}
of the morphism
$\widetilde{\alpha}_I(f)$ on $X[I]^{\ast}$
is a filtered quasi-isomorphism
with respect to the filtrations $G$ on the both sides.
\end{lem}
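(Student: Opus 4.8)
The plan is to reduce the assertion to the associated graded pieces for the filtration $G$. Since $\widetilde{\alpha}_I(f)$ is compatible with $G$ on both sides and $G$ is a finite decreasing filtration, a morphism is a filtered quasi-isomorphism precisely when each $\gr_G^p\widetilde{\alpha}_I(f)$ is a quasi-isomorphism. Thus it suffices to show that
\begin{equation*}
\gr_G^p\widetilde{\alpha}_I(f)\left|_{X[I]^{\ast}}\right.:
\gr_G^p(f_I^{-1}\omega_{Y[I]} \otimes_{\bQ} sA_I(f))\left|_{X[I]^{\ast}}\right.
\longrightarrow
\gr_G^ps\tB_I(f)\left|_{X[I]^{\ast}}\right.
\end{equation*}
is a quasi-isomorphism for every $p$.

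The next step is to identify the two sides. On the source, the filtration $G$ is induced from the stupid filtration on $\omega_{Y[I]}$, and $\omega^p_{Y[I]}$ is a locally free $\cO_{Y[I]}$-module, so $\gr_G^p(f_I^{-1}\omega_{Y[I]} \otimes_{\bQ} sA_I(f)) \simeq f_I^{-1}\omega^p_{Y[I]} \otimes_{\bQ} sA_I(f)[-p]$. On the target, Lemma \ref{lemma on grGstBI} gives $\gr_G^ps\tB_I(f) \simeq f_I^{-1}\omega^p_{Y[I]} \otimes_{f_I^{-1}\cO_{Y[I]}} sB_I(f)[-p]$. Under these identifications I would verify that $\gr_G^p\widetilde{\alpha}_I(f)$ is, up to the sign $(-1)^{p|I|}$ and the invertible normalizing scalars $(2\pi\sqrt{-1})^{|q|+|I|}$ coming from the definition of $\widetilde{\alpha}_I(f)$, the morphism induced by $\id_{\omega^p_{Y[I]}} \otimes \alpha_I(f)$; that is, it is the base change of $\alpha_I(f)$ along the locally free module $f_I^{-1}\omega^p_{Y[I]}$, factored through $f_I^{-1}\omega^p_{Y[I]} \otimes_{f_I^{-1}\cO_{Y[I]}}(f_I^{-1}\cO_{Y[I]} \otimes_{\bQ} sA_I(f))[-p]$. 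This matching is the main obstacle: one must carefully reconcile the wedge-product description of $\widetilde{\alpha}_I(f)$ (built from $\psi^{\emptyset}(f)$) with the isomorphism of Lemma \ref{lemma on grGstBI} and keep track of the signs and $(2\pi\sqrt{-1})$-factors, though none of these affect the quasi-isomorphism property.

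Finally, I would invoke Proposition \ref{proposition for alpha being fqis} with $I'=I$, which is exactly why the restriction to $X[I]^{\ast}=X[I']^{\ast}$ appears: it asserts that $\alpha_I(f)$ induces a (filtered, hence in particular ordinary) quasi-isomorphism $(f_I^{-1}\cO_{Y[I]} \otimes_{\bQ} sA_I(f))\left|_{X[I]^{\ast}}\right. \to sB_I(f)\left|_{X[I]^{\ast}}\right.$. Since $f_I^{-1}\omega^p_{Y[I]}$ is locally free over $f_I^{-1}\cO_{Y[I]}$, tensoring preserves this quasi-isomorphism, so each $\gr_G^p\widetilde{\alpha}_I(f)$ is a quasi-isomorphism on $X[I]^{\ast}$. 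By the opening reduction, $\widetilde{\alpha}_I(f)\left|_{X[I]^{\ast}}\right.$ is then a filtered quasi-isomorphism with respect to $G$, as desired.
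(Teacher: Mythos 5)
Your proposal is correct and follows essentially the same route as the paper: reduce to the $G$-graded pieces, identify them via Lemma \ref{lemma on grGstBI} so that $\gr_G^p\widetilde{\alpha}_I(f)$ becomes $(-1)^{p|I|}\id \otimes \alpha_I(f)[-p]$, and conclude by Proposition \ref{proposition for alpha being fqis} with $I'=I$. The only difference is that you spell out the local-freeness/base-change step (and note the harmless normalizing constants) which the paper leaves implicit in the phrase ``by definition.''
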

\begin{proof}
We have isomorphisms of complexes
\begin{equation}
\begin{split}
&\gr_G^p(f_I^{-1}\omega_{Y[I]}
\otimes
sA_I(f))
\simeq
f_I^{-1}\omega^p_{Y[I]}
\otimes
sA_I(f)[-p] \\
&\gr_G^ps\tB_I(f)
\simeq
f_I^{-1}\omega^p_{Y[I]}
\otimes
sB_I(f)[-p]
\end{split}
\end{equation}
for every $p$.
By definition,
$\gr_G^p\widetilde{\alpha}_I(f)$
is identified with
$(-1)^{p|I|}\id \otimes \alpha_I(f)[-p]$
under these identifications.
Thus we obtain the conclusion
by Proposition \ref{proposition for alpha being fqis}.
\end{proof}

\section{Proper case}
\label{section for the proper case}

From now on,
we assume that the semistable morphism
$f:X \longrightarrow Y=\Delta^k \times S$
is proper.

\begin{lem}
\label{lemma for sA and sB}
For $K, J \subset I \subset I' \subset \ski$,
the morphism $\alpha_{I|I'}(f)$ in \eqref{definition of alpha:eq}
induces an isomorphism
\begin{equation}
(\cO_{Y[I]}
\otimes
R(f_I)_{\ast}sA_{I|I'}(f), L(J), L(K))
\bigl|_{Y[I']^{\ast}}
\overset{\simeq}{\longrightarrow}
(R(f_I)_{\ast}sB_I(f), L(J), L(K))\bigl|_{Y[I']^{\ast}}
\end{equation}
in the bifiltered derived category.
\end{lem}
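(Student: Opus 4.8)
The plan is to factor the morphism of the statement as a projection-formula isomorphism followed by the derived push-forward of the bifiltered quasi-isomorphism already established in Corollary~\ref{corollary on grL's for sA and sB}. Recall that $\alpha_{I|I'}(f)$ extends $f_I^{-1}\cO_{Y[I]}$-linearly to a morphism $f_I^{-1}\cO_{Y[I]} \otimes_{\bQ} sA_{I|I'}(f) \to sB_I(f)$, and that, restricted to $X[I']^{\ast}$, this is a bifiltered quasi-isomorphism for $(L(J),L(K))$ by Corollary~\ref{corollary on grL's for sA and sB}. Since $X[I']^{\ast} = f_I^{-1}(Y[I']^{\ast})$ and $R(f_I)_{\ast}$ commutes with restriction to the open subset $Y[I']^{\ast}$ (open base change), and since the derived push-forward of a bifiltered quasi-isomorphism is a bifiltered isomorphism, applying $R(f_I)_{\ast}$ produces a bifiltered isomorphism
\begin{equation*}
R(f_I)_{\ast}(f_I^{-1}\cO_{Y[I]} \otimes_{\bQ} sA_{I|I'}(f))|_{Y[I']^{\ast}}
\overset{\simeq}{\longrightarrow}
R(f_I)_{\ast}sB_I(f)|_{Y[I']^{\ast}} .
\end{equation*}
It then remains to identify the source with $\cO_{Y[I]} \otimes_{\bQ} R(f_I)_{\ast}sA_{I|I'}(f)$ compatibly with $(L(J),L(K))$, and to observe that under this identification the composite is exactly the morphism induced by $\alpha_{I|I'}(f)$; this last point is the naturality of the projection formula with respect to the $f_I^{-1}\cO_{Y[I]}$-linear structure of $sB_I(f)$.

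Next I would prove the projection formula: the canonical morphism
\begin{equation*}
\cO_{Y[I]} \otimes_{\bQ} R(f_I)_{\ast}sA_{I|I'}(f)
\longrightarrow
R(f_I)_{\ast}(f_I^{-1}\cO_{Y[I]} \otimes_{\bQ} sA_{I|I'}(f))
\end{equation*}
is an isomorphism. The two inputs are that $f_I$ is proper and that $\cO_{Y[I]}$ is flat over $\bQ$; note also that $sA_{I|I'}(f)$ is bounded, so all complexes in sight are bounded. I would check the assertion stalkwise: by proper base change the stalk at $y \in Y[I]$ of both sides is computed on the compact fibre $f_I^{-1}(y)$, so the statement reduces to interchanging the flat coefficient $\cO_{Y[I],y}$ with $R\Gamma(f_I^{-1}(y),-)$. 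Writing $\cO_{Y[I],y}$ as the filtered colimit of its finite-dimensional $\bQ$-subspaces and using that sheaf cohomology of the compact fibre commutes with filtered colimits, this interchange is immediate, the finite-dimensional case being trivial.

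The compatibility of this isomorphism with $L(J)$ and $L(K)$ is then automatic. Since $\cO_{Y[I]}$ is flat over $\bQ$, tensoring with it commutes with $\gr^{L(J)}$, $\gr^{L(K)}$ and with $R(f_I)_{\ast}$; hence the bigraded pieces of the projection-formula morphism are again instances of the same morphism applied to $\gr^{L(J)}\gr^{L(K)}sA_{I|I'}(f)$. By Lemma~\ref{Lemma of grLsA} the latter is a direct sum of complexes $sA_{(I \setminus K)|(I' \setminus K)}(f_{\Gamma})$ attached to the proper semistable morphisms $f_{\Gamma}$, so the unfiltered projection formula applied to each summand shows the morphism is a bifiltered quasi-isomorphism. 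Combining it with the bifiltered isomorphism of the first paragraph yields the claim.

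I expect the main obstacle to lie in the projection formula itself, precisely because the coefficient $\cO_{Y[I]}$ is of infinite rank over $\bQ$, so the standard finite-rank or perfect-coefficient form does not apply directly. The properness of $f_I$ (compactness of the fibres) together with the flatness of $\cO_{Y[I]}$ over $\bQ$ is exactly what makes the filtered-colimit argument go through; once this is settled, the remaining filtered bookkeeping is routine via Lemmas~\ref{Lemma of grLsA} and~\ref{lemma on grL(K)sB}.
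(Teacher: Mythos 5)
Your proposal is correct and follows essentially the same route as the paper: the paper's proof likewise reduces the claim to the bigraded pieces $\gr^{L(J)}\gr^{L(K)}$ and deduces the lemma from Corollary~\ref{corollary on grL's for sA and sB} together with the projection formula for the proper morphism $f_I$. The only difference is that the paper simply cites \cite[Proposition 2.6.6]{Kashiwara-SchapiraSM} for the projection formula (which does apply here, so your worry about infinite-rank coefficients is overcautious), whereas you reprove it by the standard proper-base-change and filtered-colimit argument, valid since every $\bQ$-module is flat.
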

\begin{proof}
It suffices to prove
that the morphism $\alpha_{I|I'}(f)$ induces
an isomorphism
\begin{equation}
\cO_{Y[I]}
\otimes
\gr_l^{L(J)}\gr_m^{L(K)}
R(f_I)_{\ast}sA_{I|I'}(f)\bigl|_{Y[I']^{\ast}}
\overset{\simeq}{\longrightarrow}
\gr_l^{L(J)}\gr_m^{L(K)}
R(f_I)_{\ast}sB_I(f)\bigl|_{Y[I']^{\ast}}
\end{equation}
in the derived category for all $l,m$.
Since $f$ is proper,
we obtain the conclusion
from Corollary \ref{corollary on grL's for sA and sB},
by applying the projection formula
(cf. \cite[Proposition 2.6.6]{Kashiwara-SchapiraSM}).
\end{proof}

\begin{lem}
\label{lemma on theta for direct image complexes}
For $K, J \subset I \subset \ski$,
the morphism $\theta_{J|I}(f)$ in \eqref{the morphism thetaJ|I:eq}
induces an isomorphism
\begin{equation}
\cO_{Y[I]}
\otimes^L
\iota_{J|I}^{-1}
\gr_F^p\gr_m^{L(K)}
R(f_J)_{\ast}sB_J(f)
\overset{\simeq}{\longrightarrow}
\gr_F^p\gr_m^{L(K)}
R(f_I)_{\ast}sB_I(f)
\end{equation}
in the derived category,
where $\otimes^L$
stands for the derived tensor product
over $\cO_{Y[J]}$.
\end{lem}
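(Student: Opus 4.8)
The plan is to reduce the assertion to a statement about the bigraded pieces and then combine base change with the results already available on the associated graded complexes. A morphism in the bifiltered derived category is an isomorphism precisely when it induces quasi-isomorphisms on all the bigraded pieces $\gr_F^p\gr_m^{L(K)}$. Moreover the bifiltered derived tensor product $\cO_{Y[I]} \otimes^L_{\cO_{Y[J]}}(-)$, in which $\cO_{Y[I]}$ carries the trivial bifiltration, and the bifiltered derived direct images $R(f_J)_{\ast}$, $R(f_I)_{\ast}$ all commute with $\gr_F^p$ and with $\gr_m^{L(K)}$. Hence it suffices to show that $\theta_{J|I}(f)$ induces a quasi-isomorphism
\begin{equation*}
\cO_{Y[I]} \otimes^L_{\cO_{Y[J]}} R(f_J)_{\ast}\gr_F^p\gr_m^{L(K)}sB_J(f)
\longrightarrow
R(f_I)_{\ast}\gr_F^p\gr_m^{L(K)}sB_I(f)
\end{equation*}
for all $p,m$.

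Next I would pull the base change $\cO_{Y[I]} \otimes^L_{\cO_{Y[J]}}(-)$ inside the proper direct image. For this I would use that the square with vertical maps $f_J$, $f_I$ and horizontal closed immersions $X[I] \hookrightarrow X[J]$ and $Y[I] \hookrightarrow Y[J]$ is cartesian, i.e. $X[I] = X[J] \times_{Y[J]} Y[I]$; this holds because $f^{\ast}E_i=D_i$ is reduced and the divisors meet transversally, so the scheme-theoretic fibre product is reduced and equals $X[I]$. Since $f$ is proper, so is $f_J$, and derived base change for $f_J$ gives
\begin{equation*}
\cO_{Y[I]} \otimes^L_{\cO_{Y[J]}} R(f_J)_{\ast}\gr_F^p\gr_m^{L(K)}sB_J(f)
\simeq
R(f_I)_{\ast}\left(\cO_{X[I]} \otimes^L_{\cO_{X[J]}} \gr_F^p\gr_m^{L(K)}sB_J(f)\right).
\end{equation*}

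It then remains to identify the complex inside $R(f_I)_{\ast}$ on the right with $\gr_F^p\gr_m^{L(K)}sB_I(f)$ compatibly with $\theta_{J|I}(f)$. I would rewrite $\cO_{X[I]} = \cO_{X[J]} \otimes_{f_J^{-1}\cO_{Y[J]}} f_J^{-1}\cO_{Y[I]}$, which has no higher Tor because $f_J$ is flat by Lemma \ref{local description of semistable morphism}, so that by associativity of the derived tensor product
\begin{equation*}
\cO_{X[I]} \otimes^L_{\cO_{X[J]}} \gr_F^p\gr_m^{L(K)}sB_J(f)
\simeq
f_I^{-1}\cO_{Y[I]} \otimes_{f_J^{-1}\cO_{Y[J]}} \gr_F^p\gr_m^{L(K)}sB_J(f),
\end{equation*}
where the derived tensor product collapses to the ordinary one because the terms of $\gr_F^p\gr_m^{L(K)}sB_J(f)$ are $f_J^{-1}\cO_{Y[J]}$-flat by Lemma \ref{flatness lemma}. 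Finally Lemma \ref{lemma on grLsBJ to grLsBI} asserts precisely that $\theta_{J|I}(f)$ induces a quasi-isomorphism from this last complex onto $\gr_F^p\gr_m^{L(K)}sB_I(f)$, which completes the argument.

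The main obstacle I expect is not a new idea but the careful bookkeeping needed to make the formal steps rigorous: verifying that $\gr_F^p\gr_m^{L(K)}$ genuinely commutes with all the bifiltered derived functors involved, setting up the derived base change isomorphism in a form valid for these complexes of $\cO$-modules under the proper analytic morphism $f_J$ (cf. \cite{Kashiwara-SchapiraSM} and the analogous use of the projection formula in Lemma \ref{lemma for sA and sB}), and checking by naturality that the composite isomorphism obtained above is indeed the one induced by $\theta_{J|I}(f)$ with its $F$ and $L(K)$ filtrations. The reduction to the bigraded level in the first paragraph, together with the flatness input of Lemma \ref{flatness lemma}, is exactly what turns every derived tensor product into an ordinary one at the graded stage, so that Lemma \ref{lemma on grLsBJ to grLsBI} applies directly.
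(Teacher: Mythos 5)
Your proposal is correct and takes essentially the same route as the paper: both reduce to the bigraded pieces $\gr_F^p\gr_m^{L(K)}$, collapse the derived tensor product to an ordinary one via the $f_J^{-1}\cO_{Y[J]}$-flatness from Lemma \ref{flatness lemma}, identify the result with $\gr_F^p\gr_m^{L(K)}sB_I(f)$ by Lemma \ref{lemma on grLsBJ to grLsBI}, and conclude with the projection formula and the proper base change theorem (cf. \cite[Proposition 2.6.7]{Kashiwara-SchapiraSM}). The only cosmetic difference is the order of operations: you invoke base change first and then identify the complexes on $X[I]$, whereas the paper identifies the complexes upstairs and then pushes forward.
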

\begin{proof}
By Lemma \ref{flatness lemma}
and Lemma \ref{lemma on grLsBJ to grLsBI},
we have the isomorphisms
\begin{equation}
\begin{split}
f_I^{-1}\cO_{Y[I]}
\otimes^L
\iota_{J|}^{-1}\gr_F^p\gr_m^{L(K)}sB_J(f)
&\,\simeq\,
f_I^{-1}\cO_{Y[I]}
\otimes
\iota_{J|}^{-1}\gr_F^p\gr_m^{L(K)}sB_J(f) \\
&\overset{\simeq}{\longrightarrow}
\gr_F^p\gr_m^{L(K)}sB_I(f)
\end{split}
\end{equation}
for all $m, p$
in the derived category.
Then we obtain the conclusion by
the projection formula and the proper base change theorem
(cf. \cite[Proposition 2.6.7]{Kashiwara-SchapiraSM}).
\end{proof}

\begin{lem}
\label{filtered identification between ss of A and B}
For $K, J \subset I \subset I' \subset \ski$,
the morphism $\alpha_{I|I'}(f)$ induces an isomorphism
\begin{equation}
\label{rational structure of ss:eq}
\begin{split}
(\cO_{Y[I]}
\otimes
E_r^{a,b}(R(f_I)_{\ast}sA_{I|I'}&(f),L(K))
\bigl|_{Y[I']^{\ast}} \\
&\overset{\simeq}{\longrightarrow}
E_r^{a,b}(R(f_I)_{\ast}sB_I(f),L(K))
\bigl|_{Y[I']^{\ast}}
\end{split}
\end{equation}
for all $a,b,r$,
under which we have
\begin{equation}
\begin{split}
(\cO_{Y[I]}
\otimes
(L(J)_{\rec})_mE_r^{a,b}(R(f_I)_{\ast}&sA_{I|I'}(f)),L(K))
\bigl|_{Y[I']^{\ast}}\\
&\simeq
(L(J)_{\rec})_mE_r^{a,b}(R(f_I)_{\ast}sB_I(f),L(K))
\bigl|_{Y[I']^{\ast}}
\end{split}
\end{equation}
for every $m$.
In particular,
we have
\begin{equation}
\begin{split}
(\cO_{Y[I]}
\otimes
\gr_{m_l}^{L(J_l)_{\rec}}
\cdots
&\gr_{m_2}^{L(J_2)_{\rec}}
\gr_{m_1}^{L(J_1)_{\rec}}
E_r^{a,b}(R(f_I)_{\ast}sA_{I|I'}(f)),L(K))
\bigl|_{Y[I']^{\ast}} \\
&\simeq
\gr_{m_l}^{L(J_l)_{\rec}}
\cdots
\gr_{m_2}^{L(J_2)_{\rec}}
\gr_{m_1}^{L(J_1)_{\rec}}
E_r^{a,b}(R(f_I)_{\ast}sB_I(f),L(K))
\bigl|_{Y[I']^{\ast}}
\end{split}
\end{equation}
for $K, J_1,J_2, \dots, J_l \subset I$
and for all $a,b,m_1,m_2, \dots, m_l,r$.
\end{lem}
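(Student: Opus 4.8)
The plan is to obtain the whole statement by applying the spectral-sequence functor, together with its recursive filtration, to the bifiltered isomorphism already established in Lemma~\ref{lemma for sA and sB}, and then to upgrade the single-filtration assertion to the iterated one by a formal strictness argument.

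First I would record that $\cO_{Y[I]}$ is a $\bQ$-algebra, so that the functor $\cO_{Y[I]}\otimes_{\bQ}(-)$ is exact and hence commutes both with the formation of the spectral sequence $E_r^{a,b}(-,L(K))$ of $L(K)$ and with the recursive filtration induced on its terms. Consequently the left-hand side of \eqref{rational structure of ss:eq} is canonically $\cO_{Y[I]}\otimes_{\bQ}E_r^{a,b}(R(f_I)_{\ast}sA_{I|I'}(f),L(K))$ carrying $\cO_{Y[I]}\otimes_{\bQ}(L(J)_{\rec})_m$, and the problem reduces to comparing, over $Y[I']^{\ast}$, the two spectral sequences of $L(K)$, each equipped with the recursive filtration coming from $L(J)$.

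Next, Lemma~\ref{lemma for sA and sB} asserts that $\alpha_{I|I'}(f)$ is an isomorphism in the bifiltered derived category for the pair $(L(J),L(K))$ over $Y[I']^{\ast}$, hence is represented by bifiltered quasi-isomorphisms. The graded pieces of the recursive filtration on $E_r^{a,b}(-,L(K))$ are built out of the bigraded object $\gr^{L(J)}\gr^{L(K)}$, so by the formalism of two filtrations (see \cite{DeligneIII}) the pair $\bigl(E_r^{a,b}(-,L(K)),\,L(J)_{\rec}\bigr)$ is a functor on the bifiltered derived category. Applying this functor to $\alpha_{I|I'}(f)$ yields the isomorphism \eqref{rational structure of ss:eq} carrying $(L(J)_{\rec})_m$ onto $(L(J)_{\rec})_m$ for every $m$; this is the first part of the assertion, valid for each single $J\subset I$.

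Finally, for the iterated statement I would use that the isomorphism of $E_r^{a,b}(-,L(K))$ produced by $\alpha_{I|I'}(f)$ is one and the same map, independent of which second filtration is being tracked. Applying the previous paragraph to each of $J_1,\dots,J_l$ shows that this single isomorphism strictly carries each of $L(J_1)_{\rec},\dots,L(J_l)_{\rec}$ onto its counterpart; and an isomorphism that is strict for each member of a finite family of filtrations is automatically strict for the multifiltration they generate, hence induces isomorphisms on every iterated graded piece $\gr_{m_l}^{L(J_l)_{\rec}}\cdots\gr_{m_1}^{L(J_1)_{\rec}}$, as required. The only genuinely delicate point is the invariance invoked in the third paragraph, namely that the \emph{recursive} filtration (unlike the direct one) descends to the bifiltered derived category; once this and the strictness for a single $J$ are in hand, the iterated refinement is purely formal.
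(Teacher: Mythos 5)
Your proposal is correct and takes essentially the same route as the paper, whose proof is exactly the combination of the bifiltered derived-category isomorphism of Lemma \ref{lemma for sA and sB} (transported through the spectral-sequence/recursive-filtration formalism of \cite{DeligneIII}) with Lemma \ref{local freeness and the commutattivity of gr}, the latter being your exactness-of-$\cO_{Y[I]}\otimes_{\bQ}(-)$ step, which works since $\bQ$-flatness is automatic. The point you flag as delicate --- that the recursive filtration on the $E_r$-terms descends to the bifiltered derived category --- is precisely the implicit ingredient the paper also relies on, so nothing is missing.
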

\begin{proof}
Easy from Lemma \ref{lemma for sA and sB}
and Remark \ref{rmk:2}.
\end{proof}

\begin{prop}
For $J \subset I \subset I' \subset \ski$,
the morphism $\alpha_{I|I'}(f)$ induces an isomorphism
\begin{equation}
\label{isom form sAI|I' to sBI:eq}
(\cO_{Y[I]}
\otimes
R^n(f_I)_{\ast}sA_{I|I'}(f))\bigl|_{Y[I']^{\ast}}
\simeq
R^n(f_I)_{\ast}sB_I(f)\bigl|_{Y[I']^{\ast}}
\end{equation}
for all $n$,
under which we have
\begin{equation}
(\cO_{Y[I]}
\otimes
L(J)_mR^n(f_I)_{\ast}sA_{I|I'}(f))\bigl|_{Y[I']^{\ast}}
\simeq
L(J)_mR^n(f_I)_{\ast}sB_I(f)\bigl|_{Y[I']^{\ast}}
\end{equation}
for all $m$.
In particular,
the morphism
$\alpha_{I|I'}(f)$
induces an isomorphism
\begin{equation}
\begin{split}
(\cO_{Y[I]}
\otimes
\gr_{m_l}^{L(J_l)}
&\cdots
\gr_{m_2}^{L(J_2)}
\gr_{m_1}^{L(J_1)}
R^n(f_I)_{\ast}sA_{I|I'}(f))\bigl|_{Y[I']^{\ast}} \\
&\simeq
\gr_{m_l}^{L(J_l)}
\cdots
\gr_{m_2}^{L(J_2)}
\gr_{m_1}^{L(J_1)}
R^n(f_I)_{\ast}sB_I(f)\bigl|_{Y[I']^{\ast}}
\end{split}
\end{equation}
for $J_1, J_2, \dots, J_l \subset I$
and for all $m_1,m_2, \dots, m_l, n$.
\end{prop}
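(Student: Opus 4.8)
The plan is to descend from the (bi)filtered derived-category isomorphisms of Lemma \ref{lemma for sA and sB} and the spectral-sequence identifications of Lemma \ref{filtered identification between ss of A and B} to the level of the cohomology sheaves $R^n(f_I)_{\ast}$, keeping track of the filtrations $L(J_i)$. Throughout I use that $\cO_{Y[I]} \otimes_{\bQ} -$ is exact (every $\cO_{Y[I]}$-stalk is flat over $\bQ$), so it commutes with $\coh^n(R(f_I)_{\ast}(-)) = R^n(f_I)_{\ast}(-)$ and with the formation of the filtrations $L(J_i)$ and their gradeds.

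First I would establish the unfiltered isomorphism \eqref{isom form sAI|I' to sBI:eq}: apply Lemma \ref{lemma for sA and sB} with $J=K=\emptyset$ to obtain an isomorphism $\cO_{Y[I]} \otimes_{\bQ} R(f_I)_{\ast}sA_{I|I'}(f) \simeq R(f_I)_{\ast}sB_I(f)$ in the derived category over $Y[I']^{\ast}$, and take $\coh^n$. For a single filtration $L(J)$, apply Lemma \ref{lemma for sA and sB} with $K=\emptyset$, which gives an isomorphism in the filtered derived category with respect to $L(J)$ over $Y[I']^{\ast}$. Since $L(J)$ is a finite filtration, the associated-graded isomorphism together with the five lemma, applied by induction on $m$ to the cohomology long exact sequences coming from $0 \to L(J)_{m-1} \to L(J)_m \to \gr_m^{L(J)} \to 0$ (and its $sA$-analogue), identifies $\coh^n(L(J)_m R(f_I)_{\ast}(-))$ on both sides, hence the induced filtrations $L(J)_m R^n(f_I)_{\ast}$. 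This is the second displayed isomorphism.

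The delicate point is the ``in particular'' on the iterated gradeds $\gr_{m_l}^{L(J_l)} \cdots \gr_{m_1}^{L(J_1)} R^n(f_I)_{\ast}$, because an isomorphism respecting each $L(J_i)$ separately respects iterated gradeds only once the filtrations are mutually strict on the cohomology. Here I would use Lemma \ref{filtered identification between ss of A and B}, which already identifies the $E_r$-terms of the $L(K)$-spectral sequences on the two sides together with their recursive filtrations $L(J_i)_{\rec}$ and all iterated gradeds thereof. What must be supplied is the comparison, for a suitable choice of $K$, of the recursive filtration $L(J)_{\rec}$ on the abutment with the filtration induced by $L(J)$ on $\gr_m^{L(K)} R^n(f_I)_{\ast}sB_I(f)$; this follows from the degeneration of the $L(K)$-spectral sequence and the strictness of the filtrations, both of which are consequences of the local freeness of $R^n(f_I)_{\ast}sB_I(f)$ and of its gradeds $\gr_m^{L(K)} R^n(f_I)_{\ast}sB_I(f)$ proved earlier in this section from the rational structure. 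Organizing these comparisons by induction on $l$ --- realizing $\gr_{m_1}^{L(J_1)}R^n$ as the abutment of the $L(J_1)$-spectral sequence and the remaining gradeds as recursive filtrations on it, and peeling off one filtration at a time by means of the residue decomposition of Lemma \ref{lemma on grLsBJ to grLsBI} (under which $\gr_{m_1}^{L(J_1)}$ becomes a direct sum of the $sB_{I \setminus J_1}(f_{\Gamma})$ with the remaining $L(J_i)$ turning into $L(J_i \setminus J_1)$) --- reduces to fewer filtrations and smaller index sets, where the inductive hypothesis applies.

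The main obstacle is exactly this transfer of the multifiltration data from the spectral-sequence level to the cohomology sheaves: a priori the recursive and the induced filtrations need not agree, and their coincidence rests entirely on the degeneration and strictness input. Once the local freeness of the relevant higher direct images is invoked, the degeneration holds, and the rest is bookkeeping with the residue isomorphisms.
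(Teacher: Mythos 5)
Your first two steps are correct and are in substance the paper's own proof, which is a one-liner: take $K=\emptyset$ and $r=1$ in Lemma \ref{filtered identification between ss of A and B}. Since $L(\emptyset)$ is the trivial filtration, the $E_1^{0,n}$-term of the corresponding spectral sequence is exactly $R^n(f_I)_{\ast}$ and the recursive filtration $L(J)_{\rec}$ on it is the induced filtration $L(J)$, so that lemma already contains your first two displays; your route through Lemma \ref{lemma for sA and sB} with $K=\emptyset$ plus the five-lemma induction on $m$ is the same computation carried out by hand (note also that identifying the induced filtrations needs no five lemma: a filtered quasi-isomorphism identifies the images $\image(\coh^n(L(J)_m\,\cdot) \to \coh^n(\cdot))$ directly, with $\cO_{Y[I]} \otimes_{\bQ}$ exact since $\bQ$ is a field).

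The genuine problem is your step 3, in two respects. First, the ``delicate point'' you isolate does not exist. The isomorphism \eqref{isom form sAI|I' to sBI:eq} is a single isomorphism of sheaves, induced by the one morphism $\alpha_{I|I'}(f)$, which carries $\cO_{Y[I]} \otimes_{\bQ} L(J)_mR^n(f_I)_{\ast}sA_{I|I'}(f)$ \emph{exactly onto} $L(J)_mR^n(f_I)_{\ast}sB_I(f)$ for \emph{every} $J \subset I$ and every $m$. An isomorphism of sheaves matching two filtrations exactly matches the intersections $L(J_2)_{m_2} \cap L(J_1)_{m_1}$, hence the induced filtration on $\gr_{m_1}^{L(J_1)}$, and one iterates; commuting $\cO_{Y[I]} \otimes_{\bQ}$ past the iterated gradeds is Lemma \ref{local freeness and the commutattivity of gr}, whose flatness hypothesis is automatic over the field $\bQ$. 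Strictness and degeneration are needed to compare cohomology of gradeds with gradeds of cohomology, or recursive with induced filtrations on higher $E_r$-terms; they are not needed here, where only induced filtrations on a single sheaf occur. Second, the machinery you propose to invoke is not available at this point of the paper: the $E_2$-degeneration of the $L(K)$-spectral sequence and the accompanying strictness and recursive-versus-induced comparisons (Theorem \ref{theorem for E2-degeneracy}, Lemma \ref{strictness for L(J setminus K) on ss}, Proposition \ref{compatiblity of L(J) and L(J setminus K) on grLK}) are proved only in Section \ref{section for the Kahler case}, under the additional K\"ahler hypothesis and logically \emph{after} this Proposition --- local freeness alone does not yield degeneration, contrary to what you assert. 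Following that route would thus be circular and would needlessly restrict the statement to the K\"ahler case, while the Proposition is asserted for merely proper $f$. (A minor slip: the residue decomposition you appeal to is Lemma \ref{lemma on grL(K)sB}, not Lemma \ref{lemma on grLsBJ to grLsBI}.)
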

\begin{proof}
Take $K=\emptyset$
in Lemma \ref{filtered identification between ss of A and B}.
\end{proof}

\begin{prop}
\label{local freeness of the spectral sequence}
For $K, J_1, J_2, \dots, J_l \subset I \subset \ski$,
the coherent $\cO_{Y[I]}$-module
\begin{equation}
\gr_{m_l}^{L(J_l)_{\rec}}
\cdots
\gr_{m_2}^{L(J_2)_{\rec}}
\gr_{m_1}^{L(J_1)_{\rec}}
E_r^{a,b}(R(f_I)_{\ast}sB_I(f),L(K))
\end{equation}
is locally free of finite rank
for all $a,b,m_1,m_2, \dots, m_l,r$.
In particular,
\begin{equation}
E_r^{a,b}(R(f_I)_{\ast}sB_I(f),L(K))
\end{equation}
is a locally free $\cO_{Y[I]}$-module of finite rank
for $K \subset I$ and for all $a,b,r$.
\end{prop}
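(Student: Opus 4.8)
The plan is to reduce local freeness to a fibre‑dimension criterion and to verify the latter through the rational structure of Section \ref{section for the rational structure}, the crux being to control the fibre dimension uniformly across the natural stratification of $Y[I]$. Write $M$ for the coherent $\cO_{Y[I]}$-module in the statement. Since $Y[I]$ is a complex manifold, $M$ is locally free of finite rank as soon as the function $y \mapsto \dim_{\bC}\bC(y) \otimes M$ is locally constant, by Lemma 3.4 of \cite{Fujino-Fujisawa}; so I would reduce the whole assertion to the local constancy of this fibre dimension, the ``in particular'' clause being the case $l=0$.

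First I would stratify $Y[I]$ by the locally closed subsets $Y[I']^{\ast}$ for $I \subset I' \subset \ski$, every point of $Y[I]$ lying in exactly one of them, with $Y[I]^{\ast}$ (the case $I'=I$) open and dense. Over $Y[I']^{\ast}$, Lemma \ref{filtered identification between ss of A and B} identifies $M$ with $\cO_{Y[I]} \otimes_{\bQ} M_A$, where $M_A$ is the corresponding iterated graded quotient of $E_r^{a,b}(R(f_I)_{\ast}sA_{I|I'}(f), L(K))$. Because $sA_{I|I'}(f)$ is a bounded complex of $\bQ$-sheaves and $f$ is proper, $M_A$ is constructible and its restriction to $Y[I']^{\ast}$ is a local system of finite rank; hence $M|_{Y[I']^{\ast}}$ is locally free and the fibre dimension is constant along each stratum, equal to the $\bQ$-rank of $M_A$. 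The decisive point that remains is that these ranks must not jump as one specializes from $Y[I]^{\ast}$ into the deeper strata.

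To secure this I would argue globally rather than stratum by stratum. Applying Lemma \ref{lemma on theta for direct image complexes} with $J=K$ exhibits $(R(f_I)_{\ast}sB_I(f), L(K))$ as the bifiltered derived tensor product $\cO_{Y[I]} \otimes^L_{\cO_{Y[K]}}(R(f_K)_{\ast}sB_K(f), L(K))$ over $Y[K] \supset Y[I]$, which lets me replace the $L(K)$-filtered object by the full-filtration case on $f_K$; there Lemma \ref{lemma on grLsB} identifies the $L(K)$-graded pieces with direct sums of the complexes $sB_{\emptyset}(f_{\Gamma})$ of the morphisms $f_{\Gamma}$, whose higher direct images $R^{b}(f_{\Gamma})_{\ast}sB_{\emptyset}(f_{\Gamma})$ are locally free of finite rank by \cite{FujisawaLHSSV}. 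Thus the $E_1$-terms are locally free, and the task reduces to propagating local freeness through the differentials of the spectral sequence and through the several filtrations $L(J_1), \dots, L(J_l)$. The hard part is exactly this strictness, namely that all the differentials and the formation of each $L(J_i)_{\rec}$ have locally constant rank over the whole of $Y[I]$. I expect to obtain it by verifying pointwise strictness at every $y \in Y[I]$ by means of the rational structure, the differentials being defined over $\bQ$ on each stratum (Lemma \ref{strong strictness criteria by pointwise strictness} applied fibrewise), and then by invoking the compatibility of strong strictness with derived tensor products together with the commutation of the iterated graded quotients, that is Lemma \ref{lemma on strong strictness and filtered tensor product}, Lemma \ref{local freeness and the commutattivity of gr} and Lemma \ref{lemma on successively strict filtrations}. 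Granting this strictness, the short exact sequences attached to strongly strict filtered complexes keep every $E_r$-term and every iterated $\gr^{L(J_i)_{\rec}}$ locally free, the fibre dimension is constant, and the proposition follows.
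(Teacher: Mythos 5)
Your first half coincides with the paper's entire proof, but you stop just short of the punchline and replace it with an unproven detour. The observation you miss is that the strata $Y[I']^{\ast}$, $I \subset I' \subset \ski$, cover $Y[I]$, and Lemma \ref{filtered identification between ss of A and B} holds over \emph{each} of them, with the rational complex $sA_{I|I'}(f)$ depending on the stratum. Hence at \emph{every} point $y \in Y[I]$ --- not just on the open dense stratum --- the stalk of the iterated graded $E_r$-term is of the form $\cO_{Y[I],y} \otimes_{\bQ} V$ for a $\bQ$-vector space $V$, i.e.\ a free $\cO_{Y[I],y}$-module; since the sheaf is coherent, freeness of the stalk at $y$ already yields freeness on a neighbourhood of $y$ in $Y[I]$ (lift a basis of the stalk to sections and kill the coherent kernel and cokernel near $y$). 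Local constancy of the rank --- your ``decisive point'' that the ranks must not jump as one specializes into deeper strata --- is therefore an automatic consequence, not an input: there is nothing to propagate across strata, and the reduction via Lemma 3.4 of \cite{Fujino-Fujisawa} is unnecessary. This stalkwise argument is exactly the paper's proof, which is three lines long.

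The detour you substitute for it is a genuine gap. You never prove the strictness you need --- you only ``expect to obtain it'' --- and it is not obtainable at this stage: strict compatibility of the spectral-sequence differentials with $F$ and with the filtrations $L(J_i)$ is established only in Section \ref{section for the Kahler case} (Lemma \ref{strictness for L(J setminus K) on ss}, Theorem \ref{theorem for E2-degeneracy}) under the additional K\"ahler hypothesis, whereas the present proposition is stated and used for merely proper $f$. Worse, those later results themselves invoke the local freeness of the $E_r$-terms: the proof of Theorem \ref{theorem for E2-degeneracy} extends the vanishing of $d_r$ for $r \ge 2$ from $Y[K]^{\ast}$ to all of $Y[K]$ precisely because the $E_r$-terms are locally free by this proposition, so completing your argument along the proposed lines would be circular within the paper's logical order. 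Finally, Lemma \ref{strong strictness criteria by pointwise strictness} cannot be ``applied fibrewise'' as you suggest: its hypotheses require the cohomology of the complex to be already known locally free of locally constant rank, which is part of what is being proved.
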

\begin{proof}
For any $y \in Y[I]$,
there exists $I' \supset I$
such that $y \in Y[I']^{\ast}$.
From Lemma \ref{filtered identification between ss of A and B},
the stalk
\begin{equation}
\gr_{m_l}^{L(J_l)_{\rec}}
\cdots
\gr_{m_2}^{L(J_2)_{\rec}}
\gr_{m_1}^{L(J_1)_{\rec}}
E_r^{a,b}(R(f_I)_{\ast}sB_I(f),L(K))_y
\end{equation}
is a free $\cO_{Y[I],y}$-module
of finite rank.
Thus we obtain the conclusion.
\end{proof}

\begin{prop}
\label{proposition on the local freeness of grLgrLsBI}
For $J_1, J_2, \dots, J_l \subset I \subset \ski$,
the coherent $\cO_{Y[I]}$-module
\begin{equation}
\gr_{m_l}^{L(J_l)}
\cdots
\gr_{m_2}^{L(J_2)}
\gr_{m_1}^{L(J_1)}
R^n(f_I)_{\ast}sB_I(f)
\end{equation}
is locally free of finite rank
for all $m_1, m_2, \dots, m_l,n$.
In particular,
\begin{equation}
R^n(f_I)_{\ast}sB_I(f)
\end{equation}
is locally free of finite rank for all $n$.
\end{prop}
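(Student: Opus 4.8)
The plan is to establish local freeness by the same mechanism used for Proposition \ref{local freeness of the spectral sequence}: I will show that the coherent $\cO_{Y[I]}$-module
\[
\cF=\gr_{m_l}^{L(J_l)}\cdots\gr_{m_1}^{L(J_1)}R^n(f_I)_{\ast}sB_I(f)
\]
has a free stalk at every point of $Y[I]$, and then invoke the fact that a coherent sheaf on the complex manifold $Y[I]$ whose stalks are all free is locally free of finite rank (a basis of a free stalk lifts to generators over a neighbourhood, and the coherent sheaf of relations has vanishing stalk there, hence vanishes nearby).

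First I would record that $\cF$ is coherent. Since $f$ is proper and each term $sB_I(f)^n$ is a coherent $\cO_{X[I]}$-module, the direct images $R^n(f_I)_{\ast}sB_I(f)$ are coherent $\cO_{Y[I]}$-modules; the filtrations $L(J_i)$ are by coherent submodules, so every iterated graded piece is again coherent.

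Next, fix an arbitrary point $y\in Y[I]$ and set $I'=\{\,i\in\ski : t_i(y)=0\,\}$. Then $I'\supseteq I$ and $y\in Y[I']^{\ast}$. Over the open set $Y[I']^{\ast}$ the preceding proposition (the graded refinement of the isomorphism \eqref{isom form sAI|I' to sBI:eq}, induced by $\alpha_{I|I'}(f)$) furnishes an isomorphism of $\cO_{Y[I]}$-modules
\[
\cO_{Y[I]}\otimes_{\bQ}\gr_{m_l}^{L(J_l)}\cdots\gr_{m_1}^{L(J_1)}R^n(f_I)_{\ast}sA_{I|I'}(f)
\overset{\simeq}{\longrightarrow}
\cF|_{Y[I']^{\ast}} .
\]
Because $f_I$ is proper and $sA_{I|I'}(f)$ is a bounded complex of $\bQ$-sheaves, $R^n(f_I)_{\ast}sA_{I|I'}(f)$ is a $\bQ$-sheaf with finite-dimensional stalks, and so are its iterated graded pieces. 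Passing to stalks at $y$ therefore identifies $\cF_y$ with $\cO_{Y[I],y}\otimes_{\bQ}V$ for a finite-dimensional $\bQ$-vector space $V$, so $\cF_y$ is a free $\cO_{Y[I],y}$-module of finite rank. As $y$ was arbitrary, $\cF$ is locally free of finite rank. The final assertion is the case $l=0$ (equivalently, take each $J_i=\emptyset$, or run the same argument on the ungraded isomorphism \eqref{isom form sAI|I' to sBI:eq} directly).

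I expect no genuine obstacle here: all of the analytic and Hodge-theoretic content has already been absorbed into the stratumwise rational comparison supplied by the preceding proposition, and what remains is the purely formal descent from free stalks to local freeness. The only points deserving a line of care are the coherence of the iterated graded sheaves and the finite-dimensionality of the rational stalks, both of which are immediate consequences of the properness of $f$.
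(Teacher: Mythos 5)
Your proof is correct and essentially identical to the paper's: the paper obtains this proposition by taking $K=\emptyset$ and $r=1$ in Proposition \ref{local freeness of the spectral sequence}, whose own proof is precisely your stalkwise argument (for $y \in Y[I]$ choose $I'\supset I$ with $y\in Y[I']^{\ast}$ and use the rational comparison over $Y[I']^{\ast}$ supplied by $\alpha_{I|I'}(f)$ to see that the stalk at $y$ is free, coherence then upgrading this to local freeness of finite rank). The only blemish is your justification of the finite-dimensionality of the stalks of $R^n(f_I)_{\ast}sA_{I|I'}(f)$ --- properness of $f_I$ plus boundedness of the complex does not suffice, since the Koszul-type terms of $sA_{I|I'}(f)$ involve $\cO_X$ regarded as a $\bQ$-sheaf and are far from constructible term by term --- but this claim is dispensable: $\cO_{Y[I],y}\otimes_{\bQ}V$ is a free $\cO_{Y[I],y}$-module for an arbitrary $\bQ$-vector space $V$, and the coherence of $\cF$ already forces the free stalk to be finitely generated, hence of finite rank.
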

\begin{proof}
Take $K=\emptyset$
in Proposition \ref{local freeness of the spectral sequence}.
\end{proof}

\begin{thm}
\label{theorem on the canonical extension}
If the semistable morphism
$f:X \longrightarrow Y=\Delta^k \times S$ is proper,
then
$R^n(f_I)_{\ast}sB_I(f)$
is the canonical extension of
$(\cO_{Y[I]} \otimes R^n(f_I)_{\ast}sA_I(f))|_{Y[I]^{\ast}}$
via the isomorphisms
\eqref{isom form sAI|I' to sBI:eq}
for $I=I'$.
\end{thm}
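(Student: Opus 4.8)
The plan is to verify the three defining properties of Deligne's canonical extension for the flat bundle $(\cO_{Y[I]} \otimes R^n(f_I)_{\ast}sA_I(f))|_{Y[I]^{\ast}}$ on $Y[I]^{\ast}$: namely that $R^n(f_I)_{\ast}sB_I(f)$ is (i) a locally free $\cO_{Y[I]}$-module of finite rank whose restriction to $Y[I]^{\ast}$ recovers the given bundle, (ii) equipped with a logarithmic connection along $E_{\overline{I}} \cap Y[I]$ extending the flat connection, and (iii) such that the residue of this connection along each component of $E_{\overline{I}} \cap Y[I]$ has eigenvalues in a fixed fundamental domain containing $0$. By the uniqueness of the canonical extension, these properties identify $R^n(f_I)_{\ast}sB_I(f)$ with it.

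Property (i) is already at hand: local freeness of finite rank is Proposition \ref{proposition on the local freeness of grLgrLsBI}, while the isomorphism \eqref{isom form sAI|I' to sBI:eq} with $I=I'$ identifies $R^n(f_I)_{\ast}sB_I(f)|_{Y[I]^{\ast}}$ with $(\cO_{Y[I]} \otimes R^n(f_I)_{\ast}sA_I(f))|_{Y[I]^{\ast}}$. For property (ii), recall from Remark \ref{wedge product for omegaY and B} that $(-1)^{|I|}\nabla(I)$ is an integrable logarithmic connection on $R^n(f_I)_{\ast}sB_I(f)$ with poles along $E_{\overline{I}} \cap Y[I]$, and from Remark \ref{remark on the canonical and Gauss-Manin} that under the above isomorphism it restricts on $Y[I]^{\ast}$ to the canonical flat connection $d \otimes \id$ attached to the rational structure $R^n(f_I)_{\ast}sA_I(f)$. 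Thus the only substantial point is (iii).

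For (iii), I would fix $j \in \overline{I}$ and set $I_j = I \cup \{j\}$, so that $E_j \cap Y[I] = Y[I_j]$. Since $f$ is proper, Lemma \ref{lemma on theta for direct image complexes} applied to $I \subset I_j$ gives, after taking $\coh^n$ and using Proposition \ref{proposition on the local freeness of grLgrLsBI} to turn the derived tensor product into an ordinary one, an isomorphism $\cO_{Y[I_j]} \otimes R^n(f_I)_{\ast}sB_I(f) \simeq R^n(f_{I_j})_{\ast}sB_{I_j}(f)$ induced by $\theta_{I|I_j}(f)$. I would then invoke Lemma \ref{lemma for nu and residue of nabla} with the roles of its indices played by $J = I$ and $I = I_j$: because $\mu_{I|I_j}(f) = \dlog t_j \otimes \nu_{j|I_j}(f)$ has only a $\dlog t_j$-component, the lemma shows that, under the isomorphism above, the residue $\res_{E_j}((-1)^{|I|}\nabla(I))$ along $Y[I_j]$ is identified up to sign with the operator $N_{j|I_j}(f)$ on $R^n(f_{I_j})_{\ast}sB_{I_j}(f)$. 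By Lemma \ref{lemma for nu and L} (and its consequence for the induced operator $N_{j|I_j}(f)$), the morphism $\nu_{j|I_j}(f)$ carries $L(\{j\})_m$ into $L(\{j\})_{m-2}$; as $L(\{j\})$ is a finite filtration, $N_{j|I_j}(f)$ is nilpotent, hence so is the residue. All its eigenvalues are therefore $0$, which establishes (iii).

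The main obstacle is precisely step (iii): converting the abstract residue of the Gauss--Manin connection $\nabla(I)$ into an explicit, manifestly nilpotent operator. The key is to reinterpret the residue along $E_j$ (for $j \notin I$) as the combinatorial operator $N_{j|I_j}(f)$ on the deeper stratum $sB_{I_j}(f)$, via the comparison in Lemma \ref{lemma for nu and residue of nabla} together with the proper-pushforward isomorphism of Lemma \ref{lemma on theta for direct image complexes}; once this identification is in place, nilpotence is immediate from the weight-lowering property of Lemma \ref{lemma for nu and L}. One should also bookkeep consistently the signs coming from $(-1)^{|I|}\nabla(I)$, from the factor $(-1)^{|J|+1}$ in Lemma \ref{lemma for nu and residue of nabla}, and from the connection comparison in Remark \ref{remark on the canonical and Gauss-Manin}, though none of these affects nilpotence.
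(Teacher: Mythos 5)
Your proposal is correct and takes essentially the same approach as the paper: the paper likewise reduces the statement to local freeness (Proposition \ref{proposition on the local freeness of grLgrLsBI}), the connection comparison of Remark \ref{remark on the canonical and Gauss-Manin}, and nilpotence of the residues of $(-1)^{|I|}\nabla(I)$, identifying those residues with $-N$ operators via Lemma \ref{lemma for nu and residue of nabla} --- the only difference being that the paper routes the identification through Corollary \ref{cor on compatibility of nabla(J) and nabla(I)} and Theorem \ref{thm on the compatibility of N and nu} (the $J=\emptyset$ instance of that lemma), whereas you instantiate the lemma directly at $J=I$, $I=I\cup\{j\}$. Your reading in fact silently repairs a small index slip in the paper's proof (which writes $-N_{j|I}(f)$ for $j \notin I$, properly $-N_{j|I\cup\{j\}}(f)$ on $R^n(f_{I\cup\{j\}})_{\ast}sB_{I\cup\{j\}}(f)$) and makes explicit, via the $L(\{j\})$-lowering of Lemma \ref{lemma for nu and L}, the nilpotence that the paper asserts ``by definition.''
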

\begin{proof}
Considering Proposition \ref{proposition on the local freeness of grLgrLsBI},
Lemma \ref{remark on the canonical and Gauss-Manin}
and Proposition \ref{wedge product for omegaY and B},
it suffices to check
that the residues of the connection $(-1)^{|I|}\nabla(I)$
are nilpotent.
By Corollary \ref{cor on compatibility of nabla(J) and nabla(I)},
these residues
coincide with the residues of $\nabla$
on $R^nf_{\ast}\omega_{X/Y}$.
Therefore
Theorem \ref{thm on the compatibility of N and nu}
shows that
the residues of $(-1)^{|I|}\nabla(I)$
along $E_j \cap Y[I]$ for $j \notin I$
coincide with $-N_{j|I}(f)$,
which is nilpotent by definition.
Thus
$R^n(f_I)_{\ast}sB_I(f)$
equipped with
$(-1)^{|I|}\nabla(I)$
is the canonical extension as desired.
\end{proof}

\begin{rmk}
\label{Usui's result}
For $I=\emptyset$,
the theorem above tells us that
$R^nf_{\ast}\omega_{X/Y}$ is
the canonical extension of its restriction on $Y^{\ast}$.
This has been already claimed
by Usui \cite[p.138]{UsuiRVCSV}.
\end{rmk}

\begin{lem}
\label{lemma for ss being the canonical extension}
For $K \subset I \subset \ski$,
the locally free $\cO_{Y[I]}$-module
\begin{equation}
E_r^{a,b}(R(f_I)_{\ast}sB_I(f),L(K))
\end{equation}
is the canonical extension of
\begin{equation}
\cO_{Y[I]}
\otimes
E_r^{a,b}(R(f_I)_{\ast}sA_I(f),L(K))\bigl|_{Y[I]^{\ast}}
\end{equation}
via the isomorphism \eqref{rational structure of ss:eq}
for $I'=I$.
\end{lem}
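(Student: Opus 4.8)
The plan is to verify, for the locally free $\cO_{Y[I]}$-module $E_r^{a,b}(R(f_I)_{\ast}sB_I(f),L(K))$, the four defining properties of Deligne's canonical extension: that it is locally free of finite rank, that it carries an integrable logarithmic connection with poles along $E_{\overline{I}} \cap Y[I]$, that its restriction to $Y[I]^{\ast}$ agrees through \eqref{rational structure of ss:eq} with the flat bundle attached to the $\bQ$-structure $E_r^{a,b}(R(f_I)_{\ast}sA_I(f),L(K))$, and that the residues of this connection are nilpotent. This is the exact analogue, one spectral-sequence level down, of the argument proving Theorem \ref{theorem on the canonical extension}, and I would follow that proof as a template. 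Local freeness is already furnished by Proposition \ref{local freeness of the spectral sequence}.

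The connection is produced by observing that the derived-category map $\gamma_I(f)$ defining $\nabla(I)$ preserves the filtration $L(K)$ (this rests on the compatibility of $L(K)$ with the filtration $G$ on $s\tB_I(f)$ recorded in Lemma \ref{lemma on grGstBI}), so it induces a morphism of the $L(K)$-spectral sequences $E_r^{a,b}(R(f_I)_{\ast}sB_I(f),L(K)) \to \omega^1_{Y[I]} \otimes E_r^{a,b}(R(f_I)_{\ast}sB_I(f),L(K))$; that $(-1)^{|I|}$ times this map is an integrable logarithmic connection follows from the Leibniz rule and integrability already recorded for $(-1)^{|I|}\nabla(I)$ in Remark \ref{wedge product for omegaY and B}. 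On the $sA$-side the tautological connection is $d \otimes \id$ on $\cO_{Y[I]} \otimes_{\bQ} E_r^{a,b}(R(f_I)_{\ast}sA_I(f),L(K))$. To see that the isomorphism \eqref{rational structure of ss:eq}, induced by $\alpha_I(f)$, is horizontal over $Y[I]^{\ast}$, I would pass to spectral sequences in the commutative square of Remark \ref{remark on the canonical and Gauss-Manin}, which states precisely that $\alpha_I(f)$ intertwines $d \otimes \id$ with $(-1)^{|I|}\nabla(I)$ at the level of the $R^n$; since $\alpha_I(f)$ is $L(K)$-filtered, this intertwining descends to every $E_r^{a,b}$ by Lemma \ref{filtered identification between ss of A and B}. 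Thus over $Y[I]^{\ast}$ the flat bundle is identified, connection and all, with $E_r^{a,b}(R(f_I)_{\ast}sB_I(f),L(K))|_{Y[I]^{\ast}}$.

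It remains to verify nilpotence of the residues of $(-1)^{|I|}\nabla(I)$ on $E_r^{a,b}(R(f_I)_{\ast}sB_I(f),L(K))$ along the components of $E_{\overline{I}} \cap Y[I]$. On the abutment $R^n(f_I)_{\ast}sB_I(f)$ these residues are already known to be the nilpotent operators $-N_{j|I}(f)$, by Theorem \ref{theorem on the canonical extension}, through Corollary \ref{cor on compatibility of nabla(J) and nabla(I)} and Theorem \ref{thm on the compatibility of N and nu}. Since $\nabla(I)$, and hence each of its residues, preserves $L(K)$ (compatibility of $\nabla(I)$ with $L(K)$, together with the corresponding statement for $N_{j|I}(f)$ in the lemma following Definition \ref{definition of N}, itself a consequence of Lemma \ref{lemma for nu and L}), and since the $L(K)$-filtration is horizontal, the residue of the induced connection on the subquotient $E_r^{a,b}$ is precisely the endomorphism induced by the residue on the abutment. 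As a subquotient of a nilpotent operator it is nilpotent. The four properties together identify $E_r^{a,b}(R(f_I)_{\ast}sB_I(f),L(K))$ with the asserted canonical extension.

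The main obstacle I anticipate is the bookkeeping needed to ensure that the Gauss-Manin connection genuinely descends to the spectral-sequence terms as a connection compatible with all the differentials $d_r$ --- that is, that $\gamma_I(f)$ is $L(K)$-filtered in the derived category and that forming the residue commutes with passage to $E_r$ --- rather than merely that $\nabla(I)$ preserves $L(K)$ on the abutment. Once this horizontality of the $L(K)$-filtration is set up cleanly, the matching of rational structures and the nilpotence of the residues are inherited from the already-established case of the abutment in Theorem \ref{theorem on the canonical extension}, so no genuinely new computation is required.
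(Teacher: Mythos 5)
Your verification of local freeness and the construction of the flat structure over $Y[I]^{\ast}$ are fine, but the nilpotence-of-residues step contains a genuine error: you claim that $E_r^{a,b}(R(f_I)_{\ast}sB_I(f),L(K))$ is a subquotient of the abutment $R^n(f_I)_{\ast}sB_I(f)$ with residue induced from the residue there. For finite $r$ this is false. In the spectral sequence of a filtered complex only $E_\infty^{a,b}=\gr_{-a}^{L(K)}R^{a+b}(f_I)_{\ast}sB_I(f)$ is a subquotient of the abutment; the term $E_1^{a,b}\simeq R^{a+b}(f_I)_{\ast}\gr_{-a}^{L(K)}sB_I(f)$ is the cohomology of the associated graded \emph{complex}, which is in general not a subquotient of the cohomology of the total complex. (At this point of the paper only properness is assumed, so no degeneration statement is available; and even the later Theorem \ref{theorem for E2-degeneracy} would only relate $E_2$, not $E_1$, to the abutment.) Consequently horizontality of $L(K)$ tells you nothing about the residues on $E_1$, the "subquotient of a nilpotent operator" argument collapses at the first page, and every higher page built on it inherits the gap — exactly at the bookkeeping issue you yourself flagged as the main obstacle, which your proposal does not actually resolve.

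The paper's proof avoids both problems. For $r=1$ it uses the commutative diagram \eqref{commutative diagram for grA and grB:eq} (i.e.\ Lemma \ref{lemma on grL(K)sB} and its $sA$-analogue): $\gr_m^{L(K)}sB_I(f)$ decomposes, compatibly with the rational structure, into a direct sum of complexes $sB_{I \setminus K}(f_{\Gamma})[-m-2|q|]$ attached to the proper semistable morphisms $f_{\Gamma}$, so Theorem \ref{theorem on the canonical extension} applied to each $f_{\Gamma}$ shows that every $E_1$-term is a direct sum of canonical extensions — nilpotence of residues is imported from the abutment-level theorem \emph{for the morphisms $f_{\Gamma}$}, not for $f$ itself. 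For $r \ge 2$ no connection on the pages is needed at all: the differential $d_r$ is defined over $\bQ$ on $Y[I]^{\ast}$ via \eqref{rational structure of ss:eq}, and since taking the canonical extension is an exact functor, $E_{r+1}=\kernel(d_r)/\image(d_r)$ is automatically the canonical extension of the corresponding rational object. To repair your argument you would need to either adopt this route, or prove nilpotence on $E_1$ directly through the $f_{\Gamma}$-decomposition rather than through the abutment.
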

\begin{proof}
From the commutative diagram
\eqref{commutative diagram for grA and grB:eq},
Theorem \ref{theorem on the canonical extension}
implies the conclusion for $r=1$.
Then we obtain the conclusion for general $r$
because taking the canonical extension is an exact functor.
\end{proof}

\begin{rmk}
\label{remark on the canonical extension of ss}
For $K, J \subset I \subset \ski$,
the locally free $\cO_{Y[I]}$-module
\begin{equation}
(L(J)_{\rec})_mE_r^{a,b}(R(f_I)_{\ast}sB_I(f),L(K))
\end{equation}
is the canonical extension of its restriction on $Y[I]^{\ast}$.
\end{rmk}

\begin{lem}
\label{lemma for theta and the spectral sequences}
For $K \subset J \subset I \subset \ski$,
the morphism $\iota_{|I}^{-1}\theta_{J|I}(f)$
in \eqref{the morphism thetaJ|I:eq}
induces an isomorphism
\begin{equation}
\label{theta and ss:eq}
\cO_{Y[I]}
\otimes
\iota_{J|I}^{-1}E_r^{a,b}(R(f_J)_{\ast}sB_J(f),L(K))
\overset{\simeq}{\longrightarrow}
E_r^{a,b}(R(f_I)_{\ast}sB_I(f),L(K))
\end{equation}
for all $a,b,r$.
\end{lem}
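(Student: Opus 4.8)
The plan is to deduce the statement from the bifiltered isomorphism already established in Lemma \ref{lemma on theta for direct image complexes}, the only genuinely new point being that the tensor product $\cO_{Y[I]} \otimes_{\cO_{Y[J]}}$ occurring here is the \emph{ordinary} one, whereas Lemma \ref{lemma on theta for direct image complexes} produces the \emph{derived} tensor product. Since the closed immersion $Y[I] \hookrightarrow Y[J]$ is cut out by the coordinate functions $(t_i)_{i \in I \setminus J}$ and is not flat, reconciling the derived with the ordinary base change is exactly what must be controlled.

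First I would forget the filtration $F$ in Lemma \ref{lemma on theta for direct image complexes}, keeping only $L(K)$. This gives a filtered isomorphism $(\cO_{Y[I]} \otimes^L_{\cO_{Y[J]}} R(f_J)_{\ast}sB_J(f), L(K)) \simeq (R(f_I)_{\ast}sB_I(f), L(K))$ in the filtered derived category, induced by $\theta_{J|I}(f)$. Passing to the associated $L(K)$-spectral sequences, this produces an isomorphism $E_r^{a,b}(\cO_{Y[I]} \otimes^L_{\cO_{Y[J]}} R(f_J)_{\ast}sB_J(f), L(K)) \simeq E_r^{a,b}(R(f_I)_{\ast}sB_I(f), L(K))$ for all $a,b,r$, compatible with the morphism induced by $\theta_{J|I}(f)$.

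It then remains to identify the left-hand side with $\cO_{Y[I]} \otimes_{\cO_{Y[J]}} E_r^{a,b}(R(f_J)_{\ast}sB_J(f), L(K))$, and this is where Proposition \ref{local freeness of the spectral sequence} is decisive. At the first page the identification can be seen directly: combining Lemma \ref{lemma on grLsBJ to grLsBI} with Lemma \ref{flatness lemma} yields a derived-tensor quasi-isomorphism on $\gr_m^{L(K)}sB_J(f)$, which the projection formula and proper base change push through $R(f_J)_{\ast}$ exactly as in Lemma \ref{lemma on theta for direct image complexes}; the local freeness of $R^n(f_J)_{\ast}\gr_m^{L(K)}sB_J(f) = E_1$ then replaces the derived tensor product by the ordinary one. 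For the higher pages I would argue by induction on $r$: because \emph{every} page $E_r^{a,b}(R(f_J)_{\ast}sB_J(f), L(K))$, together with all its iterated graded pieces for the recurrent filtrations $L(J)_{\rec}$, is locally free of finite rank, the cycles and boundaries defining the spectral sequence are base-change stable and the relevant higher Tor-sheaves of $\cO_{Y[I]}$ over $\cO_{Y[J]}$ vanish; the inductive step is precisely the commutation of $\cO_{Y[I]} \otimes_{\cO_{Y[J]}} -$ with the passage from $E_r$ to $E_{r+1}$.

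The main obstacle is thus exactly this last commutation across the non-flat closed immersion $Y[I] \hookrightarrow Y[J]$, where derived and ordinary base change differ in general; the strong local-freeness of all pages and all recurrent graded pieces provided by Proposition \ref{local freeness of the spectral sequence} is what forces the two to agree, making the ordinary tensor product behave, page by page, like the derived one.
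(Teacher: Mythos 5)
Your proposal is correct and takes essentially the same route as the paper: the paper's proof likewise combines the bifiltered derived isomorphism of Lemma \ref{lemma on theta for direct image complexes} with the local freeness of $E_r^{a,b}(R(f_J)_{\ast}sB_J(f),L(K))$ from Proposition \ref{local freeness of the spectral sequence}, invoking the base change theorem to replace the derived tensor product over the non-flat immersion $Y[I] \hookrightarrow Y[J]$ by the ordinary one. Your explicit page-by-page induction on $r$ simply spells out the base-change step that the paper leaves implicit.
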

\begin{proof}
By Lemma \ref{lemma on theta for direct image complexes},
the morphism $\iota_{|I}^{-1}\theta_{J|I}(f)$
induces an isomorphism
\begin{equation}
\cO_{Y[I]}
\otimes^L
\iota_{J|I}^{-1}
\gr_m^{L(K)}
R(f_J)_{\ast}sB_J(f)
\overset{\simeq}{\longrightarrow}
\gr_m^{L(K)}
R(f_I)_{\ast}sB_I(f)
\end{equation}
in the derived category for all $m$.
Then the local freeness of
$E_r^{a,b}(R(f_J)_{\ast}sB_J(f),L(K))$
for all $a,b,r$ implies the conclusion.
\end{proof}

\begin{prop}
\label{compatibility of the restriction and L(K)}
For $K \subset J \subset I \subset \ski$,
the morphism $\theta_{J|I}(f)$
induces isomorphisms
\begin{align}
&\cO_{Y[I]}
\otimes
\iota_{J|I}^{-1}
L(K)_mR^n(f_J)_{\ast}sB_J(f)
\overset{\simeq}{\longrightarrow}
L(K)_mR^n(f_I)_{\ast}sB_I(f)
\label{isom for L(K) induced by theta:eq} \\
&\cO_{Y[I]}
\otimes
\iota_{J|I}^{-1}
\gr_m^{L(K)}R^n(f_J)_{\ast}sB_J(f)
\overset{\simeq}{\longrightarrow}
\gr_m^{L(K)}R^n(f_I)_{\ast}sB_I(f)
\label{isom for grL(K) induced by theta:eq}
\end{align}
for all $m,n$.
In particular,
the morphism $\theta_{J|I}(f)$
induces an isomorphism
\begin{equation}
\cO_{Y[I]}
\otimes
\iota_{J|I}^{-1}
R^n(f_J)_{\ast}sB_J(f)
\overset{\simeq}{\longrightarrow}
R^n(f_I)_{\ast}sB_I(f)
\end{equation}
under which the filtration $L(K)$ for $K \subset J$
is identified on the both sides,
where the filtration $L(K)$ on the left hand side
is defined as in Definition $\ref{defn:1}$.
\end{prop}
\begin{proof}
Taking $r$ sufficiently large
in \eqref{theta and ss:eq},
we obtain the isomorphism
\eqref{isom for grL(K) induced by theta:eq}
for all $m,n$.
The local freeness of
$\gr_m^{L(K)}R^n(f_J)_{\ast}sB_J(f)$
implies \eqref{isom for L(K) induced by theta:eq}.
For the latter,
we note that the canonical morphism
\begin{equation}
\cO_{Y[I]}
\otimes
\iota_{J|I}^{-1}
L(K)_mR^n(f_J)_{\ast}sB_J(f)
\longrightarrow
L(K)_m(
\cO_{Y[I]}
\otimes
\iota_{J|I}^{-1}
R^n(f_J)_{\ast}sB_J(f))
\end{equation}
is an isomorphism,
because $\gr_m^{L(K)}R^n(f_J)_{\ast}sB_J(f)$
is a locally free $\cO_{Y[J]}$-module of finite rank for all $m$.
\end{proof}

\begin{cor}
\label{corollary on grL's and theta}
For $J_1,J_2, \dots, J_l \subset J \subset I \subset \ski$,
the morphism $\theta_{J|I}(f)$
induces an isomorphism
\begin{equation}
\label{the morphism induced by theta on grL's:eq}
\begin{split}
\cO_{Y[I]}
\otimes
\gr_{m_l}^{L(J_l)} \cdots
&\gr_{m_2}^{L(J_2)}\gr_{m_1}^{L(J_1)}
\iota_{J|I}^{-1}R^n(f_J)_{\ast}sB_J(f) \\
&\overset{\simeq}{\longrightarrow}
\gr_{m_l}^{L(J_l)} \cdots
\gr_{m_2}^{L(J_2)}\gr_{m_1}^{L(J_1)}
R^n(f_I)_{\ast}sB_I(f)
\end{split}
\end{equation}
for all $m_1,m_2, \dots, m_l,n$.
\end{cor}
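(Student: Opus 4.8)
The plan is to deduce the statement from the single-filtration isomorphism \eqref{isom for L(K) induced by theta:eq} of the preceding proposition together with the local freeness of Proposition \ref{proposition on the local freeness of grLgrLsBI}, and then to commute the functor $\cO_{Y[I]} \otimes_{\cO_{Y[J]}}(-)$ past the iterated graded operation using Lemma \ref{local freeness and the commutattivity of gr}. So the corollary is purely formal once those two results are available.

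First I would observe that the single morphism $\theta_{J|I}(f)$ induces one isomorphism
\[
\Phi\colon \cO_{Y[I]} \otimes_{\cO_{Y[J]}} R^n(f_J)_{\ast}sB_J(f)
\overset{\simeq}{\longrightarrow}
R^n(f_I)_{\ast}sB_I(f)
\]
(taking $m \gg 0$ in \eqref{isom for L(K) induced by theta:eq}, or invoking Lemma \ref{lemma on theta for direct image complexes}). Applying \eqref{isom for L(K) induced by theta:eq} with $K=J_i$ for each $i=1,2,\dots,l$, which is legitimate because $J_i \subset J \subset I$, shows that the canonical map $\cO_{Y[I]} \otimes_{\cO_{Y[J]}} L(J_i)_m R^n(f_J)_{\ast}sB_J(f) \to L(J_i)_m R^n(f_I)_{\ast}sB_I(f)$ is an isomorphism; that is, $\Phi$ carries the tensor-product filtration $L(J_i)$ on the source isomorphically onto $L(J_i)$ on the target. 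Since all these statements concern restrictions of the one map $\Phi$, this map respects $L(J_1),\dots,L(J_l)$ simultaneously as subobjects, hence respects every intersection of their steps, and therefore induces an isomorphism on the iterated graded pieces
\[
\gr_{m_l}^{L(J_l)} \cdots \gr_{m_1}^{L(J_1)}
(\cO_{Y[I]} \otimes_{\cO_{Y[J]}} R^n(f_J)_{\ast}sB_J(f))
\overset{\simeq}{\longrightarrow}
\gr_{m_l}^{L(J_l)} \cdots \gr_{m_1}^{L(J_1)} R^n(f_I)_{\ast}sB_I(f) ,
\]
where on the left each $L(J_i)$ denotes the tensor-product filtration.

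Next I would invoke Lemma \ref{local freeness and the commutattivity of gr} over $X=Y[J]$, taking $\cL=\cO_{Y[I]}$, $\cM=R^n(f_J)_{\ast}sB_J(f)$ and the filtrations $L(J_1),\dots,L(J_l)$ (read as decreasing in the obvious way). Its hypothesis is exactly the local freeness — hence flatness — of $\gr_{m_l}^{L(J_l)} \cdots \gr_{m_1}^{L(J_1)} R^n(f_J)_{\ast}sB_J(f)$, which is supplied by Proposition \ref{proposition on the local freeness of grLgrLsBI}. The lemma then gives that the canonical morphism
\[
\cO_{Y[I]} \otimes_{\cO_{Y[J]}}
\gr_{m_l}^{L(J_l)} \cdots \gr_{m_1}^{L(J_1)} R^n(f_J)_{\ast}sB_J(f)
\longrightarrow
\gr_{m_l}^{L(J_l)} \cdots \gr_{m_1}^{L(J_1)}
(\cO_{Y[I]} \otimes_{\cO_{Y[J]}} R^n(f_J)_{\ast}sB_J(f))
\]
is an isomorphism. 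Composing this with the isomorphism of the previous paragraph produces \eqref{the morphism induced by theta on grL's:eq}; tracing the definitions of the two canonical maps shows the composite is precisely the morphism induced by $\theta_{J|I}(f)$, as required.

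The main content is thus already carried by Proposition \ref{proposition on the local freeness of grLgrLsBI}, and the only point demanding care is the compatibility bookkeeping: one must check that the tensor-product filtrations $L(J_i)$ on $\cO_{Y[I]} \otimes_{\cO_{Y[J]}} R^n(f_J)_{\ast}sB_J(f)$ coincide with the subobjects transported by $\Phi$ from $L(J_i)_m R^n(f_I)_{\ast}sB_I(f)$. This is guaranteed by \eqref{isom for L(K) induced by theta:eq}, and by the elementary fact that an isomorphism respecting several filtrations individually automatically respects all of their iterated graded quotients. I expect no genuine obstacle beyond this verification.
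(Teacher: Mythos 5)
Your proposal is correct and takes essentially the same route as the paper: the paper's one-sentence proof invokes exactly the local freeness of $\gr_{m_l}^{L(J_l)} \cdots \gr_{m_2}^{L(J_2)}\gr_{m_1}^{L(J_1)} R^n(f_J)_{\ast}sB_J(f)$ from Proposition \ref{proposition on the local freeness of grLgrLsBI} together with Lemma \ref{local freeness and the commutattivity of gr}, with the filtered isomorphism \eqref{isom for L(K) induced by theta:eq} (applied with $K=J_i$) supplying the identification you call $\Phi$. Your write-up simply makes explicit the bookkeeping --- transport of the tensor-product filtrations and the passage to iterated graded pieces --- that the paper leaves implicit.
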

\begin{proof}
From the local freeness of
$\gr_{m_l}^{L(J_l)} \cdots \gr_{m_2}^{L(J_2)}\gr_{m_1}^{L(J_1)}
R^n(f_J)_{\ast}sB_J(f)$
in Proposition \ref{proposition on the local freeness of grLgrLsBI},
we obtain the conclusion
by Lemma \ref{local freeness and the commutattivity of gr}.
\end{proof}

\section{Degeneration of Hodge structures}
\label{section for the Kahler case}

In this and next section,
we always assume that $X$ is a K\"ahler manifold
and that the morphism $f: X \longrightarrow Y=\Delta^k \times S$
is proper and semistable.

\begin{lem}
\label{lemma on VMHS for I}
For $J \subset I \subset \ski$,
the data
\begin{equation}
((R^n(f_I)_{\ast}L(J)_msA_I(f),L(I)[n]),
(R^n(f_I)_{\ast}L(J)_msB_I(f),L(I)[n],F),
\alpha_I(f))\bigl|_{Y[I]^{\ast}}
\end{equation}
is a graded polarizable variation of $\bQ$-mixed Hodge structure
on $Y[I]^{\ast}$ for all $m,n$.
\end{lem}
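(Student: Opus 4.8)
The plan is to verify the defining axioms of a graded polarizable $\bQ$-variation of mixed Hodge structure on $Y[I]^{\ast}$ one by one, feeding on the local freeness, strictness and base change results established in the previous two sections. Throughout I fix $J \subset I$ and the indices $m,n$, and abbreviate $V_{\bQ}=R^n(f_I)_{\ast}L(J)_msA_I(f)|_{Y[I]^{\ast}}$ and $V_{\cO}=R^n(f_I)_{\ast}L(J)_msB_I(f)|_{Y[I]^{\ast}}$. By the local freeness of the $L$-graded pieces (Proposition \ref{proposition on the local freeness of grLgrLsBI}) and the $E_r$-local freeness of Proposition \ref{local freeness of the spectral sequence}, the $L(J)$-spectral sequence degenerates, so $V_{\cO}$ is a locally free $\cO_{Y[I]^{\ast}}$-module identified with the $L(J)_m$-level of $R^n(f_I)_{\ast}sB_I(f)|_{Y[I]^{\ast}}$ carrying the induced filtrations $L(I)[n]$ and $F$.

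First I would produce the underlying local system together with the holomorphic vector bundle with flat connection. Over $Y[I]^{\ast}$ the morphism $\alpha_I(f)$ induces an isomorphism $\cO_{Y[I]}\otimes_{\bQ}V_{\bQ}\overset{\simeq}{\longrightarrow}V_{\cO}$, namely the $L(J)_m$-level of \eqref{isom form sAI|I' to sBI:eq} for $I'=I$. Since $f$ is proper and the base change morphism for $sA_I$ is a filtered quasi-isomorphism (Lemma \ref{lemma for base change isomorphism for sA}), the formation of $R^n(f_I)_{\ast}L(J)_msA_I(f)$ commutes with restriction to points of $Y[I]^{\ast}$, so $V_{\bQ}$ is a local system of finite-dimensional $\bQ$-vector spaces. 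The connection $(-1)^{|I|}\nabla(I)$ on $R^n(f_I)_{\ast}sB_I(f)$ is an integrable logarithmic connection (Remark \ref{wedge product for omegaY and B}) and, via $\alpha_I(f)$, is identified with the canonical connection $d\otimes\id$ on $\cO_{Y[I]}\otimes_{\bQ}V_{\bQ}$ (Remark \ref{remark on the canonical and Gauss-Manin}); hence over $Y[I]^{\ast}$ its sheaf of flat sections is exactly $V_{\bQ}$, and it preserves the $L(J)_m$-level by the construction of $\nabla(I)$.

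Next I would install the two filtrations and check transversality. By Lemma \ref{lemma on strong strictness for F} the pair $(R(f_I)_{\ast}sB_I(f),F)$ is strongly strict, so every $\gr_F^pR^n(f_I)_{\ast}sB_I(f)$ is locally free of finite rank; together with Proposition \ref{isom induced by theta on FgrL's} and Lemma \ref{local freeness and the commutattivity of gr} this shows that $F$ restricts to a filtration of $V_{\cO}$ by holomorphic subbundles with locally free graded quotients. Griffiths transversality $(-1)^{|I|}\nabla(I)(F^r)\subset\omega^1_{Y[I]}\otimes F^{r-1}$ is part of the construction of $\nabla(I)$. For the weight filtration I take $L(I)[n]$; since $\nabla(I)$ preserves $L(I)$ (again by the construction of $\nabla(I)$), the subsheaves $L(I)_w$ are flat, so $L(I)[n]$ is a filtration of $V_{\bQ}$ by sub-local systems, defined over $\bQ$ as the image of $L(I)$ on $V_{\bQ}$ under $\alpha_I(f)$. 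It remains to check the pointwise mixed Hodge condition and graded polarizability. For $s\in Y[I]^{\ast}$, applying $\bC(s)\otimes^L(-)$ to $(R^n(f_I)_{\ast}sB_I(f),L(I)[n],F)$ recovers the relative log de Rham cohomology, with its filtrations, of the proper K\"ahler semistable morphism obtained by restricting $f$ over $\Delta^I\times\{s\}$; by the strong strictness of $F$ and the local freeness of the $L(I)$-graded pieces this base change is strict for both filtrations, so the fiber is precisely the mixed Hodge structure of \cite{FujisawaLHSSV}, and $(L(I)[n],F)$ is a genuine $\bQ$-mixed Hodge structure there, inducing one on the $L(J)_m$-piece. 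For graded polarizability I would use the isomorphism of Lemma \ref{lemma on grLsB} with $K=I$ to identify $\gr_{\bullet}^{L(I)}R^n(f_I)_{\ast}sB_I(f)$, over $Y[I]^{\ast}$, with a direct sum of Tate twists of the cohomology sheaves of the smooth proper K\"ahler morphisms $f_{\Gamma}|_{X[\Gamma]^{\ast}}$; these are classical polarizable variations of pure Hodge structure, furnishing the required polarizations on the weight-graded pieces of $L(I)[n]$.

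The step I expect to be the main obstacle is the pointwise one together with its coherence across the family: one must check that the globally defined $L(I)[n]$ and $F$ restrict, fiber by fiber, to exactly the monodromy weight filtration and the Hodge filtration of the limiting mixed Hodge structure of \cite{FujisawaLHSSV}, and that $\bC(s)\otimes^L(-)$ is simultaneously strict for both. The needed strictness is precisely what the strong strictness of $F$ (Lemma \ref{lemma on strong strictness for F}) and the local freeness of the $L$-graded pieces (Proposition \ref{proposition on the local freeness of grLgrLsBI}) provide, so the real work is to organize these into the statement that $(\bC(s)\otimes^L(\,\cdot\,),L(I)[n],F)$ is a mixed Hodge structure depending holomorphically on $s$, with flat rational lattice $V_{\bQ}$.
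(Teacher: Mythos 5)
Your proposal contains a genuine gap at its center: the graded-polarizability step confuses the $E_1$-page of the weight spectral sequence with the graded pieces of the abutment. Lemma \ref{lemma on grL(K)sB} is an identification at the level of complexes, so what it controls is
\begin{equation*}
E_1^{-m,n+m}(R(f_I)_{\ast}sB_I(f),L(I))
\simeq
R^n(f_I)_{\ast}\gr_m^{L(I)}sB_I(f) ,
\end{equation*}
which is indeed a direct sum of (Tate twists of) cohomologies of the smooth proper K\"ahler families $f_{\Gamma}$; but $\gr_m^{L(I)}R^n(f_I)_{\ast}sB_I(f)$ is only a \emph{subquotient} of this, namely $E_{\infty}=E_2$, and only once degeneration at $E_2$ is known. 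Your substitute claim that the spectral sequence degenerates because of the local freeness of the $E_r$-terms (Proposition \ref{local freeness of the spectral sequence}) is false --- local freeness of the pages says nothing about the vanishing of the $d_r$ --- and it is also circular relative to the paper's logic, since the $E_2$-degeneration (Theorem \ref{theorem for E2-degeneracy}) is deduced \emph{from} this lemma. The missing mechanism, which is essentially the whole content of the paper's proof, is the weight argument: after reducing to $I=\ski$ (so that $Y[I]^{\ast}=S$), one shows that the $E_1$-terms of $(Rf_{0\ast}L(J)_msB(f),L)$ underlie polarizable variations of Hodge structure of weight $b$ and that $(Rf_{0\ast}\gr_{-a}^LL(J)_msB(f),F)$ is strongly strict; by the lemma on two filtrations \cite[Proposition (7.2.5)]{DeligneIII} one gets $F=F_{\rec}=F_d=F_{d^{\ast}}$ on $E_1$ and $E_2$; then $d_1$ underlies a morphism of variations of Hodge structure, hence is strict and its cohomology $E_2$ is again a polarizable variation (polarizable VHS over a base form an abelian category closed under subquotients), while $d_r$ for $r \ge 2$ underlies a morphism between variations of \emph{different} weights and therefore vanishes. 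Only after this does $\gr_{-a}^LR^{a+b}f_{0\ast}L(J)_msB(f)$, with the induced $F$, become a polarizable VHS of weight $b$ --- which yields the graded polarizability and the pointwise mixed Hodge condition simultaneously, so that no separate fiberwise check is needed.

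Your alternative route to the pointwise condition --- citing the mixed Hodge structure of \cite{FujisawaLHSSV} on each fiber --- is under-justified for the same reason: to identify $\bC(s)\otimes^L$ of the triple with the fiberwise construction you need simultaneous strictness for $L(I)$ and $F$ and the identification of the recurrent filtration with the induced one, which is exactly the two-filtrations work you skipped; the strong strictness of $F$ alone (Lemma \ref{lemma on strong strictness for F}) does not give strictness for $L(I)$. Note also that Lemma \ref{lemma for base change isomorphism for sA} only covers base changes $S' \longrightarrow S$ in the $S$-direction, so invoking it for an arbitrary point $s \in Y[I]^{\ast}$ presupposes the paper's reduction to $I=\ski$, in which $\Delta^I \times Y_I^{\ast}$ is regarded as the new total space with $Y_I^{\ast}$ playing the role of $S$ --- a reduction you never perform. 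The parts of your proposal concerning the identification of $(-1)^{|I|}\nabla(I)$ with $d \otimes \id$ via $\alpha_I(f)$, the Griffiths transversality, and the recognition of the $f_{\Gamma}$ as the geometric source of the polarizations are correct and agree with the paper; but without the $d_r=0$ weight argument and the $F=F_{\rec}$ comparison the proof does not go through.
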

\begin{proof}
We may assume that $I=\ski$.
Then
\begin{equation}
Y[\ski]^{\ast}=Y[\ski]=\{0\} \times S=S
\end{equation}
by definition.
As in the proof of Proposition \ref{proposition for alpha being fqis},
we have the isomorphism
\begin{equation}
(\gr_{-a}^LL(J)_msB(f),F)
\simeq
\bigoplus
(\Omega_{X[\Gamma]/S}[a-2|q|],F[a-|q|])
\end{equation}
and the quasi-isomorphism
\begin{equation}
\gr_{-a}^LL(J)_msA(f)
\simeq
\bigoplus
\kos(\mathbf{e}(f_{\Gamma})_{\bQ};\infty;1)[a-2|q|]
\end{equation}
by Lemma \ref{lemma on grL(K)sB},
Lemma \ref{Lemma of grLsA} and
Lemma \ref{lemma for closed immersion},
where the direct sums are taken over the same index set
\begin{equation}
\{(q, \Gamma) ; {q \in \bnnZ^k}, 
\Gamma \in S_{-a+2|q|+k}^{\ge q+e}(\Lambda)
\text{ with }
|\Gamma \cap \Lambda_J| \le m+2|q_J|+|J|\} .
\end{equation}
Note that $\kos(\mathbf{e}(f_{\Gamma})_{\bQ};\infty;1)$
is the Koszul complex associated to the exponential map
\begin{equation}
\cO_{X[\Gamma]} \longrightarrow \cO^{\ast}_{X[\Gamma]} \otimes \bQ
\end{equation}
with the base extension to $\bQ$.
Moreover, there exists an quasi-isomorphism
$\bQ_{X[\Gamma]} \longrightarrow \kos(\mathbf{e}(f_{\Gamma})_{\bQ};\infty;1)$
(cf. \cite[Corollary 1.15]{FujisawaMHSLSD}).
Under these identifications
the morphism $\gr_{-a}^L\alpha(f)$
is identified with the direct sum of the morphisms
$(2\pi\sqrt{-1})^{a-|q|}\psi(f_{\Gamma})[a-2|q|]$.
Because
$f_{\Gamma}: X[\Gamma] \longrightarrow S$
is a proper smooth morphism from a K\"ahler manifold $X[\Gamma]$,
we obtain the following:
\begin{mylist}
\itemno
\label{VHS condition:eq}
The data
\begin{equation*}
(R^{a+b}f_{0\ast}\gr_{-a}^LL(J)_msA(f),
(R^{a+b}f_{0\ast}\gr_{-a}^LL(J)_msB(f),F),
R^{a+b}f_{0\ast}\gr_{-a}^L\alpha(f))
\end{equation*}
is a polarizable variation of $\bQ$-Hodge structure of weight $b$
on $S$.
\itemno
\label{strictness condition for E0:eq}
The filtered complex
$(Rf_{0\ast}\gr_{-a}^LL(J)_msB(f),F)$
is strongly strict for all $a$.
\end{mylist}
The morphism
\begin{equation}
d_0:
E_0^{a,b}(Rf_{0\ast}L(J)_msB(f),L)
\longrightarrow
E_0^{a,b+1}(Rf_{0\ast}L(J)_msB(f),L)
\end{equation}
is strictly compatible with respect to the filtration
$F=F_{\rec}=F_{d}=F_{d^{\ast}}$
by \eqref{strictness condition for E0:eq}.
Then we have $F_{\rec}=F_{d}=F_{d^{\ast}}$
on $E_1^{a,b}(Rf_{0\ast}L(J)_msB(f),L)$
and on $E_2^{a,b}(Rf_{0\ast}L(J)_msB(f),L)$
by the lemma on two filtration
\cite[Proposition (7.2.5)]{DeligneIII}.
Moreover
\begin{equation}
(E_1^{a,b}(Rf_{0\ast}L(J)_msA(f),L),
(E_1^{a,b}(Rf_{0\ast}L(J)_msB(f),L),F_{\rec}),
E_1^{a,b}(\alpha(f)))
\end{equation}
is a polarizable variation of $\bQ$-Hodge structure of weight $b$ on $S$
by \eqref{VHS condition:eq}.
Because the morphism
\begin{equation}
d_1:
E_1^{a,b}(Rf_{0\ast}L(J)_msB(f),L)
\longrightarrow
E_1^{a+1,b}(Rf_{0\ast}L(J)_msB(f),L)
\end{equation}
underlies a morphism of variations of $\bQ$-Hodge structure,
the morphism $d_1$ is strictly compatible with the filtration $F_{\rec}$ and
\begin{equation}
(E_2^{a,b}(Rf_{0\ast}L(J)_msA(f),L),
(E_2^{a,b}(Rf_{0\ast}L(J)_msB(f),L),F_{\rec}),
E_2^{a,b}(\alpha(f)))
\end{equation}
is a polarizable variation
of $\bQ$-Hodge structures of weight $b$ on $S$.
Because the morphism
\begin{equation}
d_2:
E_2^{a,b}(Rf_{0\ast}L(J)_msB(f),L)
\longrightarrow
E_2^{a+2,b-1}(Rf_{0\ast}L(J)_msB(f),L)
\end{equation}
underlies a morphism of variations of $\bQ$-Hodge structure again,
we obtain $d_2=0$.
Inductively, we obtain $d_r=0$ for $r \ge 2$.
Then we have
\begin{equation}
\begin{split}
&E_2^{a,b}(Rf_{0\ast}L(J)_msA(f),L)
\simeq
E_{\infty}^{a,b}(Rf_{0\ast}L(J)_msA(f),L) \\
&E_2^{a,b}(Rf_{0\ast}L(J)_msB(f),L)
\simeq
E_{\infty}^{a,b}(Rf_{0\ast}L(J)_msB(f),L)
\end{split}
\end{equation}
and $F=F_{\rec}=F_{d}=F_{d^{\ast}}$
on $E_{\infty}^{a,b}(Rf_{0\ast}L(J)_msB(f),L)$
for all $a,b$.
Thus the data
\begin{equation}
(\gr_{-a}^LR^{a+b}f_{0\ast}L(J)_msA(f),
(\gr_{-a}^LR^{a+b}f_{0\ast}L(J)_msB(f),F), \gr_{-a}^L\alpha_0(f))
\end{equation}
is a polarizable variation of Hodge structures of weight $b$ on $S$.
By Lemma \ref{remark on the canonical and Gauss-Manin}
the canonical connection $d \otimes \id$
on $\cO_{S} \otimes R^qf_{0\ast}sA(f)$ is identified
with the Gauss-Manin connection $(-1)^k\nabla(\ski)$,
which satisfies the Griffiths transversality.
Thus we obtain the conclusion.
\end{proof}

The lemma above implies the following theorem.

\begin{thm}
\label{theorem on VMHS for I}
Let $X$ be a K\"ahler manifold
and $f:X \longrightarrow Y=\Delta^k \times S$
a proper semistable morphism.
For any $I \subset \ski$,
the data
\begin{equation}
\label{VMHS for I:eq}
((R^n(f_I)_{\ast}sA_I(f),L(I)[n]),
(R^n(f_I)_{\ast}sB_I(f),L(I)[n],F), \alpha_I(f))\bigl|_{Y[I]^{\ast}}
\end{equation}
is a graded polarizable variation
of $\bQ$-mixed Hodge structure on $Y[I]^{\ast}$
for every $n \in \bZ$.
Moreover, $L(J)_m$
gives a subvariation of $\bQ$-mixed Hodge structure
for every $J \subset I$ and for every $m \in \bZ$.
\end{thm}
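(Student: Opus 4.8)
The plan is to read the theorem off from Lemma~\ref{lemma on VMHS for I}: the first assertion is literally the special case $J=\emptyset$, while the second follows from the pointwise strictness of morphisms of variations of $\bQ$-mixed Hodge structure.

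First I would dispose of the claim that the data \eqref{VMHS for I:eq} is itself a graded polarizable variation of $\bQ$-mixed Hodge structure. Recall from Definition~\ref{definition of L(K) on BI} that $L(\emptyset)$ is the trivial filtration, so that $L(\emptyset)_0sA_I(f)=sA_I(f)$ and $L(\emptyset)_0sB_I(f)=sB_I(f)$. Specializing Lemma~\ref{lemma on VMHS for I} to $J=\emptyset$ (and $m=0$) therefore produces precisely the data \eqref{VMHS for I:eq}, which is thus a graded polarizable variation of $\bQ$-mixed Hodge structure on $Y[I]^{\ast}$. No further argument is needed here.

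For the subvariation claim I would exploit the inclusions of complexes $L(J)_msA_I(f)\hookrightarrow sA_I(f)$ and $L(J)_msB_I(f)\hookrightarrow sB_I(f)$, which are compatible with $\alpha_I(f)$, with the stupid filtration $F$, and with the filtration $L(I)$ (the latter being taken in its induced form on $L(J)_msB_I(f)$). Applying $R^n(f_I)_{\ast}$ and restricting to $Y[I]^{\ast}$ yields a morphism from the variation attached by Lemma~\ref{lemma on VMHS for I} to the pair $(J,m)$ to the variation \eqref{VMHS for I:eq} attached to $J=\emptyset$. By construction this morphism respects the rational structures, the Hodge filtration $F$, and the weight filtration $L(I)[n]$, hence is a morphism of variations of $\bQ$-mixed Hodge structure. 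By the very definition of the induced filtration, its image is $L(J)_mR^n(f_I)_{\ast}sB_I(f)$ together with its rational counterpart. Since morphisms of graded polarizable variations of $\bQ$-mixed Hodge structure are strict, this image, equipped with the weight and Hodge filtrations induced from $L(I)[n]$ and $F$, is a subvariation; this is exactly the assertion that $L(J)_m$ defines a subvariation for every $J\subset I$ and every $m$.

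The one point that deserves care is the compatibility of the comparison morphism with the weight filtration. Here I would invoke that the weight filtration furnished by Lemma~\ref{lemma on VMHS for I} is the induced filtration $L(I)[n]$, so that the $L(I)$-compatibility of the inclusions already at the level of complexes transports directly to $L(I)[n]$-compatibility on the cohomology sheaves. Granting this, injectivity of the comparison morphism plays no role, and the whole argument reduces to the pointwise strictness of morphisms of mixed Hodge structure.
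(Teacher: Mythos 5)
Your proposal is correct and follows essentially the same route as the paper, which offers no proof beyond the remark that Lemma \ref{lemma on VMHS for I} implies the theorem: the first claim is indeed the specialization $J=\emptyset$, $m=0$ (where $L(\emptyset)$ is trivial), and the subvariation claim is the intended routine deduction from the lemma applied to $(J,m)$. Your filling-in of that deduction---the inclusion $L(J)_msB_I(f)\hookrightarrow sB_I(f)$ induces a morphism of graded polarizable variations of $\bQ$-mixed Hodge structure whose image is by definition $L(J)_mR^n(f_I)_{\ast}sB_I(f)$, and strictness makes this image a subvariation with the induced filtrations---is exactly what the paper leaves implicit.
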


\begin{cor}
\label{MHS on E1-terms}
For $K \subset I \subset \ski$,
the data
\begin{equation}
\begin{split}
((E_1^{a,b}(R(f_I)_{\ast}sA_I(f),&L(K)), L(I \setminus K)[b]), \\
&(E_1^{a,b}(R(f_I)_{\ast}sB_I(f),L(K)),L(I \setminus K)[b],F),
E_1^{a,b}(\alpha_I(f)))
\bigl|_{Y[I]^{\ast}}
\end{split}
\end{equation}
is a variation of $\bQ$-mixed Hodge structure on $Y[I]^{\ast}$
for all $a, b$.
For $J \subset I$,
the filtration $L(J)_m$
gives us its subvariation of $\bQ$-mixed Hodge structure
for all $m$.
\end{cor}
\begin{proof}
By the identifications
\begin{align}
&E_1^{a,b}(R(f_I)_{\ast}sA_I(f),L(K))
\simeq
R^{a+b}(f_I)_{\ast}\gr_{-a}^{L(K)}sA_I(f) \\
&E_1^{a,b}(R(f_I)_{\ast}sB(f),L(K))
\simeq
R^{a+b}(f_I)_{\ast}\gr_{-a}^{L(K)}sB_I(f)
\label{identification of E_1 for sBI:eq}
\end{align}
we obtain the conclusion by Lemma \ref{lemma on grL(K)sB},
Lemma \ref{Lemma of grLsA}, Lemma \ref{lemma for closed immersion},
Proposition \ref{proposition for alpha being fqis}
and Theorem \ref{theorem on VMHS for I}.
\end{proof}

The case of $I=\ski$ in the theorem below
is equivalent to the main result Theorem (4.1)
in \cite{FujisawaDWSS}.
The proof below
is independent of the argument in \cite{FujisawaDWSS}
and much simpler than that of
Theorem (4.1) in \cite{FujisawaDWSS}.

\begin{thm}
\label{theorem for E2-degeneracy}
Let $X$ be a K\"ahler manifold
and $f:X \longrightarrow Y=\Delta^k \times S$
a proper semistable morphism.
For any $K \subset I \subset \ski$,
the spectral sequence
$E_r^{a,b}(R(f_I)_{\ast}sB_I(f),L(K))$
degenerates at $E_2$-terms.
In other words,
we have the canonical isomorphism
\begin{equation}
\label{E2-degeneration for L(K):eq}
\gr_m^{L(K)}R^n(f_I)_{\ast}sB_I(f)
\overset{\simeq}{\longrightarrow}
E_2^{-m,n+m}(R(f_I)_{\ast}sB_I(f),L(K))
\end{equation}
for all $m,n$.
\end{thm}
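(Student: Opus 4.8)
The plan is to prove that every differential $d_r$ with $r\ge 2$ of the spectral sequence $E_r^{a,b}(R(f_I)_{\ast}sB_I(f),L(K))$ vanishes, which is exactly equivalent to the asserted isomorphism \eqref{E2-degeneration for L(K):eq}. First I would reduce the general case $K\subset I$ to the special case $K=I$. Applying Lemma \ref{lemma for theta and the spectral sequences} with $J=K$, the morphism $\theta_{K|I}(f)$ induces isomorphisms
\[
\cO_{Y[I]}\otimes_{\cO_{Y[K]}}E_r^{a,b}(R(f_K)_{\ast}sB_K(f),L(K))
\overset{\simeq}{\longrightarrow}
E_r^{a,b}(R(f_I)_{\ast}sB_I(f),L(K))
\]
for all $a,b,r$. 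Since $\theta_{K|I}(f)$ preserves $L(K)$ (Definition \ref{definition of theta}), it is a morphism of spectral sequences, so these isomorphisms intertwine the differentials; hence $d_r$ for $R(f_I)_{\ast}sB_I(f)$ is obtained by applying $\cO_{Y[I]}\otimes_{\cO_{Y[K]}}(-)$ to $d_r$ for $R(f_K)_{\ast}sB_K(f)$, and in particular vanishes as soon as the latter does. Thus it suffices to treat $E_r^{a,b}(R(f_K)_{\ast}sB_K(f),L(K))$, i.e.\ the case where the filtration is the full weight filtration; in other words we may assume $K=I$.

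Next, for $K=I$, I would reduce to checking $d_r=0$ over the dense open subset $Y[I]^{\ast}$. By Proposition \ref{local freeness of the spectral sequence} each term $E_r^{a,b}(R(f_I)_{\ast}sB_I(f),L(I))$ is a locally free $\cO_{Y[I]}$-module of finite rank, so every $d_r$ is a morphism of holomorphic vector bundles on $Y[I]$. As $Y[I]^{\ast}=Y[I]\setminus(E_{\overline{I}}\cap Y[I])$ is the complement of a divisor, hence dense, such a morphism vanishes on $Y[I]$ as soon as it vanishes on $Y[I]^{\ast}$. So it is enough to prove the degeneration after restriction to $Y[I]^{\ast}$.

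Finally, over $Y[I]^{\ast}$ I would run the Hodge-theoretic weight argument already carried out in the proof of Lemma \ref{lemma on VMHS for I}. Over $Y[I]^{\ast}$ the morphisms $f_{\Gamma}$ occurring in the graded pieces $\gr_m^{L(I)}sB_I(f)$ (Lemma \ref{lemma on grLsB}) are smooth, proper and K\"ahler, so using the rational structure of Lemma \ref{filtered identification between ss of A and B} together with the strong strictness of $(R(f_I)_{\ast}sB_I(f),F)$ (Lemma \ref{lemma on strong strictness for F}), the term $E_1^{a,b}(R(f_I)_{\ast}sB_I(f),L(I))|_{Y[I]^{\ast}}$ underlies a polarizable variation of $\bQ$-Hodge structure of weight $b$, and $d_1$ is a morphism of such variations preserving the weight. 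Hence $E_2^{a,b}$ is again pure of weight $b$, and for $r\ge 2$ the differential
\[
d_r:E_r^{a,b}(R(f_I)_{\ast}sB_I(f),L(I))
\longrightarrow
E_r^{a+r,b-r+1}(R(f_I)_{\ast}sB_I(f),L(I))
\]
underlies a morphism of polarizable variations of Hodge structure from weight $b$ to weight $b-r+1<b$. A morphism of pure Hodge structures of different weights vanishes, so $d_r=0$; combined with the two reductions above this gives the theorem.

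The substantial Hodge-theoretic content—the purity of the $E_1$-terms and the weight-decreasing behaviour of the higher differentials—is essentially contained in the proof of Lemma \ref{lemma on VMHS for I}, so the new work here is mainly organizational. The point demanding the most care is verifying that the two reductions are compatible with the differentials $d_r$: namely that $\theta_{K|I}(f)$ induces a genuine morphism of spectral sequences, so that $d_r$ for $I$ is the base change of $d_r$ for $K$, and that the vanishing obtained over $Y[I]^{\ast}$ propagates to all of $Y[I]$ through the local freeness of the $E_r$-terms.
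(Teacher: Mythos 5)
Your proposal is correct and follows essentially the same route as the paper: reduction to the case $K=I$ via the base-change isomorphism \eqref{theta and ss:eq} induced by $\theta_{K|I}(f)$, propagation of the vanishing of $d_r$ ($r\ge 2$) from $Y[I]^{\ast}$ to all of $Y[I]$ by the local freeness in Proposition \ref{local freeness of the spectral sequence}, and the weight argument over $Y[I]^{\ast}$ already carried out in the proof of Lemma \ref{lemma on VMHS for I}. The only difference is the (immaterial) order of the two reductions, the paper treating the case $K=I$ completely first and then base-changing.
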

\begin{proof}
We first treat the case of $I=K$.
As in the proof of Theorem \ref{theorem on VMHS for I},
the morphisms of $E_r$-terms
\begin{equation*}
E_r^{a,b}(R(f_K)_{\ast}sB_K(f),L(K))
\longrightarrow
E_r^{a+r,b-r+1}(R(f_K)_{\ast}sB_K(f),L(K))
\end{equation*}
are the zero morphisms over $Y[K]^{\ast}$ for all $a,b$
and for all $r \ge 2$.
Therefore the morphisms of $E_r$-terms are
the zero morphisms over the whole $Y[K]$
for $r \ge 2$
because $E_r^{a,b}(R(f_K)_{\ast}sB_K(f),L(K))$
are locally free over $Y[K]$
by Proposition \ref{local freeness of the spectral sequence}.
Thus we obtain the desired $E_2$-degeneracy.

For the general case,
we have the isomorphism
\eqref{theta and ss:eq} for $J=K$
\begin{equation}
\cO_{Y[I]}
\otimes
\iota_{K|I}^{-1}E_r^{a,b}(R(f_K)_{\ast}sB_K(f),L(K))
\overset{\simeq}{\longrightarrow}
E_r^{a,b}(R(f_I)_{\ast}sB_I(f),L(K))
\end{equation}
for all $a,b,r$,
which is compatible with the morphisms $d_r$.
Then the $E_2$-degeneracy
of $E_r^{a,b}(R(f_K)_{\ast}sB_K(f),L(K))$
implies that of $E_r^{a,b}(R(f_I)_{\ast}sB_I(f),L(K))$.
\end{proof}

\begin{para}
Let $X$ be a K\"ahler manifold
and $f: X \longrightarrow Y=\Delta^k \times S$
a proper semistable morphism as before.
For an integer $b$ and for $I \subset \ski$,
we have the complex of $\cO_{Y[I]}$-modules
\begin{equation}
(E_1^{\bullet,b}(R(f_I)_{\ast}sB_I(f),L(K)), d_1) ,
\end{equation}
where
\begin{equation}
\label{the morphism of E1-terms:eq}
d_1:
E_1^{a,b}(R(f_I)_{\ast}sB_I(f),L(K))
\longrightarrow
E_1^{a+1,b}(R(f_I)_{\ast}sB_I(f),L(K))
\end{equation}
is the morphism of $E_1$-terms for all $a$.
For any $J \subset I$,
the filtration $L(J)_{\rec}=L(J)_d=L(J)_{d^\ast}$
on $E_1^{a,b}(R(f_I)_{\ast}sB_I(f),L(K))$
is simply denoted by $L(J)$,
which forms an increasing filtration of the complex
$E_1^{\bullet,b}(R(f_I)_{\ast}sB_I(f),L(K))$.
We also have the filtration $F_{\rec}=F_{d}=F_{d^{\ast}}$
on $E_1^{\bullet,b}(R(f_I)_{\ast}sB_I(f),L(K))$,
which is simply denoted by $F$.
\end{para}

\begin{lem}
\label{lemma on the isom induced by theta on E1}
For $K, J_1, J_2, \dots, J_l \subset J \subset I \subset \ski$,
the morphism $\theta_{J|I}(f)$ induces an isomorphism of complexes
\begin{equation}
\label{iso grL...grLtheta:eq}
\begin{split}
\cO_{Y[I]} \otimes
\gr_{m_l}^{L(J_l)}
&\cdots
\gr_{m_2}^{L(J_2)}\gr_{m_1}^{L(J_1)}
\iota_{J|I}^{-1}E_1^{\bullet,b}(R(f_J)_{\ast}sB_J(f),L(K)) \\
&\overset{\simeq}{\longrightarrow}
\gr_{m_l}^{L(J_l)} \cdots
\gr_{m_2}^{L(J_2)}\gr_{m_1}^{L(J_1)}
E_1^{\bullet,b}(R(f_I)_{\ast}sB_I(f),L(K))
\end{split}
\end{equation}
for all $m_1, m_2, \dots, m_l$.
\end{lem}
\begin{proof}
From the identification \eqref{identification of E_1 for sBI:eq},
we easily obtain the conclusion
by Lemma \ref{lemma on grL(K)sB}
and Corollary \ref{corollary on grL's and theta}.
\end{proof}

\begin{lem}
\label{strictness for L(J setminus K) on ss}
For $K, J_1, J_2, \dots, J_l \subset J \subset I \subset \ski$,
the filtration $L(J \setminus K)$ on the complex
\begin{equation}
\gr_{m_l}^{L(J_l)} \cdots
\gr_{m_2}^{L(J_2)}\gr_{m_1}^{L(J_1)}
E_1^{\bullet,b}(R(f_I)_{\ast}sB_I(f),L(K))
\end{equation}
is strongly strict
for all $m_1, m_2, \dots, m_l$.
\end{lem}
\begin{proof}
By Lemma \ref{filtered identification between ss of A and B},
we have the isomorphism
\begin{equation}
\begin{split}
\cO_{Y[I]}
\otimes
\coh^a(&\gr_m^{L(J \setminus K)}
\gr_{m_l}^{L(J_l)} \cdots
\gr_{m_2}^{L(J_2)}\gr_{m_1}^{L(J_1)}
E_1^{\bullet,b}(R(f_I)_{\ast}sA_{I|I'}(f),L(K)))
\bigl|_{Y[I']^{\ast}} \\
&\simeq
\coh^a(\gr_m^{L(J \setminus K)}
\gr_{m_l}^{L(J_l)} \cdots
\gr_{m_2}^{L(J_2)}\gr_{m_1}^{L(J_1)}
E_1^{\bullet,b}(R(f_I)_{\ast}sB_I(f),L(K)))
\bigl|_{Y[I']^{\ast}}
\end{split}
\end{equation}
for all $a, m, m_1, m_2, \dots, m_l$
and for any $I' \supset I$.
Therefore the coherent $\cO_{Y[I]}$-module
\begin{equation}
\coh^a(\gr_m^{L(J \setminus K)}
\gr_{m_l}^{L(J_l)} \cdots
\gr_{m_2}^{L(J_2)}\gr_{m_1}^{L(J_1)}
E_1^{\bullet,b}(R(f_I)_{\ast}sB_I(f),L(K)))
\end{equation}
turns out to be locally free for all $a, m, m_1, m_2, \dots, m_l$
as in Proposition \ref{local freeness of the spectral sequence}.
Hence it suffices to prove the strictness
of $L(J \setminus K)$.

First we treat the case of $J=I$.
On the open subset $Y[I]^{\ast}$,
\begin{equation}
(\gr_{m_l}^{L(J_l)} \cdots
\gr_{m_2}^{L(J_2)}\gr_{m_1}^{L(J_1)}
E_1^{a,b}(R(f_I)_{\ast}sB_I(f),L(K)), L(I \setminus K)[b], F)
\left|_{Y[I]^{\ast}}\right.
\end{equation}
underlies a variation of $\bQ$-mixed Hodge structures for all $a$
and the morphism $d_1$ underlies a morphism of variations
by Corollary \ref{MHS on E1-terms}.
Therefore the filtration $L(I \setminus K)$
on the complex
\begin{equation}
\gr_{m_l}^{L(J_l)} \cdots
\gr_{m_2}^{L(J_2)}\gr_{m_1}^{L(J_1)}
E_1^{\bullet,b}(R(f_I)_{\ast}sB_I(f),L(K))
\left|_{Y[I]^{\ast}}\right.
\end{equation}
is strict,
that is,
the canonical morphism
\begin{equation}
\label{the canonical morphism for strictness of L:eq}
\begin{split}
\coh^a(L(I \setminus K)_m
&\gr_{m_l}^{L(J_l)} \cdots
\gr_{m_2}^{L(J_2)}\gr_{m_1}^{L(J_1)}
E_1^{\bullet,b}(R(f_I)_{\ast}sB_I(f),L(K))) \\
&\longrightarrow
\coh^a(
\gr_{m_l}^{L(J_l)} \cdots
\gr_{m_2}^{L(J_2)}\gr_{m_1}^{L(J_1)}
E_1^{\bullet,b}(R(f_I)_{\ast}sB_I(f),L(K)))
\end{split}
\end{equation}
is injective over $Y[I]^{\ast}$.
Since
\begin{equation}
\gr_{m_l}^{L(J_l)} \cdots
\gr_{m_2}^{L(J_2)}\gr_{m_1}^{L(J_1)}
E_1^{a,b}(R(f_I)_{\ast}sB_I(f),L(K)))
\end{equation}
and
\begin{equation}
L(I \setminus K)_m
\gr_{m_l}^{L(J_l)} \cdots
\gr_{m_2}^{L(J_2)}\gr_{m_1}^{L(J_1)}
E_1^{a,b}(R(f_I)_{\ast}sB_I(f),L(K)))
\end{equation}
are the canonical extensions of their restrictions on $Y[I]^{\ast}$
by Lemma \ref{lemma for ss being the canonical extension}
(see also Remark \ref{remark on the canonical extension of ss})
for all $a$,
the morphism \eqref{the canonical morphism for strictness of L:eq}
is injective over the whole $Y[I]$.
Thus we obtain the strictness of $L(I \setminus K)$ over $Y[I]$.

From the case of $J=I$ above,
the filtration $L(J \setminus K)$ on the complex
\begin{equation}
\gr_{m_l}^{L(J_l)}
\cdots
\gr_{m_2}^{L(J_2)}
\gr_{m_1}^{L(J_1)}
E_1^{\bullet,b}(R(f_J)_{\ast}sB_J(f),L(K)))
\end{equation}
is strongly strict.
On the other hand,
the morphism $\theta_{J|I}(f)$ induces the isomorphism
\begin{equation}
\begin{split}
(\cO_{Y[I]} \otimes^L
&\gr_{m_l}^{L(J_l)} \cdots
\gr_{m_2}^{L(J_2)}\gr_{m_1}^{L(J_1)}
\iota_{J|I}^{-1}E_1^{\bullet,b}(R(f_J)_{\ast}sB_J(f),L(K)), L(J \setminus K)) \\
&\simeq
(\gr_{m_l}^{L(J_l)} \cdots
\gr_{m_2}^{L(J_2)}\gr_{m_1}^{L(J_1)}
E_1^{\bullet,b}(R(f_I)_{\ast}sB_I(f),L(K)), L(J \setminus K))
\end{split}
\end{equation}
in the filtered derived category
by Lemma \ref{lemma on the isom induced by theta on E1}.
Therefore we obtain the conclusion
by Lemma \ref{lemma on strong strictness and filtered tensor product}.
\end{proof}

\begin{lem}
\label{lemma on theta and E2}
For $K \subset J \subset I \subset \ski$,
the morphism $\theta_{J|I}(f)$ induces an isomorphism
\begin{equation}
\begin{split}
\cO_{Y[I]} \otimes
\gr_m^{L(J \setminus K)_{\rec}}
&\iota_{J|I}^{-1}E_2^{a,b}(R(f_J)_{\ast}sB_J(f),L(K)) \\
&\overset{\simeq}{\longrightarrow}
\gr_m^{L(J \setminus K)_{\rec}}
E_2^{a,b}(R(f_I)_{\ast}sB_I(f),L(K))
\end{split}
\end{equation}
for all $a, b, m$.
\end{lem}
\begin{proof}
From Lemma \ref{lemma on the isom induced by theta on E1},
we have the isomorphism
\begin{equation}
\begin{split}
(\cO_{Y[I]}
\otimes
E_1^{a,b}(R(f_J)_{\ast}sB_J(f),&L(K)), L(J \setminus K)) \\
&\simeq
(E_1^{a,b}(R(f_I)_{\ast}sB_I(f),L(K)), L(J \setminus K))
\end{split}
\end{equation}
in the filtered derived category.
Because $L(J \setminus K)$
is strongly strict
on the complex
$E_1^{\bullet,b}(R(f_J)_{\ast}sB_J(f),L(K))$,
we obtain the conclusion
by Lemma \ref{lemma on strong strictness and filtered tensor product}.
\end{proof}

\begin{prop}
\label{compatiblity of L(J) and L(J setminus K) on grLK}
Under the identification
\eqref{E2-degeneration for L(K):eq},
the filtration $L(J)[-m]$ on the left hand side
coincides with the filtration $L(J \setminus K)_{\rec}$ on the right
for all $K \subset J \subset I$.
\end{prop}
\begin{proof}
First we treat the case $J=I$.
It suffices to prove the desired coincidence over $Y[I]^{\ast}$
because 
$\gr_l^{L(I)}\gr_m^{L(K)}R^n(f_I)_{\ast}sB_I(f)$ and 
$\gr_l^{L(I \setminus K)_{\rec}}E_2^{-m,n+m}(R(f_I)_{\ast}sB_I(f),L(K))$
are locally free for all $l$.
Therefore we may assume that $J=I=\ski$.
Thus what we have to prove is
the coincidence of $L[-m]$ and $L(\overline{K})_{\rec}$
under the identification
\begin{equation}
\label{identification for f0:eq}
\gr_m^{L(K)}R^nf_{0\ast}sB(f)
\overset{\simeq}{\longrightarrow}
E_2^{-m,n+m}(Rf_{0\ast}sB(f),L(K))
\end{equation}
in Theorem \ref{theorem for E2-degeneracy},
where $\overline{K}=\ski \setminus K$.
The following argument is similar to that of El Zein
\cite[6.1.12 Th\'eor\`em]{ElZeinbook}.
So we only give a sketch of the proof here.

From the $E_2$-degeneracy in Theorem \ref{theorem for E2-degeneracy},
the identification \eqref{identification for f0:eq}
fits in the commutative diagram
\begin{equation}
\label{commutative diagram from E2-degeneracy:eq}
\begin{CD}
R^nf_{0\ast}L(K)_msB(f)
@>>>
\kernel(d_1) \\
@VVV @VVV \\
\gr_m^{L(K)}R^nf_{0\ast}sB(f)
@>{\simeq}>>
E_2^{-m,n+m}(Rf_{0\ast}sB(f),L(K))
\end{CD}
\end{equation}
where $d_1$ in the right hand side of the top horizontal arrow
is the morphism of $E_1$-terms
\begin{equation}
d_1:
E_1^{-m,n+m}(Rf_{0\ast}sB(f),L(K))
\longrightarrow
E_1^{-m+1,n+m}(Rf_{0\ast}sB(f),L(K))
\end{equation}
and the top horizontal arrow
is induced from the projection
\begin{equation}
\label{projection from LKsB to grLKsB:eq}
L(K)_msB(f) \longrightarrow \gr_m^{L(K)}sB(f)
\end{equation}
by the identification
$R^nf_{0\ast}\gr_m^{L(K)}sB(f) \simeq E_1^{-m,n+m}(Rf_{0\ast}sB(f),L(K))$.
It is easy to see that
the projection \eqref{projection from LKsB to grLKsB:eq}
sends $L_l$ to $L(\overline{K})_{l-m}=L(\overline{K})[m]_l$ for all $l$.
Lemma \ref{lemma on VMHS for I},
Theorem \ref{theorem on VMHS for I}
and Lemma \ref{MHS on E1-terms} imply that
\begin{equation}
\begin{split}
&(R^nf_{0\ast}L(K)_msB(f),L[n],F), \\
&(\gr_m^{L(K)}R^nf_{0\ast}sB(f),L[n],F), \\
&(\kernel(d_1),L(\overline{K})[n+m],F), \\
&(E_2^{-m,n+m}(Rf_{0\ast}sB(f),L(K)),L(\overline{K})_{\rec}[n+m],F_{\rec})
\end{split}
\end{equation}
underlie variations of $\bQ$-mixed Hodge structure
on $Y[\ski]=S$.
Moreover the top horizontal arrow and two vertical arrows
in the commutative diagram
\eqref{commutative diagram from E2-degeneracy:eq}
underlie morphisms of variations of $\bQ$-mixed Hodge structure
by definition.
Thus we can easily see that
the identification \eqref{identification for f0:eq},
the bottom horizontal arrow
in the commutative diagram \eqref{commutative diagram from E2-degeneracy:eq},
underlies a morphism of variations of $\bQ$-mixed Hodge structure
by using the fact that
the left vertical arrow is surjective and strictly compatible
with the filtrations $L[n]$ and $F$.
Therefore we obtain the desired coincidence.

For the case $J \subsetneq I$,
we have the commutative diagram
\begin{equation}
\begin{CD}
\cO_{Y[I]}
\otimes
\iota_{J|}^{-1}\gr_m^{L(K)}R^q(f_J)_{\ast}sB_J(f)
@>>>
\gr_m^{L(K)}R^q(f_I)_{\ast}sB_I(f) \\
@VVV @VVV \\
\cO_{Y[I]}
\otimes
\iota_{J|}^{-1}E_2^{-m,q+m}(R(f_J)_{\ast}sB_J(f),L(K))
@>>>
E_2^{-m,q+m}(R(f_I)_{\ast}sB_I(f),L(K)),
\end{CD}
\end{equation}
where the vertical arrows are the identifications
\eqref{E2-degeneration for L(K):eq}
for $J$ and $I$,
and the horizontal arrows
are the morphisms induced by $\theta_{J|I}(f)$,
which are filtered isomorphisms
with respect to $L(J)$ and $L(J \setminus K)_{\rec}$
by Corollary \ref{corollary on grL's and theta}
and Lemma \ref{lemma on theta and E2}.
Then the coincidence of $L(J)[m]$ and $L(J \setminus K)_{\rec}$
under the left vertical arrow
implies the desired coincidence under the right vertical arrow.
\end{proof}

The remainder of this section is devoted to proving the following theorem.

\begin{thm}
\label{local freeness}
Let $X$ be a K\"ahler manifold
and $f:X \longrightarrow Y=\Delta^k \times S$
a proper semistable morphism.
Then the coherent $\cO_{Y[I]}$-module
\begin{equation}
\gr_F^p\gr_{m_l}^{L(J_l)} \cdots
\gr_{m_2}^{L(J_2)}\gr_{m_1}^{L(J_1)}
R^n(f_I)_{\ast}sB_I(f)
\end{equation}
is locally free of finite rank
for all $m_1,m_2, \dots, m_l,n,p$
and for all
$J_1 \subset J_2 \subset \dots \subset J_l \subset I \subset \ski$.
\end{thm}

\begin{para}
In order to prove the theorem above,
we consider the following two assertions:
\begin{itemize}
\item[$(A_l)$]
For any proper semistable morphism
$f:X \longrightarrow Y=\Delta^k \times S$
from a K\"ahler manifold $X$,
the $\cO_{Y[I]}$-module
\begin{equation}
\gr_F^p\gr_{m_l}^{L(J_l)} \cdots
\gr_{m_2}^{L(J_2)}\gr_{m_1}^{L(J_1)}
R^n(f_I)_{\ast}sB_I(f)
\end{equation}
is locally free of finite rank
for all $m_1,m_2, \dots, m_l,n,p$
and for all
$J_1 \subset J_2 \subset \dots \subset J_l \subset I \subset \ski$.
\item[$(B_l)$]
For any proper semistable morphism
$f:X \longrightarrow Y=\Delta^k \times S$
from a K\"ahler manifold $X$,
the filtration $F$ on the complex
\begin{equation}
\gr_{m_l}^{L(J_l \setminus K)}
\cdots
\gr_{m_2}^{L(J_2 \setminus K)}
\gr_{m_1}^{L(J_1 \setminus K)}
E_1^{\bullet,b}(R(f_I)_{\ast}sB_I(f),L(K))
\end{equation}
is strongly strict
for all $b, m_1, m_2, \dots, m_l$
and for all
$K \subset J_1 \subset J_2 \subset \dots \subset J_l \subset I \subset \ski$.
\end{itemize}
What we have to prove is that $(A_l)$ holds true for all $l \in \bnnZ$.
\end{para}

\begin{lem}
\label{lemma on strong strictness for F}
The filtered complex $(R(f_I)_{\ast}sB_I(f),F)$ is strongly strict
for all $I \subset \ski$.
In particular,
the $\cO_{Y[I]}$-module
$\gr_F^pR^n(f_I)_{\ast}sB_I(f)$
is locally free of finite rank
for all $n,p$,
that is, the assertion $(A_0)$ is true.
\end{lem}
\begin{proof}
For the case of $I=\emptyset$,
the strong strictness of $(Rf_{\ast}\omega_{X/Y},F)$ is proved
in \cite[Theorem (6.10)]{FujisawaLHSSV}.
The morphism $\theta_{|I}(f)$ induces an isomorphism
\begin{equation}
(\cO_{Y[I]} \otimes^L \iota_{I|}^{-1}Rf_{\ast}\omega_{X/Y},F)
\simeq
(R(f_I)_{\ast}sB_I(f),F)
\end{equation}
in the filtered derived category
by Lemma \ref{lemma on theta for direct image complexes}.
Therefore we obtain the conclusion
by Lemma \ref{lemma on strong strictness and filtered tensor product}.
\end{proof}

\begin{prop}
\label{iso from FRf*sBJ to FRf*sBI:eq}
The morphism $\theta_{J|I}(f)$ induces an isomorphism
\begin{equation}
\cO_{Y[I]} \otimes \iota_{J|I}^{-1}R^n(f_J)_{\ast}sB_J(f)
\simeq
R^n(f_I)_{\ast}sB_I(f)
\end{equation}
for every $n$,
under which we have
\begin{equation}
\cO_{Y[I]} \otimes \iota_{J|I}^{-1}F^pR^n(f_J)_{\ast}sB_J(f)
\simeq
F^pR^n(f_I)_{\ast}sB_I(f)
\end{equation}
for all $p$.
\end{prop}
\begin{proof}
Lemma \ref{lemma on theta for direct image complexes}
and the strong strictness of $(R(f_J)_{\ast}sB_J(f),F)$
implies the conclusion
as in Lemma \ref{lemma on strong strictness and filtered tensor product}.
\end{proof}


\begin{lem}
The assertion $(A_l)$ implies $(B_l)$.
\end{lem}
\begin{proof}
From the identification
\eqref{identification of E_1 for sBI:eq},
the assumption $(A_l)$ implies that
\begin{equation}
\gr_F^p\gr_{m_l}^{L(J_l \setminus K)}
\cdots
\gr_{m_2}^{L(J_2 \setminus K)}
\gr_{m_1}^{L(J_1 \setminus K)}
E_1^{a,b}(R(f_I)_{\ast}sB_I(f),L(K))
\end{equation}
is locally free of finite rank.
Thus the filtered complex
\begin{equation}
(\gr_{m_l}^{L(J_l \setminus K)}
\cdots
\gr_{m_2}^{L(J_2 \setminus K)}
\gr_{m_1}^{L(J_1 \setminus K)}
E_1^{\bullet,b}(R(f_I)_{\ast}sB_I(f),L(K)), F)
\end{equation}
is filtered perfect.
Since
\begin{equation}
\coh^a(
\gr_{m_l}^{L(J_l \setminus K)}
\cdots
\gr_{m_2}^{L(J_2 \setminus K)}
\gr_{m_1}^{L(J_1 \setminus K)}
E_1^{\bullet,b}(R(f_I)_{\ast}sB_I(f),L(K)))
\end{equation}
is locally free as in the proof
of Lemma \ref{strictness for L(J setminus K) on ss},
it is sufficient to prove that
the filtered complex
\begin{equation}
\label{the complex in question:eq}
(\bC(x) \otimes_{\cO_{Y[I]}}^L
\gr_{m_l}^{L(J_l \setminus K)}
\cdots
\gr_{m_2}^{L(J_2 \setminus K)}
\gr_{m_1}^{L(J_1 \setminus K)}
E_1^{\bullet,b}(R(f_I)_{\ast}sB_I(f),L(K)),F)
\end{equation}
is strict for all $x \in Y[I]$
by Lemma \ref{strong strictness criteria by pointwise strictness}.
Because the filtration $F$ on the complex
\begin{equation}
\gr_{m_l}^{L(J_l \setminus K)}
\cdots
\gr_{m_2}^{L(J_2 \setminus K)}
\gr_{m_1}^{L(J_1 \setminus K)}
E_1^{\bullet,b}(R(f_I)_{\ast}sB_I(f),L(K))
\left|_{Y[I]^{\ast}}\right.
\end{equation}
is strongly strict
by Corollary \ref{MHS on E1-terms},
the filtered complex
\eqref{the complex in question:eq}
is strict for $x \in Y[I]^{\ast}$
by Lemma \ref{lemma on strong strictness and filtered tensor product}.
For a point $x \in Y[I] \setminus Y[I]^{\ast}$,
we can find $I' \supset I$ such that $x \in Y[I']^{\ast}$.
Since the $\cO_{Y[I]}$-module
\begin{equation}
\gr_F^p\gr_{m_l}^{L(J_l \setminus K)}
\cdots
\gr_{m_2}^{L(J_2 \setminus K)}
\gr_{m_1}^{L(J_1 \setminus K)}
E_1^{a,b}(R(f_I)_{\ast}sB_I(f),L(K))
\end{equation}
is locally free of finite rank as above,
the morphism $\theta_{I|I'}(f)$
induces the isomorphism
\begin{equation}
\begin{split}
(\cO_{Y[I']} \otimes^L
&\gr_{m_l}^{L(J_l \setminus K)}
\cdots
\gr_{m_2}^{L(J_2 \setminus K)}
\gr_{m_1}^{L(J_1 \setminus K)}
\iota_{I|I'}^{-1}E_1^{\bullet,b}(R(f_I)_{\ast}sB_I(f),L(K)),F) \\
&\simeq
(\gr_{m_l}^{L(J_l \setminus K)}
\cdots
\gr_{m_2}^{L(J_2 \setminus K)}
\gr_{m_1}^{L(J_1 \setminus K)}
E_1^{\bullet,b}(R(f_{I'})_{\ast}sB_{I'}(f),L(K)),F)
\end{split}
\end{equation}
in the filtered derived category
by Lemma \ref{local freeness and the commutattivity of gr}.
Thus we have an isomorphism
\begin{equation}
\begin{split}
(\bC(x) &\otimes_{\cO_{Y[I]}}^L
\gr_{m_l}^{L(J_l \setminus K)}
\cdots
\gr_{m_2}^{L(J_2 \setminus K)}
\gr_{m_1}^{L(J_1 \setminus K)}
E_1^{\bullet,b}(R(f_I)_{\ast}sB_I(f),L(K)),F) \\
&\simeq
(\bC(x) \otimes_{\cO_{Y[I']}}^L
\gr_{m_l}^{L(J_l \setminus K)}
\cdots
\gr_{m_2}^{L(J_2 \setminus K)}
\gr_{m_1}^{L(J_1 \setminus K)}
E_1^{\bullet,b}(R(f_{I'})_{\ast}sB_{I'}(f),L(K)),F)
\end{split}
\end{equation}
in the filtered derived category.
Therefore the complex \eqref{the complex in question:eq}
is strict as desired.
\end{proof}

\begin{cor}
\label{condition (B0)}
The assertion $(B_0)$ holds true.
\end{cor}

\begin{lem}
\label{coincidence for F via E2-degeneracy}
Under the identification
\eqref{E2-degeneration for L(K):eq},
the filtration $F$ on the left hand side
coincides with the filtration $F_{\rec}$ on the right.
\end{lem}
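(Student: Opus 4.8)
The plan is to reduce first to the case $K=I$ by means of $\theta_{K|I}(f)$, then to establish the coincidence over the dense open subset $Y[I]^{\ast}$ by the two-filtration argument already carried out in the proof of Lemma \ref{lemma on VMHS for I}, and finally to propagate it over the whole of $Y[I]$ by local freeness.

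First I would reduce to $K=I$. By Lemma \ref{lemma for theta and the spectral sequences} (taking $J=K$) the morphism $\theta_{K|I}(f)$ induces an isomorphism $\cO_{Y[I]} \otimes_{\cO_{Y[K]}} E_r^{a,b}(R(f_K)_{\ast}sB_K(f),L(K)) \simeq E_r^{a,b}(R(f_I)_{\ast}sB_I(f),L(K))$ which, since $\theta_{K|I}(f)$ preserves $F$ and $L(K)$ and the $E_r$-terms are locally free, is compatible with $F_{\rec}$; by Corollary \ref{corollary on grL's and theta} and Proposition \ref{isom induced by theta on FgrL's} the same morphism induces an isomorphism between the corresponding $\gr_m^{L(K)}R^n$ that is compatible with $F$, and these isomorphisms are compatible with the $E_2$-degeneracy isomorphism \eqref{E2-degeneration for L(K):eq}. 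Hence it is enough to treat the case in which the filtration index equals the full index of the morphism, i.e. $K=I$.

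Next I would prove the statement over $Y[I]^{\ast}$. There $f_I$ is smooth, and locally the situation is that of a proper K\"ahler semistable morphism whose divisor carries the full index set over the smooth base $Y[I]^{\ast}$ --- exactly the reduction ``$I=\ski$'' made in the proof of Lemma \ref{lemma on VMHS for I}. In that proof the differential $d_0$ of the $L$-spectral sequence is strictly compatible with $F=F_{\rec}=F_d=F_{d^{\ast}}$ by the strong strictness of the $\gr^L$-pieces, the $E_1$- and $E_2$-terms underlie polarizable variations of Hodge structure, and $d_r=0$ for $r\ge 2$; consequently $F=F_{\rec}$ on $E_{\infty}^{a,b}=E_2^{a,b}$. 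Through \eqref{E2-degeneration for L(K):eq} this yields the desired coincidence of $F$ on $\gr_m^{L(I)}R^n(f_I)_{\ast}sB_I(f)$ with $F_{\rec}$ on $E_2^{-m,n+m}(R(f_I)_{\ast}sB_I(f),L(I))$, but only over $Y[I]^{\ast}$.

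Finally I would extend the coincidence over all of $Y[I]$. The common underlying sheaf $\gr_m^{L(I)}R^n(f_I)_{\ast}sB_I(f)\simeq E_2^{-m,n+m}(R(f_I)_{\ast}sB_I(f),L(I))$ is locally free by Proposition \ref{proposition on the local freeness of grLgrLsBI} and Proposition \ref{local freeness of the spectral sequence}; using the strong strictness of $(R(f_I)_{\ast}sB_I(f),F)$ from Lemma \ref{lemma on strong strictness for F} together with Lemma \ref{flatness lemma} and Lemma \ref{local freeness and the commutattivity of gr}, both $F^p\gr_m^{L(I)}R^n(f_I)_{\ast}sB_I(f)$ and $(F_{\rec})^pE_2^{-m,n+m}(R(f_I)_{\ast}sB_I(f),L(I))$ are subbundles, i.e. locally free subsheaves with locally free quotient. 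Since such a subsheaf is saturated, it is determined by its restriction to any dense open subset; hence two subbundles of a fixed locally free sheaf agreeing on the dense open $Y[I]^{\ast}$ agree on all of $Y[I]$, which completes the argument. The hard part is precisely this last step: one must know that each step of $F$ and of $F_{\rec}$ is a genuine subbundle over all of $Y[I]$, not merely over $Y[I]^{\ast}$, and it is here that the global strong strictness of Lemma \ref{lemma on strong strictness for F} and the local freeness results of the preceding section are indispensable.
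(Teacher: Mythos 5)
There is a genuine gap, and it sits exactly where you locate ``the hard part.'' Your extension step needs $F^p\gr_m^{L(K)}R^n(f_I)_{\ast}sB_I(f)$ and $F_{\rec}^pE_2^{-m,n+m}(R(f_I)_{\ast}sB_I(f),L(K))$ to be saturated subsheaves (subbundles) over all of $Y[I]$, but the results you cite do not deliver this. Lemma \ref{lemma on strong strictness for F} gives local freeness of $F^pR^n(f_I)_{\ast}sB_I(f)$ and $\gr_F^pR^n(f_I)_{\ast}sB_I(f)$, which says nothing about the $F$-steps of the filtration \emph{induced} on the $L(K)$-graded quotients: the interaction of the two filtrations is not formal, and Lemma \ref{flatness lemma} and Lemma \ref{local freeness and the commutattivity of gr} concern sheaf-level flatness and tensor products, not cohomology. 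In fact, the local freeness of $\gr_F^p\gr_m^{L(K)}R^n(f_I)_{\ast}sB_I(f)$ is precisely statement $(A_1)$ of Theorem \ref{strictness for F on ss and local freeness}, whose proof \emph{uses} Lemma \ref{coincidence for F via E2-degeneracy}; so within the paper's development your argument is circular. Similarly, Proposition \ref{local freeness of the spectral sequence} covers only the $L(J)_{\rec}$-graded pieces of the $E_r$-terms, not $F_{\rec}$; the subbundle property of $F_{\rec}$ on $E_2$ would require the strong strictness of $F$ on the complex $E_1^{\bullet,b}$ (statement $(B_0)$ of Theorem \ref{strictness for F on ss and local freeness}), which you never invoke. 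The same omission undermines your first reduction: Lemma \ref{lemma for theta and the spectral sequences} is an isomorphism of $E_r$-terms, but its compatibility with $F_{\rec}$ under the base change $\cO_{Y[I]}\otimes_{\cO_{Y[K]}}(-)$ is not established at this point either (again it would need $(B_0)$-type strictness via Lemma \ref{lemma on strong strictness and filtered tensor product}). Without saturation, two coherent subsheaves of a locally free sheaf agreeing on the dense open $Y[I]^{\ast}$ need not agree on $Y[I]$, so the density argument does not close.

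The paper's proof avoids the restriction-and-extension scheme altogether, and this is why it is short: by Lemma \ref{lemma on grL(K)sB}, $\gr_m^{L(K)}sB_I(f)$ decomposes into shifted copies of $sB_{I\setminus K}(f_{\Gamma})$ for the proper K\"ahler semistable morphisms $f_{\Gamma}$, so Lemma \ref{lemma on strong strictness for F} applies to each summand and yields strong strictness of $F$ on $R(f_I)_{\ast}\gr_m^{L(K)}sB_I(f)$ \emph{over the whole of} $Y[I]$, for every $K$ (no reduction to $K=I$ is needed). Deligne's lemma on two filtrations \cite[Proposition (7.2.5)]{DeligneIII}, run as in the proof of Lemma \ref{lemma on VMHS for I}, then gives $F_{\rec}=F_d=F_{d^{\ast}}$ on $E_2^{a,b}(R(f_I)_{\ast}sB_I(f),L(K))$ globally, and the $E_2$-degeneracy of Theorem \ref{theorem for E2-degeneracy} converts this into the asserted coincidence under \eqref{E2-degeneration for L(K):eq}. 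Your middle step (the variation-of-Hodge-structure argument over $Y[I]^{\ast}$) is sound but, as it stands, it proves only the restricted statement; to repair your proof you would have to establish $(B_0)$ first (which, as in the paper, follows from $(A_0)$ via Lemma \ref{strong strictness criteria by pointwise strictness} and the $\theta_{I|I'}$ base change, without circularity) and derive the saturation claims from it, at which point you would essentially have reconstructed the paper's global strictness argument.
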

\begin{proof}
From Lemma \ref{lemma on strong strictness for F},
the filtration $F$ on the complex
$R(f_I)_{\ast}\gr_m^{L(K)}sB_I(f)$
is strongly strict for all $m$.
Since $(B_0)$ holds true
by Corollary \ref{condition (B0)},
the lemma on two filtrations
\cite[Proposition (7.2.5)]{DeligneIII}
implies the conclusion.
\end{proof}

\begin{proof}[Proof of Theorem $\ref{local freeness}$]
It suffices to prove that
the assertions $(B_0)$ and $(B_l)$ implies $(A_{l+1})$.
We set $K=J_1$.
Then the complex
\begin{equation}
E_1^{\bullet,b}(R(f_I)_{\ast}sB_I(f),L(K))
\end{equation}
equipped with the filtrations
\begin{equation}
L(J_2 \setminus K), \dots, L(J_l \setminus K), L(J_{l+1} \setminus K), F
\end{equation}
satisfies the assumption
in Lemma \ref{lemma on successively strict filtrations}
by Lemma \ref{strictness for L(J setminus K) on ss}
and by the assumptions $(B_0)$ and $(B_l)$.
Therefore we have the isomorphism
\begin{equation}
\begin{split}
\coh^a(
\gr_F^p&\gr_{m_{l+1}}^{L(J_{l+1} \setminus K)}
\gr_{m_l}^{L(J_l \setminus K)}
\cdots
\gr_{m_2}^{L(J_2 \setminus K)}
E_1^{\bullet,b}(R(f_I)_{\ast}sB_I(f),L(K))) \\
&\simeq
\gr_{F_{\rec}}^p
\gr_{m_{l+1}}^{L(J_{l+1} \setminus K)_{\rec}}
\gr_{m_l}^{L(J_l \setminus K)_{\rec}}
\cdots
\gr_{m_2}^{L(J_2 \setminus K)_{\rec}}
E_2^{a,b}(R(f_I)_{\ast}sB_I(f),L(K))
\end{split}
\end{equation}
for all $a, b, m_2, \dots, m_l, m_{l+1}, p$.
Moreover, it is locally free of finite rank
by the strong strictness of $F$ in the assumption $(B_l)$.
Then the identification \eqref{E2-degeneration for L(K):eq},
Lemma \ref{coincidence for F via E2-degeneracy}
and Proposition \ref{compatiblity of L(J) and L(J setminus K) on grLK}
give us $(A_{l+1})$.
\end{proof}

\begin{cor}
For
$J_1 \subset J_2 \subset \dots \subset J_l \subset J \subset I \subset \ski$,
the morphism $\theta_{J|I}(f)$ induces an isomorphism
\begin{equation}
\begin{split}
\cO_{Y[I]} \otimes
\gr_F^p
\gr_{m_l}^{L(J_l)}
&\cdots
\gr_{m_2}^{L(J_2)}
\gr_{m_1}^{L(J_1)}
\iota_{J|I}^{-1}R^n(f_J)_{\ast}sB_J(f) \\
&\overset{\simeq}{\longrightarrow}
\gr_F^p
\gr_{m_l}^{L(J_l)}
\cdots
\gr_{m_2}^{L(J_2)}
\gr_{m_1}^{L(J_1)}
R^n(f_I)_{\ast}sB_I(f)
\end{split}
\end{equation}
for all $m_1, m_2, \dots, m_l, n, p$.
\end{cor}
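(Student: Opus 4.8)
The plan is to deduce this corollary formally from facts already established for the multiply-graded pieces, the three key inputs being Corollary \ref{corollary on grL's and theta}, Proposition \ref{isom induced by theta on FgrL's}, and the local freeness assertion $(A_l)$ of Theorem \ref{strictness for F on ss and local freeness}. Throughout I write $\gr_L^{\underline{m}}$ for the iterated functor $\gr_{m_l}^{L(J_l)} \cdots \gr_{m_1}^{L(J_1)}$ for short.

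First I would recall that Corollary \ref{corollary on grL's and theta} already provides the isomorphism \eqref{the morphism induced by theta on grL's:eq}
\begin{equation*}
\cO_{Y[I]} \otimes_{\cO_{Y[J]}} \gr_L^{\underline{m}} R^n(f_J)_{\ast}sB_J(f)
\overset{\simeq}{\longrightarrow}
\gr_L^{\underline{m}} R^n(f_I)_{\ast}sB_I(f)
\end{equation*}
induced by $\theta_{J|I}(f)$, and that Proposition \ref{isom induced by theta on FgrL's} shows this isomorphism to be strictly compatible with $F$, in the sense that it restricts to an isomorphism on $F^p$ for every $p$. A filtered isomorphism automatically induces isomorphisms on the associated graded, so passing to the quotient $F^p/F^{p+1}$ yields
\begin{equation*}
\gr_F^p\bigl(\cO_{Y[I]} \otimes_{\cO_{Y[J]}} \gr_L^{\underline{m}} R^n(f_J)_{\ast}sB_J(f)\bigr)
\overset{\simeq}{\longrightarrow}
\gr_F^p \gr_L^{\underline{m}} R^n(f_I)_{\ast}sB_I(f),
\end{equation*}
whose target is already the right-hand side of the desired isomorphism.

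The only remaining point is to move $\gr_F^p$ inside the tensor product on the source, i.e.\ to identify $\gr_F^p(\cO_{Y[I]} \otimes_{\cO_{Y[J]}} \gr_L^{\underline{m}} R^n(f_J)_{\ast}sB_J(f))$ with $\cO_{Y[I]} \otimes_{\cO_{Y[J]}} \gr_F^p \gr_L^{\underline{m}} R^n(f_J)_{\ast}sB_J(f)$. For this I would invoke assertion $(A_l)$ of Theorem \ref{strictness for F on ss and local freeness}, which guarantees that $\gr_F^p \gr_L^{\underline{m}} R^n(f_J)_{\ast}sB_J(f)$ is a locally free, hence $\cO_{Y[J]}$-flat, $\cO_{Y[J]}$-module for all indices. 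Under this flatness hypothesis Lemma \ref{local freeness and the commutattivity of gr}, applied with $\cL=\cO_{Y[I]}$ and the filtrations $L(J_1),\dots,L(J_l),F$ on $\cM=R^n(f_J)_{\ast}sB_J(f)$ (after the usual reindexing of the increasing filtrations $L(J_i)$ as decreasing ones), shows that $\cO_{Y[I]} \otimes_{\cO_{Y[J]}}(-)$ commutes with the iterated graded functor $\gr_F^p\gr_L^{\underline{m}}$. Composing this identification with the isomorphism of the previous paragraph gives the assertion, and by construction the resulting map is the one induced by $\theta_{J|I}(f)$.

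I expect no genuine obstacle here: the corollary is a formal consequence of the filtered isomorphism of Proposition \ref{isom induced by theta on FgrL's} together with the flatness furnished by $(A_l)$. The substantive work, namely the strictness of $F$ on the iterated graded complexes and the local freeness of the multiply-graded direct images, was already carried out in Theorem \ref{strictness for F on ss and local freeness}, so the present statement merely repackages that output in $\gr_F$ form.
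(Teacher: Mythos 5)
Your proposal is correct and is essentially the paper's own argument: the proof there reads simply ``Easy from Lemma \ref{local freeness and the commutattivity of gr}'', i.e.\ it combines the isomorphism \eqref{the morphism induced by theta on grL's:eq} of Corollary \ref{corollary on grL's and theta} (together with its $F$-compatibility from Proposition \ref{isom induced by theta on FgrL's}) with the local freeness $(A_l)$ of Theorem \ref{strictness for F on ss and local freeness}, exactly as you do. Your write-up merely makes explicit the commutation of $\cO_{Y[I]}\otimes_{\cO_{Y[J]}}(-)$ with the iterated $\gr$'s that the paper leaves implicit.
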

\begin{proof}
Easy from Lemma \ref{local freeness and the commutattivity of gr}.
\end{proof}

\section{Main result}
\label{section for the main results}

\begin{para}
Let $X$ be a K\"ahler manifold
and $f:X \longrightarrow Y=\Delta^k \times S$
a proper semistable morphism
as in the last section.
For $i=1,2, \dots, k$,
we write $Y[i]=Y[\{i\}](=E_i)$ for short.
We use the similar convention for other objects,
$X[i]=X[\{i\}]$, $sB_i(f)=sB_{\{i\}}(f)$, $L(i)=L(\{i\})$ and so on.
\end{para}

\begin{lem}
\label{lemma for the main result for the case of |J|=1}
For any $I \subset \ski$,
the morphism $N_{j|I}(f)^l$
defined in Definition \ref{definition of N}
induces an isomorphism
\begin{equation}
\label{isomorphism for Nj:eq}
\gr_l^{L(j)}R^n(f_I)_{\ast}sB_I(f)
\overset{\simeq}{\longrightarrow}
\gr_{-l}^{L(j)}R^n(f_I)_{\ast}sB_I(f)
\end{equation}
for all $l \ge 0$ and for all $j \in I$.
In other words,
the filtration $L(j)$ coincides with
the monodromy weight filtration $W(N_{j|I}(f))$
on $R^n(f_I)_{\ast}sB_I(f)$ for $j \in I$.
\end{lem}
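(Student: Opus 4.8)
The plan is to verify the two defining properties of the monodromy weight filtration for the single nilpotent operator $N_{j|I}(f)$ and then invoke its uniqueness. The relation $N_{j|I}(f)(L(j)_mR^n(f_I)_{\ast}sB_I(f)) \subset L(j)_{m-2}R^n(f_I)_{\ast}sB_I(f)$ is already available: it is the case $K=\{j\}$ of the compatibility of $N_{j|I}(f)$ with $L(K)$ proved just after Definition \ref{definition of N}. Hence the whole content is the isomorphism \eqref{isomorphism for Nj:eq} for every $l \ge 0$. Once that is established, the uniqueness of the weight filtration centered at $0$ of a nilpotent endomorphism forces $L(j)=W(N_{j|I}(f))$, which is the second assertion.

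To prove \eqref{isomorphism for Nj:eq} I would first reduce to the open stratum. Both sides are locally free $\cO_{Y[I]}$-modules by Proposition \ref{proposition on the local freeness of grLgrLsBI}, so the $\cO_{Y[I]}$-linear map $N_{j|I}(f)^l$ is an isomorphism as soon as $\bC(y) \otimes N_{j|I}(f)^l$ is an isomorphism for every $y \in Y[I]$. Given $y$, choose $I' \supset I$ with $y \in Y[I']^{\ast}$; applying Corollary \ref{corollary on grL's and theta} with $(J,I)$ replaced by $(I,I')$ and $J_1=\{j\}$, together with the evident compatibility of $\theta_{I|I'}(f)$ with $\nu_{j|I}(f)$ coming from Definitions \ref{definition of theta} and \ref{definition of nu}, the fibre of $N_{j|I}(f)^l$ at $y$ is identified with the fibre at $y$ of the analogous map $N_{j|I'}(f)^l$. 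Since the sets $Y[I']^{\ast}$ for $I' \supseteq I$ cover $Y[I]$, it suffices to prove that $N_{j|I}(f)^l$ induces the isomorphism \eqref{isomorphism for Nj:eq} over the open set $Y[I]^{\ast}$, for every $I$.

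Over $Y[I]^{\ast}$ I would compute both sides through the residue description of $\gr^{L(j)}$. By Lemma \ref{lemma on grLsB} applied to $K=\{j\}$, the total object $\bigoplus_m \gr_m^{L(j)}R^n(f_I)_{\ast}sB_I(f)$ is a direct sum, indexed by $q \in \bnnZ$ and $\Gamma \subset \Lambda_j$, of the higher direct images of $sB_{I \setminus \{j\}}(f_\Gamma)$ for the proper Kähler smooth strata $f_\Gamma \colon X[\Gamma] \longrightarrow Y[j]$; under this identification $N_{j|I}(f)$ becomes the combinatorial operator raising the residue index $q$ in the $t_j$-direction. Because each $f_\Gamma$ is proper Kähler smooth, these strata cohomologies carry the polarizable variations of $\bQ$-Hodge structure exploited in Lemma \ref{lemma on VMHS for I}, and through the polarizations the operator $N_{j|I}(f)$ is intertwined with the Lefschetz operators of the $f_\Gamma$. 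The Hard Lefschetz theorem for the $f_\Gamma$ then yields the bijectivity of $N_{j|I}(f)^l$ between $\gr_l^{L(j)}$ and $\gr_{-l}^{L(j)}$, exactly as in the one-variable computation of \cite{SteenbrinkLHS} and \cite{ElZeinbook}. This is the step where the Kähler hypothesis is indispensable.

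The main obstacle is precisely this third step: making the $\mathfrak{sl}_2$/Hard-Lefschetz bookkeeping precise, i.e. identifying $N_{j|I}(f)^l$ on the graded pieces with the iterated Lefschetz isomorphism across the summands indexed by $q$ and $\Gamma$, and checking that the signs and degree shifts in Lemma \ref{lemma on grLsB} match the centering at $0$ in \eqref{isomorphism for Nj:eq}. Everything else—the reduction to $Y[I]^{\ast}$, the local freeness, and the passage from \eqref{isomorphism for Nj:eq} to the identity $L(j)=W(N_{j|I}(f))$—is formal given the results already established.
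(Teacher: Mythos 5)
Your reduction steps are essentially sound, and they parallel the paper's proof in reverse order: the paper first shrinks $I$ to $\{j\}$ (Corollary \ref{corollary on grL's and theta} with $J=J_1=\{j\}$, plus the compatibility of $\theta_{j|I}(f)$ with $\nu_{j}$), then restricts to the open stratum using that $(R^n(f_j)_{\ast}sB_j(f),-\nabla(j))$ and its $L(j)$-graded pieces are canonical extensions and that $N_j(f)$ commutes with the connection, whereas you enlarge $I$ to $I'$ and check fibrewise via Proposition \ref{proposition on the local freeness of grLgrLsBI}; both mechanisms work. But because you never reduce to $I=\{j\}$, your third step is applied in a situation where its premises fail: over $Y[I]^{\ast}$ with $|I|\ge 2$ the morphisms $f_{\Gamma}$ are \emph{not} smooth over the stratum (they remain semistable in the directions $I\setminus\{j\}$), and the summands produced by Lemma \ref{lemma on grL(K)sB} with $K=\{j\}$ are the complexes $sB_{I\setminus\{j\}}(f_{\Gamma})$, whose direct images carry only graded-polarizable variations of \emph{mixed} Hodge structure (Theorem \ref{theorem on VMHS for I}), not the pure polarizable VHS your Hard Lefschetz argument requires. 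This part is repairable: apply the $\theta$-reduction to $I=\{j\}$ first, as the paper does, so that the residual case is genuinely one-parameter.

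The deeper problem is that your third step mis-locates the difficulty, and that is a genuine gap. Under the identification of Lemma \ref{lemma on grL(K)sB}, $\nu_{j|I}(f)$ is the reindexing $(q,\Gamma)\mapsto(q+e_j,\Gamma)$, and a purely combinatorial check (no polarizations, no K\"ahler class) shows that $\nu_{j|I}(f)^l:\gr_l^{L(j)}sB_I(f)\to\gr_{-l}^{L(j)}sB_I(f)$ is already an isomorphism of \emph{complexes}, hence induces isomorphisms of $E_1$-terms compatible with $d_1$. So Hard Lefschetz for the strata is not what produces the bijectivity at the $E_1$-level; the entire content of the lemma is the descent of this isomorphism through the $d_1$-cohomology to $E_2=E_\infty$ (Theorem \ref{theorem for E2-degeneracy} together with Proposition \ref{compatiblity of L(J) and L(J setminus K) on grLK}). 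That descent is not formal: $d_1$ lowers the $N$-weight by one, so the Lefschetz primitive decomposition is not $d_1$-stable (one only has $d_1P_m\subset P_{m-1}\oplus\nu P_{m+1}$), and termwise bijectivity of $N^l$ on $E_1$ does not imply bijectivity on $E_2$. What closes this is the positivity of the polarization on the $E_1$-terms, i.e. the theory of polarized (bigraded) Hodge--Lefschetz structures of \cite{Guillen-NavarroAznarCI} and \cite{SaitoMorihikoMHP} --- exactly the point where Steenbrink's original one-variable argument was incomplete. The paper deliberately does not reprove this: after reducing to $k=1$ and then to $S$ a point (via Remark \ref{nu and base change for S} and Lemma \ref{lemma on base change for sB}), it quotes \cite{SaitoMorihikoMHP}, \cite{Guillen-NavarroAznarCI} and \cite{UsuiMTTS}. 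What you describe as ``making the $\mathfrak{sl}_2$/Hard-Lefschetz bookkeeping precise'' is therefore the main theorem rather than a verification, and since your proposal leaves precisely that step as an acknowledged obstacle, the essential part of the proof is missing.
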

\begin{proof}
The morphism $\theta_{j|I}(f)$ induces the isomorphism
\begin{equation}
\cO_{Y[I]}
\otimes
\gr_l^{L(j)}\iota_{j|I}^{-1}R^n(f_j)_{\ast}sB_j(f)
\overset{\simeq}{\longrightarrow}
\gr_l^{L(j)}R^n(f_I)_{\ast}sB_I(f)
\end{equation}
for all $l$.
The morphism $\theta_{j|I}(f)$ is compatible with
the morphism $N_{j|I}(f)$ and $N_j(f)$ for all $j \in J$.
Therefore we may assume $I=\{j\}$.
Moreover,
the morphism $N_j(f)$ is compatible with
the logarithmic connection $-\nabla(j)$
on $R^n(f_j)_{\ast}sB_j(f)$,
it is sufficient to prove
that the restriction $N_j(f)|_{Y[j]^{\ast}}$
induces the isomorphism \eqref{isomorphism for Nj:eq}
over $Y[j]^{\ast}$
because $(R^n(f_j)_{\ast}sB_j(f), -\nabla(j))$
is the canonical extension of its restriction over $Y[j]^{\ast}$.
Thus we may assume that $k=1$.
Because $N_j(f)$ is compatible with the base change
as in Remark \ref{nu and base change for S},
we can reduce the problem
to the case that $S$ is a point
by Lemma \ref{lemma on base change for sB}.
Hence we obtain the conclusion
by \cite{SaitoMorihikoMHP}, \cite{Guillen-NavarroAznarCI}
and \cite{UsuiMTTS}.
\end{proof}

The following theorem is the main result of this article.

\begin{thm}
\label{theorem on the monodromy weight filtrations}
Let $f:X \longrightarrow Y=\Delta^k \times S$ be
a proper semistable morphism
and $N_{(J \setminus K)|I}(f;c)$ the morphism
defined in \eqref{definition of NJI(f;c):eq}
for $K \subset J \subset I \subset \ski$
and for $c \in \bpR^{J \setminus K}$.
Moreover, we assume that $X$ is K\"ahler.
For any non-negative integer $l$,
the morphism $N_{(J \setminus K)|I}(f;c)^l$
induces an isomorphism
\begin{equation}
\label{isomorphism NJI(f)^l:eq}
\gr_{l+m}^{L(J)}\gr_m^{L(K)}R^n(f_I)_{\ast}sB_I(f)
\overset{\simeq}{\longrightarrow}
\gr_{-l+m}^{L(J)}\gr_m^{L(K)}R^n(f_I)_{\ast}sB_I(f)
\end{equation}
for all $m,n \in \bZ$.
In particular,
$N_{J|I}(f;c)^l$ induces the isomorphism
\begin{equation}
\gr_l^{L(J)}R^n(f_I)_{\ast}sB_I(f)
\overset{\simeq}{\longrightarrow}
\gr_{-l}^{L(J)}R^n(f_I)_{\ast}sB_I(f)
\end{equation}
for all $J \subset I$
by setting $K=\emptyset$.
\end{thm}
\begin{proof}
The morphism $\theta_{J|I}(f)$ induces the isomorphism
\begin{equation}
\cO_{Y[I]}
\otimes
\gr_{l+m}^{L(J)}\gr_m^{L(K)}\iota_{J|I}^{-1}R^n(f_J)_{\ast}sB_J(f)
\overset{\simeq}{\longrightarrow}
\gr_{l+m}^{L(J)}\gr_m^{L(K)}R^n(f_I)_{\ast}sB_I(f)
\end{equation}
for all $l,m,n$.
Moreover the morphism $\theta_{J|I}(f)$ is compatible with
the morphism $N_{j|I}(f)$ for all $j \in J$.
Therefore we may assume $J=I$.
Moreover, we may assume that $I=\ski$ as before
by using the fact that
$(R^n(f_I)_{\ast}sB_I(f),(-1)^{|I|}\nabla(I))$ is the canonical extension
of its restriction over $Y[I]^{\ast}$.
Hence it suffices to prove that the morphism
$N_{\overline{K}|}(f;c)^l$ induces an isomorphism
\begin{equation}
\gr_{l+m}^L\gr_m^{L(K)}R^nf_{0\ast}sB(f)
\overset{\simeq}{\longrightarrow}
\gr_{-l+m}^L\gr_m^{L(K)}R^nf_{0\ast}sB(f)
\end{equation}
for all $l,m,n$ with $l \ge 0$
and for $c \in \bpR^{\overline{K}}$,
where $\overline{K}=\ski \setminus K$.

Now we proceed by induction on $k$.
The case of $k=1$ is already proved
in Lemma \ref{lemma for the main result for the case of |J|=1}.
So we assume $k \ge 2$.

For a non-empty subset $K \subset \ski$,
we have the identification
\begin{equation}
\label{(grLKsBI,LJ) for nu:eq}
(\gr_m^{L(K)}sB(f),L(\overline{K}))
\simeq
\bigoplus_{(q,\Gamma)}
(sB(f_{\Gamma})[-m-2|q|],L)
\end{equation}
by Lemma \ref{lemma on grL(K)sB},
where the index $(q,\Gamma)$ runs through the set
\begin{equation}
\label{index set for grLKsBI for nu:eq}
\{(q,\Gamma); q \in \bnnZ^K,
\Gamma \in S_{m+2|q|+|K|}^{\ge q+e_K}(\Lambda_K)\}
\end{equation}
as in Lemma \ref{lemma on grL(K)sB}.
Under the identification
\eqref{(grLKsBI,LJ) for nu:eq} above,
the morphism
\begin{equation}
\gr_m^{L(K)}\nu_{j|}(f):
\gr_m^{L(K)}sB(f)
\longrightarrow
\gr_m^{L(K)}sB(f)
\end{equation}
is identified with
\begin{equation}
\bigoplus \nu_{j|}(f_{\Gamma})[-m-2|q|]
\end{equation}
for any $j \in \overline{K}$,
where the direct sum is taken over the same index set
as \eqref{index set for grLKsBI for nu:eq}.
Thus the morphism $\gr_m^{L(K)}N_{\overline{K}|}(f;c)^l$
induces an isomorphism
\begin{equation}
\gr_l^{L(\overline{K})}E_1^{-m,n+m}(Rf_{0\ast}sB(f),L(K))
\overset{\simeq}{\longrightarrow}
\gr_{-l}^{L(\overline{K})}E_1^{-m,n+m}(Rf_{0\ast}sB(f),L(K))
\end{equation}
for all $l,m,n$ with $l \ge 0$
because of the identifications
\eqref{(grLKsBI,LJ) for nu:eq} and
\begin{equation}
E_1^{-m,n+m}(Rf_{0\ast}sB(f),L(K))
\simeq
R^nf_{0\ast}\gr_m^{L(K)}sB(f),
\end{equation}
together with the induction hypothesis.
Then $\gr_m^{L(K)}N_{\overline{K}|}(f;c)^l$ induces an isomorphism
\begin{equation}
\gr_l^{L(\overline{K})_{\rec}}E_2^{-m,n+m}(Rf_{0\ast}sB(f),L(K))
\overset{\simeq}{\longrightarrow}
\gr_{-l}^{L(\overline{K})_{\rec}}E_2^{-m,n+m}(Rf_{0\ast}sB(f),L(K))
\end{equation}
because $L(\overline{K})$ is strictly compatible
with the morphism $d_1$ of the $E_1$-terms
by Lemma \ref{strictness for L(J setminus K) on ss}.
Therefore we obtain the desired isomorphism
by Proposition \ref{compatiblity of L(J) and L(J setminus K) on grLK}.

Now the case $K=\emptyset$ is remained.
The morphism $N_{\overline{\{j\}}|}(f)^l$ induces an isomorphism
\begin{equation}
\gr_{l+m}^L\gr_m^{L(j)}R^nf_{0\ast}sB(f)
\overset{\simeq}{\longrightarrow}
\gr_{-l+m}^L\gr_m^{L(j)}R^nf_{0\ast}sB(f)
\end{equation}
for all $l,m,n$ with $l \ge 0$ as proved above.
Since we know the coincidence $L(j)=W(N_{j|}(f))$
by Lemma \ref{lemma for the main result for the case of |J|=1},
the monodromy weight filtration $W(N(f;c))$
satisfies the same property, that is,
the morphism $N_{\{\overline{j}\}|}(f)^l$ induces an isomorphism
\begin{equation}
\gr_{l+m}^{W(N(f;c))}\gr_m^{L(j)}R^nf_{0\ast}sB(f)
\overset{\simeq}{\longrightarrow}
\gr_{-l+m}^{W(N(f;c))}\gr_m^{L(j)}R^nf_{0\ast}sB(f)
\end{equation}
for all $l,m,n$ with $l \ge 0$
by \cite[(3.3) Theorem]{Cattani-Kaplan}
(see also \cite[(3.12) Theorem]{Steenbrink-Zucker}).
Then the uniqueness of the relative monodromy weight filtrations
implies the coincidence $L=W(N(f;c))$.
Therefore we conclude that the morphism $N(f;c)^l$ induces an isomorphism
\begin{equation}
\gr_l^LR^nf_{0\ast}sB(f)
\overset{\simeq}{\longrightarrow}
\gr_{-l}^LR^nf_{0\ast}sB(f)
\end{equation}
for all $l \ge 0$ by the definition of $W(N(f;c))$.
\end{proof}

\begin{thm}
Let $X$ be a K\"ahler manifold
and $f:X \longrightarrow Y=\Delta^k \times S$
a proper semistable morphism.
For any $I \subset \ski$,
the variation of $\bQ$-mixed Hodge structure
\eqref{VMHS for I:eq} on $Y[I]^{\ast}$
is admissible in $Y[I]$.
\end{thm}
\begin{proof}
Let $\varphi: \Delta \longrightarrow Y[I]$ be a holomorphic map
such that $\varphi^{-1}(Y[I]^{\ast})=\pd$.
Take $I' \supsetneq I$
such that $\varphi(0) \in Y[I']^{\ast}$.
For every $j \in I' \setminus I$,
$c_j$ denotes the order of zero of the holomorphic function
$\varphi^{\ast}t_j$
at the origin $0 \in \Delta$.
Then $c_j$ is a positive integer for all $j \in I' \setminus I$.

Now we consider
a locally free $\cO_{\Delta}$-module of finite rank
with an integrable logarithmic connection
$(\varphi^{\ast}R^n(f_I)_{\ast}sB_I(f), (-1)^{|I|}\varphi^{\ast}\nabla(I))$
on $\Delta$.
Since $\gr_F^p\gr_m^{L(I)}R^n(f_I)_{\ast}sB_I(f)$
is locally free of finite rank,
we have
\begin{equation}
\gr_F^p\gr_m^{L(J)}\varphi^{\ast}R^n(f_I)_{\ast}sB_I(f)
\simeq
\varphi^{\ast}\gr_F^p\gr_m^{L(J)}R^n(f_I)_{\ast}sB_I(f) ,
\end{equation}
which is locally free $\cO_{\Delta}$-module of finite rank.
By using the morphism $\theta_{I'|I}(f)$,
we have the identification
\begin{equation}
\bC(0) \otimes \varphi^{\ast}R^n(f_I)_{\ast}sB_I(f)
\simeq
\bC(\varphi(0)) \otimes R^n(f_I)_{\ast}sB_I(f)
\simeq
\bC(\varphi(0)) \otimes R^n(f_{I'})_{\ast}sB_{I'}(f)
\end{equation}
under which 
the equality
\begin{equation}
\res_{0}((-1)^{|I|}\varphi^{\ast}\nabla(I))
=\sum_{j \in I' \setminus I}c_j\res_j((-1)^{|I|}\nabla(I))
=(-1)^{|I|}N_{(I' \setminus I)|I'}(f;c)
\end{equation}
is easily seen.
Thus $\res_{0}((-1)^{|I|}\varphi^{\ast}\nabla(I))$ is nilpotent.
Therefore
\begin{equation}
(\varphi^{\ast}R^n(f_I)_{\ast}sB_I(f),(-1)^{|I|}\varphi^{\ast}\nabla(I))
\end{equation}
is of unipotent monodromy around $0 \in \Delta$
and the canonical extension of its restriction over $\pd$.
Moreover, $L(I')$ gives us
the relative monodromy weight filtration
of $\res_0((-1)^{|I|}\varphi^{\ast}\nabla(I))$
with respect to $L(I)$ on
\begin{equation}
\bC(0) \otimes \varphi^{\ast}R^n(f_I)_{\ast}sB_I(f)
\simeq
\bC(\varphi(0)) \otimes R^n(f_{I'})_{\ast}sB_{I'}(f)
\end{equation}
by Theorem \ref{theorem on the monodromy weight filtrations}.
\end{proof}

\providecommand{\bysame}{\leavevmode\hbox to3em{\hrulefill}\thinspace}
\providecommand{\MR}{\relax\ifhmode\unskip\space\fi MR }
\providecommand{\MRhref}[2]{%
  \href{http://www.ams.org/mathscinet-getitem?mr=#1}{#2}
}
\providecommand{\href}[2]{#2}

\end{document}